\documentclass[a4paper,reqno,12pt]{amsart} 
\usepackage{subcaption}
\usepackage{amssymb, mathtools, mathrsfs, xcolor, comment, hyperref}
\usepackage{microtype}
\usepackage{graphicx}
\definecolor{steel}{RGB}{70,130,180}
\definecolor{overleafedit}{RGB}{230,0,230}
\definecolor{overleafreply}{RGB}{150,75,0}
\usepackage[foot]{amsaddr}
\usepackage[alphabetic, abbrev, msc-links, nobysame]{amsrefs}
\usepackage[normalem]{ulem}
\calclayout
\mathtoolsset{showonlyrefs}
\numberwithin{equation}{section}
\allowdisplaybreaks

\theoremstyle{plain}
\newtheorem{thm}{Theorem}[section]
\newtheorem{prop}[thm]{Proposition}
\newtheorem{lemma}[thm]{Lemma}
\newtheorem{corollary}[thm]{Corollary}

\theoremstyle{remark}
\newtheorem{remark}[thm]{Remark}

\newtheorem{example}[thm]{Example}

\DeclareMathOperator*{\esssup}{ess\,sup}
\newcommand{\R}{\mathbb{R}}
\newcommand{\Z}{\mathbb{Z}}
\newcommand{\N}{\mathbb{N}}

\newcommand{\E}{{\mathbb E}} 
\newcommand{\X}{{\mathbb X}}  
\newcommand{\bbP}{{\mathbb P}}  
\newcounter{constant}\newcommand{\C}{\refstepcounter{constant}\ensuremath{C_{\theconstant}}}\newcommand{\Cr}[1]{\ensuremath{C_{\ref{#1}}}}
\newcounter{c}\newcommand{\counter}{\refstepcounter{c}{\thec}}
\newcommand{\eq}{\refstepcounter{equation}(\theequation)}

\DeclarePairedDelimiter{\Parentheses}{(}{)} \newcommand{\pr}{\Parentheses*}
\DeclarePairedDelimiter{\Brackets}{[}{]} \newcommand{\brk}{\Brackets*}
\DeclarePairedDelimiter{\Braces}{\{}{\}} \newcommand{\brc}{\Braces*}
 
\DeclarePairedDelimiter{\Absolute}{\lvert}{\rvert} \newcommand{\abs}{\Absolute*}
\DeclarePairedDelimiter{\Norm}{\lVert}{\rVert} \newcommand{\norm}{\Norm*}
\newcounter{step}\newcommand{\firststep}{\setcounter{step}{0}\refstepcounter{step}\emph{Step \thestep. }} \newcommand{\nextstep}{\refstepcounter{step}\emph{Step \thestep. }}

\begin{document}
	\title[SRDE on MMS]{Comparison principles for stochastic reaction-diffusion equations on metric measure spaces}
	\author[L.~Fan]{Louis Wai-Tong Fan} \email[L.~Fan]{louisfan@unc.edu}
	\author[Z.~Sun]{Zhenyao Sun} \email[Z.~Sun]{zhenyao.sun@gmail.com}
	\author[J.~Yang]{Johnny (Yifan) Yang} \email[J.~Yang]{jyang436@asu.edu}
	\address[L.~Fan]{School of Data and Information Sciences, The University of North Carolina at Chapel Hill} 
	\address[Z.~Sun]{School of Mathematics and Statistics \\ Beijing Institute of Technology \\ Beijing 100081 \\ China} 
	\address[J.~Yang]{School of Mathematics and Statistical Sciences, Arizona State University}
	\subjclass[2020]{TBD}
	\keywords{TBD}
	\begin{abstract}
		We study  parabolic stochastic partial differential equations on metric measure spaces $(\X, d,m)$ of the form
		\begin{align}
			\partial_t u(t,x) = \mathcal{L}^* u(t,x) + b(t,x,u(t,x)) + \sigma(t,x,u(t,x)) \dot{W}(t,x),\quad t>0,\, x \in \X,
		\end{align}
		where $\mathcal{L}$ is the generator of a Markov process which possesses transition densities, and $\dot{W}$  is a Gaussian noise that is white in time and possibly with spatial correlation. We assume the coefficients $b$ and $\sigma$ are Lipschitz and satisfy the linear growth condition. We formulate general and checkable assumptions on $(\X,\mathcal{L},\dot{W})$ that ensure existence and uniqueness of probabilistically strong, continuous, tempered  mild solutions. We then prove comparison principles (including a strong comparison principle) and  strict positivity, relative to initial conditions. 
        Our framework allows non-symmetric heat kernels and includes diffusion-type and stable-type examples, such as metric graphs and fractal spaces with sub-Gaussian heat kernel estimates.
	\end{abstract}
	\maketitle

\tableofcontents
	
	\section{Introduction}
	\subsection{Motivation}
	In this article, we study parabolic-type stochastic partial differential equations (SPDEs) on general metric measure spaces. Let $(\X,d)$ be a locally compact separable metric space, and let $m$ be a Radon measure on $\X$ with full support. 
	We study the equations of the form
	\begin{align}\label{eq:spde_intro}
		\partial_t u(t,x) = \mathcal L^* u(t,x) + b(t,x,u(t,x)) + \sigma(t,x,u(t,x))\dot{W}(t,x),\;\; t>0, x \in \X,
	\end{align}
	where ${\mathcal L}$ is the infinitesimal generator  of an $\X$-valued Markov process 
	which possesses transition densities  $(G_t(x,y))_{t >0 ,x,y \in \X}$ with respect to the reference measure $m$, $\mathcal L^*$ is the adjoint of $\mathcal L$ with respect to $m$, 
	$\dot{W}$ is a space-time Gaussian noise defined on an augmented filtered probability space
	$(\Omega, \mathscr{F}, (\mathscr{F}_t)_{t\geq 0},\mathbb P)$, and the coefficients $b$ and $ \sigma$ are $\R$-valued (random) predictable functions on $\R_+ \times \X \times \R$.
	
Following Walsh's theory \cite{MR876085}, we  regard \eqref{eq:spde_intro} as a shorthand of the integral equation
	\begin{align}
		u_t(x)&=
		\int_\X G_t(y,x) u_0(y) m(\mathrm dy) + \int_0^t \mathrm ds \int_\X G_{t - s}(y,x)b(s,y,u_s(y)) m(\mathrm dy)
		\\&\quad + \iint_0^t G_{t-s}(y,x)\sigma(s,y,u_s(y)) W(\mathrm ds,\mathrm dy), \label{eq:mild}
	\end{align}
	where $u_0$ is the initial condition, and we write $u_t(x):=u(t,x)$ (not a partial derivative with respect to $t$). 
    The appearance of $\mathcal L^*$ in \eqref{eq:spde_intro} is consistent with this formulation: terms such as $\int_\X G_t(y,x)f(y)m(\mathrm dy)$ can be written formally as $P_t^*f(x)$, where $P_t^*$ is the adjoint of the semigroup associated with $\mathcal L$. 	Roughly, a stochastic process $u=(u_t)_{t\in \R_+}$ is called a \textbf{mild-solution} to \eqref{eq:spde_intro} if it is defined on the same probability space $(\Omega, \mathscr{F},\mathbb P)$, is adapted to the filtration $(\mathscr{F}_t)_{t\geq 0}$, and satisfies \eqref{eq:mild} almost surely for each $t\in\R_+$.

	A basic example is the one-dimensional Euclidean case $(\X,d)=(\R,|\cdot|)$ with Lebesgue measure and $\mathcal L=\Delta$, where
	\[
		G_t(x,y)=\frac{1}{\sqrt{4\pi t}}\exp\pr{\frac{-|x-y|^2}{4t}}
	\]
	is the Gaussian heat kernel. Much existing work treats the symmetric case, in which $\mathcal L^*=\mathcal L$ and $G_t(x,y)=G_t(y,x)$ for all $t>0$ and $x,y\in\X$. Our framework also allows nonsymmetric kernels.

    Much of the existing solution theory for parabolic SPDEs was developed in Euclidean settings. When the driving noise $\dot{W}$ is space-time white noise,  classical function-valued well-posedness results are largely restricted to one-dimensional state spaces, such as the circle, bounded intervals, or the real line. This restriction reflects the singularity of space-time white noise: in higher-dimensional spaces, the heat kernel typically does not provide enough smoothing to define the stochastic convolution as a function-valued process without additional spatial correlation, renormalization, or a weaker solution concept. 
    Working on suitable metric measure spaces (MMS) can mitigate this difficulty, enabling well-posedness while retaining space-time white noise as the driving noise. Moreover, there are compelling theoretical and practical reasons to study SPDEs beyond Euclidean spaces. A metric-measure-space formulation provides a common framework for manifolds, metric graphs, and fractals, and it also accommodates nonsymmetric transition kernels.

Indeed, SPDEs on metric measure spaces have attracted increasing attention in recent years. Besides the abstract functional-analytic formulations \cite{da2014stochastic},  earlier studies for SPDEs on metric measure spaces trace back to the Dirichlet form approaches in \cite{hinz2013vector}, and the semi-group approaches with heat kernel estimates in  \cites{hinz2012semigroups,issoglio2015regularity}. 
Although Walsh's original theory of mild solutions  focuses on the heat kernel of $\Delta$ on $\R$, his stochastic integration theory with respect to martingale measures was formulated on Lusin spaces  \cite{MR876085}. 
Further developments include well-posedness and regularity results for parabolic-type SPDEs on post-critically finite fractals \cites{hambly2018existence, hambly2017continuous}. There, the authors also investigated intermittency and invariant measures of solutions. The stochastic wave equations on fractals were studied in \cite{Zbl145435418}. Cerrai and Freidlin obtained SPDEs on metric graphs as asymptotic limits of SPDEs on narrowing 2-dimensional domains \cite{Zbl136760078, Zbl902444267} and of SPDEs in $\R^2$ with fast advection \cite{Zbl141860070, Zbl148360090}. Later,  Fan  \cite{MR4278798}  showed that solutions to stochastic Fisher-KPP equations on metric graphs can be obtained as scaling limits of interacting particle systems. Recently in \cite{Zbl08159034}, the authors obtained  estimates on the intermittency properties of the parabolic Anderson model on a class of bounded recurrent metric measure spaces, which includes compact metric graphs and the bounded Sierpi\'{n}ski gasket. Finally, singular SPDEs on fractals were studied in \cite{chen2026wick}, and those on other non-flat spaces in \cites{bailleul2016heat,Zbl900576144,Zbl902307062}.

One aim of this paper is to give a unified well-posedness framework for such equations. Despite the progress above, many existing results are tailored to particular spaces, noises, or heat kernels. We formulate checkable assumptions on the space, the noise, the heat kernel, and the coefficients that recover several known examples and also indicate new settings in which mild-solutions can be constructed.

Our second aim is to study various comparison principles for \eqref{eq:spde_intro} relative to initial conditions. We say that the {\bf comparison principle} holds for \eqref{eq:spde_intro} if the following is true: if $u_0(x)\ge u'_0(x)$ for all $x\in\X$, then
	\[
	\bbP\bigl(u_t(x)\ge u'_t(x)\ \text{for all }t\ge 0,\ x\in\X\bigr)=1
	\]
	whenever  $u=(u_t)_{t\in\R_+}$ and $u'=(u'_t)_{t\in\R_+}$ are strong solutions to \eqref{eq:spde_intro} with initial values $u_0$ and $u'_0$ respectively. 
	We say that the {\bf strong comparison principle} holds for \eqref{eq:spde_intro} if, in addition, the following is true: if $u_0(x)\ge u'_0(x)$ for all $x\in\X$ and $u_0(x_0)>u'_0(x_0)$ for some $x_0\in\X$, then
	\[
	\bbP\bigl(u_t(x)>u'_t(x)\ \text{for all }t>0,\ x\in\X\bigr)=1
	\]
	whenever  $u=(u_t)_{t\in\R_+}$ and $u'=(u'_t)_{t\in\R_+}$ are strong solutions to \eqref{eq:spde_intro} with initial values $u_0$ and $u'_0$ respectively. 

    It is known that comparison principles relative to initial conditions can be used to study the positivity of the solution \cites{Zbl074960057, kotelenez1992comparison}. We say that the {\bf strict positivity} holds for \eqref{eq:spde_intro} if 
    	\[
	\bbP\bigl(u_t(x)>0\ \text{for all }t>0,\ x\in\X\bigr)=1
	\]
    whenever the initial condition $u_0$ is non-negative and satisfies $u_0(x_0)>0$ for some $x_0\in\X$.

Most proofs of strong comparison principles for SPDEs trace back to Mueller’s original argument in \cite{Zbl074960057} for the one-dimensional stochastic heat equation driven by space-time white noise and with no drift (i.e. the coefficient $b\equiv0$). Mueller proved that, if $\sigma(u)=|u|^{\gamma}$ for some $\gamma\ge 1$, then the solution $u$ remains strictly positive up to the explosion time $\tau_{\infty}$, where $\tau_{\infty}=\lim_{n\to\infty}\tau_n$ and $\tau_n=\inf\{t\ge0:\,\sup_{x\in\R}u(t,x)\ge n\}$.
Shiga \cite{MR1271224} generalized this result, proving strong comparison and strict positivity when $b(u)$ and $\sigma(u)$ are Lipschitz continuous and $\dot{W}$ is space-time white noise. Later, Chen and Kim \cite{chen2017comparison} extended these results to fractional Laplacians and rough initial data. Gubinelli and Perkowski \cite{gubinelli2017kpz} gave a pathwise proof of strict positivity for the stochastic heat equation. Their argument is robust with respect to the precise structure of the noise, but it requires strictly positive initial data, whereas the results in \cites{Zbl074960057, MR1271224, chen2017comparison} allow initial data that are nonnegative and not identically zero.
See also Section 6.4 of \cite{dalang2024stochastic} for an alternative proof of comparison principles for the stochastic heat equation driven by space-time white noise. There, the comparison principle on the real line is obtained by approximation from the corresponding principles on intervals $[-L,L]$ with Dirichlet boundary conditions, followed by letting $L\to \infty$. 

The results discussed above concern one-dimensional Euclidean spaces. In arbitrary spatial dimension, comparison and positivity results are also available for the stochastic heat equation on $\R^d$ driven by Gaussian noise that is white in time and  homogeneous in space. Kotelenez \cite{kotelenez1992comparison} developed an abstract comparison theorem for mild, function-valued SPDEs with Lipschitz coefficients driven by cylindrical Hilbert-space-valued Brownian motions, under the assumption that the evolutionary operator is positivity-preserving. As an application, Kotelenez constructed unique mild solutions that remain in the cone of positive functions for certain reaction-diffusion equations with non-Lipschitz drift terms. 
Tessitore and Zabczyk \cite{tessitore1998strict} studied the linear multiplicative stochastic heat equation on $\R^d$ driven by spatially homogeneous Gaussian noise. Under integrability assumptions on the spectral density of the noise, they proved that if the initial function is nonnegative and uniformly positive on a set with nonempty interior, then, for each $t>0$, the solution is positive at Lebesgue-a.e. spatial point almost surely. 
More recently, Chen and Huang \cite{chen2019comparison} proved, for globally Lipschitz $\sigma$ and $b\equiv0$ and on $\R^d$, the weak comparison principle under Dalang's condition; under a stronger spectral integrability condition on the spatial covariance, they also established the strong comparison principle and strict positivity for measure-valued initial data. The proofs of these results typically go through finite-dimensional approximations of the noise, where a Pardoux-type comparison argument can be applied, and then prove strict positivity by combining Mueller's iterative propagation argument with exponential tail estimates for  stochastic convolutions.

To the best of our knowledge, comparison, strong comparison, and strict positivity have not previously been established for SPDEs on metric measure spaces at the level of generality considered here. Our results fill in this gap and provide a unified framework with both theoretical and potential practical applications. The work \cite{Zbl08159034}, for example, focuses on the linear parabolic Anderson model (PAM) on bounded spaces satisfying sub-Gaussian heat kernel estimates and a dimension restriction (see Example~\ref{Eg:PolySubGaussianHK}). By contrast, our framework accommodates bounded or unbounded spaces, non-linear drift and diffusion coefficients, and abstract heat kernel assumptions that cover both diffusion-type and stable-type processes. Figure~\ref{fig:pam-fractals} illustrates the solution profiles for the diffusion-type and the stable-type PAMs on the Sierpi\'{n}ski gasket. 

\begin{figure}[htbp]
    \centering
    \begin{subfigure}[t]{0.48\textwidth}
        \centering
        \includegraphics[width=\linewidth]
            {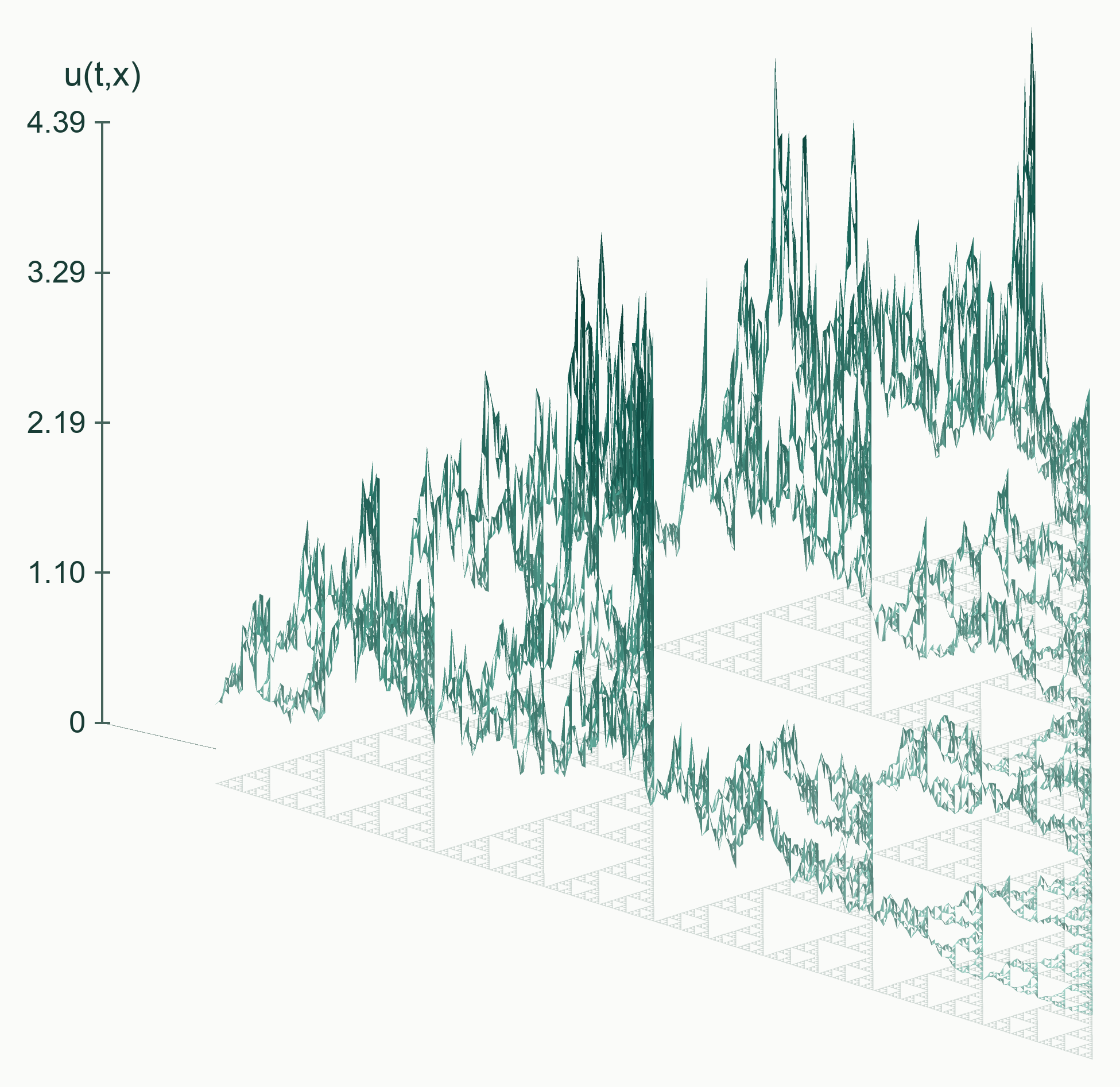}
        \caption{
            $\partial_t u = 0.15\mathcal L u + u\dot W$.
        }
        \label{fig:pam-usual}
    \end{subfigure}\hfill
    \begin{subfigure}[t]{0.48\textwidth}
        \centering
        \includegraphics[width=\linewidth]
            {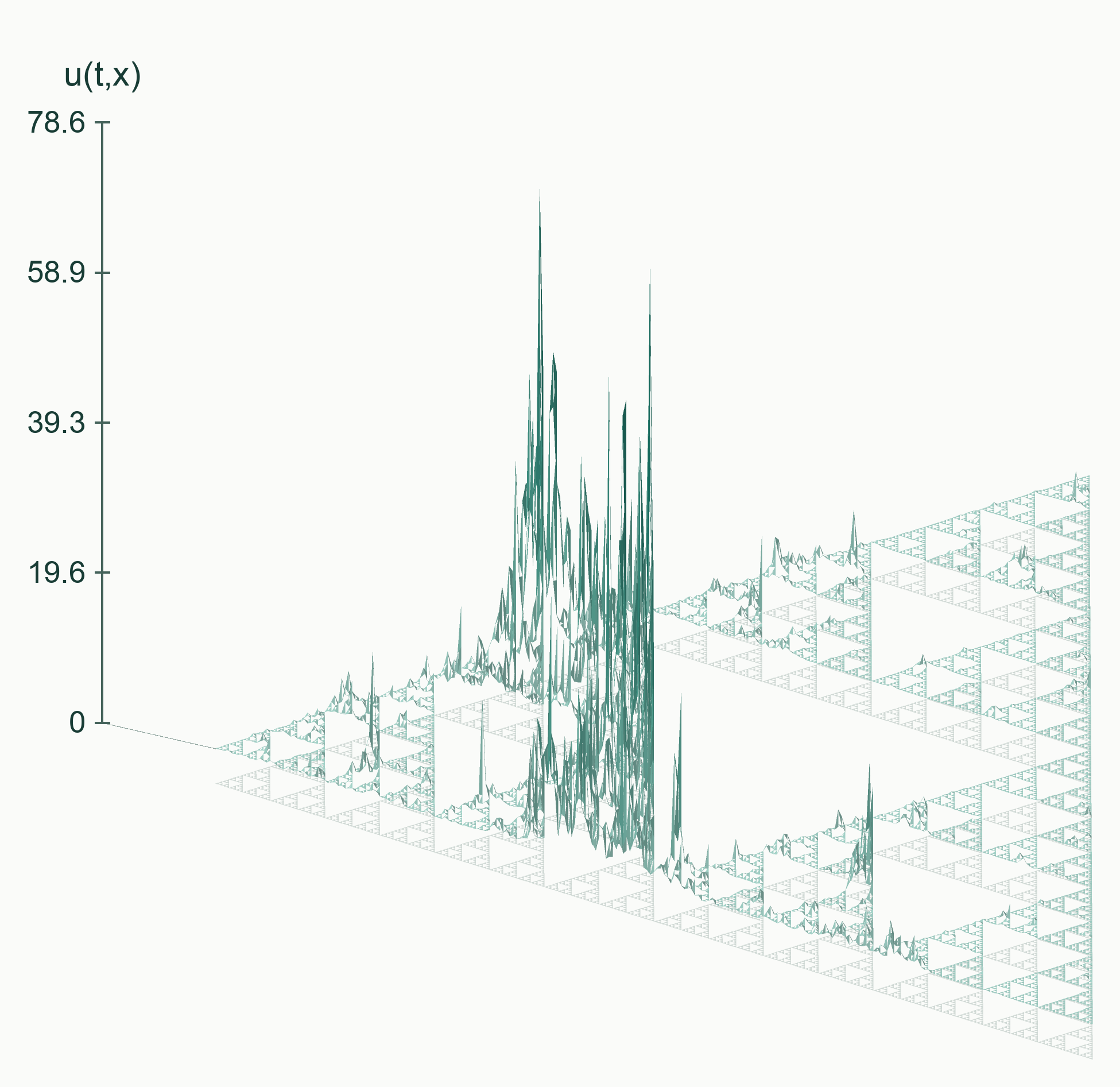}
        \caption{
            $\partial_t u = -0.15(-\mathcal L)^{0.8}u + u\dot W$.
        }
        \label{fig:pam-stable}
    \end{subfigure}
    \caption{
        Simulations of two fractional PAMs \eqref{eq:fractionalPAM} on the finite Sierpi\'{n}ski gasket
        at $t=0.1$, with the same $\delta$-like initial
        condition and realization of space-time white noise.
        Here, $\mathcal L$ is the  Laplacian on the gasket.
            }
    \label{fig:pam-fractals}
\end{figure}

In particular, our results provide tools for  studying stochastic reaction-diffusion equations driven by space-time white noise on metric graphs and fractals, which is useful for understanding how stochastic extinction, propagation phenomena and other long-time behaviors depend on the underlying geometry \cite{MR4278798,Zbl08159034}.
Our strict positivity result also enables the study of the KPZ equation on fractals and other spaces via the Cole-Hopf transform: Strict positivity of $u$ ensures that $\log u$ is  well-defined; formally,
if $u$ solves a multiplicative-noise equation (e.g., a parabolic Anderson type equation), then $h:=\log u$ is the corresponding KPZ-type height field \cite{hairer2013solving}.

	In this article, we study both well-posedness and comparison principles for SPDE \eqref{eq:spde_intro} in a general geometric setting (see Section \ref{sec:backgroup_setups}), and we provide easy-to-check sufficient conditions for both properties. 

	Our only explicit requirement is that the metric space $(\X,d)$ has a metric-entropy bound (see \eqref{ineq:entropy bound}) which supports a Kolmogorov-type continuity lemma.
	However, one observes that many implicit requirements of $\X$ are encoded in the heat kernel (see for example \cite{grigor2003heat}). Hence, instead of making explicit assumptions on the geometry of $(\X,d,m)$ and the heat kernel bounds, we  make implicit assumptions about the interactions between the space, the heat kernels and the Gaussian noise. The advantage of such an approach is that it helps identify rather weak sufficient conditions for our results. The downside is that each condition needs to be verified separately for different cases.
	However, we will demonstrate in Section \ref{sec:examples} how to verify our assumptions in terms of heat kernel bounds, with examples such as metric graphs and certain fractals like the Sierpi\'{n}ski gasket.
    In particular, we show that if the Markov process generated by ${\mathcal L}$ behaves either like a diffusion or a stable process, then the strong comparison principle holds for \eqref{eq:spde_intro} provided $b$ and $\sigma$ are globally Lipschitz in $u$ and continuous in $(x,u)$, under the noise conditions of Section~\ref{sec:examples}.
    
	A difficulty of studying SPDE on general space is that Mueller-type arguments rely on fine local estimates for stochastic convolutions and on the immediate spreading of mass by the heat kernel. In our setting these ingredients are replaced by metric-entropy estimates, compatibility conditions between the heat kernel and the Gaussian noise, and an abstract instant-propagation condition. This formulation is flexible enough to include nonsymmetric heat kernels and a range of diffusion-type and stable-type examples. More specifically,
	Mueller's proof of strict positivity \cite{Zbl074960057} iterates over many (say, $n$) short time steps: in each step the heat semigroup spreads a positive lower bound to a slightly larger region, while an exponential tail estimate shows that the stochastic convolution destroys this gain with probability $o(n^{-1})$, so that all steps succeed with probability tending to one as $n\to\infty$. On Euclidean spaces, this tail estimate rests on increments of the stochastic convolution on nested dyadic grids and on explicit Gaussian heat-kernel bounds, which also give the spreading of mass. In our proof for strict positivity, the dyadic decomposition is replaced by chaining over metric coverings, based on \eqref{ineq:entropy bound} and on increment bounds in the covariance norm of the noise (Proposition~\ref{prop:MLDP}); these local estimates are summed over the covering in \eqref{ineq:faster_than_ms} to obtain weighted bounds on the whole space (Lemma~\ref{lem:mldp_global}). The Gaussian computation for the spreading of mass is replaced by the instant-propagation condition \eqref{ineq:instant_propagation}, and the weak comparison principle in Theorem~\ref{thm:comparison}~\eqref{part:main2 weak comparison} allows us to restart each step from the random profile reached at the end of the previous one.

\smallskip
	
	This paper is organized as follows.
	In Section \ref{sec:backgroup_setups} we introduce the components of the equation \eqref{eq:spde_intro}: the metric measure space, the generator of a Markov process and a Gaussian noise.
	We also give our regularity assumptions at the end of Section \ref{sec:backgroup_setups}.
	We state our main results on well-posedness and the comparison principles in Section \ref{sec:main}.
	The sufficient conditions for the key assumptions in our main theorems, together with illustrative examples, can be found in Section \ref{sec:examples}. Section~2 collects preliminary results on stochastic integrals, heat kernels and random fields. Theorem~\ref{thm:main2} is proved in Section~\ref{sec:strong well posed}, Theorem~\ref{thm:comparison}~\eqref{part:main2 weak comparison} in Section~\ref{sec:comparison}, and Theorem~\ref{thm:comparison}~\eqref{part:main2 strong comparison}--\eqref{part:main2 strictly positivity} in Section~\ref{sec:proof of strong comparison}. Section \ref{sec:appendix} contains the proofs of auxiliary lemmas used throughout the paper.

    Unless otherwise stated, proofs of all results are provided either in the main text or in Appendix~\ref{sec:appendix}, which also collects the proofs of auxiliary lemmas used throughout the paper.
	
	\subsection{Model setups}\label{sec:backgroup_setups}
	
	\subsubsection*{Notations}
	
	We will be using notations of bounded-up-to-a-constant `$\lesssim$', comparable `$\asymp$', big-O `$\mathbf O$', small-o `$\mathbf o$' and big-Omega `$\Omega$'. To make these notations precise, let $f$ and $g$ be arbitrary real-valued functions on the Cartesian product $A\times B$ of two abstract topological spaces $A$ and $B$.
	For every $b\in B$, we say $|f(a,b)| \lesssim |g(a,b)|$ uniformly in $a\in A$, if 
	$\sup_{a\in A} \frac{\abs{f(a,b)}}{\abs{g(a,b)}} < \infty.$
	(Here, we are using the convention that $0/0 = 0.$)
	We write ``$f  \asymp  g$'' to represent ``$f \lesssim g $ and $g \lesssim f$''. 
	We say $f(a,b) = \mathbf O(g(a,b))$ uniformly in $a\in A$ as $b\to b_0$ in $B$ if $ \limsup_{b\to b_0} \sup_{a\in A} \frac{\abs{f(a,b)}}{\abs{g(a,b)}} < \infty$. 
	We say $f(a,b) = \mathbf o(g(a,b))$ uniformly in $a\in A$ as $b\to b_0$ in $B$, if $ \limsup_{b\to b_0} \sup_{a\in A} \frac{\abs{f(a,b)}}{\abs{g(a,b)}} = 0$. 
	We write ``$f=\boldsymbol{\Theta}(g)$'' to represent ``$g = \mathbf O(f)$''.

	\subsubsection*{The metric measure space}
	
	Throughout our presentations, let $(\X,d)$ be a locally compact separable complete metric space. The Borel $\sigma$-field of $\X$ is denoted as $\mathscr B_\X$. 
	Let $m$ be a Radon measure on $\X$ with full support. We call $(\X,d,m)$ the (underlying) metric measure space. Denote by $\mathcal B_\X$ the collection of real-valued Borel functions on $\X$.
	For every $p\geq 1$, denote by $L^p_\X$ the collection of $f\in \mathcal B_\X$ such that $\|f\|_{L^p_\X}:= \pr{\int |f(x)|^p m(\mathrm dx)}^{1/p} < \infty$.
	For $x \in \X$ and $r >0$, denote by $B_r(x) := \{ y \in \X: d(x,y) < r\}$ the open ball with radius $r$ centered at $x$. 
	Let $D_\varepsilon(A)$ be  the smallest number of  open balls with radius $\varepsilon>0$ required to cover a given subset $A\subset \X$.

	We assume throughout this paper the following local condition on the space
	\begin{itemize}
		\item[\eq\label{ineq:entropy bound}]
		for each $R\geq 1$ there exists $b>0$ such that $D_\varepsilon( B_R(x)) \lesssim \varepsilon^{-b}$ uniformly in $x\in \X$ and $0<\varepsilon \le 1$. 
	\end{itemize}
	This is a sufficient geometric condition for the Kolmogorov--Chentsov type theorem (see \cite[Theorem~1.1]{kratschmer2022kolmogorov}), which yields continuous modifications of random fields on metric spaces. Condition~\eqref{ineq:entropy bound} is also used in the chaining argument for the large deviation estimates of stochastic integrals in Section~\ref{sec:proof of strong comparison}.

	\subsubsection*{The state spaces}
	Let $\N = \{1, 2, \dots \}$.
	Let $\mathcal C_\X$ be the collection of real-valued continuous functions on $\X$. Denote by $\mathcal C^{\mathrm b}_\X$ the collection of bounded functions in $\mathcal C_\X$.
	Throughout this paper,  we fix a weight function $h \in \mathcal C_\X$ with $h \ge 1$ such that the following two statements hold.
	\begin{enumerate}
		\item[\eq \label{ineq:faster_than_ms}]There exist 
		$\lambda_0 > 0$, an index set $\mathcal I\subseteq\N$, and a family $(x_n)_{n \in \mathcal I}$ in $\X$ 
		such that $\X = \cup_{n \in \mathcal I} B_1(x_n)$ and
		\[   
		\sum_{n \in \mathcal I} h(x_n)^{-\lambda}< \infty,\quad \text{ for all } \lambda \ge \lambda_0.
		\]
		The index set $\mathcal I$ is allowed to be finite.
		\item[\eq\label{ineq:ref_regular}] 
		Uniformly for $x,y\in \X$ with $d(x,y)\leq 1$, $h(y) \lesssim h(x)$.
		
	\end{enumerate}
	For $\lambda \in \R$ and $f \in \mathcal B_\X$, define \[\|f\|_{(\lambda)} := \|f\|_{h^\lambda L_\X^\infty} :=\esssup_{\text{$m$-a.e. $x\in \X$}} h(x)^{-\lambda}|f(x)|.\]
	Denote by
	\[
	\mathcal C^{\mathrm{tem}}_{\X,h}  := \brc{ f \in \mathcal C_\X : \|f\|_{(\lambda)}< \infty
		\text{ for all } \lambda > 0 },
	\]
	the space of continuous functions on $\X$ with tempered growth. We also write $\mathcal C^{\mathrm{tem},+}_{\X,h}:=\{f\in\mathcal C^{\mathrm{tem}}_{\X,h}: f\ge 0\}$. In Lemma~\ref{lem:PS} we show that $\mathcal C^{\mathrm{tem}}_{\X,h}$ is a Polish space.
	The topology of $\mathcal C^{\mathrm{tem}}_{\X,h}$
	is generated by the family of norms $\{\|\cdot\|_{(\lambda)}: \lambda > 0\}$
	in the sense of \cite[Theorem 1.37]{MR1157815}. Denote by $\mathcal C_{\R_+}\mathcal C^{\mathrm{tem}}_{\X,h}:=\mathcal C(\R_+, \mathcal C^\mathrm{tem}_{\X,h})$ the collection of continuous paths in $\mathcal C^{\mathrm{tem}}_{\X,h}$ indexed by $\R_+$. Similarly, denote $\mathcal C_{[0,T]}\mathcal C^{\mathrm{tem}}_{\X,h} :=\mathcal C([0,T],\mathcal C^\mathrm{tem}_{\X,h})$.

	\subsubsection*{Generator}
	Let $\mathcal C^0_\X$ be the Banach space of real-valued continuous functions on $\X$ vanishing at infinity, equipped with the uniform norm. This space is dense in $\mathcal C^{\mathrm{tem}}_{\X,h}$, by Lemma \ref{lem:C_temp_density_0}. 
        
	Throughout the paper, let ${\mathcal L}$ be a linear operator from its domain $\mathcal D(\mathcal L)\subset \mathcal C^0_\X$ to $\mathcal C^0_\X$. 
	We assume that $\mathcal L$ is the generator of an $\X$-valued  Feller process $(Y_t)_{t \ge 0}$, whose transition semigroup and expectation operators are denoted by $(P_t)_{t\geq 0}$ and $(\Pi_x)_{x\in \X}$, in the sense of \cite[Theorem 17.4]{MR4226142}.
	We also assume the existence of a  measurable function $(G_t(x,y))_{t >0 ,x,y \in \X}$, referred to as the transition densities of $Y$ w.r.t.~ the reference measure $m$, so that
	\begin{align}\label{eq:G}
		G_{t+s}(x,z) = \int G_t(x,y) G_s(y,z) m(\mathrm dy), \quad s,t>0, x,z \in \X,
	\end{align}
	and 
	\begin{align}\label{eq:pi_hk}
		P_t f(x) =\Pi_x\left[f(Y_t) \right] = \int_\X  G_t(x,y) f(y) m(\mathrm dy), \quad t>0, x\in \X, f\in  \mathcal B^{\mathrm{b}+}_\X.
	\end{align}
	Here, $\mathcal B^{\mathrm{b}+}_\X$ represents the collection of bounded non-negative Borel functions on $\X$. 
	Throughout, $\mathcal L^*$ denotes the formal infinitesimal operator associated with the $m$-transposed kernel family $(P_t^*)_{t\geq 0}$ defined below.
	Define 
	$(P_t^*)_{t\geq 0}$ to be 
	the family of kernels from $\X$ to itself such that
	\begin{align}
		P_t^* f(x) =  \mathbf 1_{\{0\}}(t)f(x) + \mathbf 1_{(0,\infty)}(t) \int_\X G_t(y,x) f(y) m(\mathrm dy), \quad t\geq 0, 
		x\in \X, 
		f \in \mathcal B^{\mathrm{b}+}_\X.
	\end{align}
	For convention, define 
	\begin{align}\label{eq:GC}
		G_t(x,y) := 0, \quad t\leq 0, x,y \in \X.
	\end{align}
	
	Throughout this paper, we assume that the heat kernel $\{G_t(x, y)\}_{t>0,x,y\in\X}$ satisfies the following:
	\begin{itemize}
		\item[\eq\label{cond:ctem continuous}] for each $f\in \mathcal C^{\mathrm{tem}}_{\X,h}$, $(P_t^*f)_{t\geq 0}$ is a $\mathcal C^{\mathrm{tem}}_{\X,h}$-valued continuous process. 
	\end{itemize}
	Note that this is a natural requirement to ensure the existence of a solution in $\mathcal C^{\mathrm{tem}}_{\X,h}$ of the deterministic PDE $\partial_t u = \mathcal L^* u$ (when $b\equiv 0$ and $\sigma \equiv 0$ in \eqref{eq:spde_intro}).
	In addition, to obtain the desired regularity of the solution, we assume the following condition to hold throughout the paper:
	\begin{itemize}
		\item[\eq\label{cond:time_holder}] 
		There exist $\beta \in [0,1)$ and $\xi_1,\xi_2 >0$ so that, uniformly for every $x\in \X$ and $t,t'\in (0,1]$,
		\begin{equation}
			\| G_t(\cdot,x) -  G_{t'}(\cdot,x) \|_{L^1_\X} 
			\lesssim \frac{|t' - t|^{\xi_1} }{(t\wedge t')^\beta },
		\end{equation}
		and that, uniformly for every 
		$x,x' \in \X$ with $d(x,x') \leq 1$ and $t\in (0,1]$,
		\begin{equation}
			\|G_t(\cdot,x) -  G_t(\cdot,x') \|_{L^1_\X}
			\lesssim \frac{d(x,x')^{\xi_2}}{t^\beta}.
		\end{equation}
	\end{itemize}

	\subsubsection*{Stochastic basis}

	Throughout this paper, let $K$ be a Radon measure on $\X^2$ which is symmetric and non-negative definite, and 
    	denote by
	\[
	\mathcal H_{\X,K}:=\mathcal H_{K}:= \left\{f\in \mathcal B_\X:\;\int_{\X^2} |f(x)f(y)| K(\mathrm dx, \mathrm dy) <  \infty \right\}.
	\]
For any $f\in \mathcal H_{K}$, we define
		\[
		 \|f\|_{\mathcal H_{K}}^2:= \iint_{\X^2} f(x)f(y) K(\mathrm dx, \mathrm dy)
		\] 
and write  $[f]_K:=\{f+g: g\in \mathcal H_{K}, \|g\|_{\mathcal H_{K}} = 0\}$. We state the following standard result  for later use.
	\begin{lemma}\label{lem:H0}
		The space $(\mathcal H_{K}$, $\|\cdot\|_{\mathcal H_{K}})$ is a separable Hilbertian seminormed space in the sense of \cite[Section 1.1]{Zbl054760064}. 
		Furthermore, $[0]_K$ is a linear subspace of $\mathcal H_{K}$, and there exists a unique norm $\|\cdot\|_{\tilde{\mathcal H}_{K}}$ on the quotient space $\mathcal H_{K}/[0]_K:= \{[f]_K: f\in \mathcal H_{K}\}$ such that $\|[f]_K\|_{\tilde{\mathcal H}_{K}} = \|f\|_{\mathcal H_{K}}$ for each $f\in \mathcal H_{K}.$
		The completion of the normed space $(\mathcal H_{K}/[0]_K, \|\cdot\|_{\tilde{\mathcal H}_{K}})$ is a separable Hilbert space, denoted by $(\tilde{\mathcal H}_{K}, \|\cdot\|_{\tilde{\mathcal H}_{K}})$.

	\end{lemma}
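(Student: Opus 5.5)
The plan is to first show that $\mathcal H_K$ is a vector space and that $\|\cdot\|_{\mathcal H_K}$ is a Hilbertian seminorm, then quotient out the null space, and finally handle separability. Everything rests on the symmetry and non-negative definiteness of $K$.

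\emph{Vector space and bilinear form.} Closure of $\mathcal H_K$ under addition follows from the pointwise bound
\[
|(f+g)(x)(f+g)(y)|\le |f(x)f(y)|+|f(x)g(y)|+|g(x)f(y)|+|g(x)g(y)|.
\]
The cross terms are controlled via Cauchy--Schwarz for the non-negative definite kernel $|K|$-type integrals; more directly, one can use $2|f(x)g(y)|\le |f(x)f(y)|\,t+|g(x)g(y)|/t$. That inequality fails pointwise, so instead I would use the following argument. For non-negative $\phi,\psi$, the form $(\phi,\psi)\mapsto\iint \phi(x)\psi(y)K(\mathrm dx,\mathrm dy)$ is symmetric and positive semidefinite on non-negative functions, provided $K$ is a positive measure (non-negative definite in the paper's sense, which I read as $K$ being a non-negative Radon measure that is also positive definite). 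Cauchy--Schwarz applied to $|f|,|g|$ then gives
\[
\iint|f(x)g(y)|\,K\le \|\,|f|\,\|\,\|\,|g|\,\|<\infty.
\]
Consequently $\langle f,g\rangle:=\iint f(x)g(y)K(\mathrm dx,\mathrm dy)$ is a well-defined symmetric bilinear form with $\langle f,f\rangle\ge0$ by non-negative definiteness. Hence $\|\cdot\|_{\mathcal H_K}$ is a seminorm satisfying the parallelogram law, i.e.\ it is Hilbertian. The main obstacle is fixing the precise meaning of ``non-negative definite'' so that Cauchy--Schwarz applies to $|f|,|g|$. If $K$ is only a signed non-negative definite measure, I would instead work with the total variation $|K|$ in the integrability definition, as the definition of $\mathcal H_K$ already suggests.

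\emph{Quotient and completion.} The null set $[0]_K=\{g:\langle g,g\rangle=0\}$ is a linear subspace, since Cauchy--Schwarz for $\langle\cdot,\cdot\rangle$ gives $\langle g,h\rangle=0$ for every $h$ whenever $\|g\|=0$. The seminorm is constant on cosets by the triangle inequality, so the quotient norm is well defined; uniqueness is immediate because every class has a representative. The completion of an inner product space is a Hilbert space.

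\emph{Separability.} I would use a countable generating family. Since $\X$ is separable and locally compact, there is a countable base of relatively compact open sets, and I take the $\pi$-system $\mathcal A$ of their finite intersections. Indicators $\mathbf 1_A$ of relatively compact Borel sets lie in $\mathcal H_K$ because $K$ is Radon. The step is then to show that rational combinations of $\mathbf 1_A$, $A\in\mathcal A$, are dense in $\mathcal H_K/[0]_K$, which I would do as follows:
\begin{enumerate}
\item A monotone class argument shows that for any relatively compact Borel $B$ inside a fixed compact set, $\mathbf 1_B$ lies in the closure. This uses the fact that $\|\mathbf 1_{A}-\mathbf 1_{B}\|^2\le |K|((A\triangle B)\times \X_0\cup \X_0\times(A\triangle B))$ on a compact $\X_0$, combined with outer regularity of the Radon measure $|K|$ projected to marginals on $\X_0$.
\item Simple functions are then dense among bounded functions supported in compacts, by dominated convergence.
\item Truncations $f\mathbf 1_{\{|f|\le n\}\cap B_n}$ converge to $f\in\mathcal H_K$, again by dominated convergence with dominating function $|f(x)f(y)|$.
\end{enumerate}
A countable dense subset of a dense subspace gives separability of both $\mathcal H_K$ (as a seminormed space) and the completion $\tilde{\mathcal H}_K$.
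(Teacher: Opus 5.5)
Your argument has a genuine gap in the vector-space step, where you obtain $\iint |f(x)g(y)|\,K(\mathrm dx,\mathrm dy)\le \|\,|f|\,\|_{\mathcal H_K}\|\,|g|\,\|_{\mathcal H_K}$ by ``Cauchy--Schwarz applied to $|f|,|g|$''.

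Positivity of the form on non-negative functions follows automatically from $K\ge 0$, and it is not what Cauchy--Schwarz uses. The proof of Cauchy--Schwarz expands $\langle |f|-t|g|,\,|f|-t|g|\rangle\ge 0$, which needs two things:
\begin{enumerate}
\item non-negative definiteness of $K$ for the \emph{signed} function $|f|-t|g|$;
\item finiteness of the cross term $\iint|f(x)||g(y)|\,K(\mathrm dx,\mathrm dy)$, so that the expansion is not of the form $\infty-\infty$.
\end{enumerate}
Item (2) is exactly what you are trying to prove. Item (1) is only available for a class of nice test functions (the paper uses compactly supported simple functions), and $|f|-t|g|$ need not belong to that class. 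So the step is circular under any reading of ``non-negative definite''. The real obstacle is therefore not the one you flag (signed vs.\ positive $K$; here $K$ is a Radon, hence positive, measure). The same objection applies to your claim that $\langle f,f\rangle\ge 0$ for every $f\in\mathcal H_K$ ``by non-negative definiteness''.

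The missing idea is truncation, which you only invoke later in the separability step. With compacts $C_n\uparrow\X$, set $\phi_n:=(|f|\wedge n)\mathbf 1_{C_n}$ and $\psi_n:=(|g|\wedge n)\mathbf 1_{C_n}$ (or, as in the paper, compactly supported simple $f_n\to f$ with $|f_n|\le|f|$). Since $K$ is Radon, all pairings of such functions are finite and non-negative definiteness applies to their span. Cauchy--Schwarz there gives $\iint\phi_n(x)\psi_n(y)\,K(\mathrm dx,\mathrm dy)\le \langle\phi_n,\phi_n\rangle^{1/2}\langle\psi_n,\psi_n\rangle^{1/2}\le \|\,|f|\,\|_{\mathcal H_K}\|\,|g|\,\|_{\mathcal H_K}$, using $0\le\phi_n\le|f|$, $0\le\psi_n\le|g|$ and $K\ge0$. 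Monotone convergence (the paper uses Fatou) then yields the desired bound, so $\mathcal H_K$ is a vector space. Afterwards $\langle f,f\rangle=\lim_n\langle f_n,f_n\rangle\ge 0$ by dominated convergence with dominating function $|f(x)f(y)|$, and Cauchy--Schwarz holds on all of $\mathcal H_K$. Your quotient and completion steps are then fine.

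Your separability argument, a Dynkin/monotone-class argument for indicators of a countable $\pi$-system of relatively compact open sets, is a valid alternative to the paper's. The paper instead sets $\mu_n(A):=K(A\times C_n)$ and bounds $\|\psi\|_{\mathcal H_K}^2\le\|\psi\|_{L^2(C_n,\mu_n)}^2$ for bounded $\psi$ supported in $C_n$, via $ab\le\frac12(a^2+b^2)$ and the symmetry of $K$; it then borrows separability of $L^2(C_n,\mu_n)$. The paper's route is shorter. Yours is more self-contained but leaves the Dynkin-system details to be written out.
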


	To control the spatial growth of the solutions to the SPDE under study, we assume the heat kernel and the Gaussian noise satisfy the following condition:
	\begin{itemize}
		\item[\eq\label{cond:exp_hk_lamda}]  There exists $\alpha \in (0,1)$ so that for every $\lambda  \in [0,1]$, uniformly  in $t \in (0,1]$ and $x\in \X$,
		$\|G_t(\cdot, x) h(\cdot)^\lambda\|_{L^1_\X} \lesssim h(x)^\lambda$ and
		$\| G_t(\cdot,x) h(\cdot)^\lambda \|_{\mathcal H_{K}}^2 \lesssim t^{-\alpha}h(x)^{2\lambda}$.
	\end{itemize}

\smallskip
    
	We say $(\Omega, \mathscr{F}, (\mathscr{F}_t)_{t\geq 0},\mathbb P)$ is an augmented filtered probability space if $(\Omega, \mathscr{F}, \mathbb P)$ is a complete probability space and $(\mathscr{F}_t)_{t\geq 0}$ is an augmented filtration  in the sense of \cite[Lemma 9.8]{MR4226142}. 
	We say  $(\Omega, \mathscr{F}, (\mathscr{F}_t)_{t\geq 0}, \mathbb P, W)$ is a {\bf stochastic basis} if $(\Omega, \mathscr{F}, (\mathscr{F}_t)_{t\geq 0}, \mathbb P)$ is an augmented filtered probability space and  $W =(W_t)_{t\geq 0}=(W_t(f): t\geq 0,f\in \tilde{\mathcal H}_{K})$ is a generalized $(\mathscr{F}_t)_{t\geq 0}$-Wiener process on $\tilde{\mathcal H}_{K}$, in the sense of \cite[Section 4.1.2. and p.~95]{MR1207136}, defined in the probability space $(\Omega, \mathscr{F}, \mathbb P)$.
	Let $\mathcal P$ denote the predictable $\sigma$-algebra on $\Omega\times\mathbb R_+$.
	Throughout this paper, a real-valued predictable random field on $\mathbb R_+\times\X$ means a $\mathcal P\otimes\mathscr B_\X$-measurable function on $\Omega\times\mathbb R_+\times\X$.
    
	Given such a stochastic basis, there exists a self-adjoint non-negative definite bounded linear operator $Q$ on $\tilde{\mathcal H}_{K}$ 
        such that 
	\begin{align}\label{eq:W_covariance}
		\mathbb E[W_t(f)W_s(g)] = (t\wedge s) \langle Qf,g \rangle_{\tilde{\mathcal H}_{K}}, \quad t,s\geq 0, f,g\in \tilde{\mathcal H}_{K}.
	\end{align}
	We refer to $Q$ as \textbf{the covariance of $W$}.
	Denote by $\|Q\|$ the operator norm of $Q$ \cite[p.~310]{MR1157815}
    For each $\phi \in Q^{1/2}\tilde{\mathcal H}_{K}$,
	denote by $Q^{-1/2}\phi$ the unique element $f$ in $\operatorname{Ker}(Q^{1/2})^{\bot}$ such that $Q^{1/2}f=\phi$; and define the induced norm $\|\phi\|_{Q^{1/2}\tilde{\mathcal H}_{K}}:=\|Q^{-1/2}\phi\|_{\tilde{\mathcal H}_{K}}$.
	It is known that  $(Q^{1/2}\tilde{\mathcal H}_{K}, \|\cdot\|_{Q^{1/2}\tilde{\mathcal H}_{K}})$  is a Hilbert space and is referred to as the reproducing kernel of $W$.
	For each $g\in \mathcal H_{K}$, noticing $[g]_K \in \tilde{\mathcal H}_{K}$, we define $\Gamma_g(\phi):=\langle [g]_K,\phi\rangle_{\tilde{\mathcal H}_{K}}$ for every $\phi \in Q^{1/2}\tilde{\mathcal H}_{K} \subset \tilde{\mathcal H}_{K}$.
		\begin{lemma}\label{lem:HS}
		For each  $g\in \mathcal H_{K}$,
		$\Gamma_g$ is a Hilbert-Schmidt operator from 
		$Q^{1/2}\tilde{\mathcal H}_{K}$
		to $\mathbb R$ satisfying that, for any orthonormal basis  $(\eta_i)_{i\in I}$ of 
		$Q^{1/2}\tilde{\mathcal H}_{K}$,
		
		\[
			\|\Gamma_g\|^2_{\mathcal L^{\mathrm{HS}}_{Q^{1/2}\tilde{\mathcal H}_{K}}}
			:=\sum_{i\in I} |\Gamma_g \eta_i|^2 
			= \norm{Q^{1/2}[g]_K}^2_{\tilde{\mathcal H}_{K}} 
			\leq \|Q^{1/2}\|^2 \|g\|^2_{\mathcal H_{K}}
		\]
	\end{lemma}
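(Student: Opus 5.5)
The claim is elementary Hilbert-space theory, so I would prove it in three steps. First identify $\Gamma_g$ as a bounded linear functional on the Hilbert space $Q^{1/2}\tilde{\mathcal H}_{K}$ and compute its Riesz representative. Then use the fact that a rank-one functional is Hilbert--Schmidt with HS norm equal to its operator norm. Finally bound that norm by $\|Q^{1/2}\|\,\|g\|_{\mathcal H_K}$.

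\emph{Step 1: the Riesz representative.} Write $\phi\in Q^{1/2}\tilde{\mathcal H}_{K}$ as $\phi=Q^{1/2}f$ with $f=Q^{-1/2}\phi\in \operatorname{Ker}(Q^{1/2})^{\perp}$. Since $Q^{1/2}$ is self-adjoint on $\tilde{\mathcal H}_K$,
\[
\Gamma_g(\phi)=\langle [g]_K,Q^{1/2}f\rangle_{\tilde{\mathcal H}_K}=\langle Q^{1/2}[g]_K,f\rangle_{\tilde{\mathcal H}_K}.
\]
Set $\psi:=Q^{1/2}Q^{1/2}[g]_K=Q[g]_K$, which lies in $Q^{1/2}\tilde{\mathcal H}_K$. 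Its preimage under $Q^{1/2}$ in $\operatorname{Ker}(Q^{1/2})^\perp$ is $Q^{1/2}[g]_K$, because the range of a self-adjoint operator is orthogonal to its kernel. Hence $Q^{-1/2}\psi=Q^{1/2}[g]_K$, and
\[
\langle \psi,\phi\rangle_{Q^{1/2}\tilde{\mathcal H}_K}=\langle Q^{-1/2}\psi,Q^{-1/2}\phi\rangle_{\tilde{\mathcal H}_K}=\langle Q^{1/2}[g]_K,f\rangle_{\tilde{\mathcal H}_K}=\Gamma_g(\phi).
\]
So $\Gamma_g=\langle\psi,\cdot\rangle_{Q^{1/2}\tilde{\mathcal H}_K}$. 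In particular $\Gamma_g$ is linear and bounded.

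\emph{Step 2: the Hilbert--Schmidt norm.} For any orthonormal basis $(\eta_i)_{i\in I}$ of $Q^{1/2}\tilde{\mathcal H}_K$, Parseval's identity gives
\[
\sum_i|\Gamma_g\eta_i|^2=\sum_i|\langle\psi,\eta_i\rangle|^2=\|\psi\|^2_{Q^{1/2}\tilde{\mathcal H}_K}=\|Q^{1/2}[g]_K\|^2_{\tilde{\mathcal H}_K}.
\]
The sum is therefore finite and independent of the basis, so $\Gamma_g$ is Hilbert--Schmidt with the stated norm. Separability of $\tilde{\mathcal H}_K$ (Lemma~\ref{lem:H0}) makes $Q^{1/2}\tilde{\mathcal H}_K$ separable, so $I$ is countable. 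This is a convenience, not something the argument needs.

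\emph{Step 3: the bound.} We have $\|Q^{1/2}[g]_K\|_{\tilde{\mathcal H}_K}\le\|Q^{1/2}\|\,\|[g]_K\|_{\tilde{\mathcal H}_K}$. By Lemma~\ref{lem:H0}, $\|[g]_K\|_{\tilde{\mathcal H}_K}=\|g\|_{\mathcal H_K}$, which gives the inequality.

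The only point needing care is the identification $Q^{-1/2}(Q[g]_K)=Q^{1/2}[g]_K$ in Step 1, which depends on the convention of taking preimages in $\operatorname{Ker}(Q^{1/2})^\perp$. It is settled by the orthogonality $\overline{\operatorname{Ran}Q^{1/2}}=\operatorname{Ker}(Q^{1/2})^\perp$ for self-adjoint operators.
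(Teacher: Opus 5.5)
Your proof is correct and is essentially the paper's argument. Both rest on three ingredients: the self-adjointness of $Q^{1/2}$, the isometry $Q^{1/2}\colon\operatorname{Ker}(Q^{1/2})^{\perp}\to Q^{1/2}\tilde{\mathcal H}_K$, and Parseval. The paper computes the Hilbert--Schmidt norm of $\Gamma_g\circ Q^{1/2}$ on $\tilde{\mathcal H}_K$ and transfers it through that isometry, citing Da Prato--Zabczyk. You instead push the Riesz representative forward to $\psi=Q[g]_K$ and apply Parseval directly in $Q^{1/2}\tilde{\mathcal H}_K$, which keeps the argument self-contained.
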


	Given a stochastic basis $(\Omega, \mathscr{F}, (\mathscr{F}_t)_{t\geq 0}, \mathbb P, W)$, the stochastic integral w.r.t.\ $W$ is defined as in \cite[Chapter 4]{MR1207136}.
    	In particular, for any predictable process $g:=(g_s)_{s\geq 0}$ in 
	$\mathcal H_{K}$
	such that \[ \norm{g}^2_{L^2_{[0,t]}\mathcal H_{K}}:=\int_0^t \norm{g_s}^2_{\mathcal H_{K}} \mathrm ds < \infty, \quad t\geq 0, \text{a.s.}\]
	the stochastic integral
	\begin{equation}\label{E:StocIntegral}
	\iint_0^\cdot g_s(y) \dot W_s(y)\mathrm ds m(\mathrm dy)
	:=\iint_0^\cdot g_s(y) W(\mathrm d s, \mathrm d y)
	:= \int_0^\cdot \Gamma_{g_s} \mathrm dW_s  
	\end{equation}
	is an adapted continuous local martingale with quadratic variation 
	\[
	\int_0^\cdot \|\Gamma_{g_s}\|_{\mathcal L^{\mathrm{HS}}_{Q^{1/2}\tilde{\mathcal H}_{K}}}^2\mathrm ds 
	\overset{\text{Lemma \ref{lem:HS}}}\leq \|Q^{1/2}\|^2 \int_0^\cdot\norm{g_s}^2_{\mathcal H_{K}}\mathrm ds.
	\]
	Here, we write $\dot W_t(x)$ as a formal expression of the improper derivative $W(\mathrm dt, \mathrm dx)/(\mathrm dt \otimes m(\mathrm dx))$. We also write $W(\mathrm dx,\mathrm dt)$ for $W(\mathrm dt,\mathrm dx)$. 
	We say $W$ is cylindrical if $Q=I$ is the identity operator on $\tilde{\mathcal H}_{K}$.
	We say $\dot W_t(x)$ is a space-time white noise if $W$ is cylindrical and $K(\mathrm dx, \mathrm dy) = \delta_x(\mathrm dy) m(\mathrm dx)$
	where $\delta_x$ is the Dirac delta measure at $x \in \X$.

	\subsection{Main results}\label{sec:main}
	
	The main object of study in this paper is the SPDE
	\begin{equation}\label{eq:spde}
		\begin{cases}
			\partial_t u_t(x) 
			= \mathcal L^* u_t(x) + b(t, x, u_t(x)) + \sigma(t, x ,u_t(x)) \dot W_t(x),
			\quad t \in (0,\infty),\, x \in \X,
			\\ u_0(x) = f(x).
		\end{cases}
	\end{equation}
	where 	
	\begin{itemize}
		\item[\eq\label{eq:SB}]  $(\Omega, \mathscr{F}, (\mathscr{F}_t)_{t\geq 0}, \mathbb P, W)$ is a stochastic basis as described earlier;
        		\item[\eq\label{eq:DC}] the drift coefficient \[b:=(b(t,x,u))_{t\geq 0, x\in \X, u\in \mathbb R} := (b(\omega, t, x, u))_{\omega \in \Omega, t\geq 0,x\in \X, u\in \mathbb R}\]
		which is a predictable function
		on the product space $\Omega \times \mathbb R_+ \times \X \times \mathbb R$; 
		\item[\eq\label{eq:NC}] the noise coefficient
		\[\sigma:=(\sigma(t,x,u))_{t\geq 0, x\in \X, u\in \mathbb R} := (\sigma(\omega, t, x, u))_{\omega \in \Omega, t\geq 0,x\in \X, u\in \mathbb R}\]
		which is a predictable function 
		on the product space $\Omega \times \mathbb R_+ \times \X \times \mathbb R$; and 
		\item[\eq\label{eq:IV}] the initial value  $f\in \mathcal C^{\mathrm{tem}}_{\X,h}$.
	\end{itemize}
	
	We will  focus on the  probabilistically strong continuous random field mild-solution theory.
		To be more precise, a stochastic process
    $(u_t)_{t\geq 0}$ defined in the probability space $(\Omega, \mathscr{F},\mathbb P)$
    is a \textbf{mild-solution} to the SPDE \eqref{eq:spde}, if it is  a continuous $\mathcal C^{\mathrm{tem}}_{\X,h}$-valued process that is  $(\mathscr{F}_t)_{t\geq 0}$-adapted and such that, for every $t\geq 0$ and $x\in \X$, 
	\begin{align}
		u_t(x)&=
		P_t^*f(x) + \int_0^t \mathrm ds \int_\X G_{t - s}(y,x)b(s,y,u_s(y)) m(\mathrm dy)
		\\&\quad + \iint_0^t G_{t-s}(y,x)\sigma(s,y,u_s(y)) W(\mathrm ds,\mathrm dy), \label{eq:mild2}
	\end{align}
    $\mathbb{P}$-almost surely, where the stochastic integral in \eqref{eq:mild2} is as in \eqref{E:StocIntegral}. Note that, by definition, the process $(u_t)_{t\geq 0}$ has sample paths in $\mathcal C_{\R_+}\mathcal C^{\mathrm{tem}}_{\X,h}$ almost surely.
	
	\begin{thm}
		\label{thm:main2}
		Let $(\X, d,m)$ be a metric measure space such that \eqref{ineq:entropy bound} is satisfied. 
		Let the stochastic basis $(\Omega, \mathscr{F}, (\mathscr{F}_t)_{t\geq 0}, \mathbb P, W)$, drift coefficient $b$, noise coefficient
		$\sigma$, and the initial value $f$ be given as in \eqref{eq:SB}--\eqref{eq:IV}.
        Suppose
		$\mathcal L$ admits a heat kernel $\{G_t(x, y)\}_{t>0,x,y\in\X}$ (with respect to the Radon measure $m$) that satisfies \eqref{cond:ctem continuous},
		\eqref{cond:exp_hk_lamda} and
		\eqref{cond:time_holder}.
		Suppose
		the existence of
		a $\mathbb P$-null subset $\mathrm N_{\counter\label{c:Lip}}$ of $\Omega$ s.t.
		\begin{itemize}
			\item[\eq\label{eq:LiG}]
			(linear growth condition)
			for any $T>0$, uniformly for
			$\omega \in \Omega \setminus \mathrm N_{\ref{c:Lip}}$,
			$(t,x) \in [0,T] \times \X$ and $u \in \R$,
			\begin{equation}
				|b(\omega, t,x,u) |+|\sigma(\omega, t,x,u)| \lesssim 1+|u|;
				\quad \text{and}
			\end{equation}
		\end{itemize}
		\begin{itemize}
			\item[\eq\label{ineq:g_lip}]
			(Lipschitz condition)
			for any $T>0$, uniformly for
			$\omega \in \Omega \setminus \mathrm N_{\ref{c:Lip}}$,
			$(t,x) \in [0,T] \times \X$ and $u,v \in \R$,
			\begin{equation}
				|b(\omega, t,x,u) - b(\omega, t,x,v)|+|\sigma(\omega, t,x,u)- \sigma(\omega, t,x,v)| \lesssim |u - v|.
			\end{equation}
		\end{itemize}
		Then, there exists a 
		mild-solution $(u_t)_{t\geq 0}$ to the SPDE \eqref{eq:spde}, and the solution is unique up to indistinguishability.
	\end{thm}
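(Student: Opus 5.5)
We construct the solution by Picard iteration in a weighted moment space, and only afterwards extract a continuous $\mathcal C^{\mathrm{tem}}_{\X,h}$-valued modification. Fix $T\in(0,1]$; longer horizons follow by restarting the equation at times $T,2T,\dots$, using the Markov structure of \eqref{eq:mild2} and the semigroup property \eqref{eq:G}. For $p\ge 2$ and $\lambda\in(0,1]$, let $\mathcal X_{p,\lambda,T}$ be the space of predictable random fields $v$ on $[0,T]\times\X$ with
\[
\|v\|_{p,\lambda,T}:=\sup_{t\le T,\,x\in\X} h(x)^{-\lambda}\bigl(\E|v_t(x)|^p\bigr)^{1/p}<\infty .
\]
Define $\Phi(v)_t(x)$ as the right-hand side of \eqref{eq:mild2} with $u$ replaced by $v$. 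Since $f\in \mathcal C^{\mathrm{tem}}_{\X,h}$, the first inequality in \eqref{cond:exp_hk_lamda} gives $|P_t^*f(x)|\le \|f\|_{(\lambda)}\int G_t(y,x)h(y)^\lambda m(\mathrm dy)\lesssim h(x)^\lambda$.

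For the drift term, \eqref{eq:LiG} together with Minkowski's inequality and the same $L^1$ bound controls the weighted $L^p$ norm by $\int_0^t (1+\|v\|)\,\mathrm ds$. For the stochastic term we apply BDG using the quadratic variation formula after \eqref{E:StocIntegral} and Lemma~\ref{lem:HS}. This gives
\[
\E\Bigl|\iint_0^t G_{t-s}(y,x)\sigma(s,y,v_s(y))W(\mathrm ds,\mathrm dy)\Bigr|^p\lesssim \E\Bigl(\int_0^t \|G_{t-s}(\cdot,x)\sigma(s,\cdot,v_s)\|_{\mathcal H_K}^2\mathrm ds\Bigr)^{p/2}.
\]
The key point is to pass the $L^{p/2}(\Omega)$ norm inside the double integral against $|K|$; since $K$ is only non-negative definite, I would use $|f(x)f(y)|$ as in the definition of $\mathcal H_K$. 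I would also bound $\|\sigma(s,y,v_s(y))\|_{L^p}\lesssim 1+ h(y)^\lambda\|v\|$, so that the second bound in \eqref{cond:exp_hk_lamda} yields the factor $\int_0^t (t-s)^{-\alpha}(1+\|v\|_{p,\lambda,s}^2)\,\mathrm ds$. Here $\alpha<1$ makes this integrable.

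There is a caveat: \eqref{cond:exp_hk_lamda} controls $\|G h^\lambda\|_{\mathcal H_K}$, which is the signed quadratic form, not the absolute one. I expect either to need positivity of $K$, or to note that $K$ is non-negative as a measure in the intended setting, or to read \eqref{cond:exp_hk_lamda} with $|K|$. This is a technical point I would settle first.

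The same estimates applied to differences, using \eqref{ineq:g_lip}, give
\[
\|\Phi v-\Phi w\|^p_{p,\lambda,t}\lesssim \int_0^t (t-s)^{-\alpha}\|v-w\|^p_{p,\lambda,s}\,\mathrm ds .
\]
A fractional Gronwall lemma (or an exponentially weighted norm $\sup_t e^{-\theta t}\|\cdot\|$ with $\theta$ large) then makes $\Phi$ a contraction, yielding a unique fixed point in $\mathcal X_{p,\lambda,T}$ for every $p$ and every $\lambda$ small. The fixed point is consistent across $(p,\lambda)$ by uniqueness.

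Continuity is the main obstacle. Using \eqref{cond:time_holder} for the deterministic parts and BDG for the stochastic part, I would derive
\[
\E|u_t(x)-u_{t'}(x')|^p\lesssim h(x)^{\lambda p}\bigl(|t-t'|^{\gamma}+d(x,x')^{\gamma}\bigr)^{p}\quad\text{for } d(x,x')\le1,
\]
with some $\gamma>0$. For the stochastic increments this requires $\mathcal H_K$ analogues of the $L^1$ bounds in \eqref{cond:time_holder}. I would obtain these by interpolation: write $G_{t-s}(\cdot,x)-G_{t-s}(\cdot,x')$ via Chapman--Kolmogorov \eqref{eq:G} as $\int (G_{r}(\cdot,z))(G_{t-s-r}(z,x)-G_{t-s-r}(z,x'))m(\mathrm dz)$, and combine the $\mathcal H_K$ bound of \eqref{cond:exp_hk_lamda} for $G_r$ with the $L^1$ Hölder bound, splitting small versus large times and trading powers of $(t-s)^{-\alpha}$ and $(t-s)^{-\beta}$ for a small Hölder exponent.

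Given these increment bounds, the Kolmogorov--Chentsov theorem of \cite{kratschmer2022kolmogorov}, made applicable on each $[0,T]\times B_2(x_n)$ by \eqref{ineq:entropy bound}, produces a continuous modification on each ball. The local sup moments are $\lesssim h(x_n)^{\lambda p}$ by \eqref{ineq:ref_regular}. Summing over the covering with \eqref{ineq:faster_than_ms} and $p$ large then shows $\sup_{t\le T}\|u_t\|_{(\lambda')}<\infty$ a.s. for every $\lambda'>\lambda$, and gives continuity in $\mathcal C^{\mathrm{tem}}_{\X,h}$; the $P_t^*f$ part is continuous by \eqref{cond:ctem continuous}.

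For uniqueness, take any mild solution $u'$. Localize with $\tau_N=\inf\{t:\|u'_t\|_{(\lambda)}>N\}$ so that the truncated field lies in $\mathcal X_{p,\lambda,T}$. The difference estimate then gives $u=u'$ a.e. for each $(t,x)$ before $\tau_N$. Finally, continuity of both fields upgrades this to indistinguishability, and letting $N\to\infty$ concludes.
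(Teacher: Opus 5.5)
Your proposal is correct and is essentially the paper's proof: a fixed point in weighted $L^p_\Omega$ moment spaces with an exponential time weight (BDG plus Minkowski in $\mathcal H_K$ and \eqref{cond:exp_hk_lamda}), $\mathcal H_K$ increment bounds via Chapman--Kolmogorov, Minkowski and H\"older interpolation against \eqref{cond:time_holder}, a metric-entropy Kolmogorov theorem on unit balls summed over the covering \eqref{ineq:faster_than_ms} (with $P^*_tf$ handled separately through \eqref{cond:ctem continuous}), and stopping-time localization for uniqueness. Your caveat about $K$ is moot, because $K$ is assumed to be a (non-negative) Radon measure, so $\|g\|_{\mathcal H_K}^2=\iint|g(x)g(y)|\,K(\mathrm dx,\mathrm dy)$ for $g\ge0$; and the restart on intervals of length at most one is unnecessary, since the paper extends \eqref{cond:exp_hk_lamda} and \eqref{cond:time_holder} to any horizon by the semigroup property (Lemma~\ref{lem:hk longtime}), which you would need anyway for the stochastic integral in \eqref{eq:mild2} to be defined when $t>1$.
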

	
	Next, we shall state the comparison principles. For this we will use the following conditions.
	\begin{itemize}
		\item[\eq\label{cond:hk_uni_bound}] 
		For each $\delta \in (0,1)$, 
		$\| G_\varepsilon(\cdot, x) \mathbf 1_{B_\delta(x)^\mathrm c}(\cdot) \|^2_{\mathcal H_{K}} = \mathbf o(1)$
		uniformly in $x\in \X$ while $\varepsilon \downarrow 0$.
		\item[\eq\label{ineq:instant_propagation}]
		For every $0<r_0<v$, and $x_0\in \X$, as $\varepsilon \downarrow 0$, uniformly in $t\in (\varepsilon, 2\varepsilon)$, $r \in (r_0, v)$ and $x\in B_{r+v\varepsilon}(x_0)$, 
		\begin{equation}\label{E:hk_uni_bound}
			\int_{B_r(x_0)} G_t(y,x) m(\mathrm dy) = \boldsymbol{\Theta}(1).
		\end{equation}
                        \item[\eq\label{eq:con}] for any $(t,\omega) \in \R_+\times \Omega \setminus \mathrm N_{\ref{c:Lip}}$, the maps $(x,u) \mapsto b(\omega,t,x,u)$ and $(x,u) \mapsto \sigma(\omega,t,x,u)$ are continuous.
	\end{itemize}

	\begin{thm}
		\label{thm:comparison}
		Suppose the same assumptions as in Theorem \ref{thm:main2} hold. Let $(u_t)_{t\ge 0}$ and $(u'_t)_{t \ge 0}$ be the mild-solutions with initial conditions $u_0=f\in \mathcal C^{\mathrm{tem}}_{\X,h}$ and  
        $u_0' = f' \in \mathcal C^{\mathrm{tem}}_{\X,h}$ respectively.
		\begin{enumerate}
			\item \label{part:main2 weak comparison} (Comparison principle)
			Suppose  \eqref{cond:hk_uni_bound} and \eqref{eq:con}  hold. If
			$f(x) \ge f'(x)$ for  $x \in \X$, then $\bbP\bigl(u_t(x)\ge u'_t(x)\ \text{for all }t\ge 0 \text{ and } x\in\X\bigr)=1$.
			\item \label{part:main2 strong comparison}
			(Strong comparison principle) Suppose   \eqref{cond:hk_uni_bound}, \eqref{ineq:instant_propagation} and \eqref{eq:con}    hold. If $f(x) \ge f'(x)$ for  $x \in \X$ and $f(x_0) >f'(x_0)$ for some $x_0 \in \X$,
			then $\bbP\bigl(u_t(x)> u'_t(x)\ \text{for all }t> 0 \text{ and } x\in\X\bigr)=1$.
			\item  \label{part:main2 strictly positivity}
			(Strict positivity)
			Suppose   \eqref{cond:hk_uni_bound}, \eqref{ineq:instant_propagation} and \eqref{eq:con}    hold, and that almost surely,
			\begin{align}\label{cond:pos_zero}
				b(t,x,0) \ge 0\quad \text{ and }\quad \sigma(t,x,0) = 0 \quad\text{for all }(t,x) \in \R_+ \times \X.
			\end{align}
            If $f$ is non-negative and $f(x)>0$ for some $x\in \X$, then 
                        $\bbP\bigl(u_t(x)> 0\ \text{for all }t> 0 \text{ and } x\in\X\bigr)=1$.
		\end{enumerate}
	\end{thm}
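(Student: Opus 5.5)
The plan has three parts. First prove the weak comparison principle (part \eqref{part:main2 weak comparison}) by approximating the noise with finite-dimensional ones. Then derive strict positivity (part \eqref{part:main2 strictly positivity}) by a Mueller-type iteration. Finally obtain the strong comparison principle (part \eqref{part:main2 strong comparison}) from positivity applied to the difference $v:=u-u'$.

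\emph{Weak comparison.} Fix an orthonormal basis of the reproducing kernel space $Q^{1/2}\tilde{\mathcal H}_{K}$ and truncate the noise to its first $N$ modes. We also mollify it in space, replacing $G_{t-s}(y,x)$ in the noise term by $G_{t-s+\varepsilon}$; condition \eqref{cond:hk_uni_bound} is what makes this spatial localisation negligible. Each approximating equation is then driven by finitely many Brownian motions with smooth spatial coefficients. For such an equation, a pathwise (Pardoux/Kotelenez-type) argument applies pointwise in $x$: apply Itô's formula to $((u'-u)^+)^2$ weighted by $h^{-\lambda}$, or equivalently use positivity of $P^*_t$ together with the Lipschitz bounds and a Gronwall inequality. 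This gives $u^{N}\ge u'^{N}$.

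Next pass to the limit $N\to\infty$, $\varepsilon\downarrow 0$. We need $\sup_{t\le T,x}h^{-\lambda}|u^N_t(x)-u_t(x)|\to 0$ in probability. To get this, use the moment and Gronwall estimates already available from the proof of Theorem~\ref{thm:main2}, the bounds \eqref{cond:exp_hk_lamda} and \eqref{cond:time_holder}, and the Kolmogorov lemma under \eqref{ineq:entropy bound}. Continuity \eqref{eq:con} of the coefficients is used to identify the limit. Because the solutions are continuous, inequality a.s.\ at rational $(t,x)$ upgrades to inequality for all $(t,x)$.

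\emph{Strict positivity.} Suppose \eqref{cond:pos_zero} holds. By weak comparison (the zero function is a subsolution), $u\ge0$. Continuity of $f$ gives $f\ge c\mathbf 1_{B_{r_0}(x_0)}$ for some $c,r_0>0$, and by weak comparison we may replace $f$ by a continuous minorant of this shape. Rewrite $b(t,x,u)=b(t,x,0)+\tilde b\,u$ and $\sigma(t,x,u)=\tilde\sigma\,u$ with $\tilde b,\tilde\sigma$ bounded. Comparison then lets us drop $b(t,x,0)\ge0$ and treat an equation that is linear-like in $u$.

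The core is Mueller's scheme. Split $[0,T]$ into steps of length $\varepsilon$. On step $k$, suppose $u\ge c_k$ on the ball $B_{r_k}(x_0)$. By the instant-propagation condition \eqref{ineq:instant_propagation}, the deterministic part $P^*$ applied to $c_k\mathbf 1_{B_{r_k}}$ is at least $\theta c_k$ on a slightly larger ball, for some $\theta>0$ independent of $\varepsilon$. The drift contributes only a factor $e^{-C\varepsilon}$. For the stochastic convolution we need an exponential tail bound: its supremum over the step and the ball exceeds $\tfrac{\theta}{2}c_k$ with probability at most $\exp(-c\varepsilon^{-\gamma})$. By the Markov property and weak comparison, the next step can be restarted from the random profile. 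Summing over roughly $1/\varepsilon$ steps, the failure probability tends to $0$, so $u>0$ on growing balls. This gives positivity on compact sets at fixed times, and then for all $t>0$ and all $x$ by continuity and a union over rational times. Expanding radii reach all of $\X$ provided $v$ may be chosen large.

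The tail bound is the main obstacle. I would prove it by chaining over metric coverings, using \eqref{ineq:entropy bound}, Burkholder–Davis–Gundy moment bounds for increments controlled through \eqref{cond:time_holder}, and the smallness of $\|G_s h^\lambda\|_{\mathcal H_K}^2\lesssim s^{-\alpha}$ from \eqref{cond:exp_hk_lamda}. Normalising by $c_k$ and stopping when $u$ exceeds $c_k\varepsilon^{-\kappa}$ or becomes large, the quadratic variation over a step is $\lesssim\varepsilon^{1-\alpha}$ times the profile squared. Exponential martingale inequalities then give the required tail. Globally weighted sums use \eqref{ineq:faster_than_ms}.

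\emph{Strong comparison.} The difference $v=u-u'$ solves an equation of the same type with coefficients
\[
\hat b(t,x,v)=b(t,x,u'+v)-b(t,x,u'),\qquad \hat\sigma(t,x,v)=\sigma(t,x,u'+v)-\sigma(t,x,u').
\]
These are predictable, Lipschitz, of linear growth, and continuous, and they satisfy $\hat b(\cdot,0)=0$ and $\hat\sigma(\cdot,0)=0$. Applying the strict positivity result to $v$ with initial value $f-f'$ gives part \eqref{part:main2 strong comparison}.
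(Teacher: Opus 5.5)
The gap is in the weak comparison step, and the rest of your argument depends on it. Your approximating equations still contain the unbounded operator $\mathcal L^*$ and are solved only in mild form. So $t\mapsto u^N_t(x)$ is not a semimartingale, and there is no It\^o formula for $((u'-u)^+)^2$ ``pointwise in $x$''. A variational $L^2$-It\^o formula is not available at this level of generality either (a non-symmetric Feller generator on weighted sup-norm spaces).

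The alternative you offer, positivity of $P_t^*$ plus Lipschitz bounds plus Gronwall, handles only the drift. The stochastic convolution of $G_{t-s}(y,x)\bigl(\sigma(s,y,u_s(y))-\sigma(s,y,u'_s(y))\bigr)$ has no sign, so $(u'-u)^+$ cannot be dominated by positive operators applied to $(u'-u)^+$.

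Your mollification also fails. Replacing $G_{t-s}$ by $G_{t-s+\varepsilon}$ in the noise term turns the forcing into $P^*_\varepsilon[\sigma(u)\dot W]$. The diffusion coefficient at $x$ then depends on $u$ on a neighbourhood of $x$, not just on $u(x)$, and truncating to finitely many modes does not change this. For such nonlocal coefficients comparison fails in general, already for finite systems of SDEs. Where $u(x)=u'(x)$, the quadratic variation of $u(x)-u'(x)$ need not vanish, so the local time at $0$ is positive and the difference can cross zero.

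The missing idea is an approximation that makes each spatial point a one-dimensional semimartingale while keeping what comparison needs. That means a bounded approximation of the generator whose off-diagonal part is a positive kernel, and a noise coefficient evaluated at the same point as the equation. The paper takes $\mathcal L^{\varepsilon,*}=(P_\varepsilon^*-I)/\varepsilon$ and the noise term $\sigma(s,x,u_\varepsilon(s,x))\,W^\varepsilon(\mathrm ds,x)$, where $W^\varepsilon(t,x)=\iint_0^tG_\varepsilon(y,x)W(\mathrm ds,\mathrm dy)$. For fixed $x$ this is a Brownian motion in $t$ with variance rate $\lesssim\varepsilon^{-\alpha}$ by \eqref{cond:exp_hk_lamda}. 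Tanaka's formula for $u_\varepsilon^-$ then applies. The local time at $0$ vanishes because $|\sigma(t,x,u)|\le L|u|$ once the coefficients vanish at $0$. Gronwall closes thanks to the positivity of $\varepsilon^{-1}P_\varepsilon^*$.

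The price is paid in the convergence $u_\varepsilon\to u$, where one must control $\sigma(s,y,u(s,y))-\sigma(s,z,u(s,z))$ for $z$ near $y$. For $d(y,z)\ge\delta$ this uses \eqref{cond:hk_uni_bound}. For $d(y,z)<\delta$ it uses spatial H\"older bounds on $u$ and on the coefficients. The coefficient bounds are obtained by first smoothing $b,\sigma$ in $x$ with $P^*_\varepsilon$, then removing the smoothing via \eqref{eq:con} and \eqref{cond:ctem continuous} (Proposition~\ref{prop:approximation of coefficients}). So your nonlocal mollification is easy to pass to the limit but destroys comparison. The local one preserves comparison, and that is exactly where these hypotheses are needed.

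Your Mueller iteration, and the deduction of strong comparison from positivity of $v=u-u'$, are sound and agree with the paper except for the order. The paper proves strong comparison first and gets positivity by comparing with the solution started from $0$. Two small points remain:
\begin{itemize}
\item The lower bound must hold on whole time intervals, since strict positivity at rational times does not pass to all times by continuity.
\item ``$0$ is a subsolution'' and dropping $b(t,x,0)\ge0$ both require comparison between equations with different drifts, which is not Part~\eqref{part:main2 weak comparison} as stated.
\end{itemize}
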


	\subsection{Examples}\label{sec:examples}
	
	\subsubsection{Metric measure Dirichlet spaces with sub-Gaussian heat kernel estimates}
	A metric measure Dirichlet space (MMD) is a collection $(\X,d,m,\mathcal E,\mathcal F)$ such that
	\begin{enumerate}
		\item $(\X,d)$ is a locally compact separable complete metric space;
		\item $m$ is a Radon measure on $\X$ with full support;
		\item $(\mathcal E,\mathcal F)$ is a strongly local regular symmetric Dirichlet form on $L^2(\X,m)$ in the sense of \cite{fukushima2011dirichlet}.
	\end{enumerate}
	Associated with an MMD space is a strongly continuous symmetric Markov semigroup $(P_t)_{t\ge0}$ on $L^2(\X,m)$ with its nonpositive self-adjoint generator $\mathcal L$ in the sense of \cite[Theorem 1.4.1]{fukushima2011dirichlet}.
	We say such an MMD admits heat kernels if there exists a measurable function $(G_t(x,y))_{t>0, x,y \in \X}$ such that \eqref{eq:G} and \eqref{eq:pi_hk} hold for every $t>0$, $x\in \X$ and $f\in L^2(\X,m)$.
	
	To describe the sub-Gaussian estimates of the heat kernel, we recall the notion of a scale function $\Psi$, i.e., an increasing homeomorphism $\Psi:[0,\infty)\to[0,\infty)$ satisfying
	\[
	\Bigl(\frac{R}{r}\Bigr)^{\beta_1} \lesssim \frac{\Psi(R)}{\Psi(r)} \lesssim \Bigl(\frac{R}{r}\Bigr)^{\beta_2},
	\qquad\text{uniformly in } 0<r\le R,
	\]
	for some constants $1<\beta_1\le \beta_2$, and its Legendre transform
	\[
	\Psi^*(s):=\sup_{r>0}\left(\frac{s}{r}-\frac{1}{\Psi(r)}\right), \quad s\geq 0.
	\]
	We say a function $V:[0,\infty) \to [0,\infty)$ is \textit{doubling} if it is a homeomorphism such that
	\begin{align}\label{ineq:doubling_function}
		V(2r) \lesssim V(r),\qquad\text{uniformly in } r>0.
	\end{align}
	Note that if $V,\Psi:[0,\infty)\to[0,\infty)$ are doubling functions satisfying
	\begin{align} \label{eq:2doublingfunctions}
		\frac{R^2}{r^2} \lesssim \frac{\Psi(R)}{\Psi(r)} \lesssim \frac{R\,V(R)}{r\,V(r)},\qquad\text{uniformly in } 0<r\le R,
	\end{align}
	then $\Psi$ is a scale function.
	
	We say an MMD $(\X,d,m, \mathcal E, \mathcal F)$ with heat kernels $(G_t)_{t>0}$ satisfies the sub-Gaussian heat kernel estimate with respect to a scale function $\Psi$, shortened as $\mathrm{HKE}\pr{\Psi}$, if there exist constants $C_1,c_U,b_U,C_2,\delta \in (0,\infty)$ such that, for all $t>0$ and $m\otimes m$-a.e.\ $(x,y)\in\X^2$,
	\[
	G_t(x,y)
	\le
	\frac{C_1}{m(B_{\Psi^{-1}(t)}(x))}
	\exp\!\left[-c_U\, t\, \Psi^*\!\left(b_U\frac{d(x,y)}{t}\right)\right],
	\]
	and, whenever $d(x,y)\le\delta\Psi^{-1}(t)$,
	\[
	G_t(x,y)\ge
	\frac{C_2}{m(B_{\Psi^{-1}(t)}(x))}.
	\]
	We say an MMD $(\X,d,m,\mathcal E,\mathcal F)$ with heat kernels $(G_t)_{t>0}$ satisfies the \textit{full} sub-Gaussian heat kernel estimate with respect to the scale function $\Psi$, shortened as $\mathrm{HKE}_{\rm full}\pr{\Psi}$, if in addition to $\mathrm{HKE}\pr{\Psi}$, there exist $c_L,b_L>0$ such that, for all $t>0$ and $m\otimes m$-a.e.\ $(x,y)\in\X^2$,
	\begin{align*}
		G_t(x,y) \ge \frac{C_2}{m(B_{\Psi^{-1}(t)}(x))}\exp \pr{-c_L t \Psi^*\pr{b_L \frac{d(x,y)}{t}}}.
	\end{align*}
	We say the metric measure space $(\X,d,m)$ has \textit{volume growth profile} $V$ if 
	\begin{align*}
		V(r) \asymp m\pr{B_r(x)}\qquad \text{uniformly in }x \in \X,\,0 <r< \text{diam}(\X).
	\end{align*}
	Under $\mathrm{HKE}(\Psi)$, the heat kernel admits a jointly continuous version (see \cite{barlow2012equivalence}), which we use throughout, so that the above estimates hold for all $(x,y)\in\X^2$, and the associated semigroup is Feller \cite[Proposition 3.2]{lierl2015scale}.

	Inspired by \cites{Zbl08159034,chen2025global}, we consider the following family of symmetric and non-negative definite Radon measures $\{K_\gamma\}_{\gamma \ge 0}$ on $\X\times \X$ given by:
	\begin{align}\label{eq:K0}
		K_0(\mathrm dy,\mathrm dx):= \delta_{x}(\mathrm dy)m(\mathrm dx),
	\end{align} 
	and for $\gamma \neq 0$,  
	\begin{align}\label{eq:Kgamma}
		K_\gamma\pr{\mathrm dx,\mathrm dy}:= \pr{\int_0^\infty e^{-s}s^{\gamma - 1}G_s(x,y) \mathrm ds} m(\mathrm dx)\otimes m(\mathrm dy).
	\end{align}
	
	The following result shows that the MMD spaces described in this subsection provide a concrete class of examples to which our main theorems apply.
	\begin{prop}\label{prop:sub_g_mmd_framework}
		Let $V,\Psi:[0,\infty)\to[0,\infty)$ be doubling functions satisfying \eqref{eq:2doublingfunctions}.
		Let $(\X,d,m,\mathcal E,\mathcal F)$ be an MMD that admits heat kernels, satisfies $\mathrm{HKE}_{\rm full}(\Psi)$ and has volume growth profile $V$.
		Let $\mathcal L$ be the operator associated with $(\mathcal E,\mathcal F)$.
		Fix $x_0\in\X$ and set $h(x):=\exp(d(x_0,x))$ for $x\in\X$.
		Let $\gamma\geq 0$ and let $K_\gamma$ be as in \eqref{eq:K0} and \eqref{eq:Kgamma}.
		Suppose there exists $\alpha_0\in(0,1)$ such that
		\begin{itemize}
			\item if $\gamma = 0$, then 
			\[
			\pr{V\pr{\Psi^{-1}(t)}}^{-1}\lesssim t^{-\alpha_0},\qquad\text{uniformly in }t\in(0,1];
			\]
			\item else if $\gamma \neq 0$, then 
			\[
			\int_0^\infty \frac{s^\gamma}{V\pr{\Psi^{-1}(t+s)}}\,\frac{\mathrm ds}{s}\lesssim t^{-\alpha_0},\qquad\text{uniformly in }t\in(0,1].
			\]
		\end{itemize}
		Let stochastic basis $(\Omega, \mathscr{F}, (\mathscr{F}_t)_{t\geq 0}, \mathbb P, W)$, drift coefficient $b$, noise coefficient
		$\sigma$, and initial value $f$ be given as in \eqref{eq:SB}--\eqref{eq:IV} with $K$ being replaced by $K_\gamma$.
		Suppose there exists a $\mathbb P$-null subset $\mathrm N_{\ref{c:Lip}}$ of $\Omega$ such that \eqref{eq:LiG} and \eqref{ineq:g_lip} hold.
		Then, there exists a unique (up to indistinguishability) strong solution $(u_t)_{t\geq 0}$ to the SPDE \eqref{eq:spde}.
				Furthermore, if in addition \eqref{eq:con} and \eqref{cond:pos_zero} hold, then the unique strong solution satisfies the strict positivity stated in Theorem~\ref{thm:comparison}.
		In particular, for the Parabolic Anderson model
		\[
		\partial_t u = \mathcal L u + u\dot W_t,\qquad u(0,\cdot)=u_0,
		\]
		with either white noise $K_0$ or colored noise $K_\gamma$ under the respective conditions above,
		the unique strong solution satisfies
		\[
		\mathbb P\pr{u(t,x)>0\text{ for all }t>0,\;x\in\X}=1
		\]
		whenever $u_0\in\mathcal C^{\mathrm{tem},+}_{\X,h}$ is not identically zero.
	\end{prop}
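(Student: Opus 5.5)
The plan is to reduce Proposition~\ref{prop:sub_g_mmd_framework} to Theorems~\ref{thm:main2} and \ref{thm:comparison}. The work is to verify, for $h(x)=e^{d(x_0,x)}$ and $K=K_\gamma$, the structural hypotheses \eqref{ineq:entropy bound}, \eqref{ineq:faster_than_ms}, \eqref{ineq:ref_regular}, \eqref{cond:ctem continuous}, \eqref{cond:time_holder}, \eqref{cond:exp_hk_lamda}, \eqref{cond:hk_uni_bound} and \eqref{ineq:instant_propagation}. Strict positivity then follows from Theorem~\ref{thm:comparison}\eqref{part:main2 strictly positivity}. The PAM is the special case $b\equiv0$, $\sigma(u)=u$, which is Lipschitz, continuous and satisfies \eqref{cond:pos_zero}. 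Symmetry gives $\mathcal L^*=\mathcal L$.

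\emph{Geometric conditions.} These come from the doubling of $V$ together with $m(B_r(x))\asymp V(r)$.
\begin{itemize}
\item[(a)] A maximal $\varepsilon$-separated set in $B_R(x)$ has disjoint $\varepsilon/2$-balls. Hence $D_\varepsilon(B_R(x))\lesssim V(2R)/V(\varepsilon/2)\lesssim \varepsilon^{-b}$, with $b=\log_2$ of the doubling constant. This gives \eqref{ineq:entropy bound}.
\item[(b)] Take a maximal $1$-separated net $(x_n)$. Its balls cover $\X$. The number of net points with $d(x_0,x_n)\in[k,k+1)$ is $\lesssim V(k+2)/V(1/2)\lesssim k^{b}$. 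Therefore $\sum_n h(x_n)^{-\lambda}\lesssim\sum_k k^b e^{-\lambda k}<\infty$ for every $\lambda>0$, which gives \eqref{ineq:faster_than_ms}.
\item[(c)] \eqref{ineq:ref_regular} is immediate from the triangle inequality, since $h(y)\le e\,h(x)$ when $d(x,y)\le 1$.
\end{itemize}

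\emph{Heat-kernel conditions.} These use $\mathrm{HKE}_{\rm full}(\Psi)$.
\begin{itemize}
\item[(a)] The first bound in \eqref{cond:exp_hk_lamda}: $\Psi^*$ grows superlinearly, since $\beta_1>1$. So the upper bound gives $\int G_t(y,x)e^{\lambda d(x,y)}m(\mathrm dy)\lesssim 1$ for $t\le1$, $\lambda\le1$. Combined with $h(y)^\lambda\le h(x)^\lambda e^{\lambda d(x,y)}$, this yields the estimate.
\item[(b)] The $\mathcal H_K$ bound for $\gamma=0$: $\|G_t(\cdot,x)h^\lambda\|^2_{\mathcal H_{K_0}}=\int G_t(y,x)^2h(y)^{2\lambda}m(\mathrm dy)$. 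Using $G_t\le C/V(\Psi^{-1}(t))$ on one factor and (a) on the other gives $\lesssim t^{-\alpha_0}h(x)^{2\lambda}$.
\item[(c)] The $\mathcal H_K$ bound for $\gamma\ne0$: by Chapman--Kolmogorov, $\iint G_t(y,x)G_s(y,z)G_t(z,x)\,m(\mathrm dy)\,m(\mathrm dz)=G_{2t+s}(x,x)$. With the weights, I would bound $h(y)^\lambda h(z)^\lambda\le h(x)^{2\lambda}e^{\lambda d(x,y)+\lambda d(x,z)}$ and absorb the exponentials by sacrificing a constant in the Gaussian-type tail. This should give $\lesssim h(x)^{2\lambda}\int_0^\infty e^{-s}s^{\gamma-1}V(\Psi^{-1}(2t+s))^{-1}\,\mathrm ds\lesssim t^{-\alpha_0}h(x)^{2\lambda}$.
\item[(d)] \eqref{cond:hk_uni_bound} follows the same way. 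Off $B_\delta(x)$ the factor $\exp(-c\varepsilon\Psi^*(\delta/\varepsilon))$ decays faster than any power of $\varepsilon$, so it beats the polynomial blow-up.
\item[(e)] \eqref{ineq:instant_propagation}: the full lower bound gives $G_t(y,x)\gtrsim V(\Psi^{-1}(t))^{-1}$ for $d(x,y)\lesssim \Psi^{-1}(t)$. Since $x\in B_{r+v\varepsilon}(x_0)$ and $\Psi^{-1}(\varepsilon)\gg\varepsilon$ (because $\beta_2$ is finite, wait — actually $\Psi^{-1}(\varepsilon)\ge c\varepsilon^{1/\beta_1}\gg \varepsilon$ as $\beta_1>1$), the set $B_r(x_0)\cap B_{c\Psi^{-1}(t)}(x)$ contains a ball of radius $\asymp\Psi^{-1}(t)$. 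A chain-of-midpoints argument may be needed if there is no geodesic structure. Volume doubling then gives a $\boldsymbol\Theta(1)$ lower bound. I expect this step to be delicate without geodesics, and would first try to use the chain condition implied by $\mathrm{HKE}_{\rm full}$.
\end{itemize}

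\emph{Main obstacle.} I expect \eqref{cond:time_holder} and \eqref{cond:ctem continuous} to be the hardest. For time, I would use analyticity of the symmetric semigroup, $\|\partial_tG_t(\cdot,x)\|_{L^1}\lesssim t^{-1}$, from the Davies-type estimate $|\partial_tG_t|\lesssim t^{-1}\times$ (sub-Gaussian bound), valid under $\mathrm{HKE}$. Interpolating with the trivial bound $2$ gives exponent $\xi_1$ with $\beta=\xi_1<1$. For space, I would use the Hölder continuity of the heat kernel given by $\mathrm{HKE}$: $|G_t(y,x)-G_t(y,x')|\lesssim (d(x,x')/\Psi^{-1}(t))^{\theta}\,G_{ct}$-type bound. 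Integrating gives $d^\theta t^{-\theta/\beta_1}$, and after interpolation with $2$ the exponent is kept below $1$. Finally, \eqref{cond:ctem continuous} follows from the Feller property together with (a) of the heat-kernel conditions and the density Lemma~\ref{lem:C_temp_density_0}, approximating $f$ in $\|\cdot\|_{(\lambda)}$ by $\mathcal C^0_\X$ functions.
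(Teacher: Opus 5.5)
Apart from one step, your verification is sound and close to the paper's Propositions~\ref{prop:sub_g_cond_no_noise}--\ref{prop:sub_g_condition_noise}. Two of your routes are legitimate alternatives to the paper's: obtaining \eqref{cond:ctem continuous} from the Feller property and Lemma~\ref{lem:C_temp_density_0} (as the paper itself does for manifolds), and obtaining the time part of \eqref{cond:time_holder} from a Davies-type bound $\|\partial_t G_t(\cdot,x)\|_{L^1_\X}\lesssim t^{-1}$ (which you would need to cite). The paper instead uses a direct computation and the parabolic oscillation inequality.

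The genuine gap is step (e). Your claim that $B_r(x_0)\cap B_{c\Psi^{-1}(t)}(x)$ contains a ball of radius $\asymp\Psi^{-1}(t)$ is false for a general metric $d$. In fact \eqref{ineq:instant_propagation} can fail for $d$ under all hypotheses of the proposition, so no argument carried out in $d$ (chain condition or otherwise) can close it. For instance, take $\R$ with Lebesgue measure and Brownian motion, and put $d(a,b):=|a-b|+|g(a)-g(b)|$. Here $g$ is Lipschitz, $g(0)=0$, and $g(y)=c+L|y-1|$ near $y=1$, with $c>0$ and $L>1$. Then $d$ is bi-Lipschitz to $|\cdot|$, so $\mathrm{HKE}_{\rm full}(\Psi)$ with $\Psi(r)=r^2$, the profile $V(r)=r$, and the white-noise condition with $\alpha_0=1/2$ all hold. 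But near $y=1$ one has $d(0,y)\geq 1+c+(L-1)|y-1|$. So for $r:=1+c$ (and any $r_0<1+c<v$), the ball $B_r(0)$ stays a positive distance away from $x=1$, although $1\in B_{r+v\varepsilon}(0)$ for every $\varepsilon$. Hence $\int_{B_r(0)}G_t(y,1)\,m(\mathrm dy)\to0$ as $t\downarrow0$.

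The missing idea is the paper's change of metric (Proposition~\ref{prop:sub_g_strict_pos_condition}). The chain condition coming from $\mathrm{HKE}_{\rm full}(\Psi)$ is not used to prove the estimate in $d$. It is used to produce a geodesic metric $\rho\asymp d$, via \cite[Theorem 2.11]{murugan2020length} and \cite[Proposition A.1]{kajino2020singularity}. Set $r_x:=\max\{\rho(x,x_0)-r,0\}\leq v\varepsilon$, and move from $x$ along a $\rho$-geodesic toward $x_0$ by $\min\{r_x+\frac{\delta_0}{4}\Psi^{-1}(t),\rho(x,x_0)\}$. This gives a point $w$ with $B^\rho_{\delta_0\Psi^{-1}(t)/8}(w)\subset B^\rho_r(x_0)$ and this ball lies in the near-diagonal region of $x$ (Lemma~\ref{prop:on_diagonal_to_instant_propagation}). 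You then need the observation of Remark~\ref{rmk:bi_lip_metric_remark}: all the other hypotheses, with the same $h$, survive a bi-Lipschitz change of metric, while the conclusion does not involve the metric at all.
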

	The proof of Proposition \ref{prop:sub_g_mmd_framework} follows from verifying the assumptions of Theorems~\ref{thm:main2} and~\ref{thm:comparison}, which is carried out in Appendix~\ref{sec:appendix}.
	
	\begin{remark}\label{rem:sub-g-general}
		Let \(V,\Psi : [0,\infty) \to [0,\infty)\) be doubling functions such that
		\begin{align}\label{ineq:sub_g_escape_vol_constrain}
			\frac{R^2}{r^2} \lesssim \frac{\Psi(R)}{\Psi(r)} \lesssim \frac{R\,V(R)}{r\,V(r)},\qquad\text{uniformly in } 0<r\le R.
		\end{align}
		Then there exists an unbounded (that is, infinite diameter) MMD space
		\[
		(\X,d,m,\mathcal E,\mathcal F)
		\]
		that satisfies the full sub-Gaussian kernel estimate \(\mathrm{HKE}_{\rm full}(\Psi)\), and the metric measure space $(\X,d,m)$ has volume growth profile $V$.
		By tracing the proofs in \cite{murugan2024diffusions}, one sees that the metric space $(\X,d)$ here is a complete locally compact metric space (see \cite[Proofs of Theorem 2.7  and Proposition 3.18]{murugan2024diffusions}).
		Note that the doubling property of $\Psi$ alone is weaker than the scale function definition (with two-sided exponents $\beta_1,\beta_2$) given earlier in this subsection; the two-sided polynomial bounds for $\Psi$ follow from \eqref{ineq:sub_g_escape_vol_constrain} together with the doubling property and are not assumed a priori.
	\end{remark}
	
	\begin{example}[Polynomial sub-Gaussian heat kernels]\label{Eg:PolySubGaussianHK}
		In many standard examples we have $\Psi(r)\asymp r^{d_w}$ for some $d_w \ge 2$ and $V(r) \asymp r^{d_h}$ for some $d_h \ge 1$; in that case this setting corresponds to the usual sub-Gaussian heat kernel estimate. Examples include
		\begin{itemize}
			\item  $\R^n$ with the usual Gaussian heat kernel, in which case $d_h = n$ and $d_w = 2$. 
			\item  metric graphs with volume doubling property, with $d_h = 1$ and $d_w = 2$;  see \cite{MR4278798}. 
			\item (Unbounded) Sierpi\'{n}ski gasket in $\R^n$ with $d_h = \log_2(n+1)$ and $d_w = \log_2(n+3)$; see \cites{barlow1988brownian,barlow1998diffusions}.
		\end{itemize}
		
		Under these choices of $\Psi$ and $V$, the standing assumptions in Section~\ref{sec:backgroup_setups} are satisfied when the parameters meet appropriate conditions (see Appendix~\ref{sec:appendix}). In particular, for white noise ($\gamma=0$) one needs $d_h<d_w$; for colored noise $K_\gamma$, the condition reads
		\[
		\frac{d_h}{d_w} > \gamma > \max\brc{0,\frac{d_h}{d_w}-1}{,}
		\]
		which is consistent with the conditions for the resolvability of Parabolic Anderson models in \cite[Theorem 1.1]{Zbl08159034}  and the discussion thereafter.
	\end{example}
	\subsubsection{Metric measure spaces with stable-type heat kernel bound}
	
	Another class of examples to which our main theorems apply consists of stable-type heat kernels.
	For simplicity, we shall assume $(\X,m,d,\mathcal E,\mathcal F)$ satisfies the conclusion of Remark \ref{rem:sub-g-general} with $V(r) = r^{d_h}$ and $\Psi(r) = r^{d_w}$ for $1\le d_h\le d_w$.
	Let $\mathcal L$ be the associated operator, then for $\beta \in (0,1)$, the operator $-(-\mathcal L)^\beta$ admits heat kernel $(G_t^\beta(x,y))_{t>0,\,x,y \in \X}$ so that
	\begin{align*}
		G_t^\beta(x,y) \asymp t^{-\frac{d_h}{d_w \beta}} \pr{1+ \frac{d(x,y)}{t^{1/(\beta d_w)}}}^{-d_h - \beta d_w},\qquad \text{ for }t>0,\, x,y \in \X. 
	\end{align*}
	The proof of the following result is similar to that of Propositions~\ref{prop:sub_g_cond_no_noise}--\ref{prop:sub_g_strict_pos_condition} in Appendix~\ref{sec:appendix}, hence we omit its proof. 
	\begin{prop}\label{prop:fractionalPAM}
		Let $(\X,d,m,\mathcal E,\mathcal F)$, $\mathcal L$ and $1 \le d_h \le d_w$ be as above and let $\beta \in (d_h/d_w, 1)$.
		Fix an $x_0 \in \X$ and let $h(x):= 1+d(x_0,x)$ and let $\dot{W}$ be the space-time white noise on $\R_+\times \X$.
		For any $u_0 \in \mathcal C^{\mathrm{tem},+}_{\X,h}$ that is not identically zero, there exists a unique solution to
		\begin{equation}\label{eq:fractionalPAM}
			\partial_t u = -(-\mathcal L)^\beta u + u \dot{W},\qquad u(0,\cdot) = u_0.
		\end{equation}
		Furthermore, $\mathbb P \left(u(t,x)>0 \,\text{ for }\, t>0,\, x \in \X \right) =1$.
	\end{prop}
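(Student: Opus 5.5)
The plan is to deduce Proposition~\ref{prop:fractionalPAM} from Theorems~\ref{thm:main2} and~\ref{thm:comparison}\eqref{part:main2 strictly positivity}, applied with $G=G^\beta$, $h(x)=1+d(x_0,x)$, $K=K_0$ (space-time white noise), $b\equiv 0$ and $\sigma(t,x,u)=u$. The coefficient conditions \eqref{eq:LiG}, \eqref{ineq:g_lip}, \eqref{eq:con} and \eqref{cond:pos_zero} are immediate for these choices. The operator $-(-\mathcal L)^\beta$ generates the subordinated process, which is Feller, and $G^\beta_t=\int_0^\infty G_s\,\eta_t(\mathrm ds)$ with $\eta_t$ the $\beta$-stable subordinator law. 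This gives the Chapman--Kolmogorov identity \eqref{eq:G} and the symmetry of $G^\beta$. What remains is to verify the space and heat-kernel hypotheses from the two-sided stable bound.

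\textbf{Space conditions.} Since $m(B_r(x))\asymp r^{d_h}$, a standard volume-counting argument gives \eqref{ineq:entropy bound} with $b=d_h$. Take $(x_n)$ to be a maximal $1$-separated net. The number of $x_n$ with $d(x_0,x_n)\in[k,k+1)$ is $\lesssim (k+1)^{d_h-1}$, up to constants. Hence $\sum_n h(x_n)^{-\lambda}<\infty$ for $\lambda>d_h$, which is \eqref{ineq:faster_than_ms}. Condition \eqref{ineq:ref_regular} holds because $1+d(x_0,y)\le 2(1+d(x_0,x))$ when $d(x,y)\le1$.

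\textbf{Heat-kernel conditions.} For \eqref{cond:exp_hk_lamda}, write $\theta=t^{1/(\beta d_w)}$ and use $h(y)^\lambda\le h(x)^\lambda(1+d(x,y))^\lambda$ with $\lambda\le1<\beta d_w$. The tail $(1+d/\theta)^{-d_h-\beta d_w}$ integrates $(1+d)^\lambda$ to $\lesssim 1$ for $t\le1$, which gives the $L^1$ bound. For white noise, $\|G_t(\cdot,x)h^\lambda\|^2_{\mathcal H_{K_0}}=\int G_t(y,x)^2h(y)^{2\lambda}\,m(\mathrm dy)$. This is at most $\sup_y G_t(y,x)$ times the weighted $L^1$ bound (with $2\lambda\le 2<\beta d_w$ requiring a short check, or splitting the weight), so it is $\lesssim t^{-d_h/(d_w\beta)}h(x)^{2\lambda}$. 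Since $\beta>d_h/d_w$, we get $\alpha=d_h/(\beta d_w)<1$. Condition \eqref{cond:hk_uni_bound} follows by the same estimate restricted to $d(x,y)\ge\delta$: there $G_\varepsilon\lesssim \varepsilon\,\delta^{-d_h-\beta d_w}$, so the squared norm is $\lesssim\varepsilon\to0$. Condition \eqref{ineq:instant_propagation} comes from the lower bound. For $x\in B_{r+v\varepsilon}(x_0)$ and small $\varepsilon$, the ball $B_r(x_0)$ contains a ball of radius comparable to $\min(r_0/2,\ t^{1/(\beta d_w)})$ around a point at distance $\lesssim t^{1/(\beta d_w)}$ from $x$. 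On that ball $G_t\gtrsim t^{-d_h/(d_w\beta)}$, so the integral is $\gtrsim1$ uniformly.

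\textbf{Main obstacle: regularity conditions.} The hardest step is \eqref{cond:time_holder} together with \eqref{cond:ctem continuous}, since the two-sided estimate alone gives no control on differences. For the time regularity I would use analyticity of the symmetric semigroup on $L^2$ and pass to $L^1$ via subordination. From $\partial_t G^\beta_t=\int G_s\,\partial_t\eta_t(\mathrm ds)$ and the scaling of the stable density, $\|\partial_tG^\beta_t(\cdot,x)\|_{L^1}\lesssim t^{-1}$. Then, for $t\le t'$, $\|G_t-G_{t'}\|_{L^1}\le\min(2,|t'-t|/t)\le |t'-t|^{\xi_1}t^{-\xi_1}$, which gives the first estimate with $\beta'=\xi_1<1$. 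For the spatial regularity I would invoke Hölder continuity of $G_s$ from $\mathrm{HKE}_{\rm full}$ (the near-diagonal Hölder estimate of \cite{barlow2012equivalence}): $\|G_s(\cdot,x)-G_s(\cdot,x')\|_{L^1}\lesssim (d(x,x')/\Psi^{-1}(s))^{\theta}\wedge 2$. Integrating this against $\eta_t$, using that the stable law has $\int s^{-\theta/d_w}\,\eta_t(\mathrm ds)\asymp t^{-\theta/(\beta d_w)}<\infty$, gives the bound with $\xi_2=\theta$. Finally, \eqref{cond:ctem continuous} follows from the weighted $L^1$ bound, the Feller property on $\mathcal C^0_\X$, and density (Lemma~\ref{lem:C_temp_density_0}), by controlling tails with the weight $h$. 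This mirrors the argument referenced for Propositions~\ref{prop:sub_g_cond_no_noise}--\ref{prop:sub_g_strict_pos_condition}.
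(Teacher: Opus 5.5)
Your overall plan is what the paper's omitted proof does, mirroring Propositions~\ref{prop:sub_g_cond_no_noise}--\ref{prop:sub_g_strict_pos_condition}: check the hypotheses of Theorems~\ref{thm:main2} and~\ref{thm:comparison} for $G^\beta$, $h=1+d(x_0,\cdot)$, $K=K_0$, $b\equiv0$, $\sigma(t,x,u)=u$. Most of your checks are fine. The subordination bound $\|\partial_tG^\beta_t(\cdot,x)\|_{L^1_\X}\lesssim t^{-1}$ is a clean way to get the time part of \eqref{cond:time_holder} without any oscillation estimate.

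The genuine gap is \eqref{ineq:instant_propagation}. You claim that, for $x\in B_{r+v\varepsilon}(x_0)$, the ball $B_r(x_0)$ contains a ball of radius $\asymp t^{1/(\beta d_w)}$ centred within $\lesssim t^{1/(\beta d_w)}$ of $x$. This is a geodesic property of the metric. It does not follow from $m(B_r(x))\asymp r^{d_h}$ and two-sided kernel bounds, and \eqref{ineq:instant_propagation} can actually fail for the given $d$.

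For instance, take the ladder in $\R^2$ formed by the lines $\{y=0\}$, $\{y=1\}$ and unit vertical rungs over $\Z+\tfrac12$. Use Brownian motion on this metric graph, and let $d$ be the restricted Euclidean distance. Since $d$ is bi-Lipschitz to the intrinsic length metric, $\mathrm{HKE}_{\rm full}(r^2)$ and $V(r)\asymp r$ hold for $d$, so $d_h=1$ and $d_w=2$. Take $x_0=(0,0)$, $x=(0,1)$ and $r=1-v\varepsilon/2$. Then $x\in B_{r+v\varepsilon}(x_0)$, but $d(x,B_r(x_0))\ge\tfrac12$. The stable upper bound then gives $\int_{B_r(x_0)}G^\beta_t(y,x)\,m(\mathrm dy)\lesssim t\to0$.

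The missing step is the one in Proposition~\ref{prop:sub_g_strict_pos_condition}. Because the underlying diffusion satisfies $\mathrm{HKE}_{\rm full}(r^{d_w})$, there is a geodesic metric $\rho\asymp d$ (\cite{murugan2020length}, \cite{kajino2020singularity}).

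\begin{itemize}
\item Everything else you verified transfers to $\rho$ with changed constants: the stable two-sided bound, volume growth, entropy and H\"older estimates.
\item Since $1+\rho(x_0,\cdot)\asymp h$, the space $\mathcal C^{\mathrm{tem}}_{\X,h}$ and its topology are unchanged.
\item The conclusions are metric-free (cf.\ Remark~\ref{rmk:bi_lip_metric_remark}).
\end{itemize}

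Under $\rho$, your containment argument is precisely Lemma~\ref{prop:on_diagonal_to_instant_propagation} with $\Psi(r)=r^{\beta d_w}$ and $V(r)=r^{d_h}$. That lemma applies because $t^{1-1/(\beta d_w)}\to0$, which holds since $\beta d_w>d_h\ge1$.

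A minor slip: in your check of \eqref{ineq:faster_than_ms}, the annulus count $\lesssim(k+1)^{d_h-1}$ does not follow from $m(B_r)\asymp r^{d_h}$, because the upper and lower constants differ. The ball count $\#\{n:d(x_0,x_n)<R\}\lesssim R^{d_h}$ together with summation by parts gives the same conclusion.
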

	
	    Take
	$\X=\R$ with Lebesgue measure, $\mathcal L=\Delta$,
	$d_h=1$, $d_w=2$, $\beta=\alpha/2$ with $\alpha\in(1,2)$, and the space-time white noise. Then \eqref{eq:fractionalPAM} is the
    fractional PAM studied in \cite{chen2017comparison}.    
	With $h(x)=1+|x|$, Proposition~\ref{prop:fractionalPAM} yields existence, uniqueness,  strong comparison principle
	and strict positivity for this equation for initial conditions in the space $\mathcal C^{\mathrm{tem}}_{\R,h}$, thereby extending some results of \cite{chen2017comparison}. 

    Figure~\ref{fig:pam-fractals} illustrates the fractional PAM  on the Sierpi\'{n}ski gasket.

\subsubsection{Riemannian manifolds with short-time Gaussian heat kernel estimates}\label{sec:local_manifolds}

Let $(M,g)$ be a complete connected smooth Riemannian manifold of dimension $n\geq1$, without boundary. Write $d=d_g$ for the distance induced by the Riemannian metric $g$, and $m$ for its Riemannian volume measure. By the Hopf--Rinow theorem \cite[Chapter~7, Theorem~2.8]{docarmo1992riemannian}, $(M,d)$ is geodesic: every two points are joined by a distance-minimizing geodesic.

Let $\Delta_M=\operatorname{div}_g\nabla_g$ be the Laplace--Beltrami operator, realized as a nonpositive self-adjoint operator on $L^2(M,m)$. Set $\mathcal L=\tfrac12\Delta_M$ and let $G_t(x,y)$ denote the smooth integral kernel of the heat semigroup $P_t=e^{t\mathcal L}$. Self-adjointness gives
\[
 G_t(x,y)=G_t(y,x),\qquad t>0,\quad x,y\in M.
\]
We assume the following uniform short-time two-sided Gaussian estimates: there are constants $c_0,C_0>0$ such that
\begin{align}
 c_0 t^{-n/2}e^{-C_0 d(x,y)^2/t}
 \leq G_t(x,y)\leq C_0 t^{-n/2}e^{-c_0 d(x,y)^2/t},
 \quad x,y\in M,\; t\in(0,1].
 \label{eq:manifold_gaussian}
\end{align}
This assumption allows exponential volume growth at large distances and does not require curvature bounds; examples are given below.

Fix $o\in M$, set $h(x)=\exp(d(o,x))$, and, for
\begin{align}
 \gamma>\max\{0,n/2-1\},
 \label{eq:manifold_gamma}
\end{align}
define
\begin{align}
 K(\mathrm dx,\mathrm dy)
 :=\frac1{\Gamma(\gamma)}
 \left(\int_0^\infty e^{-s}s^{\gamma-1}G_s(x,y)\,\mathrm ds\right)
 m(\mathrm dx)m(\mathrm dy).
 \label{eq:manifold_covariance}
\end{align}
The factor $e^{-s}$ makes this definition applicable on both compact and noncompact manifolds. The parabolic Anderson model on Riemannian manifolds, driven by Gaussian noises with covariances of this type, is studied in \cites{chen2025global, chen2026sharp} for compact manifolds and in \cite{baudoin2025moment} for Cartan--Hadamard manifolds.

It turns out that the geometric, semigroup and noise  assumptions of Theorems~\ref{thm:main2} and~\ref{thm:comparison} are satisfied under the above setting.
\begin{prop}\label{prop:local_manifolds}
Suppose $(\X,d,m)=(M,d_g,m)$ and $\mathcal L=\tfrac12\Delta_M$ are the Riemannian manifold and operator  described above  such that \eqref{eq:manifold_gaussian} holds.
Suppose the function $h$, the constant $\gamma$ and the measure $K$ are as in the paragraph containing \eqref{eq:manifold_gamma}-\eqref{eq:manifold_covariance}.
Then, $(M,d,m)$ satisfies \eqref{ineq:entropy bound}, $h$ satisfies \eqref{ineq:faster_than_ms} and \eqref{ineq:ref_regular}, the heat semigroup $(P_t)_{t\geq0}$ is Feller, $K$ is a symmetric, non-negative definite Radon measure, and \eqref{cond:ctem continuous}, \eqref{cond:time_holder}{,}
\eqref{cond:exp_hk_lamda}{, \eqref{cond:hk_uni_bound} and \eqref{ineq:instant_propagation}} are all satisfied.
\end{prop}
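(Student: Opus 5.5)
The plan is to verify each item in turn, using only the two-sided short-time Gaussian bound \eqref{eq:manifold_gaussian}, the semigroup property \eqref{eq:G}, and two geometric consequences of \eqref{eq:manifold_gaussian}.

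\emph{Step 1: local volume comparability.} I first show that $m(B_r(x))\asymp r^n$ uniformly in $x\in M$ and $r\in(0,1]$. For the upper bound, integrate the lower Gaussian bound at $t=r^2$ over $B_r(x)$ and use $\int G_t(x,y)\,m(\mathrm dy)\le 1$. For the lower bound, use the upper Gaussian bound to see that $\int_{B_r(x)^c}G_{\epsilon r^2}(x,y)\,m(\mathrm dy)$ is small. Because the manifold is complete and has no boundary, stochastic completeness holds for $t\le1$ via the Gaussian bounds, so the mass is $1$. This mass is then concentrated in $B_r(x)$, where the kernel is at most $C(\epsilon r^2)^{-n/2}$.

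Controlling the tail integral requires a volume bound at all scales. Bishop-type bounds are unavailable, so I will first derive exponential growth, $m(B_R(x))\lesssim e^{cR}$ uniformly in $x$. This follows by chaining: cover $B_{R+1}$ using balls of radius $1$ together with the local doubling property implied by the two-sided bounds. I expect this step to be the main technical obstacle, since every other step relies on it.

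Given these bounds, \eqref{ineq:entropy bound} follows from a maximal $\varepsilon$-separated set in $B_R(x)$. The disjoint balls of radius $\varepsilon/2$ each have volume $\gtrsim\varepsilon^n$, and they sit inside $B_{R+1}(x)$, which has volume $\lesssim e^{cR}$. This gives the bound with $b=n$. The family $(x_n)$ for \eqref{ineq:faster_than_ms} is a maximal $1$-separated set. The number of points in the annulus $\{k\le d(o,\cdot)<k+1\}$ is $\lesssim e^{ck}$, so $\sum_n e^{-\lambda d(o,x_n)}<\infty$ for $\lambda>c$. Property \eqref{ineq:ref_regular} is immediate from the triangle inequality, since $h(y)\le e\,h(x)$.

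\emph{Step 2: semigroup conditions.} The Feller property follows from the smoothness of $G$, conservativeness, and the Gaussian tails. Combined with $m(B_R)\lesssim e^{cR}$, these tails give $\int G_t(y,x)e^{\lambda d(o,y)}\,m(\mathrm dy)\lesssim e^{\lambda d(o,x)}$, uniformly for $t\le1$ and $\lambda\le1$. This is the first half of \eqref{cond:exp_hk_lamda}. The same estimate, applied to a tempered $f$ (split into a compactly supported part and a small tail), yields \eqref{cond:ctem continuous}.

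For \eqref{cond:time_holder}, I bound $\partial_t G=\mathcal LG$ and $\nabla_x G$ in $L^1$. I would obtain these from \eqref{eq:manifold_gaussian} by local parabolic regularity, specifically the Gaussian derivative bounds $|\partial_t G_t|\lesssim t^{-1-n/2}e^{-cd^2/t}$ and $|\nabla G_t|\lesssim t^{-1/2-n/2}e^{-cd^2/t}$, valid for $t\le1$ on balls of radius $\sqrt t$ where the geometry is uniformly controlled. Interpolating each of these with the trivial bound $2$ gives exponents $\xi_1,\xi_2$ and some $\beta<1$. For the gradient, geodesic connectedness lets me integrate along a minimizing geodesic.

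\emph{Step 3: noise conditions.} Write $\|g\|_{\mathcal H_K}^2=\Gamma(\gamma)^{-1}\int_0^\infty e^{-s}s^{\gamma-1}\langle g,P_sg\rangle\,\mathrm ds$. For $g=G_t(\cdot,x)h^\lambda$, I use symmetry and the semigroup property: the part with $s\le1$ is bounded by $\int_0^1 s^{\gamma-1}h(x)^{2\lambda}G_{2t+s}(x,x)\,\mathrm ds\lesssim h(x)^{2\lambda}\int_0^1 s^{\gamma-1}(t+s)^{-n/2}\,\mathrm ds$. This is $\lesssim t^{-\alpha}h(x)^{2\lambda}$ with $\alpha=\max(n/2-\gamma,0)+\epsilon<1$, by \eqref{eq:manifold_gamma}. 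The tail $s>1$ is controlled by the $e^{-s}$ factor together with the $L^1$ and weighted bounds, which completes \eqref{cond:exp_hk_lamda}.

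For \eqref{cond:hk_uni_bound}, the same representation with $g=G_\varepsilon\mathbf 1_{B_\delta^c}$ gives a bound that is pointwise at most $e^{-c\delta^2/\varepsilon}\varepsilon^{-n/2}$ times integrable factors. This tends to $0$ uniformly.

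Finally, \eqref{ineq:instant_propagation} follows from the Gaussian lower bound. For $x\in B_{r+v\varepsilon}(x_0)$, take a point $y$ on a geodesic from $x$ toward $x_0$ with $d(x,y)\le v\varepsilon+\sqrt\varepsilon$ and $B_{\sqrt{\varepsilon}}(y)\subset B_r(x_0)$. On that small ball, $G_t\gtrsim\varepsilon^{-n/2}$ since $(v\varepsilon)^2/\varepsilon\to0$, and the ball has volume $\asymp\varepsilon^{n/2}$ by Step 1. This gives the required $\boldsymbol\Theta(1)$ lower bound.
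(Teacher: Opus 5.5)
\paragraph{The spatial half of \eqref{cond:time_holder} does not follow as you propose.}
You want a uniform bound $|\nabla_xG_t(x,y)|\lesssim t^{-1/2-n/2}e^{-cd(x,y)^2/t}$ from local parabolic regularity on balls of radius $\sqrt t$ ``where the geometry is uniformly controlled''. But \eqref{eq:manifold_gaussian} gives no such control. No curvature or injectivity-radius bounds are assumed, and interior gradient (Schauder) estimates for $\partial_t-\tfrac12\Delta_M$ have constants depending on derivatives of $g$ in local coordinates. These may be unbounded in $x$, already in Example~\ref{ex:local_manifolds}(3), where only $a^{-1}g_{\mathrm E}\le g\le a g_{\mathrm E}$ is imposed.

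Even your $L^1$ version, $\int_M|\nabla_xG_t(x,y)|\,m(\mathrm dy)\lesssim t^{-1/2}$, would imply the uniform Lipschitz-type estimate $\|\nabla P_tf\|_\infty\lesssim t^{-1/2}\|f\|_\infty$. Two-sided Gaussian bounds are not known to give this, and I would not expect it to hold without some curvature-type control. What \eqref{eq:manifold_gaussian} does give uniformly is local volume doubling and a Poincar\'e inequality at scales $\le1$, hence a local parabolic Harnack inequality. The resulting De Giorgi--Nash--Moser theory yields only H\"older regularity.

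H\"older regularity is all \eqref{cond:time_holder} asks for, and it is the paper's route. From Hebisch--Saloff-Coste and Coulhon et al.\ one has
$|P_tf(x)-P_{t'}f(x')|\le C\bigl((d(x,x')+|t-t'|^{1/2})/\sqrt{t\wedge t'}\bigr)^{\eta}\|f\|_\infty$ for $t,t'\in(0,1]$. Taking the supremum over $\|f\|_\infty\le1$ gives both halves of \eqref{cond:time_holder}, with $\beta=\xi_1=\eta/2$ and $\xi_2=\eta$. Your time-derivative bound can be obtained without curvature, from analyticity of the semigroup and complex-time Gaussian bounds. It does not follow for the reason you give, and the H\"older estimate makes it unnecessary anyway.

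\paragraph{Step 1 is circular as written.}
Your lower bound $m(B_r(x))\gtrsim r^n$ uses conservativeness and a Gaussian tail integral. Both need an a priori bound on $m(B_R(x))$ at all scales. You plan to get that bound from local doubling, which presupposes the lower bound you are proving. Also, completeness without boundary does not by itself give stochastic completeness.

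You can break the loop with the crude estimate $m(B_R(x))\le c_0^{-1}e^{C_0R^2}$. It follows by integrating the lower bound in \eqref{eq:manifold_gaussian} at $t=1$ over $B_R(x)$ and using $\int_M G_1(x,y)\,m(\mathrm dy)\le1$. Together with the local upper bound, it controls the small-time tails, and Grigor'yan's volume test then gives conservativeness.

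Alternatively, as the paper does, use $c_02^{-n/2}t^{-n/2}\le G_{2t}(x,x)=\int_M G_t(x,y)^2\,m(\mathrm dy)$, split over $B_{A\sqrt t}(x)$ and its complement. This needs only sub-Markovianity. Exponential growth then follows from your chaining argument.

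\paragraph{The remaining steps.}
These agree in substance with the paper, with one caveat in Step~3. Absorbing the weights into $G_{2t+s}(x,x)$ needs a space-time rescaling, which the short-time bounds provide only for small $t$. The paper's bound $\langle v,P_sv\rangle\le\min\{\|v\|_{L^2_M}^2,\sup_{y,z}G_s(y,z)\|v\|_{L^1_M}^2\}$ avoids this.
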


Consequently, for a stochastic basis and coefficients as in \eqref{eq:SB}--\eqref{eq:NC}, satisfying \eqref{eq:LiG} and \eqref{ineq:g_lip}, and every $f\in\mathcal C^{\mathrm{tem}}_{M,h}$, it holds that (by Theorem~\ref{thm:main2}) equation~\eqref{eq:spde} has a unique mild-solution. Furthermore, for the Parabolic Anderson model
\[
\partial_t u=\mathcal L u+u\dot W_t,\qquad u(0,\cdot)=u_0,
\]
the unique mild-solution satisfies $\mathbb P\pr{u(t,x)>0\text{ for all }t>0,\;x\in M}=1$ whenever $u_0\in\mathcal C^{\mathrm{tem},+}_{M,h}$ is not identically zero, by Theorem~\ref{thm:comparison}{\eqref{part:main2 strictly positivity}}.

The proof of Proposition \ref{prop:local_manifolds} will be given in the Appendix.

\begin{example}\label{ex:local_manifolds}
 By \cite[Theorem~2.7]{hebisch2001relation}, a sufficient criterion for \eqref{eq:manifold_gaussian} consists of \eqref{eq:manifold_local_volume} and a constant $C_P>0$ such that
 \[
  \int_{B_r(x)}|v-v_{B_r(x)}|^2\,dm
  \leq C_P r^2\int_{B_r(x)}|\nabla v|^2\,dm,
  \qquad v_B:=\frac1{m(B)}\int_B v\,dm,
 \]
 for every $x\in M$, $0<r\leq1$ and $v\in C_c^\infty(M)$. This includes the following manifolds.
 \begin{enumerate}
  \item Every compact connected smooth Riemannian manifold without boundary satisfies \eqref{eq:manifold_gaussian}; see \cite[Theorem~6.1, p.~428]{saloff1992uniformly}. Here $h$ is bounded and $\mathcal C^{\mathrm{tem}}_{M,h}=C(M)$.
  \item Real hyperbolic space $\mathbb H^n$, $n\geq2$, satisfies \eqref{eq:manifold_gaussian} by \cite[Theorem~6.1, p.~428]{saloff1992uniformly}.
         \item Let $M=\mathbb R^n$, let $g_{\mathrm E}$ be the Euclidean metric, and let $g=(g_{ij})$ be a smooth Riemannian metric on $M$ such that, for some $a\geq1$,
  \begin{align}
   a^{-1}g_{\mathrm E}\leq g\leq a g_{\mathrm E}.
   \label{eq:manifold_metric_comparison}
  \end{align}
  Set
  $d=d_g,\,m(\mathrm dx)=\sqrt{\det g(x)}\mathrm dx,
   \, \mathcal L=\tfrac12\operatorname{div}_g\nabla_g$. 
    By \eqref{eq:manifold_metric_comparison}, the global parabolic Harnack inequality for $g_{\mathrm E}$ transfers to $g$ by \cite[Theorem~5.5.9, pp.~168--169]{saloff2002aspects}. Its equivalence with two-sided Gaussian heat-kernel estimates follows from \cite[Theorems~5.5.1 and~5.5.3, pp.~157--161]{saloff2002aspects}. Then \eqref{eq:manifold_gaussian} follows from \eqref{eq:manifold_metric_comparison} and the fact that $m(B_r(x))\asymp r^n$.

   \end{enumerate}
\end{example}

\par

	\section{Preliminaries}

	This section establishes the preliminary results about stochastic integrals, heat kernels and random fields for our general setting. These results are necessary for the later analysis.
	Unless otherwise stated, we assume that $(\X,\mathcal{L}, \dot{W})$ is as in Theorem~\ref{thm:main2}. 

	We first state the following random-field version of the stochastic Fubini theorem (see \cite[Theorem~4.33]{da2014stochastic} and its proof), while recalling the notation in Lemma~\ref{lem:H0}.

	\begin{lemma}[Stochastic Fubini]\label{thm:stochastic_fubini}
		Suppose that $(\Omega, \mathscr{F}, (\mathscr{F}_t)_{t\geq 0}, \mathbb P, W)$ is a stochastic basis given as in \eqref{eq:SB}. 
		Let  $(E, \mathscr E, \nu)$
		be a $\sigma$-finite measure space and let $T\geq 0$. 
		Let $\mathcal P_T$ denote the restriction of $\mathcal P$ to $\Omega\times[0,T]$.
		Let $\phi:=(\phi(t,\cdot, \theta): t \ge 0, \theta \in E )$ be a predictable random field in $\mathcal H_{K}$
		such that the map
		\[
			(\omega,t,\theta)\longmapsto [\phi(\omega,t,\cdot,\theta)]_K
		\]
		is $\mathcal P_T\otimes\mathscr E$-measurable with values in $\widetilde{\mathcal H}_K$, and
		\begin{align} 
			\norm{\phi}_{L^1_{E} L^2_{\Omega\times[0,T]}\mathcal H_{K}}:=
			\int_{E} \E\left[\int_0^T \norm{\phi(t,\cdot, \theta)}^2_{\mathcal H_{K}} \mathrm dt\right]^{1/2} \nu(\mathrm d\theta) <\infty.
		\end{align} 
		Then, there exists a real-valued measurable function  $(\Phi_T(\omega,\theta))_{\omega\in \Omega, \theta \in E}$ on 
		$(\Omega \times E, \mathscr{F}_T \otimes \mathscr E)$ such that
		\[
		\Phi_T(\cdot, \theta)=\iint_0^T \phi(t,x,\theta)W(\mathrm dt,\mathrm dx), \quad \text{a.s.,}\quad \text{for $\nu$-almost every $\theta\in E$},
		\]
		and that
		\[
		\int_{E} \Phi_T(\cdot, \theta) \nu(\mathrm d\theta) = \iint_0^T \left( \int_{E} [\phi(t,x,\theta)]_K\,\nu(\mathrm d\theta)\right) W(\mathrm dt,\mathrm dx),\quad \text{a.s.}
		\]
		Here the inner integral on the right-hand side is a Bochner integral in $\widetilde{\mathcal H}_K$.
	\end{lemma}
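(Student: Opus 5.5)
We reduce the statement to the Hilbert-space stochastic Fubini theorem of \cite[Theorem~4.33]{da2014stochastic}. Lemma~\ref{lem:HS} identifies the random-field integral \eqref{E:StocIntegral} with the integral of the Hilbert--Schmidt-operator-valued integrand $\Gamma_{\phi(t,\cdot,\theta)}$ against the $Q$-Wiener process $W$. Since $[\cdot]_K\mapsto\Gamma_{\cdot}$ is bounded linear from $\widetilde{\mathcal H}_K$ into $\mathcal L^{\mathrm{HS}}_{Q^{1/2}\widetilde{\mathcal H}_K}$ (with norm at most $\|Q^{1/2}\|$), the measurability hypothesis on $(\omega,t,\theta)\mapsto[\phi]_K$ transfers to $\Psi(\omega,t,\theta):=\Gamma_{\phi(\omega,t,\cdot,\theta)}$. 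This makes $\Psi$ a $\mathcal P_T\otimes\mathscr E$-measurable $\mathcal L^{\mathrm{HS}}$-valued map. Its norm satisfies $\|\Psi\|_{\mathrm{HS}}\le\|Q^{1/2}\|\,\|\phi(t,\cdot,\theta)\|_{\mathcal H_K}$, so the integrability hypothesis gives
\[
\int_E \E\Bigl[\int_0^T\|\Psi(t,\theta)\|^2_{\mathrm{HS}}\,\mathrm dt\Bigr]^{1/2}\nu(\mathrm d\theta)<\infty,
\]
which is exactly the hypothesis of the cited theorem (stated there with the $L^1_E L^2$ norm; if the reference uses the stronger $\int_E\|\cdot\|_{L^2}$ form, this is precisely it).

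The first key step is constructing $\Phi_T$ jointly measurably. I would not take the stochastic integral separately for each $\theta$, since that gives no joint measurability. Instead, following the proof in \cite{da2014stochastic}, I first treat elementary integrands $\phi=\sum_j \mathbf 1_{A_j}(\theta)\mathbf 1_{(s_j,t_j]}(t)\mathbf 1_{F_j}(\omega)g_j$ with $F_j\in\mathscr F_{s_j}$ and $g_j\in\mathcal H_K$. For these $\Phi_T(\omega,\theta)$ is an explicit finite sum, so it is $\mathscr F_T\otimes\mathscr E$-measurable, and the Fubini identity is just linearity of $W$. Next I approximate a general $\phi$ in the norm $\|\cdot\|_{L^1_E L^2_{\Omega\times[0,T]}\mathcal H_K}$ by elementary $\phi^n$. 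Such approximations exist because elementary processes are dense in $L^2(\Omega\times[0,T],\mathcal P_T;\widetilde{\mathcal H}_K)$ and $\widetilde{\mathcal H}_K$ is separable (Lemma~\ref{lem:H0}); one then uses Bochner-space density in $L^1(E;L^2(\cdots))$. By the Itô isometry, $\int_E\|\Phi^n_T(\theta)-\Phi^m_T(\theta)\|_{L^2(\Omega)}\nu(\mathrm d\theta)\to0$. Passing to a subsequence that converges $\mathbb P\otimes\nu$-a.e. (after reducing $\nu$ to a finite equivalent measure by $\sigma$-finiteness), I define $\Phi_T$ as the $\limsup$. This limit is jointly measurable, and for $\nu$-a.e.\ $\theta$ it equals the stochastic integral a.s.

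The second step passes the identity to the limit. The left sides satisfy $\E\bigl|\int_E(\Phi^n_T-\Phi_T)\,\mathrm d\nu\bigr|\le\int_E\|\Phi^n_T-\Phi_T\|_{L^2(\Omega)}\,\mathrm d\nu\to0$. On the right, the Bochner integral $\int_E[\phi^n]_K\,\mathrm d\nu$ converges to $\int_E[\phi]_K\,\mathrm d\nu$ in $L^2(\Omega\times[0,T];\widetilde{\mathcal H}_K)$ by Minkowski's integral inequality. This needs the Bochner integral to exist for a.e.\ $(\omega,t)$, which requires $\int_E\|\phi(t,\cdot,\theta)\|_{\mathcal H_K}\,\nu(\mathrm d\theta)<\infty$ a.e. That follows from the hypothesis together with Minkowski, which gives $\|\int_E\|\phi\|\,\mathrm d\nu\|_{L^2(\Omega\times[0,T])}\le\|\phi\|_{L^1_EL^2}$. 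The same bound shows the Bochner integral is predictable, so the Itô isometry finishes the argument.

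I expect the main obstacle to be measurability bookkeeping rather than estimates. Three points need care: working in the quotient $\widetilde{\mathcal H}_K$, where the integral and $\Gamma$ are well defined, instead of on representatives in $\mathcal H_K$; checking predictability of the Bochner integral; and producing a version of $\Phi_T$ that is $\mathscr F_T\otimes\mathscr E$-measurable via the a.e.-convergent subsequence.
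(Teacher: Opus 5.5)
Your proposal is correct and is essentially the paper's argument: the paper proves this lemma only by pointing to \cite[Theorem~4.33]{da2014stochastic} and its proof, and your transfer through the bounded linear map $[\cdot]_K\mapsto\Gamma_{\cdot}$ of Lemma~\ref{lem:HS}, together with your reduction of the $\sigma$-finite $\nu$ to a finite equivalent measure and your elementary-approximation construction of an $\mathscr F_T\otimes\mathscr E$-measurable $\Phi_T$, is that cited proof written out. The one step to tighten is the ``Bochner-space density'': $L^2(\Omega\times[0,T],\mathcal P_T;\widetilde{\mathcal H}_K)$ need not be separable, so you should justify strong measurability of the map sending $\theta$ to the process $(\omega,t)\mapsto[\phi(\omega,t,\cdot,\theta)]_K$ (for instance, a $\mathcal P_T\otimes\mathscr E$-measurable map into a separable space is $\mathcal A_0\otimes\mathscr E$-measurable for some countably generated $\mathcal A_0\subset\mathcal P_T$), or else approximate $\phi$ directly by simple functions on $\mathcal P_T\otimes\mathscr E$ and then by rectangles.
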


	The following BDG-type estimate controls moments of the stochastic convolutions appearing throughout the paper.
	\begin{lemma}\label{cor:mtg+mink_cs}
		Suppose that $(\Omega, \mathscr{F}, (\mathscr{F}_t)_{t\geq 0}, \mathbb P, W)$ is a stochastic basis given as in \eqref{eq:SB}. 
		Then for every $p\geq 2$, uniformly for every $t\geq 0$ and real-valued predictable random field $(\Phi(s,y))_{s\geq 0, y\in \X}$ 
		such that 
		\[
		\|\Phi\|_{L_{[0,t]}^2 \mathcal H_{K} L^p_\Omega} :=\left(\int_0^t \left\|\left\|\Phi(s,\cdot)\right\|_{L^p_{\Omega}}\right\|_{\mathcal H_{K}}^2 \mathrm ds \right)^\frac{1}{2} < \infty, 
		\]
		we have
		\begin{align}\label{ineq:mtg+mink+cs}
			\left\|\iint_0^t \Phi(s,y)W(\mathrm ds,\mathrm dy)\right\|_{L^p_\Omega} 
			\lesssim \|\Phi\|_{L_{[0,t]}^2 \mathcal H_{K} L^p_\Omega}.
		\end{align}
	\end{lemma}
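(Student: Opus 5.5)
The plan is to reduce \eqref{ineq:mtg+mink+cs} to the Burkholder--Davis--Gundy inequality for the real-valued continuous local martingale $M_r:=\iint_0^r \Phi(s,y)W(\mathrm ds,\mathrm dy)$, $r\in[0,t]$. Then we pull the $L^p_\Omega$ norm inside the quadratic variation with Minkowski's integral inequality, and finally inside the $\mathcal H_K$ seminorm with a Cauchy--Schwarz argument on the non-negative measure $K$. Since we have an inequality, we may assume the right-hand side is finite. In that case $\int_0^t\|\Phi(s,\cdot)\|_{\mathcal H_K}^2\,\mathrm ds<\infty$ almost surely (this will follow from the last step below), so the stochastic integral \eqref{E:StocIntegral} is well defined.

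\textbf{Step 1 (BDG).} By BDG for continuous local martingales with constant $C_p$ depending only on $p$, and the quadratic-variation bound after \eqref{E:StocIntegral} via Lemma~\ref{lem:HS},
\[
\|M_t\|_{L^p_\Omega}\lesssim \Big\|\langle M\rangle_t^{1/2}\Big\|_{L^p_\Omega}\le \|Q^{1/2}\|\,\Big\|\int_0^t\|\Phi(s,\cdot)\|_{\mathcal H_K}^2\,\mathrm ds\Big\|_{L^{p/2}_\Omega}^{1/2}.
\]

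\textbf{Step 2 (Minkowski in time).} Since $p/2\ge1$, Minkowski's integral inequality gives
\[
\Big\|\int_0^t\|\Phi(s,\cdot)\|_{\mathcal H_K}^2\,\mathrm ds\Big\|_{L^{p/2}_\Omega}\le\int_0^t\big\|\|\Phi(s,\cdot)\|_{\mathcal H_K}^2\big\|_{L^{p/2}_\Omega}\,\mathrm ds .
\]

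\textbf{Step 3 (Minkowski inside $\mathcal H_K$).} This is the only nonstandard point and the main obstacle: since $K$ need not be a positive-definite kernel pointwise, we bound via absolute values. Indeed $\|g\|_{\mathcal H_K}^2\le\iint|g(x)g(y)|K(\mathrm dx,\mathrm dy)$ because $K\ge0$ as a measure. Fix $s$. Minkowski for the $L^{p/2}_\Omega$ norm of a $K$-integral, then Hölder in $\omega$, give
\[
\Big\|\iint|\Phi(s,x)\Phi(s,y)|K(\mathrm dx,\mathrm dy)\Big\|_{L^{p/2}_\Omega}\le\iint\|\Phi(s,x)\Phi(s,y)\|_{L^{p/2}_\Omega}K(\mathrm dx,\mathrm dy)\le\iint \|\Phi(s,x)\|_{L^p_\Omega}\|\Phi(s,y)\|_{L^p_\Omega}K(\mathrm dx,\mathrm dy).
\]
The right-hand side equals $\|\,\|\Phi(s,\cdot)\|_{L^p_\Omega}\|_{\mathcal H_K}^2$, since the function $x\mapsto\|\Phi(s,x)\|_{L^p_\Omega}$ is non-negative and hence its $\mathcal H_K$ seminorm uses the same absolute-value integrand. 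Measurability of $x\mapsto\|\Phi(s,x)\|_{L^p_\Omega}$ follows from joint measurability of $\Phi$ and Tonelli.

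\textbf{Conclusion.} Combining Steps 1--3,
\[
\|M_t\|_{L^p_\Omega}\lesssim\Big(\int_0^t\big\|\|\Phi(s,\cdot)\|_{L^p_\Omega}\big\|_{\mathcal H_K}^2\,\mathrm ds\Big)^{1/2}=\|\Phi\|_{L^2_{[0,t]}\mathcal H_KL^p_\Omega},
\]
where the implicit constant $C_p\|Q^{1/2}\|$ is independent of $t$ and $\Phi$. Finiteness of the right-hand side, together with Steps 2--3 at $p=2$, also gives the almost sure finiteness needed in the preliminary remark.
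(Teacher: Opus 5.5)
Your proposal is correct and follows essentially the same route as the paper: the BDG inequality, Minkowski's inequality in time and then in the $K$-integral, and H\"older's (Cauchy--Schwarz) inequality in $\omega$ to reach $\|\,\|\Phi(s,\cdot)\|_{L^p_\Omega}\|_{\mathcal H_K}^2$. The differences are cosmetic: the paper cites the Hilbert-space BDG inequality of Da Prato--Zabczyk, which already packages your Steps 1--2, whereas you apply scalar BDG to the quadratic variation and also check explicitly that the stochastic integral is well defined.
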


	The next lemma generalizes  \eqref{cond:exp_hk_lamda} and  \eqref{cond:time_holder}  up to an arbitrary given time.
	
	\begin{lemma}\label{lem:hk longtime}
		Suppose that $\lambda \in \R$ and 
		$\tilde \alpha \in (0,1)$ satisfy \eqref{cond:exp_hk_lamda} with $\alpha$ being replaced by $\tilde \alpha$,
		then there exist $\kappa, \tilde \kappa >0$ so that 
		uniformly for every $t>0$ and $x\in \X$,
		\begin{align}\label{ineq:hk_m long time}
			\| G_t(\cdot,x)h(\cdot)^\lambda\|_{L^1_\X} \lesssim e^{\kappa t}h(x)^\lambda,
		\end{align}
		and
		\begin{align}\label{ineq:hk_k long time}
			\| G_t(\cdot,x)h(\cdot)^\lambda\|_{\mathcal H_{K}}^2 
			\lesssim t^{-\tilde \alpha}
			e^{\tilde \kappa t}h(x)^{2\lambda}.
		\end{align}
		Let $\beta\in [0,1)$ and $\xi_1,\xi_2>0$ be given as in \eqref{cond:time_holder}. For each $T>0$, it holds uniformly in $t,t' \in (0,T]$ and $x \in \X$ that
		\begin{align}\label{ineq:time_holder_long_time}
			\norm{G_t(\cdot,x) - G_{t'}(\cdot,x)}_{L^1_\X} \lesssim  \frac{\abs{t-t'}^{\xi_1 \wedge 1}}{\pr{t\wedge t'}^\beta}.
		\end{align}
		Moreover, it holds uniformly in $t\in(0,T]$ and $x,x'\in\X$ with $d(x,x')\le1$ that
		\begin{align}\label{ineq:spatial_holder_long_time}
			\norm{G_t(\cdot,x)-G_t(\cdot,x')}_{L^1_\X}
			\lesssim \frac{d(x,x')^{\xi_2}}{t^\beta}.
		\end{align}
	\end{lemma}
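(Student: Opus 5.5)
The plan is to extend the short-time bounds in \eqref{cond:exp_hk_lamda} and \eqref{cond:time_holder} to arbitrary times using the Chapman--Kolmogorov identity \eqref{eq:G}, iterating over unit time steps.

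\emph{The $L^1$ bound \eqref{ineq:hk_m long time}.} Write $t=k+s$ with $k\in\{0,1,2,\dots\}$ and $s\in(0,1]$. Set $F_t(x):=\int G_t(y,x)h(y)^\lambda m(\mathrm dy)$. By \eqref{eq:G} and Tonelli,
\[
F_{t+1}(x)=\int G_1(z,x)F_t(z)\,m(\mathrm dz).
\]
The hypothesis gives $F_s\le C h^\lambda$ for $s\in(0,1]$. Induction then yields $F_t(x)\le C^{k+1}h(x)^\lambda$, so \eqref{ineq:hk_m long time} holds with $\kappa=\log C$ (enlarging $C\ge1$). For $\lambda<0$ the same argument works, provided the hypothesis is read as holding for that $\lambda$.

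\emph{The $\mathcal H_K$ bound \eqref{ineq:hk_k long time}.} For $t\le1$ this is the hypothesis. For $t>1$, write $G_t(\cdot,x)=\int G_{s}(\cdot,z)G_{t-s}(z,x)\,m(\mathrm dz)$ with $s=t-\lfloor t\rfloor\in(0,1]$, or $s=1$ when $t$ is an integer. Multiplying by $h^\lambda$ exhibits $G_t(\cdot,x)h^\lambda$ as a nonnegative mixture $\int \phi_z\,\mu(\mathrm dz)$, where $\phi_z=G_s(\cdot,z)h^\lambda$ and $\mu(\mathrm dz)=G_{t-s}(z,x)m(\mathrm dz)$. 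The seminorm $\|\cdot\|_{\mathcal H_K}$ is Hilbertian, and $K$ is nonnegative definite; for nonnegative integrands one also has $\int\!\!\int |f(x)f(y)|K(\mathrm dx,\mathrm dy)$ finite by Tonelli. Minkowski's inequality for this seminorm then gives
\[
\|G_t(\cdot,x)h^\lambda\|_{\mathcal H_K}\le\int\|\phi_z\|_{\mathcal H_K}\,\mu(\mathrm dz)\lesssim s^{-\tilde\alpha/2}\int h(z)^\lambda G_{t-s}(z,x)\,m(\mathrm dz).
\]
The last integral is bounded by $e^{\kappa(t-s)}h(x)^\lambda$ by the first part. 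The problem is the factor $s^{-\tilde\alpha/2}$, since $s$ may be tiny. To avoid this, I will instead split as $s'=t-\lfloor t\rfloor+1\in(1,2]$ or, more simply, always use $s=1/2$ once $t\ge1$: the factor is then a constant, and $t^{-\tilde\alpha}e^{\tilde\kappa t}\gtrsim1$ absorbs it for $t\ge1$ with $\tilde\kappa=2\kappa$.

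The main technical point is the justification of Minkowski for the $K$-seminorm integral, which is essentially the Bochner integral in $\tilde{\mathcal H}_K$. Because $K$ may be a signed-density-free Radon measure, I will justify it directly. Expand $\|\int\phi_z\mu(\mathrm dz)\|^2_{\mathcal H_K}=\iint\langle\phi_z,\phi_{z'}\rangle_K\,\mu(\mathrm dz)\mu(\mathrm dz')$ by Fubini, which is legitimate since all integrands are nonnegative. Then apply Cauchy--Schwarz in the semi-inner product.

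\emph{The Hölder bounds.} For \eqref{ineq:spatial_holder_long_time} with $t\le1$ this is the hypothesis. For $t>1$, write
\[
G_t(\cdot,x)-G_t(\cdot,x')=\int G_{t-1}(\cdot,z)\bigl(G_1(z,x)-G_1(z,x')\bigr)\,m(\mathrm dz).
\]
Taking $L^1$ norms and using $\int G_{t-1}(y,z)\,m(\mathrm dy)\le e^{\kappa T}$ (the first part with $\lambda=0$), we get the bound $e^{\kappa T}d(x,x')^{\xi_2}\lesssim d(x,x')^{\xi_2}/t^\beta$ on $(0,T]$. For \eqref{ineq:time_holder_long_time}, take $t<t'$. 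If $t'-t\ge1$, both norms are bounded by $2e^{\kappa T}$ and the right side is $\gtrsim T^{-\beta}$. If $t'-t<1$, pick $r=\max(t-1/2,0)$ and factor $G_t=\int G_r(\cdot,z)G_{t-r}(z,\cdot)$, $G_{t'}=\int G_r G_{t'-r}$ with $t-r,t'-r\in(0,3/2]$. If needed, subdivide $[t,t']$ into two pieces each of length at most $1$ so that the base bound applies, with denominator $(t-r)\wedge\cdot\gtrsim t\wedge1/2$. This gives $|t-t'|^{\xi_1}$, and replacing $\xi_1$ by $\xi_1\wedge1$ handles large gaps uniformly on bounded intervals.
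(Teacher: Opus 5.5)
Your proposal follows the paper's proof in all four parts: Chapman--Kolmogorov iteration over unit steps for \eqref{ineq:hk_m long time}; splitting off a fixed time of order $\tfrac12$ and expanding the $K$-quadratic form by Tonelli and the Cauchy--Schwarz inequality of Lemma~\ref{lem:H0} for \eqref{ineq:hk_k long time}; factoring through $G_1$ for \eqref{ineq:spatial_holder_long_time}; and factoring at $t-\tfrac12$ for \eqref{ineq:time_holder_long_time}. For large time gaps, your crude bound $\|G_t(\cdot,x)\|_{L^1_\X}+\|G_{t'}(\cdot,x)\|_{L^1_\X}\lesssim e^{\kappa T}$ is valid and simpler than the paper's chaining through half-integers.

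Two slips need correcting, both easily fixed.

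First, $\tilde\kappa=2\kappa$ does not work. For $t\ge1$ your bound is $\lesssim e^{2\kappa t}h(x)^{2\lambda}$. Dominating it by $t^{-\tilde\alpha}e^{\tilde\kappa t}h(x)^{2\lambda}$ requires $t^{\tilde\alpha}\lesssim e^{(\tilde\kappa-2\kappa)t}$, which fails as $t\to\infty$ when $\tilde\kappa=2\kappa$. The fact that $t^{-\tilde\alpha}e^{\tilde\kappa t}\gtrsim1$ is not what is needed. Take $\tilde\kappa=2\kappa+1$ and use $t^{\tilde\alpha}\le e^t$, as the paper does.

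Second, consider the temporal step with $t>\tfrac12$ and $t'-t\in[\tfrac12,1)$. After factoring at $r=t-\tfrac12$ you get $t'-r>1$, so \eqref{cond:time_holder} does not apply. Splitting $[t,t']$ into two pieces does not help unless each piece is re-factored at its own left endpoint minus $\tfrac12$. The clean fix is to use your crude bound whenever $t'-t\ge\tfrac12$, since the right-hand side is then at least $2^{-(\xi_1\wedge1)}T^{-\beta}$. In the remaining case $t'-t<\tfrac12$ one has $t-r,\,t'-r\in(0,1]$, and this is exactly the paper's intermediate estimate \eqref{ineq:time_holder_proof_half}.

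For $t\le\tfrac12$, do not factor at all: under the convention \eqref{eq:GC} a factor $G_0$ would vanish. In that case $t'<1$ and the hypothesis applies directly.
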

	
	In the rest of the paper, we write $G(s,y; t, x):= G_{t-s}(y,x)$ and $G(t,y,x):=G_t(y,x)$ for every $(s,y)$ and $(t,x)$ in $\mathbb R_+\times \X$. 
	Pay attention that $G(s,\cdot; t, \cdot) \equiv 0$ if $s\geq t$.
	For any real-valued measurable function $(f(s,y))_{(s,y)\in \mathbb R_+\times \X}$, we define (semi)norms
	\[ 
	\| f \|_{L^1_{[0,t)} L^1_\X}:= \int_0^{t} \mathrm ds \int_\X |f(s,y) | m(\mathrm d y), \quad t \in [0,\infty) \cup \{\infty\},
	\]
	and
	\[
	\|f\|_{L^2_{[0,t)}\mathcal H_{K}}:= \left(\int_0^t \| f(s, \cdot) \|_{\mathcal H_{K}}^2 \mathrm ds \right)^{1/2}, \quad t\in [0,\infty)\cup\{\infty\}.
	\]
	The next lemma gives the increment estimates used to establish the continuity of the solutions.
	\begin{lemma} \label{lem:hk_abs}
		Let $\xi_1,\, \xi_2 \in (0,1]$ be as in \eqref{cond:time_holder}, then
		for any $T>0$ and $\lambda \in [0,1/2]$, it holds uniformly for 
		$t,t'$ in $[0,T]$
		and  $x,x' \in \X$ with $d(x,x') \le 1$,
		that 
		\begin{align}
			h(x)^{-\lambda}\| h(\cdot)^{\lambda}\pr{G(\cdot,\cdot;t,x) - G(\cdot, \cdot; t', x')} \|_{L^1_{[0,T)} L^1_\X}
			\lesssim  |t-t'|^{{{\xi_1 \wedge 1}\over 2}} + d(x,x')^{{\xi_2 \over 2}\wedge 1}. \label{ineq:hk abs holder m}
		\end{align}
		Let $\alpha\in (0,1)$ and $\beta \in [0,1)$ be as in \eqref{cond:exp_hk_lamda} and \eqref{cond:time_holder} respectively.  
		There exist $\widetilde \xi_1,\widetilde \xi_2 \in (0,1)$, so that for every $\lambda_1\in[0,1/2]$ and $T>0$,
		it holds uniformly for $t,t'$ in $[0,T]$ and  $x,x' \in \X$ with $d(x,x') \le 1$ that
		\begin{align}
			h(x)^{-2\lambda_1}\| h(\cdot)^{\lambda_1}\abs{G(\cdot,\cdot;t,x) - G(\cdot, \cdot; t', x') }\|^2_{L^2_{[0,T)}\mathcal H_{K}}
			\lesssim |t-t'|^{2\widetilde \xi_1} + d(x,x')^{2\widetilde \xi_2}. \label{ineq:hk abs holder k}
		\end{align}
		Moreover, there exist $\xi\in(0,1]$ and $\alpha_0\in(0,1)$ such that, for every $\lambda\in[0,1/2]$ and $T>0$, uniformly for $x,x'\in\X$ with $d(x,x')\le1$ and $0\le t\le t'\le T$,
		\begin{align}
			\int_0^{t'}\left\|\left|G_{t-s}(\cdot,x)-G_{t'-s}(\cdot,x')\right|h(\cdot)^\lambda\right\|_{\mathcal H_K}^2\,\mathrm ds
			\lesssim t'^{1-\alpha_0}\left(|t-t'|^\xi+d(x,x')^\xi\right)h(x)^{2\lambda}.
			\label{ineq:hk abs holder k weighted}
		\end{align}
	\end{lemma}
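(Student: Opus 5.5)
We may take $t\le t'$ and split the increment into a time part and a space part:
\[
G(s,y;t,x)-G(s,y;t',x')=\bigl[G_{t-s}(y,x)-G_{t'-s}(y,x)\bigr]+\bigl[G_{t'-s}(y,x)-G_{t'-s}(y,x')\bigr]=:D^{(1)}_s(y)+D^{(2)}_s(y).
\]
Here $G_r:=0$ for $r\le 0$, so on $s\in[t,t')$ the first bracket is just $-G_{t'-s}(y,x)$. Each piece is estimated separately and the results are summed. In the spatial piece, $h(x')\lesssim h(x)$ by \eqref{ineq:ref_regular}, since $d(x,x')\le 1$, so weights centred at $x'$ can be traded for weights centred at $x$.

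\emph{Weighted $L^1$ bound \eqref{ineq:hk abs holder m}.} By Cauchy--Schwarz with respect to the measure $|D|\,\mathrm dm$,
\[
\int|D|h^\lambda\,\mathrm dm\le\Bigl(\int|D|\,\mathrm dm\Bigr)^{1/2}\Bigl(\int|D|h^{2\lambda}\,\mathrm dm\Bigr)^{1/2}.
\]
This is where $\lambda\le 1/2$ and the halved exponents come from. The first factor is controlled by \eqref{ineq:time_holder_long_time} or \eqref{ineq:spatial_holder_long_time}. The second factor is bounded by $e^{\kappa T}h(x)^{2\lambda}$ (up to constants), using $|D|\le G+G$, \eqref{ineq:hk_m long time} with $2\lambda\le1$, and \eqref{ineq:ref_regular}.

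For $s<t$ this gives the integrand
\[
h(x)^{\lambda}\,|t-t'|^{(\xi_1\wedge1)/2}\,(t-s)^{-\beta/2},
\]
and the analogous spatial bound. Both are integrable over $s\in[0,T)$ because $\beta<1$. On $s\in[t,t')$ we only use \eqref{ineq:hk_m long time}, which contributes $h(x)^\lambda|t-t'|\lesssim h(x)^\lambda|t-t'|^{(\xi_1\wedge 1)/2}$ for $t,t'\le T$.

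\emph{$\mathcal H_K$ bounds \eqref{ineq:hk abs holder k} and \eqref{ineq:hk abs holder k weighted}.} This is the main obstacle. The $\mathcal H_K$ norm of $|D|h^{\lambda_1}$ cannot be read off from $L^1$ smallness directly. The plan is to smooth with Chapman--Kolmogorov \eqref{eq:G}.

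Put $\delta=(t-s)/2$. Then
\[
|D^{(1)}_s(y)|\le\int G_\delta(y,z)\,|G_{t-s-\delta}(z,x)-G_{t'-s-\delta}(z,x)|\,m(\mathrm dz),
\]
and similarly for $D^{(2)}$. Since $K\ge0$, the map $f\mapsto\|\,|f|\,\|_{\mathcal H_K}$ is monotone and satisfies Minkowski's integral inequality. Combining this with $h(y)^{\lambda_1}\le$ the weight structure and with \eqref{ineq:hk_k long time} gives
\[
\|\,|D_s|h^{\lambda_1}\|_{\mathcal H_K}\lesssim\delta^{-\alpha/2}\int h(z)^{\lambda_1}|\Delta(z)|\,m(\mathrm dz).
\]
The right-hand integral is exactly the weighted $L^1$ quantity bounded in the first step. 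Squaring, this is bounded by a multiple of
\[
A:=(t-s)^{-\alpha-\beta}\bigl(|t-t'|^{\xi_1\wedge1}+d(x,x')^{\xi_2}\bigr)h(x)^{2\lambda_1}.
\]
We also have the trivial bound $B:=(t-s)^{-\alpha}h(x)^{2\lambda_1}$, again from \eqref{ineq:hk_k long time}. Since $\alpha+\beta$ may exceed $1$, we interpolate using $\min(A,B)\le A^\theta B^{1-\theta}$ with $\theta>0$ small enough that $\alpha+\theta\beta<1$. The resulting integrand is integrable on $[0,t)$, and this gives \eqref{ineq:hk abs holder k} with $\widetilde\xi_i$ of order $\theta\xi_i/2$. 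On $s\in[t,t')$ we use $\int_t^{t'}(t'-s)^{-\alpha}\,\mathrm ds\asymp|t-t'|^{1-\alpha}$ directly.

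For \eqref{ineq:hk abs holder k weighted} we run the same computation and keep track of the dependence on $t'$.
\begin{itemize}
\item On $[0,t)$: $\int_0^t(t-s)^{-\alpha-\theta\beta}\,\mathrm ds\lesssim t^{1-\alpha-\theta\beta}\le T^{\,\cdots}t'^{1-\alpha_0}$, provided $\alpha_0\ge\alpha+\theta\beta$.
\item On $[t,t')$: we need $(t'-t)^{1-\alpha}\le t'^{1-\alpha_0}(t'-t)^{\xi}$. This holds with $\xi\le\theta\xi_i$ small and $\alpha_0:=\max(\alpha+\xi,\alpha+\theta\beta)<1$, because $(t'-t)^{1-\alpha-\xi}\le t'^{1-\alpha-\xi}$.
\end{itemize}
Throughout, the constants depend only on $T$, through the factors $e^{\kappa T}$ and $e^{\tilde\kappa T}$ from Lemma~\ref{lem:hk longtime}.
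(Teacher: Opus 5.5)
Your proof is correct and follows essentially the paper's route. You use Cauchy--Schwarz against $|D|\,\mathrm dm$ to absorb the weight in \eqref{ineq:hk abs holder m}; for the $\mathcal H_K$ bounds you smooth by Chapman--Kolmogorov at half the elapsed time and then apply Lemma~\ref{lem:Min_H} and \eqref{ineq:hk_k long time}; and you use the same $[0,t)$ versus $[t,t')$ bookkeeping for \eqref{ineq:hk abs holder k weighted}. The differences are cosmetic: you apply Cauchy--Schwarz pointwise in $s$ rather than on $[0,T]\times\X$, and you tame the $(t-s)^{-\alpha-\beta}$ singularity by interpolating against the trivial bound $(t-s)^{-\alpha}h(x)^{2\lambda_1}$, where the paper applies H\"older with a large exponent $q$ inside the weighted $L^1$ integral; taking $\theta=2/q$ makes the two identical.
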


	We equip $\mathcal C_{\R_+}\mathcal C^{\mathrm{tem}}_{\X,h}$ with its compact-open topology. 
	The following lemma gives criteria for the existence of a $\mathcal C_{\R_+}\mathcal C^{\mathrm{tem}}_{\X,h}$-valued modification of a given random field.

	\begin{lemma}\label{lem:tight_tem_moment}
		Let $X$ be a real-valued random field on  $\R_+ \times \X$.
		Suppose
		that for each  $\lambda\in (0,1]$, there exists $p_0 = p_0(\lambda) \ge 2$ so that for all $p \ge p_0$, it holds uniformly in $x\in \X$ that
		\begin{align}
			\E\left[|X(0,x)|^p\right] \lesssim h(x)^{\lambda}. 
		\end{align}
		Suppose further that there exist
		$\tilde \xi_1,\tilde \xi_2 \in (0,1]$
		so that for each $T>0$, $\lambda \in (0,1]$ and $p \ge 2$,  uniformly for 
		$t,t'$ in $[0,T]$ with $|t-t'| \le 1$,
		$x \in \X$ and $y_1,y_2 \in B_1(x)$,
		\begin{align}\label{cond:tight_moment}
			\E\left[\left| X(t,y_1) - X(t',y_2) \right| ^p\right] \lesssim  \left( |t'-t|^{\tilde \xi_1} + d(y_1,y_2)^{\tilde \xi_2}\right)^p h(x)^{\lambda}.
		\end{align}
		Then,
		the random field $X$ admits a continuous modification $\tilde X$ taking values in $\mathcal C_{\mathbb R_+}\mathcal C^{\mathrm{tem}}_{\X,h}$.
	\end{lemma}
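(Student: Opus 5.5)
The plan is to localize to the covering balls from \eqref{ineq:faster_than_ms}, apply a metric Kolmogorov--Chentsov theorem on each piece with moment bounds that keep track of $h(x_n)$, and then sum over the covering with Borel--Cantelli to get tempered growth.

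\emph{Local step.} Fix $T\in\N$ and the covering $(x_n)_{n\in\mathcal I}$ from \eqref{ineq:faster_than_ms}. On $E_n:=[0,T]\times B_1(x_n)$ use the quasi-metric $\rho((t,y),(t',y')):=|t-t'|^{\tilde\xi_1}+d(y,y')^{\tilde\xi_2}$. By \eqref{ineq:entropy bound} (with $R=1$), $E_n$ can be covered by $\lesssim \varepsilon^{-(1/\tilde\xi_1+b/\tilde\xi_2)}$ $\rho$-balls of radius $\varepsilon$, uniformly in $n$. The increment hypothesis \eqref{cond:tight_moment} with $x=x_n$ holds for every $p$. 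The restriction $|t-t'|\le1$ is harmless: split $[0,T]$ into unit intervals, or chain through intermediate times.

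I would then apply the Kolmogorov--Chentsov theorem on metric spaces \cite[Theorem~1.1]{kratschmer2022kolmogorov}, or redo its chaining argument by hand. For $p$ larger than the entropy dimension, this gives a modification $X^{(n)}$ on $E_n$ that is $\rho$-H\"older of some fixed order $\theta>0$. It also gives a constant $M_n$ with
\[
\E\Bigl[\sup_{z\ne z'\in E_n}\frac{|X^{(n)}(z)-X^{(n)}(z')|^p}{\rho(z,z')^{\theta p}}\Bigr]\lesssim h(x_n)^\lambda,
\]
uniformly in $n$. This is where the constants must be tracked: the implicit constant in \eqref{cond:tight_moment} and the entropy constant are both uniform in $n$, so the chaining bound is uniform up to the factor $h(x_n)^\lambda$. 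Combining this with the bound on $\E|X(0,x_n)|^p$ gives $\E\bigl[\sup_{E_n}|X^{(n)}|^p\bigr]\lesssim h(x_n)^\lambda$ for $p\ge p_0(\lambda)$.

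\emph{Gluing.} On overlaps $E_n\cap E_m$, the fields $X^{(n)}$ and $X^{(m)}$ are continuous and agree a.s.\ on a countable dense subset (both are modifications of $X$). Hence they agree everywhere a.s. Since $\mathcal I$ and the set of $T\in\N$ are countable, they patch into a single continuous $\tilde X$ on $\R_+\times\X$ that is a modification of $X$.

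\emph{Tempered growth and continuity in $\mathcal C^{\mathrm{tem}}_{\X,h}$.} Fix a rational $\lambda'>0$. Choose $\lambda\in(0,1]$ and $p\ge p_0(\lambda)$ with $p\lambda'-\lambda\ge\lambda_0$. Chebyshev gives
\[
\bbP\Bigl(\sup_{E_n}|\tilde X|>h(x_n)^{\lambda'}\Bigr)\lesssim h(x_n)^{\lambda-p\lambda'},
\]
which is summable over $n$ by \eqref{ineq:faster_than_ms}. By Borel--Cantelli, a.s.\ $\sup_{E_n}|\tilde X|\le h(x_n)^{\lambda'}$ for all but finitely many $n$. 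Then \eqref{ineq:ref_regular} gives $\sup_{t\le T}\|\tilde X_t\|_{(\lambda'')}<\infty$ for every $\lambda''>\lambda'$; letting $\lambda'$ run over the rationals gives this for every $\lambda''>0$.

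For continuity of $t\mapsto\tilde X_t$ in each norm $\|\cdot\|_{(\mu)}$ I split into two cases. If $\mathcal I$ is finite, uniform continuity on $\bigcup_n E_n$ follows from the H\"older bounds. If $\mathcal I$ is infinite, summability forces $h(x_n)\to\infty$. Using the bound with $\lambda'=\mu/2$, the contribution of balls with $n>N$ to $\sup_{t\le T}\|\tilde X_t-\tilde X_s\|_{(\mu)}$ is at most $2\sup_{n>N}h(x_n)^{-\mu/2}$ times a constant, which is small for large $N$. On the finitely many remaining balls, the a.s.\ finite H\"older constants give uniform continuity in $t$. Hence $\tilde X\in\mathcal C_{\R_+}\mathcal C^{\mathrm{tem}}_{\X,h}$ a.s.; on the exceptional null set, redefine $\tilde X\equiv0$.

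The main obstacle is the local step: getting a chaining estimate whose constants are uniform over the balls $B_1(x_n)$ and scale exactly like $h(x_n)^\lambda$. For this I would follow the Kratschmer--Urusov chaining proof with explicit constants rather than citing it as a black box.
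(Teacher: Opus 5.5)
Your proposal is correct and follows essentially the same route as the paper. Both apply Kolmogorov--Chentsov via \cite[Theorem~1.1]{kratschmer2022kolmogorov} on unit pieces of the cover, with constants proportional to $h(x_n)^\lambda$, then glue, sum a Markov bound over the cover using \eqref{ineq:faster_than_ms}, and control the tail to get continuity in each $\|\cdot\|_{(\mu)}$. The minor differences are that you use Borel--Cantelli ball by ball and $h(x_n)\to\infty$ directly, whereas the paper uses a union bound, Lemma~\ref{lem:ref_grow} and compactness of closed balls; this amounts to the same thing.

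The step you flag as the main obstacle needs no extra work. The Kolmogorov--Chentsov bound is homogeneous in the moment constant, so applying the cited theorem as a black box to $h(x_n)^{-\lambda/p}X$ already gives the bound $\lesssim h(x_n)^\lambda$ uniformly in $n$, without redoing the chaining.
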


	\section{Proof of strong well-posedness}\label{sec:strong well posed}

	This section is dedicated to the proof of Theorem \ref{thm:main2}.
	To do this, let the stochastic basis $(\Omega, \mathscr{F}, (\mathscr{F}_t)_{t\geq 0}, \mathbb P, W)$, drift coefficient $b$, noise coefficient 
	$\sigma$, and the initial value $f$ be given as in \eqref{eq:SB}--\eqref{eq:IV}. 
	Suppose that the linear growth condition \eqref{eq:LiG} and the Lipschitz condition \eqref{ineq:g_lip} hold.
	Let $T>0$ be arbitrary.
	For any two predictable random fields $(X(\omega, t, x))_{\omega\in \Omega, t\in [0,T], x\in \X}$ and $(X'(\omega, t, x))_{\omega \in \Omega, t\in [0,T], x\in \X}$, we say $X'$ is a predictable modification of $X$ if, for every $x\in \X$, $X(\omega, t, x) = X'(\omega, t, x)$ holds for $(\mathbb P\otimes \Lambda)$-a.e.~ $(\omega,t)\in \Omega \times [0,T]$; 
	we say $X'$ is a modification of $X$ if, for every $(t,x)\in [0,T] \times \X$, $X(\omega, t, x) = X'(\omega, t, x)$ holds for $\mathbb P$-a.s.~ $\omega\in \Omega$.
	Here, $\Lambda$ represents the Lebesgue measure on the  real line, and $\mathbb P\otimes \Lambda$ represents the product measure of $\mathbb P$ and $\Lambda$.
	Clearly, 
	\begin{itemize}
		\item[\eq\label{eq:Modification}] if $X$ and $X'$ are predictable random fields on $[0,T]\times \X$ such that $X'$ is a modification of $X$, then $X'$ is a predictable modification of $X$. 
	\end{itemize}
	
	We say $(B, \Sigma)$ is a Duhamel-integrable pair if 
	\begin{itemize}
		\item $B$ and $\Sigma$ are real-valued predictable random fields on $[0,T] \times \X$; and
		\item for any $(t,x)\in [0,T]\times \X$, almost surely
		\[
		\|G(\cdot, \cdot; t,x)B\|_{L^1_{[0,T]}L^1_\X} + \|G(\cdot, \cdot; t,x)\Sigma\|^2_{L^2_{[0,T]}\mathcal H_K} < \infty.
		\]
	\end{itemize}
	In particular, noticing \eqref{cond:ctem continuous}, for any $g\in \mathcal C^{\mathrm{tem}}_{\X,h}$, Duhamel-integrable pair $(B, \Sigma)$, and $(t,x)\in [0,T] \times \X$,
	the following random variable 
	\begin{equation}
		\Psi(g,B,\Sigma)(t,x) := 
		\Psi_0(g)(t,x)
		+ \Psi_1(B)(t,x) + \Psi_2(\Sigma)(t,x) \label{eq:Psi}
	\end{equation}
	is  (almost surely) well-defined where
	\begin{align} 
		& \Psi_0(g)(t,x):= (P_t^* g)(x),  \label{eq:Psi_0}
		\\&\Psi_1(B)(t,x):=\int_0^t \mathrm ds \int_\X G(s,y;t,x) B(s,y) m(\mathrm dy),
		\\&\Psi_2(\Sigma)(t,x):= \iint_0^t G(s,y;t,x) \Sigma(s,y)W(\mathrm ds,\mathrm dy).
	\end{align}

    The next lemma ensures that the Duhamel terms depend only on the predictable modifications of their integrands.
	\begin{lemma}\label{lem:PsiM}
		Suppose that both $(B, \Sigma)$ and $(B', \Sigma')$ are Duhamel-integrable pairs. Then 
		\begin{itemize}
			\item $\Psi_1(B')$ is a modification of $\Psi_1(B)$ provided $B'$ is a predictable modification of $B$; 
			\item $\Psi_2(\Sigma')$ is a modification of $\Psi_2(\Sigma)$ provided $\Sigma'$ is a predictable modification of $\Sigma$.
		\end{itemize}
	\end{lemma}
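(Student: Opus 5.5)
Fix $(t,x)\in[0,T]\times\X$ throughout. For $\Psi_1$ I will argue pathwise through Fubini, and for $\Psi_2$ I will use the isometry (or BDG bound) of the stochastic integral. In both cases the key observation is the same. The hypothesis that $B'$ is a predictable modification of $B$ says that for every $y\in\X$ the $(\omega,s)$-section $\{B(\omega,s,y)\neq B'(\omega,s,y)\}$ is $\mathbb P\otimes\Lambda$-null. The set $N:=\{(\omega,s,y): B\neq B'\}$ is $\mathcal P\otimes\mathscr B_\X$-measurable, since both fields are predictable. Tonelli therefore gives $(\mathbb P\otimes\Lambda\otimes m)(N)=0$.

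For the drift term, apply Tonelli once more, now in the other order. For $\mathbb P$-a.e.\ $\omega$, the $(s,y)$-section $N_\omega$ is $\Lambda\otimes m$-null. Because $(B,\Sigma)$ and $(B',\Sigma')$ are Duhamel-integrable pairs, there is an almost sure event on which both $G(\cdot,\cdot;t,x)B(\omega)$ and $G(\cdot,\cdot;t,x)B'(\omega)$ lie in $L^1_{[0,T]}L^1_\X$. On the intersection of that event with $\{\omega: (\Lambda\otimes m)(N_\omega)=0\}$, the two Lebesgue integrals defining $\Psi_1(B)(t,x)$ and $\Psi_1(B')(t,x)$ coincide, since their integrands agree $\Lambda\otimes m$-a.e. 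This gives $\Psi_1(B)(t,x)=\Psi_1(B')(t,x)$ a.s., which is the modification property.

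For the noise term, set $D:=\Sigma-\Sigma'$ and $g_s:=G(s,\cdot;t,x)D(s,\cdot)$. Both stochastic integrals are defined through \eqref{E:StocIntegral}, and that construction is linear in the integrand. It therefore suffices to show $\int_0^T\|g_s\|_{\mathcal H_K}^2\,\mathrm ds=0$ a.s.; the quadratic variation bound after \eqref{E:StocIntegral} then forces the local martingale $\int_0^\cdot\Gamma_{g_s}\,\mathrm dW_s$ to vanish, so $\Psi_2(\Sigma)(t,x)=\Psi_2(\Sigma')(t,x)$ a.s. This step is the main obstacle. The $\mathcal H_K$ seminorm integrates against $K$, which may be singular with respect to $m\otimes m$; space-time white noise is the extreme case $K=\delta_x(\mathrm dy)m(\mathrm dx)$. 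For a fixed $s$, "$D(s,\cdot)=0$ $m$-a.e." does not immediately give $\|g_s\|_{\mathcal H_K}=0$ for a general $K$.

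To handle this, I will use the pointwise-in-$y$ structure of the hypothesis rather than the $m$-a.e.\ statement. Write
\[
\E\int_0^T\|g_s\|_{\mathcal H_K}^2\,\mathrm ds\le \int_{\X^2}|G_{t}(y)G_t(z)|\,\E\int_0^T|D(s,y)D(s,z)|\,\mathrm ds\,K(\mathrm dy,\mathrm dz),
\]
where $G_t(y)$ abbreviates the kernel factor evaluated at $(s,y)$. The interchange of expectation, time integral and $K$-integral is justified by Tonelli, since every factor is nonnegative and jointly measurable. For each fixed $(y,z)$, the function $D(\cdot,y)D(\cdot,z)$ vanishes $\mathbb P\otimes\Lambda$-a.e., because $D(\cdot,y)$ already does. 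The inner expectation is therefore zero for every $(y,z)$, with no exceptional set, and the right-hand side is $0$ whatever measure $K$ is. This argument can be run only if the inner integral is finite; to guarantee that, and to avoid an $\infty\cdot 0$ issue, I will first localize. Using the stopping times $\tau_n:=\inf\{r:\int_0^r(\|G(s,\cdot;t,x)\Sigma_s\|^2_{\mathcal H_K}+\|G(s,\cdot;t,x)\Sigma'_s\|^2_{\mathcal H_K})\,\mathrm ds\ge n\}$, together with truncations $D\mathbf 1_{\{|D|\le n\}}$ if needed, all integrals become finite. The computation above then gives $\int_0^{T\wedge\tau_n}\|g_s\|^2_{\mathcal H_K}\,\mathrm ds=0$ a.s. Letting $n\to\infty$, with $\tau_n\uparrow\infty$ a.s.\ by Duhamel-integrability, completes the proof.
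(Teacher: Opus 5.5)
Your proposal is correct and follows essentially the same route as the paper. The drift term is handled by Tonelli. For the noise term, the key point is that the vanishing of $\Sigma-\Sigma'$ holds for every fixed $y$, hence for every pair $(y,z)$, so integrating against an arbitrary (possibly singular) $K$ via Tonelli gives zero; linearity and the isometry/quadratic-variation bound then finish. Two small remarks: the localization by $\tau_n$ and the truncation are unnecessary, since Tonelli applies directly to the nonnegative $[0,\infty]$-valued integrand and the inner $(\omega,s)$-integral vanishes identically in $(y,z)$. Also, the kernel factor $G(s,y;t,x)G(s,z;t,x)$ should sit inside the $s$-integral rather than outside it.
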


	Fix arbitrary $\lambda \in \mathbb R$ and $p\geq 1$ in this paragraph.
	Denote by $\mathbb M_{\lambda, p}$ the collection of real-valued predictable random fields $X$ on $[0,T]\times \X$ such that 
	\[
	\|X\|_{\mathbb M_{\lambda, p}(T)}:= 
	\|X\|_{\mathbb M_{\lambda,p}}
	:= \sup_{x \in \X} \esssup_{\Lambda\text{-a.e.~}t \in [0,T]}  
	h(x)^{-\lambda}
	\norm{X(t,x)}_{L^p_\Omega} <\infty.
	\]
	For every $X\in \mathbb M_{\lambda, p}$, denote by 
	$[X]_{\mathrm{p}}$
	the collection of predictable modifications of $X$.
	It is clear that $\mathbb M_{\lambda, p}$ is a vector space and that $[0]_{\mathrm{p}}$ is a  
	subspace of $\mathbb M_{\lambda, p}$. Denote by 
	\[
	\mathbb M_{\lambda,p}^{\sharp} := \mathbb M_{\lambda,p}/[0]_{\mathrm{p}}=\brc{[X]_{\mathrm{p}}: X \in  \mathbb M_{\lambda,p}}
	\] 
	the quotient of $\mathbb M_{\lambda,p}$ by the equivalence relation of  predictable modification.
	Note also that, for every $A\geq 0$ and $X\in \mathbb M_{\lambda, p}$,
	\begin{align*}
		\|X\|_{\mathbb M_{\lambda,p,A}}:= \sup_{x \in \X} \esssup_{\Lambda\text{-a.e.~} t \in [0,T]} e^{-A t}h(x)^{-\lambda} \|X(t,x)\|_{L^p_\Omega} < \infty.
	\end{align*}
	For every $A\geq 0$, noticing that $\|X\|_{\mathbb{M}_{\lambda,p,A}} = \|X'\|_{\mathbb{M}_{\lambda,p,A}}$ for every $X,X' \in \mathbb M_{\lambda,p}$ provided $[X']_{\mathrm{p}}=[X]_{\mathrm{p}}$, there exists a unique map $\|\cdot\|_{\mathbb M^\sharp_{\lambda,p,A}}$ from $\mathbb M^\sharp_{\lambda,p}$ to $[0,\infty)$ such that
	\begin{equation} \label{eq:NormSharp}
		\| [X]_{\mathrm{p}} \|_{\mathbb M_{\lambda,p,A}^{\sharp}} = \|X\|_{\mathbb{M}_{\lambda,p,A}}, \quad X\in \mathbb{M}_{\lambda,p}.
	\end{equation}
	Denote by $\mathbb M^f_{\lambda, p}$ the collection of $X\in \mathbb M_{\lambda, p}$ satisfying that $X(0,x)=f(x)$ almost surely for every $x\in \X$; and define $\mathbb M^{\sharp,f}_{\lambda, p}:= \{[X]_{\mathrm{p}}: X\in \mathbb M^f_{\lambda, p}\}$. 
	For each $A\geq 0$, define a map $d_{\lambda, p, A}$ from $\mathbb M^{\sharp,f}_{\lambda, p} \times \mathbb M^{\sharp,f}_{\lambda, p}$ to $[0,\infty)$ such that 
	\[
	d_{\lambda, p, A}([X]_{\mathrm{p}}, [X']_{\mathrm{p}}) = \norm{[X]_{\mathrm{p}} - [X']_{\mathrm{p}}}_{\mathbb M^\sharp_{\lambda, p, A}}.
	\]
	
    The following completeness result provides the metric space used in the fixed-point construction of the solution.
	\begin{lemma} \label{lem:CMS} 
		For every
		$\lambda \in \mathbb R$, $p \ge 1$ and $A\geq 0$, $(\mathbb M_{\lambda,p}^{\sharp}, \|\cdot\|_{\mathbb M_{\lambda,p,A}^{\sharp}})$ is a Banach space. 
		In particular, $(\mathbb M_{\lambda,p}^{\sharp,f}, d_{\lambda,p,A})$ is a complete metric space.
	\end{lemma}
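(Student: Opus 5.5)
The plan is to prove the Banach-space claim directly, via the Riesz--Fischer characterization of completeness: a normed space is complete if and only if every absolutely convergent series converges in it. The metric-space claim then follows by restricting to the subset $\mathbb M^{\sharp,f}_{\lambda,p}$ and showing that this subset is closed.

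First I check that $\|\cdot\|_{\mathbb M^\sharp_{\lambda,p,A}}$ is a norm on $\mathbb M^\sharp_{\lambda,p}$. It is well defined by \eqref{eq:NormSharp}, and the seminorm properties follow from Minkowski's inequality in $L^p_\Omega$. For definiteness, suppose $\|X\|_{\mathbb M_{\lambda,p,A}}=0$. Then for every $x$ we have $\|X(t,x)\|_{L^p_\Omega}=0$ for a.e.\ $t\in[0,T]$. By Tonelli, $X(\cdot,\cdot,x)=0$ holds $\mathbb P\otimes\Lambda$-a.e., so $X\in[0]_{\mathrm p}$. Since $e^{-AT}\le e^{-At}\le 1$, all the norms with different $A$ are equivalent, so it suffices to treat $A=0$.

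For completeness, take representatives $X_n\in\mathbb M_{\lambda,p}$ with $\sum_n\|X_n\|_{\mathbb M_{\lambda,p}}<\infty$. Define
\[
S(\omega,t,x):=\sum_n|X_n(\omega,t,x)|\in[0,\infty],
\]
which is $\mathcal P\otimes\mathscr B_\X$-measurable. Fix $x$. For a.e.\ $t$ (off a countable union of null sets), Minkowski's inequality together with monotone convergence gives
\[
\|S(t,x)\|_{L^p_\Omega}\le \sum_n\|X_n\|_{\mathbb M_{\lambda,p}}\,h(x)^\lambda<\infty .
\]
Now set
\[
X:=\mathbf 1_{\{S<\infty\}}\sum_n X_n .
\]
The set $\{S<\infty\}$ is predictable-measurable, so $X$ is a predictable random field, and the same estimate shows $X\in\mathbb M_{\lambda,p}$. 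Moreover, for a.e.\ $t$,
\[
\Big\|X(t,x)-\sum_{n\le N}X_n(t,x)\Big\|_{L^p_\Omega}\le\sum_{n>N}\|X_n\|_{\mathbb M_{\lambda,p}}\,h(x)^\lambda ,
\]
because $S(t,x)<\infty$ almost surely for such $t$. Taking the supremum over $x$ shows the partial sums converge to $[X]_{\mathrm p}$ in norm.

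Finally, I show $\mathbb M^{\sharp,f}_{\lambda,p}$ is closed. Note first that the condition $X(0,x)=f(x)$ is not invariant under predictable modification, since $t=0$ is a Lebesgue-null time. So I read $\mathbb M^{\sharp,f}$ as the set of classes that contain some representative with $X(0,\cdot)=f$. Given this reading, closedness is immediate: for any limit class $[X]_{\mathrm p}$, redefine $X(0,x):=f(x)$. The set $\{0\}\times\X$ is predictable and $f$ is deterministic and Borel, so the modified field is still predictable. It differs from $X$ only on the $\Lambda$-null set $\{t=0\}$, so it lies in the same class. In fact this argument shows $\mathbb M^{\sharp,f}_{\lambda,p}=\mathbb M^\sharp_{\lambda,p}$ whenever $\|f\|_{(\lambda)}<\infty$, which holds because $f\in\mathcal C^{\mathrm{tem}}_{\X,h}$.

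The main obstacle I expect is the measurability bookkeeping. The esssup in $t$ is taken separately for each $x$, and there are uncountably many $x$. I avoid a global null set by arguing pointwise in $x$ through Tonelli, and by using a joint predictable definition of the limit via $\{S<\infty\}$ rather than choosing a subsequence that depends on $x$.
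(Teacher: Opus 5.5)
Your argument is correct, but it follows a different route from the paper's.

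\textbf{The paper's route.} It takes a Cauchy sequence $(X_n)$. For each fixed $x$, the processes $X_n(\cdot,x)$ are then Cauchy in $L^p$, hence in probability, on $\Omega\times[0,T]$ with the predictable $\sigma$-field. The paper invokes a parametric measurability result on limits in probability (Lemma~5.6 and Corollary~5.33 of the cited reference) to obtain a single $\mathcal P_T\otimes\mathscr B_\X$-measurable limit. It then gets norm convergence from $x$-dependent a.s.\ subsequences via Fubini and Fatou.

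\textbf{Your route.} You use the Riesz--Fischer criterion and exploit the fact that the norm is uniform in $x$. Absolute summability in $\mathbb M_{\lambda,p}$ gives, for each $x$ and $\Lambda$-a.e.\ $t$, a.s.\ absolute convergence of $\sum_n X_n(t,x)$ without extracting any subsequence. So the limit can be written down explicitly as $\mathbf 1_{\{S<\infty\}}\sum_n X_n$, and its joint predictability is immediate. The indicator also keeps it real-valued.

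\textbf{Comparison.} Your proof is more elementary and self-contained. The paper's keeps the usual Cauchy-sequence template, at the cost of a parametric measurable-limit lemma that is needed only because its a.s.\ subsequences may depend on $x$.

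\textbf{The ``in particular'' clause.} You also treat this clause, which the paper's proof does not address explicitly. Redefining a representative at the $\Lambda$-null time $t=0$ changes neither its class nor its norm, and $\Omega\times\{0\}$ is predictable. Hence $\mathbb M^{\sharp,f}_{\lambda,p}=\mathbb M^{\sharp}_{\lambda,p}$, and completeness of $d_{\lambda,p,A}$ follows at once.

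\textbf{A small correction.} The proviso $\|f\|_{(\lambda)}<\infty$ in your last remark is not needed, because the essential supremum in $t$ never sees $t=0$. Your justification of it is also wrong for $\lambda\le 0$: membership in $\mathcal C^{\mathrm{tem}}_{\X,h}$ only controls $\|f\|_{(\lambda)}$ for $\lambda>0$. Simply drop it.
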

	
	    The following estimates show that the mild-solution map is well-defined and becomes a contraction after introducing an exponential time weight.
	\begin{lemma}\label{lem:b_contract_combined}
		Let $\lambda \ge0$ and $\tilde \alpha \in (0,1)$ satisfy \eqref{cond:exp_hk_lamda} with $\alpha$ being replaced by $\tilde \alpha$.
		Let $p \ge 2$ and $g \in \mathcal C^{\mathrm{tem}}_{\X,h}$. When $\lambda=0$, assume in addition that $g\in\mathcal C_\X^{\mathrm b}$. For each $U \in \mathbb M_{\lambda,p}$ and $(t,x) \in [0,T]\times \X$, define
		\begin{equation}\label{eq:BuSu}
			B_U(t,x):= b(t,x,U(t,x)),\qquad \Sigma_U(t,x):= \sigma(t,x,U(t,x)).
		\end{equation}
		Then  for each $U\in \mathbb M_{\lambda,p}$, $(B_U,\Sigma_U)$ is a Duhamel-integrable pair, and  $\Psi(g,B_U,\Sigma_U) \in \mathbb M_{\lambda,p}$.
		Moreover,
		uniformly in $ U,\, V \in \mathbb M_{\lambda, p }$, $A\ge 1$ and $g \in \mathcal C^{\mathrm{tem}}_{\X,h}$, we have 
		\begin{align}
			\norm{\Psi(g,B_U,\Sigma_U) - \Psi(g,B_V, \Sigma_V)}_{\mathbb M_{\lambda, p, A}} 
			\lesssim  A^\frac{\tilde \alpha - 1}{2} 
			\norm{U - V}_{\mathbb M_{\lambda, p, A}}.
		\end{align}
	\end{lemma}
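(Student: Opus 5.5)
We split $\Psi(g,B_U,\Sigma_U)$ into the three pieces $\Psi_0(g)$, $\Psi_1(B_U)$ and $\Psi_2(\Sigma_U)$ and bound each one in $\mathbb M_{\lambda,p}$ separately. The constant $T$ is fixed. Throughout we use Lemma~\ref{lem:hk longtime}, which upgrades \eqref{cond:exp_hk_lamda} to all times $t\in(0,T]$ at the cost of a factor $e^{\kappa T}$ (resp.\ $e^{\tilde\kappa T}$). These factors are absorbed into the implicit constants.

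\emph{Step 1: Duhamel-integrability and membership in $\mathbb M_{\lambda,p}$.}
\begin{itemize}
\item[(a)] \emph{Measurability.} $B_U$ and $\Sigma_U$ are predictable, since $b$ and $\sigma$ are predictable and $U$ is predictable.
\item[(b)] \emph{Moment bounds on the coefficients.} By the linear growth condition \eqref{eq:LiG}, $\|B_U(s,y)\|_{L^p_\Omega}+\|\Sigma_U(s,y)\|_{L^p_\Omega}\lesssim 1+h(y)^\lambda\|U\|_{\mathbb M_{\lambda,p}}\lesssim h(y)^\lambda$, using $h\ge1$ and $\lambda\ge0$.
\item[(c)] \emph{Drift term.} Minkowski's integral inequality and \eqref{ineq:hk_m long time} give $\|\Psi_1(B_U)(t,x)\|_{L^p_\Omega}\lesssim \int_0^t\|G_{t-s}(\cdot,x)h^\lambda\|_{L^1_\X}\,\mathrm ds\lesssim T h(x)^\lambda$.
\item[(d)] \emph{Noise term.} Lemma~\ref{cor:mtg+mink_cs} gives
\[\|\Psi_2(\Sigma_U)(t,x)\|_{L^p_\Omega}\lesssim \Bigl(\int_0^t\|G_{t-s}(\cdot,x)\,\|\Sigma_U(s,\cdot)\|_{L^p_\Omega}\|_{\mathcal H_K}^2\,\mathrm ds\Bigr)^{1/2}.\]
Here $G\ge0$, and $K$ is non-negative definite but not necessarily a non-negative measure. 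So passing from the pointwise bound $|\Sigma_U|\lesssim h^\lambda$ to $\|G h^\lambda\|_{\mathcal H_K}$ is only legitimate when $\mathcal H_K$ is read with $|f(x)f(y)|$. I would work with the majorant $\iint |f(x)||f(y)|\,|K|(\mathrm dx,\mathrm dy)$ and assume, as the paper implicitly does, that \eqref{cond:exp_hk_lamda} controls it. In the main examples $K\ge0$, and then monotonicity is immediate. With that in hand, \eqref{ineq:hk_k long time} yields the bound $\bigl(\int_0^t (t-s)^{-\tilde\alpha}\,\mathrm ds\bigr)^{1/2}h(x)^\lambda\lesssim h(x)^\lambda$.
\item[(e)] \emph{Integrability.} The same computations, done pathwise with $\mathbb E$ and Fubini, show that the Duhamel-integrability norms are finite almost surely.
\item[(f)] \emph{Initial-data term.} $|P^*_tg(x)|\le \|g\|_{(\lambda)}\|G_t(\cdot,x)h^\lambda\|_{L^1_\X}\lesssim h(x)^\lambda$. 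When $\lambda=0$ this uses boundedness of $g$, which explains the extra hypothesis.
\item[(g)] \emph{Predictability of $\Psi$.} $\Psi$ must have a predictable version. I would get this by showing it is adapted and continuous in probability in $t$, using the increment bounds of Lemma~\ref{lem:hk_abs}, and then taking a predictable modification; Lemma~\ref{lem:PsiM} guarantees this choice is harmless.
\end{itemize}

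\emph{Step 2: the contraction estimate.} The initial-data term cancels in the difference, so only the drift and noise terms remain.

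For the drift, the Lipschitz condition \eqref{ineq:g_lip} gives $\|B_U-B_V\|_{L^p_\Omega}(s,y)\lesssim e^{As}h(y)^\lambda\|U-V\|_{\mathbb M_{\lambda,p,A}}$. Hence
\[e^{-At}h(x)^{-\lambda}\|\Psi_1(B_U)-\Psi_1(B_V)\|_{L^p_\Omega}\lesssim \int_0^t e^{-A(t-s)}\,\mathrm ds\,\|U-V\|\lesssim A^{-1}\|U-V\|.\]
Since $A\ge1$ and $\tilde\alpha<1$, we have $A^{-1}\le A^{(\tilde\alpha-1)/2}$.

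For the noise, the same argument as in Step 1(d), with Lemma~\ref{cor:mtg+mink_cs}, gives the squared bound
\[\int_0^t e^{-2A(t-s)}(t-s)^{-\tilde\alpha}\,\mathrm ds\,\|U-V\|^2\le \Gamma(1-\tilde\alpha)(2A)^{\tilde\alpha-1}\|U-V\|^2,\]
which yields the factor $A^{(\tilde\alpha-1)/2}$. All constants are uniform in $U$, $V$, $A$ and $g$.

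The main obstacle is the monotonicity step in $\mathcal H_K$ in Step 1(d): replacing a function by a pointwise upper bound inside $\|\cdot\|_{\mathcal H_K}$. This needs $K\ge0$, or else must be handled through the absolute-value form $\iint|f(x)f(y)|\,K(\mathrm dx,\mathrm dy)$ appearing in the definition of $\mathcal H_K$. A secondary technical point is obtaining the predictable version of $\Psi$ in Step 1(g).
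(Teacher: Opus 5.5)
Your estimates in Step 1(b)--(f) and Step 2 are the ones the paper uses, and they are correct: Minkowski with \eqref{ineq:hk_m long time} for the drift, Lemma~\ref{cor:mtg+mink_cs} with \eqref{ineq:hk_k long time} for the noise, and $\int_0^t e^{-2A(t-s)}(t-s)^{-\tilde\alpha}\,\mathrm ds\lesssim A^{\tilde\alpha-1}$. What you flag as the main obstacle is not one. $K$ is a non-negative Radon measure: the definition of $\mathcal H_K$ through $\iint|f(x)f(y)|K(\mathrm dx,\mathrm dy)$, the Fatou step in Lemma~\ref{lem:H0} and the Minkowski step in Lemma~\ref{cor:mtg+mink_cs} all presuppose this. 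Hence $0\le f\le g$ gives $\|f\|_{\mathcal H_K}\le\|g\|_{\mathcal H_K}$ directly, which is exactly what the paper uses.

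\textbf{The gap is in Step 1(g).}
\begin{itemize}
\item \emph{Weight range.} You want continuity in probability of $t\mapsto\Psi(g,B_U,\Sigma_U)(t,x)$ from Lemma~\ref{lem:hk_abs}. That lemma only controls kernel increments against weights $h^\lambda$ with $\lambda\in[0,1/2]$. Its proof invokes \eqref{cond:exp_hk_lamda} at an inflated weight ($2\lambda$, or $\lambda q'$ with $q'>1$), so this route cannot reach $\lambda=1$. The statement, however, is for every $\lambda$ satisfying \eqref{cond:exp_hk_lamda}, and the paper applies it for all $\lambda\in(0,1]$, including $\lambda=1$ in Corollary~\ref{cor:MfT}.
\item \emph{Why this matters.} For $U\in\mathbb M_{\lambda,p}$ you only know $\|\Sigma_U(s,y)\|_{L^p_\Omega}\lesssim h(y)^\lambda$. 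Stochastic continuity would then require time regularity of $G_t(\cdot,x)h^\lambda$ in $\mathcal H_K$ and $L^1_\X$ at exactly that weight. The standing hypotheses give time regularity only without weight, so there is no slack to interpolate.
\item \emph{Joint measurability.} The classical result ``adapted and stochastically continuous implies a predictable version'' concerns a single process. You need a $\mathcal P\otimes\mathscr B_\X$-measurable field, so the stochastic integrals must be chosen measurably in $x$. The paper does this via the separability in Lemma~\ref{lem:H0} and \cite[Corollary~5.33]{MR4226142}.
\item \emph{Lemma~\ref{lem:PsiM}.} It concerns modifying the integrands $B,\Sigma$, not $\Psi$ itself, so it does not justify your choice of modification.
\end{itemize}

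\textbf{The paper's device needs no time regularity.} Set $\delta_n=2^{-n}T$. For $t\in(j\delta_n,(j+1)\delta_n]$, define $\Psi^{(n)}(t,x)$ as $P_t^*g(x)$ plus the drift and stochastic integrals of $G(s,y;t,x)B_U$ and $G(s,y;t,x)\Sigma_U$ over $s\in[0,j\delta_n]$ only. The kernel stays at the true time $t$; only the integration range is frozen.
\begin{itemize}
\item On each block, $\Psi^{(n)}$ has an $\mathscr F_{j\delta_n}\otimes\mathscr B((j\delta_n,(j+1)\delta_n]\times\X)$-measurable version, so it is predictable.
\item The error $\Psi-\Psi^{(n)}$ is the integral over $[j\delta_n,t]$. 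Using only \eqref{ineq:hk_m long time}--\eqref{ineq:hk_k long time}, it is bounded in $L^p_\Omega$ by $C\,h(x)^\lambda\bigl(\delta_n+\delta_n^{(1-\tilde\alpha)/2}\bigr)$, with $C$ depending on $T$ and $\|U\|_{\mathbb M_{\lambda,p}}$.
\item These bounds are summable, so $\Psi^{(n)}\to\Psi$ almost surely at each $(t,x)$.
\item The pointwise limit, set to $0$ where it fails to exist, is a predictable modification for every admissible $\lambda$.
\end{itemize}
Replacing your Step 1(g) with this argument completes the proof.
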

	
	Thanks to \eqref{cond:exp_hk_lamda} and Lemma \ref{lem:b_contract_combined}, we can define
	\begin{itemize}
		\item[\eq\label{eq:pp}] for any $\lambda \in (0,1], p\geq 2$ (where $\lambda=0$ is also allowed if $f\in\mathcal C^{\mathrm b}_\X$) and $U\in \mathbb M_{\lambda,p}$,
		$\Phi(U) := \Psi(f,B_U,\Sigma_U) \in \mathbb M_{\lambda,p}$
		where $B_U,\, \Sigma_U$  are given by \eqref{eq:BuSu}.
	\end{itemize}

	\begin{prop}\label{prop:wellposted}
		Suppose that  $\lambda \in (0,1]$ (or $\lambda=0$ if $f\in\mathcal C^{\mathrm b}_\X$) and $p \ge 2$. Then there exists a $U \in \mathbb M_{\lambda,p}^f$, unique up to predictable modification,
		such that $\Phi(U)$ is a  predictable modification of $U$.
	\end{prop}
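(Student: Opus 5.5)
The plan is to apply the Banach fixed-point theorem on the complete metric space $(\mathbb M^{\sharp,f}_{\lambda,p}, d_{\lambda,p,A})$ from Lemma~\ref{lem:CMS}, with $A\ge1$ chosen large. First I would show that $\Phi$ descends to a well-defined self-map $\Phi^\sharp$ of $\mathbb M^{\sharp,f}_{\lambda,p}$, via $\Phi^\sharp([U]_{\mathrm p}) := [\Phi(U)]_{\mathrm p}$.

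\emph{Well-definedness and invariance.} By Lemma~\ref{lem:b_contract_combined}, $\Phi(U)\in\mathbb M_{\lambda,p}$. Suppose $U'$ is a predictable modification of $U$. Then $B_{U'}$ and $\Sigma_{U'}$ are predictable modifications of $B_U$ and $\Sigma_U$, since $b$ and $\sigma$ are applied pointwise. Lemma~\ref{lem:PsiM} then shows that $\Psi_1(B_{U'})$ and $\Psi_2(\Sigma_{U'})$ are modifications of $\Psi_1(B_U)$ and $\Psi_2(\Sigma_U)$. The contraction estimate below also gives this directly: $\|\Phi(U)-\Phi(U')\|_{\mathbb M_{\lambda,p,A}}\lesssim\|U-U'\|_{\mathbb M_{\lambda,p,A}}=0$. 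One subtlety is that $\Phi(U)$ must itself be predictable, or at least admit a predictable modification, for $[\Phi(U)]_{\mathrm p}$ to make sense. I would handle this by noting that $\Phi(U)$ is adapted and $L^p$-continuous in $t$ (from the kernel increment bounds of Lemma~\ref{lem:hk_abs}), so a predictable modification exists by the standard result for adapted, stochastically continuous fields. Via \eqref{eq:Modification}, all such modifications lie in the same class. For $t=0$ we have $\Phi(U)(0,x)=P_0^*f(x)=f(x)$, and the two integrals vanish, so $\Phi^\sharp$ maps $\mathbb M^{\sharp,f}_{\lambda,p}$ into itself.

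\emph{Contraction.} Lemma~\ref{lem:b_contract_combined} gives a constant $C$, independent of $A\ge1$, with
\[
d_{\lambda,p,A}\bigl(\Phi^\sharp[U]_{\mathrm p},\Phi^\sharp[V]_{\mathrm p}\bigr)\le C A^{(\tilde\alpha-1)/2}\,d_{\lambda,p,A}\bigl([U]_{\mathrm p},[V]_{\mathrm p}\bigr).
\]
Here I take $\tilde\alpha=\alpha$. This is allowed because \eqref{cond:exp_hk_lamda} holds for $\lambda\in[0,1]$, and for $\lambda=0$ we additionally assume $f\in\mathcal C^{\mathrm b}_\X$ as required. Since $\tilde\alpha<1$, choosing $A$ with $CA^{(\tilde\alpha-1)/2}\le 1/2$ makes $\Phi^\sharp$ a strict contraction. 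The space $\mathbb M^{\sharp,f}_{\lambda,p}$ is nonempty; for example, it contains the deterministic field $(t,x)\mapsto P_t^*f(x)$, which lies in $\mathbb M_{\lambda,p}$ by \eqref{cond:ctem continuous} and \eqref{cond:exp_hk_lamda}. Banach's fixed-point theorem then yields a unique class $[U]_{\mathrm p}$ with $[\Phi(U)]_{\mathrm p}=[U]_{\mathrm p}$, which says exactly that $\Phi(U)$ is a predictable modification of $U$.

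\emph{Uniqueness.} Let $U$ and $U'$ both satisfy the conclusion. Their classes are then both fixed points of $\Phi^\sharp$, so they coincide, and $U'$ is a predictable modification of $U$.

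\emph{Main obstacle.} I expect the main obstacle to be the measurability bookkeeping, namely showing that $\Phi(U)$ has a predictable version lying in $\mathbb M^f_{\lambda,p}$. The contraction itself is supplied by Lemma~\ref{lem:b_contract_combined}.
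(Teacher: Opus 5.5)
Your proposal is correct and follows essentially the same route as the paper: descend $\Phi$ to $\Phi^\sharp$ on $\mathbb M^{\sharp,f}_{\lambda,p}$ via Lemma~\ref{lem:PsiM} and \eqref{eq:Modification}, get a strict contraction for large $A$ from Lemma~\ref{lem:b_contract_combined}, and conclude existence and uniqueness from Banach's fixed-point theorem on the complete space of Lemma~\ref{lem:CMS}. The predictability issue you flag as the main obstacle is already settled by Lemma~\ref{lem:b_contract_combined}: its conclusion $\Psi(f,B_U,\Sigma_U)\in\mathbb M_{\lambda,p}$ includes predictability by definition, so your separate stochastic-continuity argument (which, as sketched, would need Lemma~\ref{lem:hk_abs} beyond its stated range $\lambda\in[0,1/2]$) is not needed.
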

	\begin{proof}
		From \eqref{eq:pp}, we know that $\Phi(U) \in \mathbb M_{\lambda,p}$.
		Note that for any $U \in \mathbb M_{\lambda,p}^{ f}$,
		\begin{equation}
			\Phi(U)(0,x)
			= \Psi(f, B_U, \Sigma_U)(0,x) 
			= f(x), \quad x\in \X.
		\end{equation}
		Therefore, 
		$\Phi: U \mapsto \Phi(U)$ is a map from $\mathbb M_{\lambda,p}^{f}$ to $\mathbb M_{\lambda,p}^{f}$.
		From Lemma \ref{lem:PsiM} and \eqref{eq:Modification} we can verify that, for any $U \in \mathbb M_{\lambda,p}^{f}$
		and any 
		predictable
		modification $U'$ of $U$, $\Phi(U')$ is a 
		predictable
		modification of 
		$\Phi(U)$.
		Therefore, we can define a map $\Phi^\sharp$ from $\mathbb M_{\lambda,p}^{\sharp,f}$ to $\mathbb M_{\lambda,p}^{\sharp,f}$ such that 
		\begin{equation}\label{eq:Phisharp}
			\Phi^\sharp([U]_{\mathrm{p}}) = [\Phi(U)]_{\mathrm{p}}, \quad \forall U\in \mathbb M^f_{\lambda, p}.
		\end{equation}
		Recall that $\alpha\in (0,1)$ is given by \eqref{cond:exp_hk_lamda}.
		Note that uniformly for $A\ge 1$ and $U,V \in \mathbb M_{\lambda,p}^f$,
		\begin{align}\label{eq:PhiD}
			&\|\Phi(U) - \Phi(V)\|_{\mathbb{M}_{\lambda,p,A}} 
			=\norm{\Psi(f,B_U,\Sigma_U) - \Psi(f, B_V, \Sigma_V)}_{\mathbb M_{\lambda, p, A}} 
			\\&\overset{\text{Lemma \ref{lem:b_contract_combined}}}\lesssim A^{\frac{\alpha - 1}{2}} \|U - V\|_{\mathbb{M}_{\lambda,p,A}}. 
		\end{align}
		From the fact that $\alpha \in (0,1)$, there exists a large $A_0>0$ such that for any $A\geq A_0$ and $U,V \in \mathbb M_{\lambda,p}^{f}$,
		\begin{equation} \label{eq:PC}
			\|\Phi(U) - \Phi(V)\|_{\mathbb{M}_{\lambda,p,A}} \leq \frac{1}{2}\|U-V\|_{\mathbb{M}_{\lambda,p,A}}.
		\end{equation}
		
		Now, for any $A\geq A_0$ and $\mathbf U, \mathbf V$ in $\mathbb M_{\lambda,p}^{\sharp,f}$, taking representatives $U\in \mathbf U\cap\mathbb M_{\lambda,p}^f$ and $V\in \mathbf V\cap\mathbb M_{\lambda,p}^f$ respectively, it holds that 
		\begin{align}
			&d_{\lambda,p, A}\pr{\Phi^\sharp(\mathbf U), \Phi^\sharp(\mathbf V)} 
			= \norm{\Phi^\sharp([U]_{\mathrm{p}}) - \Phi^\sharp([V]_{\mathrm{p}})}_{\mathbb{M}_{\lambda,p,A}^{\sharp}} 
			\\&\overset{\eqref{eq:Phisharp}}{=} \norm{ [\Phi(U)]_{\mathrm{p}} - [\Phi(V)]_{\mathrm{p}} }_{\mathbb{M}_{\lambda,p,A}^{\sharp}} 
			=\norm{ [\Phi(U) - \Phi(V)]_{\mathrm{p}} }_{\mathbb{M}_{\lambda,p,A}^{\sharp}} 
			\\&\overset{\eqref{eq:NormSharp}}{=} \left\| \Phi(U) - \Phi(V)\right\|_{\mathbb{M}_{\lambda,p,A}}  
			\overset{\eqref{eq:PC}}\leq \frac{1}{2}\left\|U-V\right\|_{\mathbb{M}_{\lambda,p,A}}
			\overset{\eqref{eq:NormSharp}}{=} \frac{1}{2}\left\|[U-V]_{\mathrm{p}}\right\|_{\mathbb{M}_{\lambda,p,A}^\sharp}
			\\&= \frac{1}{2}\left\|[U]_{\mathrm{p}}-[V]_{\mathrm{p}}\right\|_{\mathbb{M}_{\lambda,p,A}^\sharp}
			=  \frac{1}{2}d_{\lambda,p, A}\pr{\mathbf U, \mathbf V} .
		\end{align}
		
		Now noticing Lemma \ref{lem:CMS}, $\Phi^\sharp$ is a contraction map from the complete metric space $(\mathbb{M}_{\lambda,p}^{\sharp,f}, d_{\lambda,p,A})$ to $( \mathbb{M}_{\lambda,p}^{\sharp,f}, d_{\lambda,p,A})$ for every $A\geq A_0$. 
		By the Banach fixed-point theorem, 
		\begin{itemize}
			\item[\eq\label{eq:UMf}] there exists a unique element $\mathbf U$ in $\mathbb{M}_{\lambda,p}^{\sharp,f}$ such that $\Phi^\sharp(\mathbf U) = \mathbf U$.
		\end{itemize}
		Let $U\in \mathbf U\cap \mathbb M_{\lambda,p}^f$ be a representative of the $\mathbf U$ in \eqref{eq:UMf}, then by \eqref{eq:Phisharp}, $[\Phi(U)]_{\mathrm{p}} = [U]_{\mathrm{p}}$. This proves the existence part of the desired result.
		Suppose that there is any other $U'\in \mathbb M_{\lambda,p}^f$ such that $[\Phi(U')]_{\mathrm{p}} = [U']_{\mathrm{p}}$, then by \eqref{eq:Phisharp}, $\Phi^\sharp([U']_{\mathrm{p}}) = [U']_{\mathrm{p}}$, and by \eqref{eq:UMf}, $[U']_{\mathrm{p}} = [U]_{\mathrm{p}}$.
		This proves the uniqueness part of the desired result. We are done.
	\end{proof}
	
	Pay attention that the space $\mathbb M^{f}_{\lambda, p}$ increases in $\lambda$, and decreases in $p$, i.e.,
	\begin{itemize}
		\item[\eq\label{eq:MM}] for any $\lambda_1 \leq \lambda_0$ in $(0,\infty)$ and $p_1 \geq p_0$ in $[1,\infty)$, we have  $\mathbb M^{f}_{\lambda_1, p_1} \subset \mathbb M^{f}_{\lambda_0, p_0}$.
	\end{itemize}
	In light of this, we define
	\begin{equation}
		\mathbb M^f:= \lim_{\lambda \downarrow 0, p \uparrow \infty} \mathbb M^{f}_{\lambda, p} := \bigcap_{\lambda \in (0,1], p \geq 2} \mathbb M^{f}_{\lambda, p}.
	\end{equation}
	
	\begin{corollary}\label{cor:MfT}
		There exists  $U \in \mathbb M^{f}$, unique up to  predictable modification, such that $\Phi(U)$ is a predictable modification of  $U$. If, in addition, $f\in\mathcal C^{\mathrm b}_\X$, then $U\in\mathbb M^f_{0,p}$ for every $p\geq 2$.
	\end{corollary}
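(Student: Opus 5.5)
We obtain $U$ from Proposition~\ref{prop:wellposted} at one reference pair of parameters and show that the same field serves for all $(\lambda,p)$. Fix $\lambda_*:=1$ and $p_*:=2$, and let $U_*\in\mathbb M^f_{1,2}$ be the fixed point given by Proposition~\ref{prop:wellposted}, so that $[\Phi(U_*)]_{\mathrm p}=[U_*]_{\mathrm p}$. For arbitrary $\lambda\in(0,1]$ and $p\ge2$, let $U_{\lambda,p}\in\mathbb M^f_{\lambda,p}$ be the fixed point produced by Proposition~\ref{prop:wellposted} for that pair.

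By \eqref{eq:MM}, since $\lambda\le 1$ and $p\ge 2$, we have $\mathbb M^f_{\lambda,p}\subset\mathbb M^f_{1,2}$. Hence $U_{\lambda,p}$ is also a fixed point of $\Phi$ in $\mathbb M^f_{1,2}$, up to predictable modification. The map $\Phi$ is defined pointwise in $U$ by \eqref{eq:pp} and does not depend on $(\lambda,p)$, so this is the same fixed-point relation. By the uniqueness part of Proposition~\ref{prop:wellposted} at $(1,2)$, $[U_{\lambda,p}]_{\mathrm p}=[U_*]_{\mathrm p}$.

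It remains to see that $U_*$ itself lies in every $\mathbb M^f_{\lambda,p}$, not only some predictable modification of it. The plan is to replace $U_*$ by $\Phi(U_*)$. First, $B_{U}$ and $\Sigma_U$ depend on $U$ only through its predictable class. By Lemma~\ref{lem:PsiM}, $\Phi(U_*)$ is therefore a modification, and not merely a predictable modification, of $\Phi(U_{\lambda,p})$, for every $(\lambda,p)$. The norm $\|\cdot\|_{\mathbb M_{\lambda,p}}$ depends only on the one-point marginals $\|X(t,x)\|_{L^p_\Omega}$, and modifications share these marginals. Moreover $\Phi(U_{\lambda,p})\in\mathbb M_{\lambda,p}$ by \eqref{eq:pp}. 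It follows that $\Phi(U_*)\in\mathbb M_{\lambda,p}$ for all $\lambda\in(0,1]$ and $p\ge2$, and $\Phi(U_*)(0,x)=f(x)$. So $U:=\Phi(U_*)\in\mathbb M^f$.

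Finally, $U$ is a predictable modification of $U_*$. Lemma~\ref{lem:PsiM} then gives that $\Phi(U)$ is a modification of $\Phi(U_*)=U$, and by \eqref{eq:Modification}, using predictability of both fields, it is a predictable modification of $U$. This settles existence.

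For uniqueness, suppose $U'\in\mathbb M^f$ also satisfies $[\Phi(U')]_{\mathrm p}=[U']_{\mathrm p}$. Then $U'\in\mathbb M^f_{1,2}$, and uniqueness in Proposition~\ref{prop:wellposted} at $(1,2)$ gives $[U']_{\mathrm p}=[U]_{\mathrm p}$.

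For the bounded case $f\in\mathcal C^{\mathrm b}_\X$, Proposition~\ref{prop:wellposted} is also available at $\lambda=0$. The step needing care is the embedding $\mathbb M^f_{0,p}\subset\mathbb M^f_{1,2}$. It holds because $h\ge1$, so $h^{-1}\le h^{0}$ and the norms are monotone in $\lambda$; together with Hölder's inequality in $p$, this extends \eqref{eq:MM} to $\lambda_1=0$. The argument above then applies verbatim with $(0,p)$ in place of $(\lambda,p)$, giving $\Phi(U_*)\in\mathbb M_{0,p}$ for every $p\ge2$.

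The main subtlety in the whole proof is the passage between "predictable modification" and membership in $\mathbb M_{\lambda,p}$. Membership is defined with a supremum over $x$ but only an essential supremum in $t$, so predictable modifications preserve it as well. Replacing $U_*$ by $\Phi(U_*)$ gives a genuine modification and removes any doubt.
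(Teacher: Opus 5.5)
Your proof is correct and follows essentially the paper's argument: fixed points at finer levels are also fixed points in the coarse space by \eqref{eq:MM}, uniqueness in Proposition~\ref{prop:wellposted} identifies them up to predictable modification, and finiteness of the norms is then transferred. The differences are minor. You compare each $(\lambda,p)$ directly with the single reference pair $(1,2)$ instead of chaining along $(\lambda_n,p_n)$, and your detour through $U:=\Phi(U_*)$ and Lemma~\ref{lem:PsiM} is harmless but unnecessary: as you note at the end, Fubini makes the $\mathbb M_{\lambda,p}$ norm invariant under predictable modification, and this is exactly what the paper uses to conclude $U_*\in\mathbb M^f$ directly.
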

	\begin{proof}
		Let $(\lambda_n)_{n\in \mathbb N}$ be a sequence in $(0,1]$ such that $\lambda_n \downarrow 0$, and let $(p_n)_{n\in \mathbb N}$ be a sequence in $[2,\infty)$ such that $p_n \uparrow \infty$. 
		From \eqref{eq:MM}, we know that $\mathbb M^{f}_{\lambda_{n+1}, p_{n+1}}\subset \mathbb M^{f}_{\lambda_{n}, p_{n}}$ for each $n\in \mathbb N$.
		From Proposition \ref{prop:wellposted} for each $n\in \mathbb N$, there exists a 
		$U^{(n)}\in \mathbb M^{f}_{\lambda_n, p_n}$, unique up to  predictable modification, such that $\Phi(U^{(n)})$ is a  predictable modification of $U^{(n)}$. 
		For each $n\in \mathbb N$, since $U^{(n+1)}\in \mathbb M^{f}_{\lambda_{n+1}, p_{n+1}} \subset \mathbb M^{f}_{\lambda_{n}, p_{n}}$ and $\Phi(U^{(n+1)})$ is a  predictable modification of $U^{(n+1)}$, it follows that $U^{(n+1)}$ is a  predictable modification of $U^{(n)}$. 
		From this and induction, $U^{(n)}$ is a  predictable modification of $U:= U^{(1)}$ for each $n\in \mathbb N$.

		For any $\lambda \in (0,1]$ and $p\geq 2$, by taking $n\in \mathbb N$ large enough such that $\lambda_n \leq \lambda$ and $p_n \geq p$, we have $U \in \mathbb M^{f}_{\lambda_{n}, p_{n}} \subset \mathbb M^{f}_{\lambda, p}$. 
		Therefore, $U \in \mathbb M^{f}$ and $[U]_{\mathrm p} = [\Phi(U)]_{\mathrm p}$.
		This proves the existence part of the desired result.
		Suppose that there is any other $U' \in \mathbb M^f$ satisfying that $[U']_{\mathrm p} = [\Phi(U')]_{\mathrm p}$, then from the fact that $\mathbb M^f \subset \mathbb M^f_{\lambda_1, p_1}$, it must hold that $[U']_{\mathrm p} = [U^{(1)}]_{\mathrm p} = [U]_{\mathrm p}$.
		This proves the uniqueness part of the desired result.

		Finally, suppose that $f\in\mathcal C^{\mathrm b}_\X$ and let $p\geq 2$. By Proposition~\ref{prop:wellposted} with $\lambda=0$, there exists $U_0\in\mathbb M^f_{0,p}$ such that $\Phi(U_0)$ is a predictable modification of $U_0$. Since $h\geq1$, both $U_0$ and $U$ belong to $\mathbb M^f_{1,p}$, so the uniqueness part of Proposition~\ref{prop:wellposted} with $\lambda=1$ (and the same $p$) shows that $U$ is a predictable modification of $U_0$. Hence, by Fubini's theorem, for every $x\in\X$, $\norm{U(t,x)}_{L^p_\Omega}=\norm{U_0(t,x)}_{L^p_\Omega}$ for $\Lambda$-a.e.\ $t\in[0,T]$, and thus $\norm{U}_{\mathbb M_{0,p}}=\norm{U_0}_{\mathbb M_{0,p}}<\infty$.
	\end{proof}
	
	\begin{prop}\label{prop:existence of C_tem modification}
		Suppose that $U \in \mathbb M^{f}$ is  the random field given as in Corollary  \ref{cor:MfT},
		then $U$ admits a  predictable modification $\tilde U$ taking values in $\mathcal C_{[0,T]}\mathcal C^{\mathrm{tem}}_{\X,h}$.  In addition, $\Phi(U)$ admits a modification in $\mathcal C_{[0,T]}\mathcal C^{\mathrm{tem}}_{\X,h}$ and  there exists $\xi \in (0,1)$ so that for any $T>0$, $p \ge 2$ and $\lambda \in (0,1]$, it holds uniformly in $t,t'\in [0,T]$, $x,x' \in \X$ with $d(x,x') \le 1$, 
		\begin{align}
			&\norm{\Psi_1(B_U)(t,x) - \Psi_1(B_U)(t',x')}_{L^p_\Omega}+\norm{\Psi_2(\Sigma_U)(t,x) - \Psi_2(\Sigma_U)(t',x')}_{L^p_\Omega} \\
			&\lesssim \pr{|t-t'|^\xi +d(x,x')^\xi }h(x)^\lambda.
		\end{align}
	\end{prop}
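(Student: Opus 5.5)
We will prove the moment increment bound first, and then obtain both modifications from Lemma~\ref{lem:tight_tem_moment}. Since $U\in\mathbb M^f$, for every $\lambda\in(0,1]$ and $p\ge2$ we have $\sup_{x}\esssup_t h(x)^{-\lambda}\|U(t,x)\|_{L^p_\Omega}<\infty$. By the linear growth condition \eqref{eq:LiG}, the same bound holds for $B_U$ and $\Sigma_U$ with $h^\lambda$ replaced by $1+h^\lambda\lesssim h^\lambda$ (using $h\ge1$).

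\emph{Drift term.} For the drift term we use Minkowski's inequality in $L^p_\Omega$:
\[
\|\Psi_1(B_U)(t,x)-\Psi_1(B_U)(t',x')\|_{L^p_\Omega}\le \int_0^T\!\!\int_\X |G(s,y;t,x)-G(s,y;t',x')|\,\|B_U(s,y)\|_{L^p_\Omega}\,m(\mathrm dy)\,\mathrm ds .
\]
Bounding $\|B_U(s,y)\|_{L^p_\Omega}\lesssim h(y)^{\lambda}$ with $\lambda\le1/2$ (smaller $\lambda$ only strengthens the claim, since $h\ge1$), the right-hand side is controlled by \eqref{ineq:hk abs holder m}. This gives $(|t-t'|^{\xi_1\wedge1/2}+d(x,x')^{\xi_2/2\wedge1})h(x)^\lambda$.

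\emph{Noise term.} We write the increment as a single stochastic integral with integrand $(G(s,y;t,x)-G(s,y;t',x'))\Sigma_U(s,y)$. Lemma~\ref{cor:mtg+mink_cs} then bounds its $L^p_\Omega$ norm by
\[
\Bigl(\int_0^T\bigl\|\,|G(s,\cdot;t,x)-G(s,\cdot;t',x')|\,\|\Sigma_U(s,\cdot)\|_{L^p_\Omega}\bigr\|_{\mathcal H_K}^2\,\mathrm ds\Bigr)^{1/2}.
\]
To pass from $\|\Sigma_U\|_{L^p_\Omega}\lesssim h^\lambda$ inside the $\mathcal H_K$ norm, we use that $K$ is a nonnegative measure. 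This holds for the integrable absolute values appearing here, since the $\mathcal H_K$ norm of $|f|g$ with $0\le g\le g'$ is monotone when $K\ge0$; this point must be checked, since $K$ is only assumed non-negative definite. If monotonicity fails, we instead use the definition $\iint|f(x)f(y)|K$ together with the fact that the quantity in Lemma~\ref{cor:mtg+mink_cs} involves $|\Phi|$. Once the weight is $h^{\lambda_1}$ with $\lambda_1\le1/2$, \eqref{ineq:hk abs holder k} (or \eqref{ineq:hk abs holder k weighted}) gives $(|t-t'|^{\tilde\xi_1}+d(x,x')^{\tilde\xi_2})h(x)^{\lambda}$. Taking $\xi$ as the minimum of all exponents, restricted to $(0,1)$, and using $|t-t'|\le T$ to lower exponents, yields the displayed estimate. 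We expect this noise-term step to be the main obstacle: justifying the monotone substitution of the moment bound into the $\mathcal H_K$ seminorm, and keeping the weights $\lambda\le 1/2$ uniform in $(t,x)$.

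\emph{Continuous modifications.} $\Psi_0(f)=P^*_\cdot f$ is already in $\mathcal C_{[0,T]}\mathcal C^{\mathrm{tem}}_{\X,h}$ by \eqref{cond:ctem continuous}. For $X=\Psi_1(B_U)+\Psi_2(\Sigma_U)$, which vanishes at $t=0$, the bound just proved gives \eqref{cond:tight_moment} for each $\lambda$ and $p$. Here we raise to the $p$-th power, replace $h(x)^{\lambda p}$ by $h(x)^{\lambda'}$ with $\lambda'=\lambda p$ after relabeling $\lambda$, and for $y_1,y_2\in B_1(x)$ use \eqref{ineq:ref_regular} to replace $h(y_1)$ by $h(x)$. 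Lemma~\ref{lem:tight_tem_moment} (applied after extending $X$ constantly beyond $T$) then gives a continuous modification of $\Phi(U)$. Since $\Phi(U)$ is a predictable modification of $U$, we set $\tilde U$ to be this continuous version. It is adapted and continuous, hence predictable, and by \eqref{eq:Modification} it is a modification of $\Phi(U)$. Since predictable modification is an equivalence relation, $\tilde U$ is a predictable modification of $U$.
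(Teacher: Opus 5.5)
Your proposal is correct and follows essentially the same route as the paper: you split $\Phi(U)=\Psi_0(f)+\Psi_1(B_U)+\Psi_2(\Sigma_U)$, bound the drift increment by Minkowski and \eqref{ineq:hk abs holder m}, and bound the noise increment by Lemma~\ref{cor:mtg+mink_cs} and \eqref{ineq:hk abs holder k} with weight exponent $\lambda_0/p\le 1/2$; you then apply Lemma~\ref{lem:tight_tem_moment} and conclude via \eqref{eq:Modification}. The step you flag as the main obstacle is not one, because $K$ is a positive Radon measure, so $0\le f\le g$ gives $\|f\|_{\mathcal H_K}\le\|g\|_{\mathcal H_K}$ directly; you should only add, as the paper does, that the moment bound for $\Sigma_U$ holds off a $\Lambda\otimes K$-null set by Fubini, and note that the drift time exponent from \eqref{ineq:hk abs holder m} is $(\xi_1\wedge1)/2$ rather than $\xi_1\wedge\frac12$.
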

	
	\begin{proof}
		\firststep
		Notice that $U$ is the predictable modification of 
		\begin{equation}\label{eq:PhiU}
			\Phi(U) := \Psi_0(f) + \Psi_1(B_U) + \Psi_2(\Sigma_U).
		\end{equation}
		To prove the desired result for this proposition, we only need to show that each term on the right hand side of \eqref{eq:PhiU} admits a modification taking values in $\mathcal C_{[0,T]}\mathcal C^{\mathrm{tem}}_{\X,h}$.
		In fact, if this has been shown, then $\Phi(U)$ admits a modification $\tilde \Phi(U)$ taking values in $\mathcal C_{[0,T]}\mathcal C^{\mathrm{tem}}_{\X,h}$. 
		And since the continuous random field $\tilde \Phi(U)$ is predictable, by \eqref{eq:Modification}, it is also a predictable modification of both $\Phi(U)$ and $U$.

		\nextstep
		From \eqref{cond:ctem continuous}, we know that $\Psi_0(f)$ takes values in $\mathcal C_{[0,T]}\mathcal C^{\mathrm{tem}}_{\X,h}$ already.
		
		\nextstep
		We will show that $\Psi_1(B_U)$ admits a 
		modification taking values in $\mathcal C_{[0,T]} \mathcal C^{\mathrm{tem}}_{\X,h}$.
		Let $\xi_1, \xi_2$
		be the constants given as in 
		\eqref{cond:time_holder}.
		Let $\lambda_0 \in [0,1]$ (where $\lambda_0=0$ is allowed only if $f\in\mathcal C^{\mathrm b}_\X$, in which case $U\in\mathbb M_{0,p}$ by Corollary~\ref{cor:MfT}) and $p \geq 2$ be arbitrary. 
		Define $\lambda := \lambda_0 / p \in [0,1/2]$. 
		Note that uniformly  for $(s,y)\in [0,T]\times \X$ and $\omega \in \Omega\setminus \mathrm N_{\ref{c:Lip}}$, 
		\begin{align}
			\abs{B_U(s,y)} + \abs{\Sigma_U(s,y)}&= \abs{b(s,y, U(s,y))} 
			+ \abs{\sigma(s,y,U(s,y))}
			\\&{\overset{\eqref{eq:LiG}}\lesssim} 1 + |U(s,y)|. \label{eq:BUles}
		\end{align}
		Therefore, we have the existence of a $\Lambda\otimes m$-null subset $\mathrm N_{\counter\label{c:BUh}}$ of $[0,T]\times \X$ such that uniformly for every $(s,y)\in [0,T]\times \X\setminus \mathrm N_{\ref{c:BUh}}$, 
		\begin{align}
			&\norm{B_U(s,y)}_{L^p_\Omega} 
			\overset{\eqref{eq:BUles}}{\lesssim} \norm{1 + |U(s,y)|}_{L^p_\Omega} 
			\overset{\text{Minkowski}}\leq 1 + \norm{U(s,y)}_{L^p_\Omega}
			\\& \leq h(y)^{\lambda} + \norm{U}_{\mathbb M_{\lambda, p}} h(y)^{\lambda}
			\lesssim h(y)^{\lambda}. \label{eq:BUp}
		\end{align}
		
		Uniformly for every $t,t'\in [0,T]$ and $x,x'\in \X$ with $d(x,x') \le 1$,
		\begin{align}
			& \mathbb E\left[\left|\Psi_1(B_U)(t,x) - \Psi_1 (B_U)(t',x')\right|^p\right]^{\frac{1}{p}} = \left\|\Psi_1(B_U)(t,x) - \Psi_1 (B_U)(t',x')\right\|_{L^p_\Omega} 
			\\&= \left\| \int_0^T \mathrm ds \int_\X \left(G(s,y;t,x) - G(s,y;t',x')\right) B_U(s,y) m(\mathrm dy)\right\|_{L^p_\Omega} 
			\\&\overset{\text{Minkowski}}\le  \left\| \left| G(\cdot,\cdot;t,x) - G(\cdot,\cdot;t',x')\right| \left\|B_U(\cdot,\cdot)\right\|_{L^p_\Omega} \right\|_{L^1_{[0,T]\times\X}}
			\\&\overset{\eqref{eq:BUp}} \lesssim \norm{\abs{G(\cdot,\cdot;t,x) - G(\cdot,\cdot;t',x')}h(\cdot )^\lambda }_{L^1_{[0,T]\times \X}} 
			\\&\overset{\text{Lemma \ref{lem:hk_abs} }}\lesssim  \pr{\abs{t-t'}^{\pr{\xi_1 \wedge 1}\over 2} +  d(x,x')^{\frac{\xi_2}{2}\wedge 1} }h(x)^\lambda 
			\\&= \pr{\abs{t-t'}^{\pr{\xi_1 \wedge 1}\over 2} +  d(x,x')^{\frac{\xi_2}{2}\wedge 1} }h(x)^{\lambda_0/p}     
		\end{align}
		Since $\Psi_1(B_U)(0,\cdot) \equiv 0$ is deterministic, and since $p\geq 2$ and $\lambda_0 \in (0,1]$ are chosen arbitrarily, from Lemma \ref{lem:tight_tem_moment}, $\Psi_1(B_U)$ admits a modification taking values in  $\mathcal C_{[0,T]} \mathcal C^{\mathrm{tem}}_{\X,h}$ as desired for this step.
		
		\nextstep We will show that $\Psi_2(\Sigma_U)$ admits a predictable modification taking values in $\mathcal C_{[0,T]} \mathcal C^{\mathrm{tem}}_{\X,h}$.
										Let $\lambda_0 \in [0,1]$ (where $\lambda_0=0$ is allowed only if $f\in\mathcal C^{\mathrm b}_\X$, in which case $U\in\mathbb M_{0,p}$ by Corollary~\ref{cor:MfT}) and $p\geq 2$ be arbitrary.
		Take $\lambda_1:=\lambda_0/p\in[0,1/2]$ with $p\lambda_1 \leq \lambda_0$.
		Pay attention that \eqref{cond:exp_hk_lamda} holds with $\lambda$ being replaced by $\lambda_1$.
		
		Uniformly for every $(t,x)$ and $(t',x')$ in $[0, T]\times \X$, we have
		\begin{align}
			&\mathbb E\left[\left|\Psi_2(\Sigma_U)(t,x) - \Psi_2(\Sigma_U)(t',x')\right|^p \right]^{1/p}
			\\&= \norm{\iint_0^T \pr{G(s,y;t,x)-G(s,y;t',x')} \Sigma_U(s,y)W(\mathrm ds, \mathrm dy)}_{L^p_\Omega}
			\\&\overset{\text{Lemma \ref{cor:mtg+mink_cs}}} \lesssim \left\|\left(G(\cdot, \cdot;t,x) - G(\cdot,\cdot;t',x')\right)  \Sigma_U \right\|_{L^2_{[0,T]} \mathcal H_K L_\Omega^p}
			\\&=\pr{\int_0^T \norm{
					\abs{G_{t-s}(\cdot,x) - G_{t'-s}(\cdot,x')} 
					\norm{\Sigma_U(s,\cdot)}_{L^p_\Omega}
				}_{\mathcal H_K}^2\mathrm ds}^{1/2}. \label{ineq:continuity stochastic part }
		\end{align}
		Note that we have the existence of a $\Lambda \otimes K$-null subset $\mathrm N_{\counter\label{c:SigmaNull}}$ of $[0,T]\times \X^2$ such that uniformly for every $(s,x,y)\in [0,T]\times \X^2 \setminus {\mathrm N_{\ref{c:SigmaNull}}}$, 
		\begin{align}
			\norm{\Sigma_U(s,x)}_{L^p_\Omega} \cdot \norm{\Sigma_U(s,y)}_{L^p_\Omega} 
			&{\overset{\eqref{eq:BUles}}{\lesssim}} \norm{1 + |U(s,x)|}_{L^p_\Omega} \cdot \norm{1 + |U(s,y)|}_{L^p_\Omega}
			\\& {\overset{\text{Minkowski}}\leq} \pr{ 1 + \norm{U(s,x)}_{L^p_\Omega}} \cdot \pr{ 1 + \norm{U(s,y)}_{L^p_\Omega}}
			\\& \leq \pr{h(x)^{\lambda_1} + \norm{U}_{\mathbb M_{\lambda_1, p}} h(x)^{\lambda_1}}\cdot \pr{h(y)^{\lambda_1} + \norm{U}_{\mathbb M_{\lambda_1, p}} h(y)^{\lambda_1}}\\
			&\lesssim h(x)^{\lambda_1} h(y)^{\lambda_1}. \label{eq:SUp}
		\end{align}
		Therefore, uniformly for every $(t,x)$ and $(t',x')$ in $[0, T]\times \X$ with $d(x,x') \le 1$, we have
		\begin{align}
			&\mathbb E\left[\left|\Psi_2(\Sigma_U)(t,x) - \Psi_2(\Sigma_U)(t',x')\right|^p \right]^{1/p}
			\\&\overset{\eqref{ineq:continuity stochastic part }}\lesssim \pr{\int_0^T \norm{
					\abs{G_{t-s}(\cdot,x) - G_{t'-s}(\cdot,x')} 
					\norm{\Sigma_U(s,\cdot)}_{L^p_\Omega}
				}_{\mathcal H_K}^2\mathrm ds}^{1/2}
			\\&\overset{\eqref{eq:SUp}}\lesssim \pr{\int_0^T \norm{
					\abs{G_{t-s}(\cdot,x) - G_{t'-s}(\cdot,x')} 
					h(\cdot)^{\lambda_1}
				}_{\mathcal H_K}^2\mathrm ds}^{1/2}
			\\& \overset{\text{Lemma \ref{lem:hk_abs} }}\lesssim  h(x)^{\lambda_1} \pr{\abs{t-t'}^{\widetilde \xi_1} + d(x,x')^{\widetilde \xi_2}} 
			\lesssim 
			h(x)^{\lambda_0/p}
			\pr{\abs{t-t'}^{\widetilde \xi_1} + d(x,x')^{\widetilde \xi_2}}
		\end{align}
		for some $\widetilde\xi_1,\, \widetilde\xi_2 \in(0,1]$, where we used the fact that 
		$h(x) \ge 1$ for all 
		$x \in \X$ and $p\lambda_1 \leq \lambda_0$ in the last line.
		Since $\Psi_2(\Sigma_U)(0,\cdot) \equiv 0$ is deterministic, and since $p \ge 2$ and $\lambda_0\in (0,1]$ 
		were
		chosen arbitrarily  independent of $\tilde \xi_1$ and $\tilde \xi_2$, from Lemma \ref{lem:tight_tem_moment}, $\Psi_2(\Sigma_U)$ admits a modification 
		taking values in 
		$\mathcal C_{[0,T]}\mathcal C^{\mathrm{tem}}_{\X,h}$ as desired for this step.
		
		\nextstep
		Combining results from
		Steps 1, 2, 3 and 4
		with the argument at the beginning of this proof, we know that the desired result for this proposition holds.
	\end{proof}

	    The next lemma upgrades equality of predictable modification classes to indistinguishability for continuous random fields, and will be needed to complete the existence argument.
	\begin{lemma}\label{eq:TX}
		If predictable random fields $U$ and $V$ in $\mathcal C_{[0,T]}\mathcal C^{\mathrm{tem}}_{\X,h}$ satisfy that $[U]_{\mathrm p} = [V]_{\mathrm p}$, then they are indistinguishable, that is, almost surely $U(t,x) = V(t,x)$ for every $(t,x)\in [0,T] \times \X$.
	\end{lemma}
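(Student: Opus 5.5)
The plan is to pass from ``equal for a.e.\ time'' to ``equal for every time'' using path continuity, together with Fubini and the separability of $\X$. By definition, $[U]_{\mathrm p}=[V]_{\mathrm p}$ means that for every fixed $x\in\X$ we have $U(\omega,t,x)=V(\omega,t,x)$ for $(\mathbb P\otimes\Lambda)$-a.e.\ $(\omega,t)\in\Omega\times[0,T]$.

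First, fix a countable dense subset $D\subset\X$, which exists since $(\X,d)$ is separable. For each $x\in D$, Fubini's theorem applied to the null set $\{(\omega,t): U(\omega,t,x)\neq V(\omega,t,x)\}$ yields a $\mathbb P$-null set $N_x$ with the following property: for $\omega\notin N_x$, the set $\{t\in[0,T]: U(\omega,t,x)\neq V(\omega,t,x)\}$ has Lebesgue measure zero. Measurability of this set is not an issue, because $U$ and $V$ are predictable, hence jointly measurable.

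Next, set $N:=\bigcup_{x\in D}N_x\cup N_0$, where $N_0$ is the null set off which both $U(\omega)$ and $V(\omega)$ lie in $\mathcal C_{[0,T]}\mathcal C^{\mathrm{tem}}_{\X,h}$. Then $N$ is still $\mathbb P$-null. Fix $\omega\notin N$ and $x\in D$. The map $t\mapsto U(\omega,t,x)-V(\omega,t,x)$ is continuous on $[0,T]$, since convergence in $\mathcal C^{\mathrm{tem}}_{\X,h}$ (for instance in $\|\cdot\|_{(1)}$) implies pointwise convergence of continuous functions. This map vanishes on a set of full Lebesgue measure, which is dense in $[0,T]$, so it vanishes identically. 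Hence $U(\omega,t,x)=V(\omega,t,x)$ for all $t\in[0,T]$ and all $x\in D$.

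Finally, for each $t$, both $U(\omega,t,\cdot)$ and $V(\omega,t,\cdot)$ are continuous on $\X$ and agree on the dense set $D$, so they agree everywhere. This gives indistinguishability. The only point requiring care is the passage from the norm topology of $\mathcal C^{\mathrm{tem}}_{\X,h}$ to pointwise continuity in $t$. This follows because $h$ is continuous and finite: $|g(x)|\le h(x)\|g\|_{(1)}$ holds for all $x$, since continuity of $g$ upgrades the essential supremum bound to a pointwise one, using that $m$ has full support.
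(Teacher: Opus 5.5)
Your proof is correct and follows essentially the same route as the paper: Fubini, a countable dense subset of $\X$, density of full-measure time sets, and continuity of the fields. The one difference is the order of the Fubini sections. You take $\omega$-sections, so your set of good times depends on $\omega$ but is automatically dense. The paper takes $t$-sections, extracts a deterministic countable dense set of good times $A_0$, and then intersects over $A_0\times\X_0$ before invoking continuity.
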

	
	\begin{proof}[Proof of Lemma \ref{eq:TX}]
		The condition $[U]_{\mathrm p} = [V]_{\mathrm p}$ says that
		\begin{equation}
			U(\omega,t,x) = V(\omega, t, x), \quad \text{$\mathbb P\otimes \Lambda$-a.e. $(\omega, t)\in \Omega \times [0,T]$,}\quad x\in \X.
		\end{equation}
		By Fubini's theorem, this implies that 
		\begin{equation}
			U(\omega,t,x) = V(\omega, t, x), \quad \text{$\mathbb P$-a.s. $\omega\in \Omega$,}\quad \text{$\Lambda$-a.e.~$t\in [0,T]$}, \quad x\in \X.
		\end{equation}
		Since $\X$ is separable, we know the existence of a countable dense subset $\X_0$ of $\X$. 
		Now the above display implies that for $\Lambda$-almost every $t\in [0,T]$,
		\begin{equation} \label{eq:CD}
			U(\omega,t,x) = V(\omega, t, x), \quad x\in \X_0, \quad \text{$\mathbb P$-a.s. $\omega\in \Omega$}.
		\end{equation}
		Denote by $A$ the collection of $t\in [0,T]$ such that \eqref{eq:CD} holds. Then $\Lambda(A^{\mathrm c}) = 0$, where $A^{\mathrm c} := [0,T]\setminus A$.
		Denote by $(\mathcal O_k)_{k=1}^\infty$ a countable topological base of $[0,T]$.
		For every $k\in \mathbb N$, since $\Lambda(\mathcal O_k)>0$, we must have $\mathcal O_k \not\subset A^{\mathrm c}$, and therefore, $\mathcal O_k\cap A \neq \emptyset$. 
		Taking $t_k \in \mathcal O_k\cap A \neq \emptyset$ for each $k\in \mathbb N$, we obtain a countable dense subset $A_0:=\{t_k:k\in \mathbb N\}$ of $[0,T]$ such that 
		\begin{equation}
			U(\omega,t,x) = V(\omega, t, x), \quad x\in \X_0, \quad \text{$\mathbb P$-a.s. $\omega\in \Omega$,}\quad t\in A_0.
		\end{equation}
		This implies that 
		\begin{equation}
			U(\omega,t,x) = V(\omega, t, x), \quad x\in \X_0, \quad t\in A_0, \quad \text{$\mathbb P$-a.s. $\omega\in \Omega$}.
		\end{equation}
		Finally, since $U$ and $V$ are continuous random fields on $[0,T]\times \X$, we obtain the desired result for this lemma.
	\end{proof}

	\begin{proof}[Proof of Theorem \ref{thm:main2}]
		\firststep\label{step:LT}
		Since $T>0$ was chosen arbitrarily, we only need to show the well-posedness of SPDE \eqref{eq:spde} up to time $T$.
		That is, we only need to show that there exists an up-to-indistinguishability unique $(\mathscr F_t)_{t\geq 0}$-adapted $\mathcal C_{[0,T]}\mathcal C^{\mathrm{tem}}_{\X,h}$-valued random field $(u_t(x))_{t\in [0,T], x\in \X}$ such that, for every $t\in [0,T]$ and $x\in \X$, \eqref{eq:mild2} holds almost surely.
		It is then standard to extend this unique solution to infinite time. 
		
		\nextstep
		We show in this step the existence of $(u_t(x))_{t\in [0,T], x\in \X}$ as desired in Step \ref{step:LT}. 
		Let the predictable random fields $U$ and $\widetilde U$ be given as in Proposition \ref{prop:existence of C_tem modification}.
		Note that 
		\begin{equation}
			[\tilde U]_{\mathrm p} = [U]_{\mathrm p} = [\Phi(U)]_{\mathrm p} 
			\overset{\eqref{eq:Phisharp}}{=} \Phi^\sharp([U]_{\mathrm p}) =  
			\Phi^\sharp([\tilde U]_{\mathrm p}) = [\Phi(\tilde U)]_{\mathrm p}.
			\label{eq:Upm}
		\end{equation}
		Therefore, $\tilde U$ is also a random field as desired in Corollary \ref{cor:MfT}.
		As a consequence of this and Proposition \ref{prop:existence of C_tem modification}, 
		\begin{itemize}
			\item[\eq\label{eq:PU}] $\Phi(\tilde U)$ admits a modification in $\mathcal C_{[0,T]}\mathcal C^{\mathrm{tem}}_{\X,h}$, denoted by $\tilde \Phi(\tilde U)$.
		\end{itemize}
		Now, since both $\tilde U$ and $\tilde \Phi(\tilde U)$ are $\mathcal C_{[0,T]}\mathcal C^{\mathrm{tem}}_{\X,h}$-valued predictable random fields and 
		\begin{equation}
			[\tilde U]_{\mathrm p} 
			\overset{\eqref{eq:Upm}}{=} [\Phi(\tilde U)]_{\mathrm p} 
			\overset{\eqref{eq:PU}}{=} [\tilde \Phi(\tilde U)]_{\mathrm p},
		\end{equation}
		by Lemma \ref{eq:TX}, $\tilde U$ and $\tilde \Phi(\tilde U)$ are indistinguishable.
		From this and \eqref{eq:PU}, $\tilde U$ is a modification of $\Phi(\tilde U)$.
		The desired existence result for this step now follows by taking $(u_t(x))_{t\in [0,T], x\in \X} = \tilde U$.
		
		\nextstep 
		We prove the uniqueness of the random field $(u_t(x))_{t\in [0,T], x\in \X}$ as desired in Step \ref{step:LT} following the argument outlined in \cite[Theorem 2.2]{MR1271224}. 
		To do this, let $U$ and $V$ be two such random fields, and let $\lambda \in (0,1]$ be arbitrary. We only need to show that $U$ and $V$ are indistinguishable.
		For the rest of this proof, $n$ always denotes an integer satisfying $n>\|f\|_{(\lambda)}$. We set
		\begin{align*}
			\tau_n:= { T\wedge }\inf\left\{
			t\in [0,T]:\,
			\norm{U(t,\cdot)}_{ (\lambda)
			} \ge n \text{ or } \norm{V(t,\cdot)}_{(\lambda)} \ge n \right\}.
		\end{align*}
		Noticing that $t\mapsto \|U(t,\cdot)\|_{(\lambda)}$ and $t\mapsto \|V(t,\cdot)\|_{(\lambda)}$ are almost surely continuous on $[0,T]$, it is easy to see that
		\begin{equation}\label{eq:tnT}
			\lim_{n\to \infty}\tau_n = T, \quad \text{a.s.}
		\end{equation}
		For every $t\in[0,T]$, $x\in\X$ and $n\in\mathbb N$, define
		\[
			U_n(t,x):=\mathbf 1_{\{t\leq\tau_n\}}U(t,x),
			\qquad
			V_n(t,x):=\mathbf 1_{\{t\leq\tau_n\}}V(t,x).
		\]
		Since $\tau_n$ is a stopping time, $t\mapsto\mathbf 1_{\{t\leq\tau_n\}}$ is adapted and left-continuous, and hence predictable; thus $U_n$ and $V_n$ are predictable.
		Clearly, for every $n\in \mathbb N$, \[\sup_{t \in [0,T]}\pr{\norm{U_n(t,\cdot)}_{(\lambda)}+ \norm{V_n(t,\cdot)}_{(\lambda)}} \le 2n, \quad \text{a.s.} \] and in particular $U_n, V_n \in \mathbb M_{\lambda, 2}$.
		For every $n\in \mathbb N$, define predictable functions $b^{(n)}$ and $\sigma^{(n)}$ on $\Omega \times [0,T] \times \X \times \mathbb R$ such that for every $t\in [0,T], x\in \X$ and $u\in \mathbb R$, 
		\begin{align}
			&(b^{(n)}(\omega, t, x, u))_{\omega \in \Omega} := b^{(n)}(t,x,u):= b(t, x, u)\mathbf 1_{\{t\leq \tau_n\}},
			\\&(\sigma^{(n)}(\omega, t, x, u))_{\omega \in \Omega} :=\sigma^{(n)}(t, x, u) := \sigma(t, x, u) \mathbf 1_{\{t\leq \tau_n\}}.
		\end{align}
		Pay attention that, almost surely for every $n\in \mathbb N$, $t\in [0,T]$, $x\in \X$ and $u,v\in \mathbb R$, we have
		\begin{equation}
			|b^{(n)}(t,x,u)| + |\sigma^{(n)}(t,x,u)|  \leq |b(t,x,u)| + |\sigma(t,x,u)|
		\end{equation}
		and
		\begin{align}
			&|b^{(n)}(t,x,u) - b^{(n)}(t,x,v)| + |\sigma^{(n)}(t,x,u)- \sigma^{(n)}(t,x,v)|  
			\\&= |b(t,x,u) - b(t,x,v)|\mathbf 1_{\{t\leq \tau_n\}} + |\sigma(t,x,u)- \sigma(t,x,v)|  \mathbf 1_{\{t\leq \tau_n\}} 
			\\&\leq  |b(t,x,u) - b(t,x,v)| + |\sigma(t,x,u)- \sigma(t,x,v)|.
		\end{align}
		Therefore, for every $n\in \mathbb N$, 
		\begin{itemize}
			\item[\eq] the linear growth condition \eqref{eq:LiG} and the Lipschitz condition \eqref{ineq:g_lip} still hold with $b$, and $\sigma$, being replaced by $b^{(n)}$, and $\sigma^{(n)}$, respectively.
		\end{itemize} 
		For any $n\in \mathbb N$, $\tilde U \in \mathbb M_{\lambda, 2}$, $t\in [0,T]$ and $x\in \X$, define
		\begin{equation}
			B^{(n)}_{\tilde U}(t,x):= b^{(n)}(t,x, \tilde U(t,x)) \quad \text{and} \quad \Sigma^{(n)}_{\tilde U}(t,x):= \sigma^{(n)}(t,x, \tilde U(t,x)).
		\end{equation}
		Now, for every $n\in \mathbb N$, since all the conditions of Lemma \ref{lem:b_contract_combined} are met with $(\tilde \alpha, p ,g, b, \sigma)$ being replaced by $(\alpha, 2, f, b^{(n)}, \sigma^{(n)})$, we conclude that 
		\begin{itemize}
			\item[\eq] for each $\tilde U\in \mathbb M_{\lambda,2}$, $(B^{(n)}_{\tilde U},\Sigma^{(n)}_{\tilde U})$ is a Duhamel-integrable pair, and  $\Psi(f,B^{(n)}_{\tilde U},\Sigma^{(n)}_{\tilde U}) \in \mathbb M_{\lambda,2}$; moreover, 
			\item[\eq\label{eq:uA}] uniformly in $ \tilde U, \tilde V \in \mathbb M_{\lambda, 2}$ and $A\ge 1$, we have 
			\begin{align}
				\norm{\Psi(f,B^{(n)}_{\tilde U},\Sigma^{(n)}_{\tilde U}) - \Psi(f,B^{(n)}_{\tilde V},\Sigma^{(n)}_{\tilde V})}_{\mathbb M_{\lambda, 2, A}} 
				\lesssim  A^\frac{\alpha - 1}{2} 
				\norm{\tilde U - \tilde V}_{\mathbb M_{\lambda, 2, A}}.
			\end{align}
		\end{itemize}
		In fact, we first note that for every $t\in [0,T]$ and $x\in \X$, almost surely,
		\begin{align} 
			&U(t,x) - V(t,x) 
			\\&= \iint_0^t G(s,y;t,x) b(s,y, U(s,y)) \mathrm ds m(\mathrm dy) 
			\\&\quad + \iint_0^t G(s,y;t,x) \sigma(s,y, U(s,y))W(\mathrm ds, \mathrm dy) 
			\\& \quad - \iint_0^t G(s,y;t,x) b(s,y, V(s,y)) \mathrm ds m(\mathrm dy) 
			\\& \quad - \iint_0^t G(s,y;t,x) \sigma(s,y, V(s,y))W(\mathrm ds, \mathrm dy). \label{eq:CF}
		\end{align}
		Since for every $x\in \X$, almost surely, both sides of \eqref{eq:CF} are continuous in $t\in [0,T]$,
		we have actually
		\begin{itemize}
			\item[\eq\label{eq:Cv}] for every $x\in \X$, almost surely, \eqref{eq:CF} holds for every $t\in [0,T]$.
		\end{itemize}
		Define $Z_n:=U_n-V_n\in\mathbb M_{\lambda,2}$ for every $n\in\mathbb N$.
		Therefore, for every sufficiently large $n\in\mathbb N$, $t\in[0,T]$ and $x\in\X$, almost surely,
		\begin{align}
			&Z_n(t,x)=U_n(t,x)-V_n(t,x)
			\\&=\mathbf 1_{\{t\leq\tau_n\}}\big(U(t,x)-V(t,x)\big)
			\\&\overset{\eqref{eq:Cv}}{=} \mathbf 1_{\{t\leq\tau_n\}}
			\iint_0^{t} G(s,y;t,x) b^{(n)}(s,y, U_n(s,y)) \mathrm ds m(\mathrm dy)
			\\&\qquad + \mathbf 1_{\{t\leq\tau_n\}}
			\iint_0^{t} G(s,y;t,x) \sigma^{(n)}(s,y, U_n(s,y))W(\mathrm ds, \mathrm dy)
			\\& \qquad - \mathbf 1_{\{t\leq\tau_n\}}
			\iint_0^{t} G(s,y;t,x) b^{(n)}(s,y, V_n(s,y)) \mathrm ds m(\mathrm dy)
			\\& \qquad - \mathbf 1_{\{t\leq\tau_n\}}
			\iint_0^{t} G(s,y;t,x) \sigma^{(n)}(s,y, V_n(s,y))W(\mathrm ds, \mathrm dy)
			\\&=\mathbf 1_{\{t\leq\tau_n\}}
			\left[\Psi(f, B^{(n)}_{U_n}, \Sigma^{(n)}_{U_n})
			-\Psi(f, B^{(n)}_{V_n}, \Sigma^{(n)}_{V_n})\right](t,x).
			\label{eq:UnVn}
		\end{align}
		We obtain that, uniformly for every sufficiently large $n\in \mathbb N$ and $A\geq 1$,
		\begin{align}
			\norm{Z_n}_{\mathbb M_{\lambda,2,A}}
			&\overset{\eqref{eq:UnVn}}{\leq}
			\norm{\Psi(f,B^{(n)}_{U_n},\Sigma^{(n)}_{U_n})
			-\Psi(f,B^{(n)}_{V_n},\Sigma^{(n)}_{V_n})}_{\mathbb M_{\lambda,2,A}}\\
			&\overset{\eqref{eq:uA}}\lesssim
			A^\frac{\alpha - 1}{2}
			\norm{U_n-V_n}_{\mathbb M_{\lambda,2,A}}
			=A^\frac{\alpha - 1}{2}\norm{Z_n}_{\mathbb M_{\lambda,2,A}}.
		\end{align}
		Therefore, there exists an $A\geq 1$ sufficiently large such that
		$\norm{Z_n}_{\mathbb M_{\lambda,2,A}}
		\leq \frac{1}{2}\norm{Z_n}_{\mathbb M_{\lambda,2,A}}$
		for every sufficiently large $n\in\mathbb N$, which implies that
		\begin{align}
			\sup_{x \in \X}\esssup_{\Lambda\text{-a.e. }t\in[0,T]}
			e^{-At}h(x)^{-\lambda}\norm{Z_n(t,x)}_{L^2_\Omega}
			=\norm{Z_n}_{\mathbb M_{\lambda,2,A}}=0.
		\end{align}
		Thus, for every $x\in\X$,
		\[
			\mathbf 1_{\{t\leq\tau_n\}}\big(U(t,x)-V(t,x)\big)=0
			\quad\text{for $\mathbb P\otimes\Lambda$-almost every $(\omega,t)$.}
		\]
		Let $\X_0$ be a countable dense subset of $\X$.
		By Fubini's theorem, on a common event of probability one, for every sufficiently large $n\in\mathbb N$ and $x\in\X_0$,
		\[
			U(t,x)=V(t,x)
			\quad\text{for $\Lambda$-almost every $t\in[0,T]$ satisfying $t\leq\tau_n(\omega)$.}
		\]
		Since $n>\|f\|_{(\lambda)}$, the continuity used in the definition of $\tau_n$ gives $\tau_n>0$ almost surely.
		The joint continuity of $U-V$ therefore first gives equality for every $t\in[0,\tau_n]$ and $x\in\X_0$, and then, by density, for every $t\in[0,\tau_n]$ and $x\in\X$.

		Let $\Omega_0$ be a probability-one event on which $\tau_n\uparrow T$, as in \eqref{eq:tnT}, and $U-V$ is jointly continuous. Since $(\tau_n)_{n\in\mathbb N}$ is nondecreasing, for some sufficiently large $n_0\geq1$,
		\begin{align}
			&\Omega_0\cap\left\{U(t,x)=V(t,x),\ \forall\,(t,x)\in[0,T]\times\X\right\}\nonumber\\
			&\qquad=\Omega_0\cap\bigcap_{n\geq n_0}\left\{U(t,x)=V(t,x),\ \forall\,(t,x)\in[0,\tau_n]\times\X\right\}.
			\label{eq:continuity of sets_temp}
		\end{align}
		Since $\mathbb P(\Omega_0)=1$ and the events in the intersection above decrease with $n$, continuity from above of $\mathbb P$ gives
		\begin{equation}\label{eq:Zn1}
			\begin{aligned}
			1&=\lim_{n \to \infty}\mathbb P\left(
			U(t,x)=V(t,x),\
			\forall\,(t,x) \in [0,\tau_n]\times\X
			\right)\\
			&=\mathbb P\left( U(t,x)=V(t,x),\ \forall\,(t,x) \in [0,T]\times\X \right).
			\end{aligned}
		\end{equation}
		Therefore, $U$ and $V$ are indistinguishable random fields as desired.
	\end{proof}

	The following moment bound for the solution will be used in Section~\ref{sec:comparison} and in the Appendix.
	\begin{corollary}\label{cor:moment}
		Let $u$ be the  mild-solution given by Theorem~\ref{thm:main2}, and let $p\ge2$. Then, for every $T>0$ and every $\lambda\in(0,1]$, and also for $\lambda=0$ if $f\in\mathcal C^{\mathrm b}_\X$,
		\begin{equation}\label{ineq:moment_every_t}
			\sup_{t\in[0,T]}\sup_{x\in\X}h(x)^{-\lambda}\norm{u(t,x)}_{L^p_\Omega}<\infty.
		\end{equation}
	\end{corollary}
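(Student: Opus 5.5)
The estimate \eqref{ineq:moment_every_t} will follow by combining the fixed-point construction with the continuity of $u$. By Corollary~\ref{cor:MfT} and the proof of Theorem~\ref{thm:main2}, $u$ restricted to $[0,T]$ is a predictable modification of the fixed point $U\in\mathbb M^f$, so $\|u\|_{\mathbb M_{\lambda,p}(T)}<\infty$ for every $\lambda\in(0,1]$ and $p\ge 2$, and also for $\lambda=0$ when $f\in\mathcal C^{\mathrm b}_\X$. This is precisely \eqref{ineq:moment_every_t}, except that the supremum in $t$ is an essential supremum. The only work left is to upgrade the $\Lambda$-a.e.\ bound in $t$ to a bound valid for every $t\in[0,T]$.

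The first route is to use the mild equation directly. For every fixed $(t,x)$, \eqref{eq:mild2} holds almost surely, so
\[
\|u(t,x)\|_{L^p_\Omega}\le |P_t^*f(x)|+\|\Psi_1(B_u)(t,x)\|_{L^p_\Omega}+\|\Psi_2(\Sigma_u)(t,x)\|_{L^p_\Omega}.
\]
\emph{Deterministic term.} We have $|P_t^*f(x)|\le \|f\|_{(\lambda)}\|G_t(\cdot,x)h^\lambda\|_{L^1_\X}\lesssim h(x)^\lambda$ by Lemma~\ref{lem:hk longtime}. When $\lambda=0$ and $f$ is bounded, $P_t^*f$ is instead bounded by $\|f\|_\infty\|G_t(\cdot,x)\|_{L^1_\X}$.

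\emph{Drift term.} By Minkowski's inequality and \eqref{eq:BUp}, which holds for $\Lambda\otimes m$-a.e.\ $(s,y)$ and therefore suffices inside the integral, this term is at most
\[
\int_0^t\|G_{t-s}(\cdot,x)h^\lambda\|_{L^1_\X}\,\mathrm ds\lesssim T e^{\kappa T}h(x)^\lambda.
\]

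\emph{Stochastic term.} By Lemma~\ref{cor:mtg+mink_cs}, \eqref{eq:SUp} and \eqref{ineq:hk_k long time}, it is bounded by
\[
\Bigl(\int_0^t (t-s)^{-\alpha}e^{\tilde\kappa T}\,\mathrm ds\Bigr)^{1/2}h(x)^\lambda\lesssim h(x)^\lambda,
\]
using $\alpha<1$.

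The one point needing care is that \eqref{eq:BUp} and \eqref{eq:SUp} were stated with weight $h^{\lambda}$ and exponent $p$ from the membership $U\in\mathbb M_{\lambda,p}$, and this membership holds for every pair $(\lambda,p)$. Moreover, changing $U$ on a $\mathbb P\otimes\Lambda$-null set does not affect the Duhamel integrals, by Lemma~\ref{lem:PsiM}. All constants are uniform in $(t,x)\in[0,T]\times\X$, so taking suprema gives \eqref{ineq:moment_every_t}.

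I expect the main obstacle to be this modification bookkeeping: making sure that the integrands $B_u,\Sigma_u$ may legitimately be bounded via the $\mathbb M_{\lambda,p}$-norm of $u$, which controls $\|u(s,y)\|_{L^p_\Omega}$ only for a.e.\ $s$. Since the integrals are over $s$, a.e.\ control is enough, and the argument closes. A fallback is Fatou's lemma: for any $t$, pick $t_k\to t$ in the full-measure set where the bound holds; continuity of $u$ gives $u(t_k,x)\to u(t,x)$ almost surely, and then $\|u(t,x)\|_{L^p_\Omega}\le\liminf_k\|u(t_k,x)\|_{L^p_\Omega}$.
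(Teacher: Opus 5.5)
Your proposal is correct. Your first step matches the paper: you identify $u$ on $[0,T]$ with the continuous predictable modification $\tilde U$ of the fixed point $U$, so that $\|u\|_{\mathbb M_{\lambda,p}(T)}<\infty$ for all admissible $(\lambda,p)$. Your ``fallback'' is precisely the paper's whole argument for the upgrade. For fixed $x$, the set of good times $s$ has full Lebesgue measure and is therefore dense in $[0,T]$. Continuity of $t\mapsto u(t,\cdot)$ in $\mathcal C^{\mathrm{tem}}_{\X,h}$, together with the full support of $m$, gives $u(t_j,x)\to u(t,x)$ almost surely, and Fatou's lemma concludes.

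Your primary route is genuinely different. You re-run the Duhamel estimates at each fixed $(t,x)$, which amounts to the pointwise bound already derived in the proof of Lemma~\ref{lem:b_contract_combined} (cf.\ \eqref{ineq:bcc D pair explicit}), applied with $U=u$. Your measure-theoretic bookkeeping there is sound. Because the norm is $\sup_x\esssup_t$, the exceptional time set is null for every spatial point. Hence the bound on $\|u(s,y)\|_{L^p_\Omega}$ survives integration against both $\Lambda\otimes m$ and $\Lambda\otimes K$, even when $K$ is singular with respect to $m\otimes m$ (e.g.\ white noise).

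As for the trade-off, the Fatou argument is shorter and yields the sharp constant $\|u\|_{\mathbb M_{\lambda,p}}$, but it relies on path continuity. Your route uses only the mild identity at each point, so it applies to any predictable field in $\mathbb M_{\lambda,p}$ satisfying \eqref{eq:mild2} pointwise, continuous or not. The price is repeating the heat-kernel estimates and obtaining a constant of the form $C_T\bigl(\|f\|_{(\lambda)}+1+\|u\|_{\mathbb M_{\lambda,p}}\bigr)$.
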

	\begin{proof}
		Fix $T>0$, and let $U$ and $\tilde U$ be as in the proof of Theorem~\ref{thm:main2}, constructed on $[0,T]$. By the uniqueness part of Theorem~\ref{thm:main2}, $u=\tilde U$ on $[0,T]\times\X$ up to indistinguishability, and by \eqref{eq:Upm}, $\tilde U$ is a predictable modification of $U$. Hence, by Corollary~\ref{cor:MfT}, $\norm{u}_{\mathbb M_{\lambda,p}}=\norm{U}_{\mathbb M_{\lambda,p}}<\infty$ for every $\lambda\in(0,1]$, and also for $\lambda=0$ if $f\in\mathcal C^{\mathrm b}_\X$.
		
		Now fix $x\in\X$ and $t\in[0,T]$. We see that the bound $h(x)^{-\lambda}\norm{u(s,x)}_{L^p_\Omega}\le\norm{u}_{\mathbb M_{\lambda,p}}$ holds for $\Lambda$-a.e.\ $s\in[0,T]$. Hence, we may choose such $s=t_j$ with $t_j\to t$. Since $u$ has continuous paths in $\mathcal C^{\mathrm{tem}}_{\X,h}$ and $m$ has full support, $u(t_j,x)\to u(t,x)$ almost surely, and Fatou's lemma gives
		\begin{align*}
			h(x)^{-\lambda}\norm{u(t,x)}_{L^p_\Omega}\le\liminf_{j\to\infty}h(x)^{-\lambda}\norm{u(t_j,x)}_{L^p_\Omega}\le\norm{u}_{\mathbb M_{\lambda,p}}.
		\end{align*}
		Taking the supremum over $(t,x)\in[0,T]\times\X$ proves \eqref{ineq:moment_every_t}.
	\end{proof}
	
    \section{Proof of the comparison principle}\label{sec:comparison}
	This section is dedicated to the proof of Theorem \ref{thm:comparison} \eqref{part:main2 weak comparison}. 
	To do this, suppose all assumptions in Theorem \ref{thm:main2} hold, and assume that conditions \eqref{cond:hk_uni_bound} and \eqref{eq:con}  hold.
Let $(u_t)_{t\ge 0}$ and $(u'_t)_{t \ge 0}$ be the mild-solutions with initial conditions $u_0=f\in \mathcal C^{\mathrm{tem}}_{\X,h}$ and  
        $u_0' = f' \in \mathcal C^{\mathrm{tem}}_{\X,h}$ respectively.

	We will need several results, the first of which is the following result on continuity with respect to the initial condition. 
	\begin{prop}\label{prop:continuity_initial}
		Let $(f_n)_{n \in \N}$  be a sequence converging to $f$ in $\mathcal C^{\mathrm{tem}}_{\X,h}$.
		For each $n\in \mathbb N$, let $u^{(n)}$ be the unique strong solution to \eqref{eq:spde} with $f$ being replaced by $f_n$. Then for any $p \ge 2$, $\lambda \in (0,1]$ and $T>0$, we have
		$\|u^{(n)} - u\|_{\mathbb M_{\lambda, p}(T)} \to 0$ when $n\uparrow \infty$.
	\end{prop}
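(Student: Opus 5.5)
The plan is to reduce the statement to the contraction estimate of Lemma~\ref{lem:b_contract_combined}, together with a bound on the difference of the deterministic parts $\Psi_0(f_n)-\Psi_0(f)$. Fix $p\ge 2$, $\lambda\in(0,1]$ and $T>0$. By Corollary~\ref{cor:moment}, $u^{(n)}$ and $u$ belong to $\mathbb M_{\lambda,p}(T)$. Moreover, $u^{(n)}$ is a predictable modification of $\Psi(f_n,B_{u^{(n)}},\Sigma_{u^{(n)}})$, and $u$ is one of $\Psi(f,B_u,\Sigma_u)$.

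Since $\|\cdot\|_{\mathbb M_{\lambda,p,A}}$ is unchanged under predictable modification, for every $A\ge1$ we can write
\begin{align}
\|u^{(n)}-u\|_{\mathbb M_{\lambda,p,A}}
&\le \|\Psi_0(f_n)-\Psi_0(f)\|_{\mathbb M_{\lambda,p,A}}
\\&\quad+\|\Psi(f,B_{u^{(n)}},\Sigma_{u^{(n)}})-\Psi(f,B_u,\Sigma_u)\|_{\mathbb M_{\lambda,p,A}} .
\end{align}
This uses the fact that $\Psi_1$ and $\Psi_2$ do not depend on the initial datum, so
\[
\Psi(f_n,B,\Sigma)-\Psi(f,B,\Sigma)=\Psi_0(f_n-f).
\]
By Lemma~\ref{lem:b_contract_combined}, the second term is at most $C A^{(\alpha-1)/2}\|u^{(n)}-u\|_{\mathbb M_{\lambda,p,A}}$, where $C$ is uniform in $n$; this uniformity is exactly what that lemma provides. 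Choosing $A$ large enough that $CA^{(\alpha-1)/2}\le 1/2$, we can absorb this term into the left-hand side, because the left-hand side is finite. The equivalence $\|\cdot\|_{\mathbb M_{\lambda,p}}\le e^{AT}\|\cdot\|_{\mathbb M_{\lambda,p,A}}$ on $[0,T]$ then gives
\[
\|u^{(n)}-u\|_{\mathbb M_{\lambda,p}(T)}\le 2e^{AT}\|\Psi_0(f_n-f)\|_{\mathbb M_{\lambda,p}(T)}.
\]

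It remains to show that $\|\Psi_0(f_n-f)\|_{\mathbb M_{\lambda,p}(T)}\to0$. The quantity $\Psi_0(f_n-f)$ is deterministic, so its $L^p_\Omega$ norm is its absolute value. For $t>0$,
\[
|P_t^*(f_n-f)(x)|\le \|f_n-f\|_{(\lambda)}\int_\X G_t(y,x)h(y)^\lambda\, m(\mathrm dy)\lesssim e^{\kappa T}h(x)^\lambda\|f_n-f\|_{(\lambda)}
\]
by \eqref{ineq:hk_m long time} of Lemma~\ref{lem:hk longtime}; this lemma applies because \eqref{cond:exp_hk_lamda} holds for $\lambda\in[0,1]$. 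At $t=0$ the bound is immediate. Since convergence in $\mathcal C^{\mathrm{tem}}_{\X,h}$ means $\|f_n-f\|_{(\lambda)}\to0$ for each $\lambda>0$, the conclusion follows.

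The step I expect to be the main obstacle is the absorption. It requires $\|u^{(n)}-u\|_{\mathbb M_{\lambda,p,A}}<\infty$, which Corollary~\ref{cor:moment} supplies. It also requires that the implicit constant in Lemma~\ref{lem:b_contract_combined} be independent of $n$, which holds because the lemma is stated uniformly in $U$, $V$, $g$ and $A$. One further point needs care: the identity for each solution holds only up to predictable modification, so it has to be handled with the sharp-norm identity \eqref{eq:NormSharp} and Lemma~\ref{lem:PsiM}, exactly as in the proof of Proposition~\ref{prop:wellposted}.
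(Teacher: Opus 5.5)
Your argument is correct, but it takes a different route from the paper's. The paper does not invoke Lemma~\ref{lem:b_contract_combined}. It re-derives the Minkowski and BDG bounds (Lemma~\ref{cor:mtg+mink_cs}) for $v^{(n)}=u^{(n)}-u$ and controls the three resulting terms $I_0^{(n)},I_1^{(n)},I_2^{(n)}$ through Lemma~\ref{lem:hk longtime}. It then closes with a Volterra--Gr\"onwall inequality in $t$ for $U_n(t)=\|v^{(n)}\|_{\mathbb M_{\lambda,p}(t)}$.

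You instead observe that, because only the initial datum changes, the nonlinear part of $u^{(n)}-u$ is exactly $\Psi(f,B_{u^{(n)}},\Sigma_{u^{(n)}})-\Psi(f,B_u,\Sigma_u)$. That quantity is already controlled, uniformly in $n$, by the exponentially weighted contraction estimate. So taking $A$ large and absorbing replaces Gr\"onwall with the same device used for the fixed point.

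Your route is shorter and reuses the well-posedness machinery directly. It yields $\|u^{(n)}-u\|_{\mathbb M_{\lambda,p}(T)}\lesssim e^{(A+\kappa)T}\|f_n-f\|_{(\lambda)}$, the same Lipschitz dependence on the initial condition that the paper's Gr\"onwall bound gives. The paper's hands-on scheme, by contrast, is the template it reuses in Propositions~\ref{thm:non_negtive} and~\ref{prop:approximation of coefficients}. There the equations being compared have different kernels or coefficients, so Lemma~\ref{lem:b_contract_combined} does not apply directly and forcing terms must be carried through a Gr\"onwall inequality.

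Both proofs rely on Corollary~\ref{cor:moment} for finiteness: you need it for the absorption, the paper for Gr\"onwall. One minor simplification: you do not need Lemma~\ref{lem:PsiM}. The mild-solution identity already makes $u^{(n)}$ a modification of $\Psi(f_n,B_{u^{(n)}},\Sigma_{u^{(n)}})$, so the $L^p_\Omega$ norms agree at every $(t,x)$.
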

	\begin{proof}
		Fix $p \ge 2$, $\lambda \in (0,1]$ and $T>0$. 
		Define $v^{(n)}_t(x):= u^{(n)}_t(x) - u_t(x)$ for every $(t,x) \in [0,T] \times \X$ and $n\in \mathbb N$.
		Uniformly for every $(t,x)\in [0,T]\times \X$ and $n\in \mathbb N$, 
		\begin{align}
			&\norm{v^{(n)}_t(x)}_{L^p_\Omega} 
			\\&= \Bigg\| P_t^*(f_n-f)(x) +   \int_0^t \mathrm ds \int_\X G_{t-s}(y,x) \pr{b(s,y, u^{(n)}_s(y)) - b(s,y, u_s(y)) } m(\mathrm dy) 
			\\& \qquad + \iint_0^t G_{t-s}(y,x)\pr{\sigma(s,y, u^{(n)}_s(y)) - \sigma(s,y, u_s(y)) }W(\mathrm ds\mathrm dy)\Bigg\|_{L^p_\Omega}
			\\&\overset{\text{Minkowski}}{\leq}  P_t^*(|f_n-f|)(x) +  \int_0^t \mathrm ds \int_\X G_{t-s}(y,x) \norm{b(s,y, u^{(n)}_s(y)) - b(s,y, u_s(y)) }_{L^p_\Omega} m(\mathrm dy)
			\\& \qquad + \norm{\iint_0^t G_{t-s}(y,x)\pr{\sigma(s,y, u^{(n)}_s(y)) - \sigma(s,y, u_s(y)) }W(\mathrm ds\mathrm dy)}_{L^p_\Omega}
			\\&\overset{\text{Lemma \ref{cor:mtg+mink_cs}}}{\lesssim} P_t^*(|f_n-f|)(x) +  \int_0^t \mathrm ds \int_\X G_{t-s}(y,x) \norm{b(s,y, u^{(n)}_s(y)) - b(s,y, u_s(y)) }_{L^p_\Omega} m(\mathrm dy)
			\\& \qquad + \pr{\int_0^t \norm{ G_{t-s}(\cdot,x)\norm{\sigma(s,\cdot, u^{(n)}_s(\cdot)) - \sigma(s,\cdot, u_s(\cdot)) }_{L^p_\Omega} }_{\mathcal H_K}^2 \mathrm ds}^{1/2}
			\\&\overset{\eqref{ineq:g_lip}}{\lesssim}P_t^*(|f_n-f|)(x) +  \int_0^t \mathrm ds \int_\X G_{t-s}(y,x) \norm{v^{(n)}_s(y)}_{L^p_\Omega} m(\mathrm dy)
			\\& \qquad + \pr{\int_0^t \norm{ G_{t-s}(\cdot,x)\norm{v^{(n)}_s(\cdot)}_{L^p_\Omega} }_{\mathcal H_K}^2 \mathrm ds}^{1/2}. \label{eq:vntx}
		\end{align}
		We denote the three terms on the right-hand side of \eqref{eq:vntx} by $I_0^{(n)}(t,x)$, $I_1^{(n)}(t,x)$ and $I_2^{(n)}(t,x)$, respectively.
		
		It holds uniformly in $(t,x) \in [0,T] \times \X$ and $n \in \mathbb N$  that 
		\begin{align}
			&
			I_0^{(n)}(t,x) 
			= \int_\X G_t(y,x)|f_n(y)-f(y)| m(\mathrm dy)
			\le \int_\X G_t(y,x) h(y)^{\lambda} 
			\|f_n - f\|_{(\lambda)}m(\mathrm dy)
			\\&=  \norm{G_t(\cdot, x)h(\cdot)^\lambda}_{L^1_\X} \|f_n - f\|_{(\lambda)} 
			\overset{
				\text{Lemma \ref{lem:hk longtime}}
			}{\lesssim} h(x)^\lambda \norm{f_n - f}_{(\lambda)}. \label{eq:vntx0}
		\end{align}
		Similarly, it holds uniformly in $(t,x) \in [0,T]\times \X$  and $n\in \mathbb N$ that 
		\begin{align}
			&
			I_1^{(n)}(t,x) 
			\le \iint_0^t  G_{t-s}(y,x) h(y)^\lambda 
			\norm{\norm{
					v^{(n)}_s(\cdot)
				}_{L^p_\Omega}}_{(\lambda)} \mathrm ds m(\mathrm d y)
			\\& \overset{\text{Fubini}}{=} \int_0^t \pr{\int G_{t-s}(y,x) h(y)^\lambda m(\mathrm dy)} \norm{\norm{v^{(n)}_s(\cdot)}_{L^p_\Omega}}_{(\lambda)} \mathrm ds 
			\overset{
				\text{Lemma \ref{lem:hk longtime}}
			}{\lesssim}  h(x)^\lambda\int_0^t  
			\norm{v^{(n)}}_{\mathbb M_{\lambda,p}(s)}
			\mathrm ds. \label{eq:vntx1}
		\end{align}
		Finally, it holds uniformly in $(t,x) \in [0,T]\times \X$  and $n\in \mathbb N$ that
		\begin{align}
			&
			I_2^{(n)}(t,x) 
			\leq \pr{\int_0^t \norm{ G_{t-s}(\cdot,x)h(\cdot)^\lambda \norm{\norm{v^{(n)}_s(\cdot)}_{L^p_\Omega}}_{(\lambda)} }_{\mathcal H_K}^2 \mathrm ds}^{1/2}
			\\
			&\leq \pr{\int_0^t \|G_{t-s}(\cdot,x) h(\cdot)^\lambda\|_{\mathcal H_K}^2 \left\|\|v^{(n)}_s(\cdot)\|_{L^p_\Omega}\right\|_{(\lambda)}^2 \mathrm ds }^\frac{1}{2}
			\\&\overset{
				\text{Lemma \ref{lem:hk longtime}}
			}{\lesssim} h(x)^\lambda \left(\int_0^t (t-s)^{-\alpha} 
			\norm{v^{(n)}}_{\mathbb M_{\lambda,p}(s)}^2
			\mathrm ds \right)^\frac{1}{2}. \label{eq:vntx2}
		\end{align}
		
		Combining \eqref{eq:vntx}, \eqref{eq:vntx0}, \eqref{eq:vntx1} and \eqref{eq:vntx2}, we see that uniformly in $\tau \leq t$ in $[0,T]$, $x\in \X$ and $n\in \mathbb N$, 
		\begin{align}
			&h(x)^{-\lambda} \norm{v^{(n)}_{\tau}(x)}_{L^p_\Omega} 
			\overset{\eqref{eq:vntx}}{\lesssim} h(x)^{-\lambda} I_0^{(n)}(\tau,x) + h(x)^{-\lambda} I_1^{(n)}(\tau,x) + h(x)^{-\lambda} I_2^{(n)}(\tau,x)
			\\&\overset{ \eqref{eq:vntx0}, \eqref{eq:vntx1}, \eqref{eq:vntx2}}{\lesssim} \norm{f_n - f}_{(\lambda)} + \int_0^\tau \norm{v^{(n)}}_{\mathbb M_{\lambda,p}(s)}
			\mathrm ds + \pr{\int_0^\tau (\tau-s)^{-\alpha} 
				\norm{v^{(n)}}_{\mathbb M_{\lambda,p}(s)}^2
				\mathrm ds }^\frac{1}{2}
			\\&= \norm{f_n - f}_{(\lambda)} + \int_0^\tau \norm{v^{(n)}}_{\mathbb M_{\lambda,p}(s)}
			\mathrm ds + \pr{\int_0^\tau s^{-\alpha} 
				\norm{v^{(n)}}_{\mathbb M_{\lambda,p}(\tau-s)}^2
				\mathrm ds }^\frac{1}{2}
			\\&\leq  \norm{f_n - f}_{(\lambda)} + \int_0^t \norm{v^{(n)}}_{\mathbb M_{\lambda,p}(s)}
			\mathrm ds + \pr{\int_0^t s^{-\alpha} 
				\norm{v^{(n)}}_{\mathbb M_{\lambda,p}(t-s)}^2
				\mathrm ds }^\frac{1}{2}
			\\&=  \norm{f_n - f}_{(\lambda)} + \int_0^t \norm{v^{(n)}}_{\mathbb M_{\lambda,p}(s)}
			\mathrm ds + \pr{\int_0^t {(t-s)}^{-\alpha} 
				\norm{v^{(n)}}_{\mathbb M_{\lambda,p}(s)}^2
				\mathrm ds }^\frac{1}{2}. \label{eq:hxl}
		\end{align}
		Then uniformly in $t\in [0,T]$ and $n\in \mathbb N$,
		\begin{align}
			&\|v^{(n)}\|_{\mathbb M_{\lambda,p}(t)}^2 = \sup_{x\in \X} \esssup_{\text{$\Lambda$-a.e. $\tau \in [0,t]$}} h(x)^{-2\lambda} \norm{v^{(n)}_\tau(x)}^2_{L^p_\Omega} 
			\\&\overset{\eqref{eq:hxl}}{\lesssim} \pr{\norm{f_n - f}_{(\lambda)} + \int_0^t \norm{v^{(n)}}_{\mathbb M_{\lambda,p}(s)}
				\mathrm ds + \pr{\int_0^t {(t-s)}^{-\alpha} 
					\norm{v^{(n)}}_{\mathbb M_{\lambda,p}(s)}^2
					\mathrm ds }^\frac{1}{2}}^2
			\\&\overset{\text{Jensen}}\lesssim  \norm{f_n - f}^2_{(\lambda)} + \int_0^t \norm{v^{(n)}}^2_{\mathbb M_{\lambda,p}(s)}
			\mathrm ds + \int_0^t {(t-s)}^{-\alpha} 
			\norm{v^{(n)}}_{\mathbb M_{\lambda,p}(s)}^2
			\mathrm ds.
		\end{align}
		For $t \in [0,T]$, set $U_n(t):=\|v^{(n)}\|_{\mathbb M_{\lambda,p}(t)}$, which is finite by Corollary~\ref{cor:moment}. By Gr\"{o}nwall's inequality (see for example, \cite{chu1967gronwall}), we obtain uniformly in $t \in [0,T]$ that 
		\begin{align}
			U_n(t)^2 \lesssim \|f_n - f\|_{(\lambda)}^2\left(1+ \int_0^t \left( 1+ (t-s)^{-\alpha} \right) \exp\left( \int_0^s(1+(s-\tau)^{-\alpha})\mathrm d\tau  \right)\mathrm ds\right)
		\end{align}
		which converges to zero uniformly in $t \in [0,T]$ as $n \to \infty$ since $\alpha \in (0,1)$.
	\end{proof}

	We define for each $\varepsilon > 0$ the integral operator $\mathcal L^{\varepsilon,*} := \frac{P_\varepsilon ^*- I}{\varepsilon}$ on $L^\infty_\X$ where $I$ denotes the identity operator, and let $\pr{P^{\varepsilon,*}_t}_{t \ge 0}$ be the semi-group  generated by $\mathcal L^{\varepsilon,*}$.
	Note that, for each $\varepsilon > 0$ and $t\geq 0$,
	\begin{align}\label{approx_kernel}
		&P^{\varepsilon,*}_t = \exp\left(t \mathcal L^{\varepsilon,*}\right) 
		= \exp\left(-\frac{t}{\varepsilon}I \right) \exp\left(\frac{t}{\varepsilon} P^*_\varepsilon\right) 
		= e^{-t/\varepsilon} \sum_{n = 0}^\infty \frac{(t/\varepsilon)^n}{n!}P^*_{n\varepsilon}
		\\&=: e^{-t/\varepsilon}I +R_t^{\varepsilon,*}
	\end{align}
	where
	\begin{align}
		&(R_t^{\varepsilon,*} f)(x)=e^{-t/\varepsilon} \sum_{n = 1}^\infty \frac{(t/\varepsilon)^n}{n!}(P^*_{n\varepsilon}f)(x)
		= e^{-t/\varepsilon} \sum_{n = 1}^\infty \frac{(t/\varepsilon)^n}{n!}\int_\X G_{n\varepsilon}(y,x)f(y)m(\mathrm dy)
		\\& \overset{\text{Fubini}}{=} \int_\X r_t^\varepsilon(y,x) f(y)m(\mathrm dy), \quad f\in L_\X^\infty, x\in \X,
	\end{align}
	and
	\begin{equation}
		r_t^\varepsilon(y,x) := e^{-t/\varepsilon}\sum_{n = 1}^\infty \frac{(t/\varepsilon)^n}{n!} G_{n\varepsilon}(y,x), \quad x,y\in \X.
	\end{equation}

	\begin{lemma}\label{lem:approx_estimates}
		\begin{enumerate}
			\item \label{lem:approx_estimates1} 
			Let $\alpha \in (0,1)$ be given as in \eqref{cond:exp_hk_lamda}. Then there exists $\kappa \in \R$, independent of $\varepsilon$, such that
			uniformly in $t>0$, $x\in\X$ and $\varepsilon\in(0,1]$,
			\begin{align}\label{ineq:R_bound}
				\left\|r_t^\varepsilon(\cdot,x)\right\|_{\mathcal H_K}^2
				\lesssim t^{-\alpha}e^{\kappa t},\qquad
				\left\|r_t^\varepsilon(\cdot,x)\right\|_{L^1_\X}
				\lesssim e^{\kappa t}.
			\end{align}
			\item  For each $t >0$,
			\begin{align}\label{lim:p_approx}
				\lim_{\varepsilon \downarrow 0}\sup_{x\in \X} \int_0^t \left\| \left|\int_\X  r_s^\varepsilon(y,x) G_\varepsilon(\cdot,y) m(\mathrm dy) -  G_s(\cdot, x)
				\right|\right\|^2_{\mathcal H_K}\,\mathrm ds = 0.
			\end{align}
			\item For each $T>0$, there exists a $c_T \in (0,1]$, and a function $\varepsilon \to \gamma_\varepsilon$ with $\lim_{\varepsilon \downarrow 0} \gamma_\varepsilon = 0$, so that,  uniformly in $(t,x) \in (0,T]\times \X$ and sufficiently small $\varepsilon >0$, it holds that
			\begin{align}\label{ineq:p_approx_m}
				\int_\X \left|r_t^\varepsilon(y,x)  - G_t(y,x)\right| m(\mathrm dy)\lesssim e^{-c_T\frac{t}{\varepsilon}} +  \frac{\gamma_\varepsilon}{t^\beta},
			\end{align}
			where $\beta$ is as in \eqref{cond:time_holder}.
		\end{enumerate}
	\end{lemma}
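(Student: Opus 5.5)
The plan is to exploit the Poisson-mixture representation $r_t^\varepsilon(y,x)=e^{-t/\varepsilon}\sum_{n\ge1}\frac{(t/\varepsilon)^n}{n!}G_{n\varepsilon}(y,x)$. Each of the three parts then reduces to estimates on $G_{n\varepsilon}$ that are already available: \eqref{cond:exp_hk_lamda} with $\lambda=0$, Lemma~\ref{lem:hk longtime}, and the time-H\"older bound \eqref{ineq:time_holder_long_time}. Write $N\sim\mathrm{Poisson}(t/\varepsilon)$, so that $r_t^\varepsilon=\E[G_{N\varepsilon}\mathbf 1_{N\ge1}]$.

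For part (1), apply the triangle inequality in $L^1_\X$ and in the seminorm $\|\cdot\|_{\mathcal H_K}$, using that $\|\cdot\|_{\mathcal H_K}$ is a Hilbertian seminorm by Lemma~\ref{lem:H0}. This gives
\[
\|r_t^\varepsilon(\cdot,x)\|_{\mathcal H_K}\le \E\bigl[\|G_{N\varepsilon}(\cdot,x)\|_{\mathcal H_K}\mathbf 1_{N\ge1}\bigr]\lesssim \E\bigl[(N\varepsilon)^{-\alpha/2}e^{\tilde\kappa N\varepsilon/2}\mathbf 1_{N\ge1}\bigr]
\]
by \eqref{ineq:hk_k long time}. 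The $L^1$ bound is immediate from \eqref{ineq:hk_m long time} and the Poisson exponential moment $\E e^{\kappa N\varepsilon}=e^{(t/\varepsilon)(e^{\kappa\varepsilon}-1)}\le e^{C\kappa t}$ for $\varepsilon\le1$. For the $\mathcal H_K$ bound, split by Cauchy--Schwarz into $\E[(N\varepsilon)^{-\alpha}\mathbf 1_{N\ge1}]^{1/2}\,\E[e^{\tilde\kappa N\varepsilon}]^{1/2}$. It remains to show $\E[(N/\mu)^{-\alpha}\mathbf 1_{N\ge1}]\lesssim1$ uniformly in $\mu=t/\varepsilon>0$. This holds because $\E[\mathbf 1_{N\ge1}/N]\le 2/(\mu+1)$ (the standard identity $\E[1/(N+1)]=(1-e^{-\mu})/\mu$) together with Jensen for $\alpha<1$. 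Squaring yields $t^{-\alpha}e^{\kappa t}$.

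For part (3), fix $T$ and use the $L^1$ triangle inequality:
\[
\int|r_t^\varepsilon-G_t|\,\mathrm dm\le e^{-t/\varepsilon}\|G_t\|_{L^1}+\E\bigl[\|G_{N\varepsilon}(\cdot,x)-G_t(\cdot,x)\|_{L^1}\mathbf 1_{N\ge1}\bigr].
\]
The first term is $\lesssim e^{-t/\varepsilon}$. On the event $\{|N\varepsilon-t|\le t/2\}$, both times lie in $[t/2,2T]$, so \eqref{ineq:time_holder_long_time} gives the bound $t^{-\beta}|N\varepsilon-t|^{\xi}$ with $\xi=\xi_1\wedge1$. Its expectation is at most $t^{-\beta}(\mathrm{Var}(N\varepsilon))^{\xi/2}=t^{-\beta}(t\varepsilon)^{\xi/2}\le t^{-\beta}(T\varepsilon)^{\xi/2}$, so we may take $\gamma_\varepsilon=(T\varepsilon)^{\xi/2}$. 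On the complement the $L^1$ difference is at most $2\sup\|G\|_{L^1}\lesssim e^{\kappa N\varepsilon}$. Here use the Chernoff bound $\E[e^{\kappa N\varepsilon}\mathbf 1_{|N-\mu|>\mu/2}]\le e^{-c\mu}$ for $\varepsilon$ small relative to $1/\kappa$, which supplies the term $e^{-c_Tt/\varepsilon}$.

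Part (2) is the main obstacle, because it needs convergence in $\mathcal H_K$ rather than $L^1$, and no time-H\"older bound in $\mathcal H_K$ is assumed. By Chapman--Kolmogorov \eqref{eq:G},
\[
\int r_s^\varepsilon(y,x)G_\varepsilon(\cdot,y)\,m(\mathrm dy)=\E\bigl[G_{(N+1)\varepsilon}(\cdot,x)\mathbf 1_{N\ge1}\bigr],\qquad N\sim\mathrm{Poisson}(s/\varepsilon).
\]
The difference from $G_s$ is therefore $\E[(G_{(N+1)\varepsilon}-G_s)\mathbf 1_{N\ge1}]-e^{-s/\varepsilon}G_s$. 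The term $e^{-s/\varepsilon}G_s$ contributes $\int_0^t e^{-2s/\varepsilon}s^{-\alpha}\,\mathrm ds\to0$. For the main term, I would write $G_{a}-G_{s}=\int (G_{a-\delta}(z,\cdot)-G_{s-\delta}(z,\cdot))G_\delta(\cdot)$ via the semigroup and try to transfer the $L^1$ time-H\"older bound into $\mathcal H_K$. Concretely, the plan is to bound $\|\int\phi(z)G_\delta(\cdot,z)\,m(\mathrm dz)\|_{\mathcal H_K}\le\|\phi\|_{L^1}\sup_z\|G_\delta(\cdot,z)\|_{\mathcal H_K}$ by Minkowski, where the orientation of the kernel is chosen so that the free variable is the integrand of the $\mathcal H_K$-norm. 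Take $\delta=s/2$. Then the integrand is $\lesssim s^{-\alpha}\bigl(s^{-\beta}|(N+1)\varepsilon-s|^{\xi}\wedge 1\bigr)^2$ on the concentration event, plus exponentially small terms as in part (3). This gives pointwise convergence to $0$ in $s$, dominated by $Cs^{-\alpha}$ thanks to part (1) applied to both pieces, so dominated convergence concludes. Should the transfer via Minkowski fail for lack of the correct kernel orientation, I would fall back on part (1) as a uniform bound and use $L^1$ convergence together with a uniform integrability argument in $\mathcal H_K$.
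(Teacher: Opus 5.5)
Your proposal is correct, and its skeleton matches the paper. Part~(1) is the paper's Poisson computation: you separate the singular and exponential factors by Cauchy--Schwarz, where the paper uses Jensen plus an exponential tilt of the Poisson parameter.

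The step you flag as the obstacle in part~(2) is exactly how the paper obtains an $\mathcal H_K$ time-H\"older bound, namely \eqref{eq:G-GS}, packaged as Corollary~\ref{cor:hk_abs}. Your orientation is the right one. By \eqref{eq:G}, $G_a(z,x)=\int_\X G_\delta(z,y)G_{a-\delta}(y,x)\,m(\mathrm dy)$, so the $\mathcal H_K$-variable sits in the first slot of $G_\delta$. The $L^1$ difference is taken in the first slot with $x$ fixed. These are precisely the quantities controlled by \eqref{cond:exp_hk_lamda} and \eqref{cond:time_holder}. So the fallback is unnecessary, and it would not work by itself anyway: $L^1$ convergence plus a uniform $\mathcal H_K$ bound does not give $\mathcal H_K$ convergence.

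The genuine difference is how you average over the Poisson index in (2) and (3).
\begin{itemize}
\item The paper splits at $n\varepsilon\le T+1$ versus $n\varepsilon>T+1$ and disposes of the far range with Bennett's inequality. On the near range it keeps the singular factor $(n\varepsilon\wedge t)^{-\beta}\le (n\varepsilon)^{-\beta}+t^{-\beta}$ (in (2), its analogue from Corollary~\ref{cor:hk_abs}). The $(n\varepsilon)^{-\beta}$ piece forces H\"older's inequality, the negative moment bound $\E[\mathbf 1_{N\ge1}/(N\varepsilon)]\le 2/t$, and high even central Poisson moments (Lemma~\ref{lem:poisson moment bounds}).
\item Your window $\{|N\varepsilon-t|\le t/2\}$ makes both times comparable to $t$, so the Poisson variance alone suffices. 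Everything off the window goes into a tilted Chernoff bound with crude norm estimates, with constants depending on $T$, which is allowed.
\end{itemize}
Your route is more elementary and yields $\gamma_\varepsilon=(T\varepsilon)^{\xi/2}$, the same order as the paper's. In (2), dominated convergence in $s$ replaces the paper's explicit bounds uniform in $t\le T+1$, which is all the fixed-$t$ statement requires.

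Three small repairs:
\begin{itemize}
\item In (2), take $\delta=s/4$ (or $\delta=(a\wedge s)/2$, as in \eqref{eq:G-GS}) rather than $s/2$. On your window, $a-s/2$ can approach $0$, and then $((a-\delta)\wedge(s-\delta))^{-\beta}\lesssim s^{-\beta}$ fails.
\item The statement has $|\cdot|$ inside the $\mathcal H_K$-norm. First pass the absolute value inside the $m(\mathrm dy)$-integral, and use that $\|\cdot\|_{\mathcal H_K}$ is monotone on nonnegative functions because $K\ge0$, before applying Lemma~\ref{lem:Min_H}.
\item The domination by $Cs^{-\alpha}$ is not part~(1) verbatim. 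It is the part-(1) computation with $(N+1)\varepsilon$ in place of $N\varepsilon$, which is the paper's \eqref{ineq:mollified_conv_H_norm}.
\end{itemize}
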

	
	We say a continuous random field $(a(t,x))_{t\geq 0,x\in \X}$ is non-negative if $a(t,x)\geq 0$ almost surely for every $t\geq 0$ and $x\in \X$; equivalently, if $a(t,x)\geq 0$ for every $t\geq 0$ and $x\in\X$ almost surely.
	\begin{prop}
		\label{thm:non_negtive}
		Suppose that
		\begin{itemize}
			\item[\eq\label{ineq:non_negativity_coef_bound_at_zero}] for any $T>0$, uniformly for $\omega \in \Omega \setminus \mathrm N_{\ref{c:Lip}}$
			and $(t,x) \in [0,T]\times \X$,
			\begin{align}
				b(\omega,t,x,0) \geq 0, \qquad \sigma(\omega,t,x,0) = 0.
			\end{align}
		\end{itemize}
		\begin{itemize}
			\item[\eq\label{ineq:non_negative_approx_coef_ineq}]
			there exists an $\eta >0$ 
			so that for any $T>0$, 
			uniformly in $\omega \in \Omega \setminus \mathrm N_{\ref{c:Lip}}$, $u\in \mathbb R$,
			and $(t,x,y) \in [0,T]\times \X^2$,
			\begin{align}
				\abs{b(\omega,t,x,u) - b(\omega,t,y,u)} \lesssim d(x,y)^\eta (1+|u|),\\
				\abs{\sigma(\omega,t,x,u) - \sigma(\omega,t,y,u)} \lesssim d(x,y)^\eta |u|;
			\end{align}
		\end{itemize}
		and that $f$ is non-negative.
		Then  $u$ is non-negative.
	\end{prop}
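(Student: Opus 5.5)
We prove Proposition~\ref{thm:non_negtive} by approximating $\mathcal L^*$ with the bounded operators $\mathcal L^{\varepsilon,*}$ of \eqref{approx_kernel}. The approximating equation becomes a system of pointwise SDEs, where a Pardoux/Shiga-type positivity argument is available. We then pass to the limit using Lemma~\ref{lem:approx_estimates}.

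\emph{Step 1: the approximating equation.} For $\varepsilon\in(0,1]$ consider
\[
u^\varepsilon_t(x)=f(x)+\int_0^t\mathcal L^{\varepsilon,*}u^\varepsilon_s(x)\,\mathrm ds+\int_0^t b(s,x,u^\varepsilon_s(x))\,\mathrm ds+\int_0^t\!\!\int_\X G_\varepsilon(y,x)\,\sigma(s,y,u^\varepsilon_s(y))\,W(\mathrm ds,\mathrm dy).
\]
The noise is smoothed by $G_\varepsilon$, so by \eqref{cond:exp_hk_lamda} it has finite $\mathcal H_K$-norm. Since $\mathcal L^{\varepsilon,*}=\varepsilon^{-1}(P^*_\varepsilon-I)$ is bounded on $h^\lambda L^\infty_\X$ by \eqref{cond:exp_hk_lamda}, this equation is well posed by the same Picard/contraction argument as Lemma~\ref{lem:b_contract_combined} and Proposition~\ref{prop:wellposted}. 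Its mild form with the kernel $e^{-t/\varepsilon}I+R^{\varepsilon,*}_t$ is equivalent to the above.

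\emph{Step 2: positivity for fixed $\varepsilon$.} Fix $x$. The term $\varepsilon^{-1}P^*_\varepsilon u^\varepsilon_s(x)$ is a nonnegative functional of $u^\varepsilon_s$. The term $-\varepsilon^{-1}u^\varepsilon_s(x)$ vanishes at $0$, and $b(s,x,0)\ge0$ by \eqref{ineq:non_negativity_coef_bound_at_zero}. We apply It\^o's formula to $\phi_\delta(u^\varepsilon_t(x))$, where $\phi_\delta$ is a smooth approximation of $u^-=\max(-u,0)$, and then integrate against a weight $h^{-\lambda_0}\,\mathrm dm$ so that all terms are summable.

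The diagonal martingale part of $u^\varepsilon$ at $x$ has coefficient $\int G_\varepsilon(y,x)\sigma(s,y,u^\varepsilon_s(y))\,\cdot\,$. This does \emph{not} vanish when $u^\varepsilon_s(x)=0$. Here we use the spatial regularity \eqref{ineq:non_negative_approx_coef_ineq}. Since $\sigma(s,y,0)=0$ and $\sigma$ is Lipschitz, we have $|\sigma(s,y,u)|\lesssim|u|$, and $|\sigma(s,y,u)-\sigma(s,x,u)|\lesssim d(x,y)^\eta|u|$. We therefore split the smoothed noise into $\sigma(s,x,u^\varepsilon_s(\cdot))$ plus an error of order $d^\eta|u|$. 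The contribution of the error on $\{d(x,y)\ge\delta\}$ is small by \eqref{cond:hk_uni_bound}.

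I expect this to be the main obstacle, and I do not yet see how to make it work cleanly. A cleaner route is to first replace $\sigma(s,y,u^\varepsilon_s(y))$ in the noise by $\sigma(s,x,\cdot)$ evaluated along a spatially mollified field. Then the coefficient is a Lipschitz function of $u^\varepsilon_s(x)$ vanishing at $0$ up to an error controlled by $\|G_\varepsilon(\cdot,x)\mathbf 1_{B_\delta(x)^c}\|_{\mathcal H_K}$ and by $\delta^\eta$. In that form the standard Yamada-type estimate gives $\E[(u^\varepsilon_t(x))^-]\le$ a Gronwall term plus $\mathbf o(1)$ as $\varepsilon\to0$. It is then enough to show $\E[(u^\varepsilon_t(x))^-]\to0$, rather than exact nonnegativity for fixed $\varepsilon$.

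\emph{Step 3: convergence $u^\varepsilon\to u$.} We estimate $\norm{u^\varepsilon-u}_{\mathbb M_{\lambda,2}(T)}$ as in Proposition~\ref{prop:continuity_initial}, comparing the two mild formulations term by term. The initial-data difference $(e^{-t/\varepsilon}I+R^{\varepsilon,*}_t-P^*_t)f$ is controlled by \eqref{ineq:p_approx_m} together with the continuity of $f$ near $t=0$. The drift kernels are compared with \eqref{ineq:p_approx_m} and \eqref{ineq:R_bound}. The stochastic kernels $\int r^\varepsilon_{t-s}(y,x)G_\varepsilon(\cdot,y)\,m(\mathrm dy)$ versus $G_{t-s}(\cdot,x)$ are compared with \eqref{lim:p_approx}, and the $e^{-t/\varepsilon}$ atom is handled via \eqref{cond:hk_uni_bound}. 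A fractional Gronwall argument then gives $\E|u^\varepsilon_t(x)-u_t(x)|\to0$.

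Combining this with Step 2 yields $\E[u_t(x)^-]\le\liminf_{\varepsilon\to0}\E[(u^\varepsilon_t(x))^-]=0$ for every $(t,x)$. The continuity of $u$ then upgrades this to $u\ge0$ for all $(t,x)$ almost surely.
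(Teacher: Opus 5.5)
Your Step~2 does not go through, and the cause is the noise you chose in Step~1. With $\int_\X G_\varepsilon(y,x)\sigma(s,y,u^\varepsilon_s(y))\,W(\mathrm ds,\mathrm dy)$, the quadratic-variation rate of $u^\varepsilon(\cdot,x)$ on $\{u^\varepsilon_s(x)=0\}$ is at most $\|Q\|\,\|G_\varepsilon(\cdot,x)\,|\sigma(s,\cdot,u^\varepsilon_s(\cdot))-\sigma(s,x,u^\varepsilon_s(x))|\|_{\mathcal H_K}^2$, with equality for space-time white noise. Nothing makes this small.

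Your splitting only removes the $x$-dependence of $\sigma$. What remains, e.g.\ $|u^\varepsilon_s(y)-u^\varepsilon_s(x)|$ when $\sigma(s,x,u)=u$, is the spatial oscillation of the solution itself, which \eqref{ineq:non_negative_approx_coef_ineq} does not control. Moreover, every term carries the weight $\|G_\varepsilon(\cdot,x)\|_{\mathcal H_K}^2\lesssim\varepsilon^{-\alpha}$. So even your near-diagonal error is only bounded by $\delta^{2\eta}\varepsilon^{-\alpha}$, which is not $\mathbf o(1)$ for fixed $\delta$. Heuristically, for space-time white noise on $\R$, $\|G_\varepsilon(\cdot,x)\|_{L^2_\X}^2\asymp\varepsilon^{-1/2}$, while the mean-square oscillation of the solution over the range $\sqrt\varepsilon$ of $G_\varepsilon$ is of order $\sqrt\varepsilon$. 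The residual is therefore of order one.

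Consequently the local time of $u^\varepsilon(\cdot,x)$ at $0$ need not vanish. A Yamada-type smoothing of $u^-$ only trades it for $\delta^{-1}$ times the residual, against an It\^o correction of order $\delta\,k_\varepsilon(x,x)\approx\delta\varepsilon^{-\alpha}$ from the vanishing part. No $\mathbf o(1)$ bound on $\E[(u^\varepsilon_t(x))^-]$ results. Note also that with your scheme Step~3 never needs \eqref{ineq:non_negative_approx_coef_ineq}, since your noise is exactly $P^*_\varepsilon$ applied to the true noise term. The hypothesis is being spent in the one place where it cannot help.

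The missing idea is to build the diagonal structure into the approximating equation itself, which is what the paper does. Set $W^\varepsilon(t,x):=\iint_0^tG_\varepsilon(y,x)\,W(\mathrm ds,\mathrm dy)$ and solve
\[
\mathrm du_\varepsilon(t,x)=\bigl(\mathcal L^{\varepsilon,*}u_\varepsilon(t,x)+b(t,x,u_\varepsilon(t,x))\bigr)\mathrm dt+\sigma(t,x,u_\varepsilon(t,x))\,W^\varepsilon(\mathrm dt,x).
\]
For fixed $x$, $W^\varepsilon(\cdot,x)$ is a Brownian motion with $[W^\varepsilon(\cdot,x)]_t=t\,k_\varepsilon(x,x)$. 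So the diffusion coefficient of $u_\varepsilon(\cdot,x)$ vanishes wherever $u_\varepsilon(t,x)=0$, and since $\sigma^2\le L^2u^2$ the local time at $0$ is zero. It\^o--Tanaka, the positivity of $P^*_\varepsilon$, and $b(t,x,r)\ge-L|r|$ for $r\le0$ then give a closed Gronwall inequality for $\sup_x\E[u_\varepsilon(t,x)^-]$, after reducing to bounded $f$ by Proposition~\ref{prop:continuity_initial}. This yields exact nonnegativity for every $\varepsilon$.

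The price is paid in the convergence $u_\varepsilon\to u$. By stochastic Fubini the noise term becomes an atom with second moment $\lesssim\varepsilon^{1-\alpha}$ plus $\iint_0^t\bigl(\int_\X r^\varepsilon_{t-s}(y,x)\sigma(s,y,u_\varepsilon(s,y))G_\varepsilon(z,y)\,m(\mathrm dy)\bigr)W(\mathrm dz,\mathrm ds)$. The mismatch $\sigma(s,y,u(s,y))-\sigma(s,z,u(s,z))$ with the true integrand is where the hypotheses enter: \eqref{ineq:non_negative_approx_coef_ineq}, the spatial H\"older bound \eqref{ineq:u_bounded_spatial_increment} for $u$, and \eqref{cond:hk_uni_bound} for $d(y,z)\ge\delta$. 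This is affordable because the mismatch is integrated against $\int_\X r^\varepsilon_{t-s}(y,x)G_\varepsilon(\cdot,y)\,m(\mathrm dy)$. Its squared $\mathcal H_K$-norm is $\lesssim(t-s)^{-\alpha}$ by \eqref{ineq:mollified_conv_H_norm}, rather than the $\varepsilon^{-\alpha}$ of $G_\varepsilon(\cdot,x)$. Your Step~3 has to be redone for this equation.
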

	
	\begin{proof}
		In light of Proposition \ref{prop:continuity_initial} and Lemma \ref{lem:C_temp_density_0}, it is enough to assume that $f$ is bounded, continuous, and non-negative.
		
		For $\varepsilon>0$ and $(t,x)\in\R_+\times\X$, define $W^\varepsilon(t,x)=:W^\varepsilon_x(t)$ via
		\begin{align}\label{def:white_noise_cov}
			W^\varepsilon(t,x) := \iint_0^t  G_\varepsilon(y,x)W(\mathrm ds,\mathrm dy).
		\end{align}
		Observe that for each $(t,x),\, (t',x') \in \R_+\times \X$, 
		\begin{align}
			\E\left[W^\varepsilon(t,x)W^\varepsilon(t',x')\right] &= \pr{t\wedge t'}\left \langle Q\left[G_\varepsilon(\cdot, x)\right]_K, \left[G_\varepsilon(\cdot,x') \right]_K \right \rangle_{\tilde{\mathcal H}_K}\\
			&:=\pr{t'\wedge t} k_\varepsilon(x,x').
		\end{align}
		By \eqref{cond:exp_hk_lamda}, we see there is some $\alpha \in (0,1)$ so that $\sup_{x}k_\varepsilon(x,x)\lesssim\varepsilon^{-\alpha}$ uniformly in $\varepsilon \in (0,1]$.
		
		For each fixed $x\in\X$, the process $(W^\varepsilon(t,x),\,t\geq0)$ is a Brownian motion with 
		\begin{align}\label{eq:q_var_ep}
			\left[\int_0^\cdot W^\varepsilon(\mathrm ds,x)\right]_t =\left[ \iint_0^\cdot  G_\varepsilon(y , x)W(\mathrm dy,\mathrm ds)\right]_t = t k_\varepsilon(x,x)
		\end{align}
		where $([M]_t)_{t \ge 0}$ is the quadratic variation process of a martingale $(M_t)_{t\ge0}$. 

		Let $\varepsilon\in(0,1]$ and recall $\mathcal L^{\varepsilon,*}$, $P^{\varepsilon,*}$, and the density $r_t^\varepsilon$ defined after \eqref{approx_kernel}. Formally, we consider the following equation with initial condition $u_\varepsilon(0,x) = f(x)$, 
		\begin{align*}
			\mathrm d u_\varepsilon(t,x)
			={}&\left(\mathcal L^{\varepsilon,*}u_\varepsilon(t,x)+b(t,x,u_\varepsilon(t,x))\right)\mathrm dt+\sigma(t,x,u_\varepsilon(t,x))W^\varepsilon(\mathrm dt,x)
		\end{align*}
		whose mild form is given by
		\begin{align}\label{eq:cont_1d_mild}
			u_\varepsilon(t,x) = P^{\varepsilon,*}_t f(x) +&  \int_0^t e^{-(t-s)/\varepsilon}b\pr{s,x,u_\varepsilon(s,x)}\mathrm ds\nonumber\\
			&+\int_0^t\mathrm ds \int_\X r_{t-s}^\varepsilon(y,x)b(s,y,u_\varepsilon(s,y))m(\mathrm dy)\nonumber\\
			&+\int_0^t e^{- (t-s)/\varepsilon}\sigma(s,x,u_\varepsilon(s,x)) W^\varepsilon(\mathrm ds,x)\nonumber\\
			&+\int_\X\int_0^t r_{t-s}^\varepsilon(y,x)\sigma(s,y,u_\varepsilon(s,y))W^\varepsilon(\mathrm ds,y)m(\mathrm dy).
		\end{align}
		One can show that \eqref{eq:cont_1d_mild} has a $\mathcal C_{[0,T]} \mathcal C^{\mathrm{tem}}_{\X,h}$-valued solution satisfying
		\begin{align}\label{boud: lp_loc_both}
			\sup_{0 < \varepsilon\le 1}\sup_{0 \le t \le T} \sup_{x \in \X} \E \left[|u_\varepsilon(t,x)|^2\right] < \infty,\qquad \forall T>0.
		\end{align}
		 Since $\mathcal L^{\varepsilon,*}$ is bounded and
		\[
			P_t^{\varepsilon,*}-I=\int_0^t\mathcal L^{\varepsilon,*}P_s^{\varepsilon,*}\mathrm ds,
		\]
		ordinary and stochastic Fubini (Lemma \ref{thm:stochastic_fubini}), conditions \eqref{eq:LiG}, \eqref{cond:exp_hk_lamda}, \eqref{ineq:hk_m long time}, \eqref{ineq:R_bound}, and the bound \eqref{boud: lp_loc_both} ensure that  \eqref{eq:cont_1d_mild} satisfies
		\begin{align}\label{eq:cont_1d_sde}
			u_\varepsilon(t,x)=f(x)&+\int_0^t\left(\mathcal L^{\varepsilon,*}u_\varepsilon(s,x)+b(s,x,u_\varepsilon(s,x))\right)\mathrm ds\nonumber\\
			&+\int_0^t\sigma(s,x,u_\varepsilon(s,x))W^\varepsilon(\mathrm ds,x).
		\end{align}
		The last term is a one-dimensional It\^{o} integral. Hence, by \eqref{def:white_noise_cov} and \eqref{eq:cont_1d_sde}, $\left(u_\varepsilon(t,x)\right)_{t\geq0}$ is a continuous semimartingale for each fixed $\varepsilon\in(0,1]$ and $x\in\X$.
		
		Let $\varphi(u) := - \left(u \wedge 0\right) = u^-,\, u \in \R $, which is convex. By It\^o--Meyer--Tanaka's formula (\cite[Proposition 9.2]{le2016brownian}), we see for each $t \ge0$ and $x \in \X$,
		\begin{equation}\label{eq:u_epsilon negative part}
			\begin{aligned}
				\E[\varphi(u_\varepsilon(t,x))]= \varphi(f(x)) &- \E\left[\int_0^t \mathbf 1_{u_\varepsilon(s,x) \le 0} \left(\mathcal L^{\varepsilon,*} u_\varepsilon(s,x) +b(s,x,u_\varepsilon(s,x)) \right)\mathrm ds\right]\\
				&- \E\left[\int_0^t \mathbf 1_{u_\varepsilon(s,x) \le 0} \,\sigma(s,x,u_\varepsilon(s,x))\mathrm dW_x^\varepsilon(s) -\frac12L^0_x(t)\right]
			\end{aligned}
		\end{equation}
		where for each $x \in \X$, $\left(L_x^0(t)\right)_{t \ge 0 }$ is the local time of $(u_\varepsilon(t,x))_{t \ge 0}$ at zero, which is non-negative and non-decreasing. Now,  
		conditions \eqref{ineq:g_lip} and \eqref{ineq:non_negativity_coef_bound_at_zero} imply that there is some $L\ge0$ which is the common Lipschitz constant for $b$ and $\sigma$ on $[0,T]$. Hence, for $r\le0$ and $(t,x) \in [0,T]\times \X$, $b(t,x,r)\ge -L|r|$. 
		By the definition of  $\mathcal L^{\varepsilon,*}$, we get the second term on the right hand side  of \eqref{eq:u_epsilon negative part} (including the minus sign) is smaller than
		\begin{equation}\label{eq:u_epsilon second}
			\begin{aligned}
				&\E\left[\int_0^t \frac{1}{\varepsilon}\boldsymbol{1}_{u_\varepsilon(s,x) \le 0} u_\varepsilon(s,x)-\frac{1}{\varepsilon}\boldsymbol{1}_{u_\varepsilon(s,x) \le 0}P_\varepsilon^* u_{\varepsilon}(s)(x) +L\,\boldsymbol{1}_{u_\varepsilon(s,x) \le 0} |u_\varepsilon(s,x)| \mathrm ds\right]\\
				\le& \left(L +1/\varepsilon\right) \int_0^t \E[\varphi(u_\varepsilon(s,x))]\,\mathrm ds-\frac{1}{\varepsilon}\int_0^t \int_\X G(\varepsilon,y,x)\E\left[ \boldsymbol{1}_{u_\varepsilon(s,x) \le 0}u_\varepsilon(s,y)\right]\,m(\mathrm dy)\,\mathrm ds
			\end{aligned}
		\end{equation}
		where $\boldsymbol{1}_A(\cdot)$ denotes the indicator function on the set $A$.  By \cite[Proposition 9.9]{le2016brownian}, we see for each $x \in \X$, it holds almost surely that
		\begin{align}\label{ineq:localtime epsilon}
			L^0_x(t) &\,\,= \lim_{\delta\downarrow 0}
			\frac{1}{\delta}\int_0^t 1_{\left\{0\le u_\varepsilon(s,x)\le \delta \right\}}\mathrm d\left[ u_\varepsilon(\cdot,x)\right]_s \nonumber\\
				&\,\,=\lim_{\delta\downarrow 0}
				\frac{1}{\delta} \int_0^t 1_{\left\{0\le u_\varepsilon(s,x)\le \delta \right\}}\sigma\big(s,x,u_\varepsilon(s,x)\big)^2 \mathrm d\left[ W_x^\varepsilon(\cdot)\right]_s\nonumber\\
				&\overset{
				\eqref{ineq:g_lip}, \eqref{ineq:non_negativity_coef_bound_at_zero}}{\le} L^2\lim_{\delta\downarrow 0}
				\frac{1}{\delta}\int_0^t 1_{\left\{0\le u_\varepsilon(s,x)\le \delta \right\}} \delta^2\mathrm d\left[ W_x^\varepsilon(\cdot)\right]_s\nonumber\\
				&\overset{\eqref{eq:q_var_ep}}{\le} L^2 k_\varepsilon(x,x) \lim_{\delta \downarrow 0}
				\frac{1}{\delta} \int_0^t  1_{\left\{0\le u_\varepsilon(s,x)\le \delta \right\}} \delta^2 \mathrm ds\nonumber\\
			&\,\,=0.
		\end{align}
		In addition, we see by \eqref{boud: lp_loc_both}, \eqref{eq:q_var_ep} and the Lipschitz bound on $\sigma$ that the stochastic integral in \eqref{eq:u_epsilon negative part} is square-integrable and hence has mean zero.
		Inserting \eqref{eq:u_epsilon second} and \eqref{ineq:localtime epsilon} into equation \eqref{eq:u_epsilon negative part} and recalling $\varphi(f) \equiv 0$ on $\X$ since $f \ge 0$, we see that 
		\begin{align}
			\E[\varphi(u_\varepsilon(t,x))] &\le ( 
			L +1/\varepsilon) \int_0^t \E\left[ \varphi(u_\varepsilon(s,x))\right] \mathrm ds \\
			&\quad \quad\quad+  (1/\varepsilon) \int_0^t\mathrm ds \int_\X G(\varepsilon,y,x)\E\left[\varphi(u_\varepsilon(s,y)
			)\right]m(\mathrm dy).
		\end{align}
		By condition \eqref{cond:exp_hk_lamda} when $\lambda = 0$, we may take the supremum over $x \in \X$ on both sides and apply Gronwall's lemma to see that $\sup_{x \in \X} \E\left[\varphi(u_\varepsilon(t,x))\right] = 0$ for $t\in[0,T]$. Since $T>0$ is arbitrary, we see for each $(t,x)\in\R_+\times\X$, $u_\varepsilon(t,x)\ge0$ almost surely.
		
		Let $(u_t)_{t\geq0}$ be the unique solution to \eqref{eq:spde}. We will show that $\lim_{\varepsilon \downarrow 0}u_\varepsilon = u$ in an appropriate sense. By Lemma \ref{lem:hk longtime}, Lemma \ref{lem:approx_estimates} together with the bound \eqref{boud: lp_loc_both}, we may apply stochastic Fubini (Lemma \ref{thm:stochastic_fubini}) to see
		\begin{align}
			&\int_\X\int_0^t  r_{t - s}^\varepsilon(y,x)\sigma\big(s,y,u_\varepsilon(s,y)\big) W^{\varepsilon}(\mathrm ds,y) \,m(\mathrm dy)\\
			&\quad \quad \quad \quad \quad \quad \quad\quad \quad \quad \quad \quad \quad-\iint_0^t G(t-s,z,x)\sigma\big(s,z,u(s,z)\big) W( \mathrm dz, \mathrm ds)\\
			=&  \iint_0^t \left(\int_\X  r_{t-s}^\varepsilon(y,x)\sigma\big(s,y,u_\varepsilon(s,y) \big) G_\varepsilon(z,y)\,m(\mathrm dy)\right)\,W(\mathrm dz,\mathrm ds)\\
			&\quad \quad \quad \quad \quad \quad \quad\quad \quad \quad \quad \quad \quad -\iint_0^t  G(t - s,z,x)\sigma\big(s,z,u(s,z)\big) W(\mathrm dz, \mathrm ds)\\
			=&\iint_0^t \left(\int_\X  r_{t-s}^\varepsilon(y,x) \big(\sigma\big(s,y,u_\varepsilon(s,y) \big)-\sigma\big(s,y,u(s,y) \big) \big) G_\varepsilon(z,y)\,m(\mathrm dy)\right)\,W(\mathrm dz,\mathrm ds)\\
			&\quad+\iint_0^t\left(\int_\X  r_{t-s}^\varepsilon(y,x) \big(\sigma\big(s,y,u(s,y) \big)-\sigma\big(s,z,u(s,z) \big)\big) G_\varepsilon(z,y)\,m(\mathrm dy)\right)\,W(\mathrm dz,\mathrm ds)\\
			&\quad+\iint_0^t\bigg(\int_\X  r_{t-s}^\varepsilon(y,x)\sigma\big(s,z,u(s,z) \big) G_\varepsilon(z,y)\,m(\mathrm dy) \\
			&\qquad \qquad \qquad \qquad \qquad \qquad \qquad \qquad- G(t - s,z,x)\sigma\big(s,z,u(s,z)\big)\bigg)\,W(\mathrm dz,\mathrm ds).
		\end{align}
		We obtain uniformly in  $(t,x) \in [0,T]\times \X$,
		\begin{align}
			&\E\left[|u_\varepsilon(t,x) - u(t,x)|^2\right] \\
			\lesssim &\left|P^{\varepsilon,*}_tf(x) - P_t^*f(x)\right|^2 + \E\left[\left|\int_0^t e^{-(t-s)/\varepsilon} b(s,x,u_\varepsilon(s,x) )\,\mathrm ds \right|^2\right]\\
			&+\E\left[\left|\int_0^t \mathrm ds\int_\X r_{t-s}^\varepsilon(y,x)\Big(b(s,y,u_\varepsilon(s,y)) - b(s,y,u(s,y))\Big) m(\mathrm dy)\, \right|^2\right]\\
			&+ \E\left[ \left|\int_0^t \mathrm ds\int_\X \Big(r_{t-s}^\varepsilon(y,x) - G(t-s,y,x) \Big) b(s,y,u(s,y)) \,m(\mathrm dy)\right|^2\right]\\
			&+ \int_0^t e^{-2(t-s)/\varepsilon} \sup_{x \in \X}\E\left[\sigma(s,x,u_\varepsilon(s,x))^2\right] \varepsilon^{-\alpha}\mathrm ds\\
			&+\int_0^t \norm{\norm{\int_\X r_{t-s}^\varepsilon(y,x)\abs{\sigma(s,y,u_\varepsilon(s,y))-\sigma(s,y,u(s,y))} G_\varepsilon(\cdot,y)m(\mathrm dy)}_{L^2_\Omega}}_{\mathcal H_K}^2 \mathrm ds\\
			&+\int_0^t \norm{ \norm{\int_\X r_{t-s}^\varepsilon(y,x) \abs{\sigma(s,y,u(s,y)) - \sigma(s,\cdot,u(s,\cdot))} G_\varepsilon(\cdot,y) m(\mathrm dy)}_{L^2_\Omega}}_{\mathcal H_K}^2 \mathrm ds\\
			&+\int_0^t \E\left[\left\|\left(\int_\X r_{t - s}^\varepsilon(y,x) G_\varepsilon(\cdot,y) m(\mathrm dy) - G(t-s,\cdot,x) \right) \sigma(s,\cdot,u(s,\cdot))\right\|^2_{\mathcal H_K} \right] \mathrm ds\\
			:=& \sum_{i = 1}^8 J_i(\varepsilon,t,x).
		\end{align}
		Using \eqref{ineq:p_approx_m} of Lemma \ref{lem:approx_estimates} and the boundedness of $f$, we see that, for each $T>0$, there exists $c_T>0$ such that, uniformly in sufficiently small $\varepsilon>0$ and $t\in(0,T]$,
		\begin{align}
			\sup_{x \in \X}J_1(\varepsilon,t,x)
			\lesssim e^{-c_Tt/\varepsilon}+\frac{\gamma_\varepsilon}{t^\beta},
		\end{align}
		where $\beta$ is as in \eqref{cond:time_holder}.
		At $t=0$, $P_0^{\varepsilon,*}f=P_0^*f=f$, and hence $J_1(\varepsilon,0,x)=0$.

		Since $f$ is bounded, we see by Corollary~\ref{cor:moment} with $\lambda = 0$ and $p=2$ that
		\begin{align}\label{ineq:u_uniform_second_moment}
			\sup_{t\in[0,T]}\sup_{x\in\X}\E\bigl[|u(t,x)|^2\bigr]<\infty.
		\end{align}
		Together with \eqref{ineq:p_approx_m}  and Minkowski's inequality,
		\begin{align}
\sup_{t\in[0,T]}\sup_{x\in\X}J_4(\varepsilon,t,x)\lesssim(\varepsilon+\gamma_\varepsilon)^2\longrightarrow0,\qquad \text{ as }\varepsilon\downarrow0.
		\end{align}
		Also, by \eqref{cond:exp_hk_lamda} with $\lambda=0$, \eqref{eq:LiG}, and \eqref{boud: lp_loc_both}, we obtain, uniformly in $\varepsilon\in(0,1]$,
		\begin{align*}
\sup_{t\in[0,T]}\sup_{x\in\X}J_2(\varepsilon,t,x)\lesssim\varepsilon^2,\qquad
\sup_{t\in[0,T]}\sup_{x\in\X}J_5(\varepsilon,t,x)\lesssim\varepsilon^{1-\alpha},
		\end{align*}
		where $\alpha\in(0,1)$ is the exponent in \eqref{cond:exp_hk_lamda}.
		Hence, 
		\begin{align}
\lim_{\varepsilon\downarrow0}\sup_{t\in[0,T]}\sup_{x\in\X}\big(J_2(\varepsilon,t,x)+J_5(\varepsilon,t,x)\big)=0.
		\end{align}
		By \eqref{eq:LiG}, \eqref{ineq:u_uniform_second_moment}, and Lemma \ref{lem:approx_estimates},
		\begin{align}
			\lim_{\varepsilon \downarrow 0} \sup_{t \in [0,T]} \sup_{x \in \X} J_8(\varepsilon,t,x) &\lesssim \lim_{\varepsilon \downarrow 0}\sup_{x \in \X}\int_0^T\left\|\left|\int_\X r_s^\varepsilon(y,x) G_\varepsilon(\cdot,y) m(\mathrm dy) - G(s,\cdot,x)\right|\right\|^2_{\mathcal H_K} \mathrm ds \\ &\overset{\eqref{lim:p_approx}}{=}0.
		\end{align}
		Let $U_\varepsilon(t) = \sup_{x\in \X} \norm{u_\varepsilon(t,x) - u(t,x)}_{L^2_\Omega}^2$. By Minkowski's inequality, uniformly in $(t,x) \in [0,T]\times \X$ and sufficiently small $\varepsilon >0$,
		\begin{align}
			J_3(\varepsilon,t,x) &\overset{\eqref{ineq:g_lip}}{\lesssim} \int_0^t \left( \int_\X r_{t-s}^\varepsilon(y,x)m(\mathrm dy)\right)^2U_\varepsilon(s) \mathrm ds\\
			&\,\overset{\eqref{ineq:R_bound}}{\lesssim} \int_0^tU_\varepsilon(s)\mathrm ds\\
			&\,\,\,\lesssim \int_0^t (t-s)^{-\alpha}U_\varepsilon(s) \mathrm ds.
		\end{align}
		By Lemma \ref{cor:mtg+mink_cs} and Minkowski's inequality, it holds uniformly in  $(t,x) \in [0,T]\times \X$ and sufficiently small $\varepsilon>0$ that
		\begin{equation}\label{ineq:J6 step 1}
			\begin{aligned}
				J_6(\varepsilon,t,x) &\,\,\,\lesssim \int_0^t \norm{\norm{\int_\X r_{t-s}^\varepsilon(y,x)\abs{\sigma(s,y,u_\varepsilon(s,y))-\sigma(s,y,u(s,y))} G_\varepsilon(\cdot,y)m(\mathrm dy)}_{L^2_\Omega}}_{\mathcal H_K}^2 \mathrm ds\\
				&\overset{\eqref{ineq:g_lip}}{\lesssim} \int_0^t\norm{\int_\X r_{t-s}^\varepsilon(y,x)\abs{U_\varepsilon(s) }^\frac{1}{2}G_\varepsilon(\cdot,y)m(\mathrm dy)}_{\mathcal H_K}^2 \mathrm ds\\
				&\,\,\,= \int_0^t \norm{\int_\X r_{t-s}^\varepsilon(y,x) G_\varepsilon(\cdot,y) m(\mathrm dy)}^2_{\mathcal H_K} U_\varepsilon(s)\, \mathrm ds.
			\end{aligned}
		\end{equation}
		Let $\widetilde \kappa \in \R$ be given by Lemma \ref{lem:hk longtime}. By the semigroup property, uniformly in $\varepsilon \in (0,1]$ and $(t,x) \in (0,T]\times \X$,
		\begin{equation}\label{ineq:mollified_conv_H_norm}
			\begin{aligned}
				\norm{\int_\X r_t^\varepsilon(y,x) G_\varepsilon(\cdot,y) m(\mathrm dy)}^2_{\mathcal H_K} &\,\,= \norm{e^{-t/\varepsilon}\sum_{n = 1}^\infty \frac{\pr{t/\varepsilon}^n}{n!} G(\varepsilon (n+1),\cdot,x)}^2_{\mathcal H_K}\\
				&\overset{\text{Minkowski\&Jensen}}{\le} \pr{e^{-t/\varepsilon}\sum_{n = 1}^\infty \frac{(t/\varepsilon)^n}{n!} \norm{G\pr{\varepsilon(n+1),\cdot,x}}_{\mathcal H_K}^2}\\
				&\overset{\eqref{ineq:hk_k long time} }{\lesssim} e^{-t/\varepsilon}\sum_{n = 0}^\infty \frac{(te^{\varepsilon\widetilde\kappa}/\varepsilon)^n}{n!} (\varepsilon(n+1))^{-\alpha}\\
				&\,\,\le e^{-t/\varepsilon + te^{\varepsilon \widetilde\kappa}/\varepsilon} \pr{e^{-te^{\varepsilon \widetilde\kappa}/\varepsilon}\sum_{n = 0}^\infty \frac{\pr{te^{\varepsilon \widetilde\kappa}/\varepsilon}^{n+1}}{(n+1)!} t^{-1}e^{-\varepsilon \widetilde\kappa}}^{\alpha}\\
				&\,\,\lesssim t^{-\alpha} \exp \left(\frac{te^{\varepsilon \widetilde\kappa} - t}{\varepsilon} \right)\\
				&\lesssim t^{-\alpha},
			\end{aligned}
		\end{equation}
		where Jensen's inequality was used in the second and fourth lines, $\alpha\in(0,1)$ is from \eqref{cond:exp_hk_lamda}, and there exists $\widetilde \kappa_0 \in \R$ such that, for all $t \ge 0$,
		\begin{align*}
			\sup_{\varepsilon \in (0,1]}\exp\pr{\frac{te^{\varepsilon \widetilde\kappa} - t}{\varepsilon}} \le e^{\widetilde \kappa_0 t}.
		\end{align*}
		Inserting this back into the inequality \eqref{ineq:J6 step 1}, we see uniformly in $\varepsilon \in (0,1]$ and $(t,x) \in [0,T]\times \X$ that
		\begin{align*}
			J_6(\varepsilon,t,x) \lesssim \int_0^t (t-s)^{-\alpha} U_\varepsilon(s) \mathrm ds.
		\end{align*}

		Now we consider $J_7$. By Lemma \ref{lem:hk longtime} and Minkowski's inequality, for each $\varepsilon \in (0,1)$ and $(t,x) \in [0,T]\times \X$,
		\begin{align}
			J_7(\varepsilon,t,x) &\le \int_0^t \left\|\int_\X r_{t-s}^\varepsilon(y,x) \|\sigma(s,y,u(s,y)) - \sigma(s,\cdot,u(s,\cdot))\|_{L^2_\Omega} G_\varepsilon(\cdot,y)m(\mathrm dy) \right\|_{\mathcal H_K}^2 \mathrm ds.
		\end{align}
		By the boundedness of $f$, \eqref{ineq:u_uniform_second_moment}, \eqref{ineq:spatial_holder_long_time}, and the calculations in the proof of Proposition \ref{prop:existence of C_tem modification} with $p=2$ and $\lambda=0$, there exists $\xi'\in(0,1)$ such that, uniformly in $s\in(0,T]$ and $y,z\in\X$,
		\begin{align}\label{ineq:u_bounded_spatial_increment}
			\norm{u(s,y)-u(s,z)}_{L^2_\Omega}
			\lesssim \left(\frac{d(y,z)^{\xi'}}{s^\beta}\right)\wedge1.
		\end{align}
		Set $\xi:=\eta\wedge\xi'$. For $d(y,z)\le1$, uniformly in $s\in(0,T]$,
		\begin{align}
			&{\quad} \norm{\sigma(s,y,u(s,y))-\sigma(s,z,u(s,z))}_{L^2_\Omega}\\
			&{\le} \norm{\sigma(s,y,u(s,y))-\sigma(s,z,u(s,y))}_{L^2_\Omega}
			+\norm{\sigma(s,z,u(s,y))-\sigma(s,z,u(s,z))}_{L^2_\Omega}\\
			&{\overset{\eqref{ineq:non_negative_approx_coef_ineq},\,\eqref{ineq:g_lip}}{\lesssim}}d(y,z)^\eta\norm{u(s,y)}_{L^2_\Omega}+\norm{u(s,y)-u(s,z)}_{L^2_\Omega}\\
			&{\overset{\eqref{ineq:u_bounded_spatial_increment}}{\lesssim}}
			\left(\frac{d(y,z)^\xi}{s^\beta}\right)\wedge1,
		\end{align}
		where $\beta\in[0,1)$ is as in \eqref{cond:time_holder}.
		For $d(y,z)>1$, \eqref{ineq:g_lip}, \eqref{ineq:non_negativity_coef_bound_at_zero}, and \eqref{ineq:u_uniform_second_moment} give the cap by $1$. Thus the last estimate holds for all $y,z\in\X$.
		
		Therefore,  it holds uniformly in $x,z \in \X$ and $\delta \in (0,1]$ that
		\begin{align}
			&\int_\X r_{t-s}^\varepsilon(y,x) \|\sigma(s,y,u(s,y)) - \sigma(s,z,u(s,z))\|_{L^2_\Omega} G_\varepsilon(z,y)m(\mathrm dy)\\
			\lesssim &\left( \left(\frac{\delta^\xi}{s^\beta}\wedge 1\right)\int_{d(z,y)\le \delta} + \int_{d(z,y) > \delta}\right) r_{t-s}^\varepsilon(y,x) G_\varepsilon(z,y)m(\mathrm{d}y)
		\end{align}
		Hence, for each $T>0$, it holds uniformly in $\varepsilon\in(0,1]$ and $(t,x)\in(0,T]\times\X$ that
		\begin{align}
			J_7(\varepsilon,t,x) \lesssim \int_0^{t} J_{7,1}(\varepsilon,t,s,x) + J_{7,2}(\varepsilon,t-s,x) \mathrm d s 
		\end{align}
		where 
		\begin{equation}\label{ineq:J71}
			\begin{aligned}
				J_{7,1}(\varepsilon,t,s,x)&\,\,: = \left(\frac{\delta^\xi}{s^\beta}\wedge 1 \right) \norm{\int_\X r_{t-s}^\varepsilon(y,x)G_\varepsilon(\cdot,y)m(\mathrm dy)}_{\mathcal H_K}^2 \\
				&\overset{\eqref{ineq:mollified_conv_H_norm}}{\lesssim} \left(\frac{\delta^\xi}{s^\beta}\wedge 1 \right)(t-s)^{-\alpha},
			\end{aligned}
		\end{equation}
		uniformly in $\delta, \varepsilon \in (0,1]$, $0<s<t\le T$, $x \in \X$,  and 
		\begin{align}
			J_{7,2}(\varepsilon,t-s,x):&\,\,= \left\|\int_\X \mathbf 1_{B_\delta(y)^c}(\cdot) r_{t-s}^\varepsilon(y,x) G_\varepsilon(\cdot,y) m(\mathrm dy)\right\|_{\mathcal H_K}^2\\
			&\,\,\le  \pr{\int_\X  r_{t-s}^\varepsilon(y,x) \norm{\mathbf 1_{B_\delta(y)^c}(\cdot) G_\varepsilon(\cdot,y)}_{\mathcal H_K} m(\mathrm dy)}^2\\
			&\overset{\eqref{ineq:R_bound}}{\lesssim} \sup_{y \in \X} \norm{\mathbf 1_{B_\delta(y)^c}(\cdot) G_\varepsilon(\cdot,y)}_{\mathcal H_K}^2,
		\end{align}
		uniformly in $\delta,\varepsilon\in(0,1]$, $0<s<t\le T$, and $x\in\X$. Here we used Minkowski's inequality.

		Fix $\theta\in(0,1]$ so that $\alpha+\theta\beta<1$. Since $a\wedge1\le a^\theta$ for every $a\ge0$, \eqref{ineq:J71} gives, uniformly in $(t,x)\in (0,T]\times \X$ and $\delta, \varepsilon \in (0,1)$,
		\begin{align}
			\int_0^t J_{7,1}(\varepsilon,t,s,x) \mathrm ds
			&\lesssim \int_0^t\left(\frac{\delta^\xi}{s^\beta}\wedge 1 \right)(t-s)^{-\alpha}\mathrm ds \\
			&\le \delta^{\xi\theta}\int_0^t s^{-\theta\beta}(t-s)^{-\alpha}\mathrm ds\\
			&= \delta^{\xi\theta}t^{1-\alpha-\theta\beta}
			\int_0^1 r^{-\theta\beta}(1-r)^{-\alpha}\mathrm dr\\
			&\lesssim \delta^{\xi\theta}.
		\end{align}
		Hence, it holds uniformly in  $\delta \in (0,1]$ that 
		\begin{align}
			\limsup_{\varepsilon\downarrow0}\sup_{t\in[0,T]}\sup_{x\in\X}J_7(\varepsilon,t,x)
			&\lesssim \delta^{\xi\theta}+T\limsup_{\varepsilon\downarrow0}\sup_{y\in\X}
			\left\|\mathbf1_{B_\delta(y)^c}(\cdot)G_\varepsilon(\cdot,y)\right\|_{\mathcal H_K}^2
			\overset{\eqref{cond:hk_uni_bound}}{\le}\delta^{\xi\theta}.
		\end{align}
		Since $\delta>0$ is arbitrary, we see $\lim_{\varepsilon \downarrow 0 }\sup_{t\in[0,T]}\sup_{x \in \X} J_7(\varepsilon,t,x) = 0.$

		Since $u_\varepsilon(0,x)=u(0,x)=f(x)$ for all $x \in \X$, we have $U_\varepsilon(0)=0$. Collecting all terms, for every $t\in(0,T]$ and sufficiently small $\varepsilon>0$, we have
		\begin{align}
			&U_\varepsilon(t) \lesssim \int_0^t (t-s)^{-\alpha}U_\varepsilon(s) \mathrm ds +H_\varepsilon(t)+e^{-c_T \frac{t}{\varepsilon}} + \frac{\gamma_\varepsilon}{t^\beta}
		\end{align}
		for some $H_\varepsilon(t)$ with $\lim_{\varepsilon\downarrow 0} \sup_{t \in [0,T]} H_\varepsilon(t) = 0.$
		By the Volterra--Gr\"{o}nwall inequality (see for example, \cite{chu1967gronwall}), we obtain, uniformly in $t\in(0,T]$ and all sufficiently small $\varepsilon>0$,
		\begin{align}
			U_\varepsilon(t) &\lesssim H_\varepsilon(t) + e^{-c_T \frac{t}{\varepsilon}} + \frac{\gamma_\varepsilon}{t^\beta} \\
			&\quad\quad\quad+ \int_0^t \left( H_\varepsilon(s) + e^{-c_T \frac{s}{\varepsilon}} + \frac{\gamma_\varepsilon}{s^\beta}\right)(t-s)^{-\alpha} \exp \left( \int_s^t (t-\tau)^{-\alpha} \mathrm d\tau\right) \mathrm ds
		\end{align}
		which converges to zero for each $t>0$ by the dominated convergence theorem. 
		Since $u_\varepsilon(t,x)\ge 0$ almost surely for each $(t,x) \in \R_+\times \X$ and $\varepsilon \in (0,1]$, we see that $u(t,x)\ge 0$ almost surely for each $(t,x) \in \R_+\times \X$.
		Thus, by separability of $\R_+\times \X$ and the joint continuity of $u$, we see $u$ is non-negative.
	\end{proof}
	
	We also need the following approximation result which considers SPDEs whose coefficients are regularized.
	\begin{prop}[Approximation theorem] \label{prop:approximation of coefficients}
		Under the standing assumptions of this section, suppose that $f$ is bounded and \eqref{ineq:non_negativity_coef_bound_at_zero} holds.
		Then there exist sequences $(b_n)_{n \in \N}$ and  $(\sigma_n)_{n \in \N}$ of predictable functions
		on the product space $\Omega \times \mathbb R_+ \times \X \times \mathbb R$ so that 
		\begin{itemize}
			\item[\eq\label{eq:FineNConditions}] for each $n\in \mathbb N$, \eqref{eq:LiG}, \eqref{ineq:g_lip}, \eqref{eq:con}, \eqref{ineq:non_negativity_coef_bound_at_zero} and \eqref{ineq:non_negative_approx_coef_ineq} hold with $b$ and $\sigma$ being replaced by $b_n$ and $\sigma_n$; and that
			\item[\eq\label{eq:Approximation}] for every $T>0$ and $\lambda>0$,
			\begin{align}
				\lim_{n\to\infty}\sup_{t\in[0,T]}\sup_{x\in\X}
				h(x)^{-\lambda}\norm{u^{(n)}(t,x)-u(t,x)}_{L^2_\Omega}=0,
			\end{align}
			where, for each $n\in \mathbb N$, $(u^{(n)}_t)_{t\geq 0}$ is the unique strong solution to the equation
			\begin{align}\label{eq:n}
				\begin{cases}
					\partial_t u^{(n)}_t(x) 
					= \mathcal L^* u^{(n)}_t(x)+b_n\pr{t,x,u^{(n)}_t(x)} + \sigma_n\pr{t,x,u^{(n)}_t(x)} \dot{W}_t(x), \quad (t,x)\in \mathbb R_+\times \X,
					\\u^{(n)}_0(x) = f(x), \quad x\in \X.
				\end{cases}
			\end{align}
		\end{itemize}
	\end{prop}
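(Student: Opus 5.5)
The plan is to regularize the coefficients in the spatial variable with the heat semigroup. For $n\in\N$ and $\varepsilon_n:=1/n$, I would set
\[
b_n(t,x,u):=\int_\X G_{\varepsilon_n}(y,x)\,b(t,y,u)\,m(\mathrm dy),\qquad
\sigma_n(t,x,u):=\int_\X G_{\varepsilon_n}(y,x)\,\sigma(t,y,u)\,m(\mathrm dy),
\]
that is, $b_n(t,\cdot,u)=P^*_{\varepsilon_n}b(t,\cdot,u)$, and likewise for $\sigma_n$. Predictability of $b_n,\sigma_n$ follows from measurability of $G$ and Fubini.

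\emph{Verification of \eqref{eq:FineNConditions}.} Since $G\ge0$ and $\|G_{\varepsilon}(\cdot,x)\|_{L^1_\X}\lesssim1$ by \eqref{cond:exp_hk_lamda} with $\lambda=0$, the linear growth \eqref{eq:LiG} and the Lipschitz bound \eqref{ineq:g_lip} pass to $b_n,\sigma_n$ with constants uniform in $n$. Positivity of the kernel preserves $b_n(t,x,0)\ge0$ and $\sigma_n(t,x,0)=0$, which gives \eqref{ineq:non_negativity_coef_bound_at_zero}.

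For \eqref{ineq:non_negative_approx_coef_ineq} with $d(x,y)\le1$, the spatial part of \eqref{cond:time_holder} (equivalently \eqref{ineq:spatial_holder_long_time}) gives
\[
|b_n(t,x,u)-b_n(t,y,u)|\le \|G_{\varepsilon_n}(\cdot,x)-G_{\varepsilon_n}(\cdot,y)\|_{L^1_\X}\sup_z|b(t,z,u)|\lesssim n^{\beta}d(x,y)^{\xi_2}(1+|u|).
\]
For $\sigma_n$ I would use $|\sigma(t,z,u)|=|\sigma(t,z,u)-\sigma(t,z,0)|\lesssim|u|$, which yields the factor $|u|$ instead of $1+|u|$. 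When $d(x,y)>1$, the $L^1$ bound on the kernel gives a constant, which is dominated by $d(x,y)^{\eta}$. So $\eta=\xi_2$ works, with an $n$-dependent constant, and that is allowed since the hypotheses are only required for each fixed $n$. Continuity \eqref{eq:con} follows from Hölder continuity in $x$ together with Lipschitz continuity in $u$.

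\emph{Convergence \eqref{eq:Approximation}.} Since $f$ is bounded, Corollary~\ref{cor:moment} gives uniform second moments of $u$ with $\lambda=0$. As in the proof of Proposition~\ref{prop:continuity_initial}, I would write $v^{(n)}:=u^{(n)}-u$ and split each coefficient difference as
\[
b_n(s,y,u^{(n)})-b(s,y,u)=\bigl[b_n(s,y,u^{(n)})-b_n(s,y,u)\bigr]+\bigl[b_n(s,y,u)-b(s,y,u)\bigr],
\]
and do the same for $\sigma$. The first bracket is controlled by the uniform Lipschitz constant. Using Lemma~\ref{cor:mtg+mink_cs} and Lemma~\ref{lem:hk longtime}, these terms produce the Volterra kernel $1+(t-s)^{-\alpha}$ acting on $\sup_x h(x)^{-\lambda}\|v^{(n)}(s,x)\|_{L^2_\Omega}$. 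The second bracket produces an error term $E_n(t,x)$. After Gr\"onwall, as in Proposition~\ref{prop:continuity_initial}, it remains to show
\[
\sup_{t\le T}\sup_x h(x)^{-\lambda}E_n(t,x)\to0.
\]

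\emph{Pointwise convergence of the error.} For fixed $(\omega,s)$ and $v\in\R$, the function $y\mapsto b(s,y,v)$ is continuous by \eqref{eq:con} and bounded by \eqref{eq:LiG}, so it lies in $\mathcal C^{\mathrm{tem}}_{\X,h}$. By \eqref{cond:ctem continuous}, $P^*_{\varepsilon}b(s,\cdot,v)\to b(s,\cdot,v)$ in $\mathcal C^{\mathrm{tem}}_{\X,h}$. Equi-Lipschitz continuity in $v$ upgrades this to convergence that is locally uniform in $(y,v)$, hence to $b_n(s,y,u(s,y))-b(s,y,u(s,y))\to0$ pointwise. The same holds for $\sigma$.

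\emph{Main obstacle: uniformity in $(t,x)$.} Pointwise convergence plus the domination $(1+|u|)$ gives $E_n(t,x)\to0$ for each fixed $(t,x)$, by dominated convergence in the $L^1$ and $\mathcal H_K$ integrals; the $\mathcal H_K$ case uses the integrability from \eqref{cond:exp_hk_lamda}. The difficulty is making this uniform. I would first try to exploit the weight $h(x)^{-\lambda}$ with $\lambda>0$, which is available because the claim is only for $\lambda>0$. The errors are bounded by a constant uniformly in $n$, so for $x$ outside a large ball the factor $h(x)^{-\lambda}$ is small, using \eqref{ineq:faster_than_ms} and \eqref{ineq:ref_regular}. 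On a fixed ball, which is totally bounded by \eqref{ineq:entropy bound}, I would get uniformity from equicontinuity in $(t,x)$ of the error fields, via the Hölder estimates of Lemma~\ref{lem:hk_abs} together with the uniform moment bounds. This gives an Arzel\`a--Ascoli/Dini-type argument: pointwise convergence plus equicontinuity on a compact set implies uniform convergence there. Feeding the resulting $\sup_{t,x}$ bound into the Volterra--Gr\"onwall inequality then yields \eqref{eq:Approximation}.
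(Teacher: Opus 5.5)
Your proposal is correct. The regularization $b_n=P^*_{1/n}b$, $\sigma_n=P^*_{1/n}\sigma$ and your check of \eqref{eq:FineNConditions} coincide with Step~1 of the paper, but your convergence step takes a genuinely different route.

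The paper splits $b_\varepsilon(s,x,u_\varepsilon(s,x))-b(s,x,u(s,x))$ into three pieces:
\begin{itemize}
\item a Lipschitz term;
\item a spatial-averaging term bounded by $Q_\varepsilon(s,x)=\int_\X G_\varepsilon(y,x)\norm{u(s,y)-u(s,x)}_{L^2_\Omega}m(\mathrm dy)$;
\item the residual $P^*_\varepsilon B_s-B_s$, where $B_s=b(s,\cdot,u(s,\cdot))$.
\end{itemize}
It then gets smallness that is already uniform in $x$ at the level of the integrands. For $Q_\varepsilon$ it uses the spatial $L^2_\Omega$-H\"older bound \eqref{ineq:u_bounded_spatial_increment} together with Lemma~\ref{lem:approx of identity of heat kernel}. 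For the residual it applies \eqref{cond:ctem continuous} pathwise to the random tempered function $B_s$. This in turn needs a localization on $\{\sup_{r\le T}\norm{u(r,\cdot)}_{(\lambda/2)}\le R\}$ and a higher-moment H\"older estimate off that event.

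You instead keep $b_n(s,y,u(s,y))-b(s,y,u(s,y))$ as a single error, for which only pointwise convergence is needed. That follows from \eqref{cond:ctem continuous} applied to the deterministic bounded continuous function $b(\omega,s,\cdot,v)$ at the fixed value $v=u(\omega,s,y)$, so your equi-Lipschitz upgrade is not actually needed. You then move the uniformity to the Duhamel level. To make this precise, let $e_n,\tilde e_n$ be the $L^2_\Omega$-norms of the two coefficient errors and set
\[
E_n(t,x):=\int_0^t\!\int_\X G_{t-s}(y,x)\,e_n(s,y)\,m(\mathrm dy)\,\mathrm ds+\Bigl(\int_0^t\norm{G_{t-s}(\cdot,x)\,\tilde e_n(s,\cdot)}_{\mathcal H_K}^2\,\mathrm ds\Bigr)^{1/2}.
\]
Then:
\begin{itemize}
\item $\sup_n\sup_{s,y}(e_n+\tilde e_n)<\infty$, by boundedness of $f$ and Corollary~\ref{cor:moment} with $\lambda=0$;
\item positivity of $K$ and Lemma~\ref{lem:hk_abs} with $\lambda=0$ make $(E_n)$ H\"older-equicontinuous, uniformly in $n$;
\item total boundedness of $[0,T]$ times a large ball therefore upgrades pointwise convergence to uniform convergence there;
\item Lemma~\ref{lem:ref_grow}, combined with the uniform bound, handles the complement of the ball.
\end{itemize}

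Your route dispenses with the spatial regularity of $u$, the approximation-of-identity lemma and the localization, and it uses only second moments at fixed points. The paper's route avoids any equicontinuity argument for the Duhamel error fields, because its integrand estimates are already uniform in space.
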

	
	\begin{prop}\label{prop:non-negative2}
		Suppose that $f$ is non-negative and bounded, and that \eqref{ineq:non_negativity_coef_bound_at_zero} holds.
		Then, the unique strong solution $u$ to the SPDE  \eqref{eq:spde} is non-negative.
	\end{prop}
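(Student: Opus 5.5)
The plan is to combine the approximation theorem (Proposition~\ref{prop:approximation of coefficients}) with the non-negativity result for regularized coefficients (Proposition~\ref{thm:non_negtive}). The standing assumptions of this section are in force, $f$ is bounded and non-negative, and \eqref{ineq:non_negativity_coef_bound_at_zero} holds. So Proposition~\ref{prop:approximation of coefficients} applies and produces predictable coefficients $(b_n,\sigma_n)_{n\in\N}$ satisfying \eqref{eq:FineNConditions}. For each fixed $n$, the pair $(b_n,\sigma_n)$ satisfies \eqref{eq:LiG}, \eqref{ineq:g_lip}, \eqref{ineq:non_negativity_coef_bound_at_zero} and the spatial H\"older-type condition \eqref{ineq:non_negative_approx_coef_ineq}. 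Since $f$ is non-negative, Proposition~\ref{thm:non_negtive} (applied with $b,\sigma$ replaced by $b_n,\sigma_n$) shows that the unique strong solution $u^{(n)}$ of \eqref{eq:n} is non-negative. In particular, $u^{(n)}(t,x)\ge 0$ almost surely for every $(t,x)$.

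Next we pass to the limit. By \eqref{eq:Approximation}, for any fixed $\lambda>0$, $T>0$ and $(t,x)\in[0,T]\times\X$, we have $\norm{u^{(n)}(t,x)-u(t,x)}_{L^2_\Omega}\le h(x)^{\lambda}\,\epsilon_n$ with $\epsilon_n\to0$. Hence $u^{(n)}(t,x)\to u(t,x)$ in $L^2_\Omega$, and along a subsequence almost surely. Since $u^{(n)}(t,x)\ge0$ almost surely for every $n$, it follows that $u(t,x)\ge 0$ almost surely. Equivalently, $\E[u(t,x)^-]\le \E[|u(t,x)-u^{(n)}(t,x)|]\to0$, which avoids the subsequence altogether.

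Finally, we upgrade to a statement holding simultaneously for all $(t,x)$. Take a countable dense set $\mathcal D\subset\R_+\times\X$ (using separability of $\X$). On an event of probability one, $u(t,x)\ge0$ for all $(t,x)\in\mathcal D$. The solution $u$ has paths in $\mathcal C_{\R_+}\mathcal C^{\mathrm{tem}}_{\X,h}$, so it is jointly continuous in $(t,x)$. Therefore $u\ge0$ everywhere on that event, and $u$ is non-negative in the sense defined before Proposition~\ref{thm:non_negtive}.

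I expect no serious obstacle here, because the substantive work sits in Propositions~\ref{thm:non_negtive} and \ref{prop:approximation of coefficients}. The only points to check are these: the regularized coefficients really do meet every hypothesis of Proposition~\ref{thm:non_negtive}, including \eqref{ineq:non_negativity_coef_bound_at_zero}, which Proposition~\ref{prop:approximation of coefficients} guarantees; and Proposition~\ref{thm:non_negtive} itself does not require boundedness beyond what we have, since $f$ is bounded and non-negative.
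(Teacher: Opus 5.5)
Your proposal is correct and follows the paper's proof essentially verbatim. You apply Proposition~\ref{thm:non_negtive} to the regularized coefficients $(b_n,\sigma_n)$ from Proposition~\ref{prop:approximation of coefficients}, then pass to the limit pointwise in $L^2_\Omega$ via \eqref{eq:Approximation} and closedness of the non-negative cone (your bound $\E[u(t,x)^-]\le\E[|u(t,x)-u^{(n)}(t,x)|]$ is a clean way to phrase this), and finish by joint continuity and separability.
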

	\begin{proof}
		Let sequences $(b_n)_{n \in \N}$ and  $(\sigma_n)_{n \in \N}$ of predictable functions on the product space $\Omega \times \mathbb R_+ \times \X \times \mathbb R$ be given as in Proposition \ref{prop:approximation of coefficients}. 
		For every $n\in \mathbb N$, by Proposition \ref{thm:non_negtive}, \eqref{eq:FineNConditions}, and the condition that $f$ is non-negative, we obtain that $u^{(n)}$, the unique strong solution to the equation \eqref{eq:n}, is non-negative. 
		By \eqref{eq:Approximation}, for each fixed $(t,x)\in\R_+\times\X$,
		$u^{(n)}(t,x)\to u(t,x)$ in $L^2_\Omega$. Since every $u^{(n)}(t,x)$ is non-negative,
		the closedness of the non-negative cone in $L^2_\Omega$ implies that $u(t,x)\geq0$ almost surely.
		Thus, by joint continuity and separability of $\R_+\times\X$, $u$ is non-negative.
	\end{proof}
	
	\begin{proof}[Proof of Theorem \ref{thm:comparison} \eqref{part:main2 weak comparison}]
		Define an adapted $\mathcal C^{\mathrm{tem}}_{\X,h}$-valued continuous process $(v_t)_{t \ge 0}$ via
		\begin{align}
			v_t(x) = u_t(x) - u'_t(x),\qquad (t,x) \in \R_+\times \X.
		\end{align}
		Note that for every $T>0$ and  $(t,x)\in [0,T]\times \X$, almost surely,
		\begin{align}
			v_t(x) = P^*_t f_0(x)
			&+ \int_0^t \mathrm ds \int_\X  G(s,y;t,x) \widetilde b(s,y, v_s(y)) 
			m(\mathrm d y) \\
			&+ \iint_0^t  G(s,y;t,x) \widetilde \sigma(s,y,v_s(y)) 
			W(\mathrm ds,\mathrm dy), 
			\label{eq:vtx}
		\end{align}
		where $f_0 := f - f' \in \mathcal C^{\mathrm{tem}}_{\X,h}$ and for all $(s,y) \in \R_+\times \X$ and $r\in \mathbb R$ 
		\begin{align}
			(\tilde b(\omega,s,y,r))_{\omega \in \Omega} := 
			\widetilde b(s,y,r)&:= b\pr{s,y,
				r
				+u_s'(y)} - b(s,y,u'_s(y))\\
			(\tilde \sigma(\omega,s,y,r))_{\omega \in \Omega} := 
			\widetilde \sigma(s,y,
			r)&:= \sigma\pr{s,y, r+u_s'(y)} - \sigma(s,y,u'_s(y)).
		\end{align}
		Since $u'$ is adapted with continuous
		$\mathcal C^{\mathrm{tem}}_{\X,h}$-valued paths, it is predictable; hence
		$\widetilde b$ and $\widetilde\sigma$ are predictable. Moreover,
		\eqref{eq:con} and the continuity of $u'_t$ show that they also satisfy
		\eqref{eq:con}.
		We can verify that 
		\begin{itemize}
			\item[\eq\label{eq:tildeL}] $\tilde b$ and $\tilde \sigma$ satisfy the linear growth condition \eqref{eq:LiG} and the Lipschitz condition \eqref{ineq:g_lip} with $b$ and $\sigma$ being replaced by $\tilde b$ and $\tilde \sigma$.  
		\end{itemize}
		In fact, for every $T>0$, uniformly for every $\omega\in \Omega\setminus\mathrm{N}_{\ref{c:Lip}}$, $t\in [0,T]$, $r,r'\in \mathbb R$ and $x\in \X$, it holds that
		\begin{align}
			|\tilde b(\omega, t,x,r) |+|\tilde\sigma(\omega, t,x,r)| 
			&= |b\pr{\omega,t,x,r+u'(\omega, t, x)} - b(\omega,t,x,u'(\omega, t, x))| 
			\\& \qquad +|\sigma\pr{\omega,t,x,r+u'(\omega, t, x)} - \sigma(\omega,t,x,u'(\omega, t, x))| 
			\\& \overset{\eqref{ineq:g_lip}}{\lesssim} |r|, \label{eq:tbtsr}
		\end{align}
		and that
		\begin{align}
			&\quad |\tilde b(\omega, t,x,r) - \tilde b(\omega, t,x,r')|+|\tilde \sigma(\omega, t,x,r)-\tilde \sigma(\omega, t,x,r')| 
			\\&=| b\pr{\omega,t,x,r+u'(\omega, t, x)}  - b\pr{\omega,t,x,r'+u'(\omega, t, x)}  | 
			\\&\qquad +| \sigma\pr{\omega,t,x,r+u'(\omega, t, x)}  - \sigma\pr{\omega,t,x,r'+u'(\omega, t, x)}  |
			\\&{\overset{\eqref{ineq:g_lip}}{\lesssim} }|r - r'|.
		\end{align}
		In particular,  Theorem \ref{thm:main2}, \eqref{eq:tildeL} and \eqref{eq:vtx} now say that $(v_t)_{t\geq 0}$ is the unique strong solution to the SPDE
		\begin{equation}
			\begin{cases}
				\partial_t v_t(x) 
				= \mathcal L^* v_t(x) + \tilde b\left(t, x, v_t(x)\right) + \tilde\sigma\left(t, x ,v_t(x)\right) \dot W_t(x),
				\quad t \in \mathbb R_+,\, x \in \X,
				\\ v_0(x) = f_0(x), \quad x\in \X.
			\end{cases}
		\end{equation}
		
		Since $f_0\in \mathcal C^{\mathrm{tem}}_{\X,h}$ is non-negative, we can take a sequence $(f^{(k)}_0)_{k \in \mathbb N}$ of non-negative bounded elements in $\mathcal C^{\mathrm{tem}}_{\X,h}$ which converges to $f_0$ in $\mathcal C^{\mathrm{tem}}_{\X,h}$.
		From Theorem \ref{thm:main2} and \eqref{eq:tildeL}, for each $k\in \mathbb N$, there exists a unique strong solution $(v^{(k)}_t)_{t\geq 0}$ to the SPDE 
		\begin{equation}
			\begin{cases}
				\partial_t v^{(k)}_t(x) 
				= \mathcal L^* v^{(k)}_t(x) + \tilde b\left(t, x, v^{(k)}_t(x)\right) + \tilde\sigma\left(t, x ,v^{(k)}_t(x)\right) \dot W_t(x),
				\quad t \in \mathbb R_+,\, x \in \X,
				\\ v^{(k)}_0(x) = f_0^{(k)}(x), \quad x\in \X.
			\end{cases}
		\end{equation}
		By \eqref{eq:tildeL} and Proposition \ref{prop:continuity_initial}, we know that 
		\begin{equation}\label{eq:v}
			\lim_{k\to \infty}\|v^{(k)}  -v\|_{\mathbb M_{\lambda, p}(T)} = 0
		\end{equation}
		for every $p \ge 2$, $\lambda >0$ and $T>0$.
		
		Taking $r=0$ in \eqref{eq:tbtsr}, we obtain
		$\widetilde b(t,x,0)=\widetilde\sigma(t,x,0)=0$, and hence
		\begin{itemize}
			\item[\eq\label{eq:tildeLM}] $\tilde b$ and $\tilde \sigma$ satisfy  \eqref{ineq:non_negativity_coef_bound_at_zero} with $b$ and $\sigma$ being replaced by $\tilde b$ and $\tilde \sigma$.  
		\end{itemize}
		Now, for each $k\in \mathbb N$, by Proposition \ref{prop:non-negative2}, \eqref{eq:tildeLM}, and the fact that $f^{(k)}_0\in \mathcal C^{\mathrm{tem}}_{\X,h}$ is non-negative and bounded, we have that $v^{(k)}$ is non-negative.  
		Combining this with the separability of $\X$, continuity of $v$  and \eqref{eq:v}, we see that $v=u-u'$ is also non-negative as desired.
	\end{proof}

\section{Proof of strong comparison principle and strict positivity}\label{sec:proof of strong comparison}
	In this section, we prove Theorem \ref{thm:comparison} \eqref{part:main2 strong comparison} and \eqref{part:main2 strictly positivity}. We first establish Mueller-type large-deviation estimates for stochastic integrals in Proposition \ref{prop:MLDP} and Corollary \ref{lem:m_ldp_g}, and obtain a global weighted estimate for the stochastic convolution in Lemma \ref{lem:unit_scale_stochastic_part_bound}. 
	We then apply the iteration method in \cite{Zbl074960057}  to prove strict positivity for the drift-free equation \eqref{eq:spde_driftless}. Combining this result with Theorem \ref{thm:comparison} \eqref{part:main2 weak comparison} gives the strong comparison principle Theorem \ref{thm:comparison} \eqref{part:main2 strong comparison}, from which Theorem \ref{thm:comparison} \eqref{part:main2 strictly positivity} follows under \eqref{cond:pos_zero}.

	\subsection{Large deviation principle on metric measure space}
	In this section, for any metric space $(\mathcal T,d_\mathcal T)$, we denote $D(\mathcal T,d_\mathcal T; \varepsilon)$ as the smallest number of open balls with radius $\varepsilon>0$ in $\mathcal T$ required to cover $\mathcal T$. 

        The following product-space covering estimate is used in the chaining argument in the proof of Proposition~\ref{prop:MLDP}.
	\begin{lemma}\label{lem:product metric entropy}
		Suppose $(\mathcal X,d_\mathcal X)$ and $\pr{\mathcal Y,d_\mathcal Y}$ are metric spaces with finite radius such that for some $\beta_\mathcal X,\beta_\mathcal Y >0$, it holds uniformly in $\varepsilon\in (0,1]$ that 
		\begin{align}
			D(\mathcal X,d_\mathcal X; \varepsilon)\lesssim \varepsilon^{-\beta_\mathcal X},\qquad D(\mathcal Y,d_\mathcal Y; \varepsilon)\lesssim \varepsilon^{-\beta_\mathcal Y}.
		\end{align}
		Suppose $\xi_\mathcal X,\xi_\mathcal Y \in (0,1]$ and let $\mathcal S:= \mathcal X \times \mathcal Y$ equipped with the metric $d_\mathcal S\pr{(x,y),(x',y')}:= d_\mathcal X(x,x')^{\xi_\mathcal X}+d_\mathcal Y(y,y')^{\xi_\mathcal Y}$. Then 
		\begin{align}
			D\pr{\mathcal S,d_\mathcal S; \varepsilon} \lesssim \varepsilon^{-\pr{\frac{\beta_\mathcal X}{\xi_\mathcal X}+\frac{\beta_\mathcal Y}{\xi_\mathcal Y}}},\quad \text{ uniformly in }\varepsilon \in (0,1].
		\end{align}
	\end{lemma}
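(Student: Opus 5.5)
The plan is to build an explicit product cover out of the two given covers. Fix $\varepsilon\in(0,1]$ and set $\varepsilon_{\mathcal X}:=(\varepsilon/2)^{1/\xi_{\mathcal X}}$ and $\varepsilon_{\mathcal Y}:=(\varepsilon/2)^{1/\xi_{\mathcal Y}}$. Both lie in $(0,1]$, since $\varepsilon/2\le 1/2$ and $1/\xi_{\mathcal X},1/\xi_{\mathcal Y}\ge 1$.

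Choose centers $x_1,\dots,x_N$ whose $d_{\mathcal X}$-balls of radius $\varepsilon_{\mathcal X}$ cover $\mathcal X$, with $N=D(\mathcal X,d_{\mathcal X};\varepsilon_{\mathcal X})$. Similarly choose $y_1,\dots,y_M$ whose $d_{\mathcal Y}$-balls of radius $\varepsilon_{\mathcal Y}$ cover $\mathcal Y$, with $M=D(\mathcal Y,d_{\mathcal Y};\varepsilon_{\mathcal Y})$. Both numbers are finite by the hypothesis.

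Now take any $(x,y)\in\mathcal S$. Pick $i$ with $d_{\mathcal X}(x,x_i)<\varepsilon_{\mathcal X}$ and $j$ with $d_{\mathcal Y}(y,y_j)<\varepsilon_{\mathcal Y}$. Since $t\mapsto t^{\xi}$ is strictly increasing on $[0,\infty)$,
\[
d_{\mathcal S}\bigl((x,y),(x_i,y_j)\bigr)=d_{\mathcal X}(x,x_i)^{\xi_{\mathcal X}}+d_{\mathcal Y}(y,y_j)^{\xi_{\mathcal Y}}<\varepsilon/2+\varepsilon/2=\varepsilon .
\]
So the $NM$ open $d_{\mathcal S}$-balls of radius $\varepsilon$ centered at the points $(x_i,y_j)$ cover $\mathcal S$. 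This gives $D(\mathcal S,d_{\mathcal S};\varepsilon)\le NM$.

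It remains to apply the hypotheses with the uniform constants:
\[
N\lesssim \varepsilon_{\mathcal X}^{-\beta_{\mathcal X}}=(\varepsilon/2)^{-\beta_{\mathcal X}/\xi_{\mathcal X}},\qquad M\lesssim(\varepsilon/2)^{-\beta_{\mathcal Y}/\xi_{\mathcal Y}}.
\]
Multiplying, we get $D(\mathcal S,d_{\mathcal S};\varepsilon)\lesssim 2^{\beta_{\mathcal X}/\xi_{\mathcal X}+\beta_{\mathcal Y}/\xi_{\mathcal Y}}\,\varepsilon^{-(\beta_{\mathcal X}/\xi_{\mathcal X}+\beta_{\mathcal Y}/\xi_{\mathcal Y})}$. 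The factor in front is independent of $\varepsilon$, which gives the claim uniformly in $\varepsilon\in(0,1]$.

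There is no real obstacle here. The only points to check are the following.
\begin{itemize}
\item The rescaled radii must stay in $(0,1]$, which is where the hypothesis bounds apply.
\item $d_{\mathcal S}$ must be a genuine metric, so that covering numbers make sense. This holds because $t\mapsto t^{\xi}$ with $\xi\in(0,1]$ is subadditive and increasing, so $d^{\xi}$ is again a metric.
\end{itemize}
The finite-radius assumption is not needed beyond guaranteeing the hypotheses are meaningful.
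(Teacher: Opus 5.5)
Your proposal is correct and is essentially the paper's own proof. The paper also covers each factor at radii $(\varepsilon/2)^{1/\xi_{\mathcal X}}$ and $(\varepsilon/2)^{1/\xi_{\mathcal Y}}$, takes the product centers $(x_k,y_\ell)$, and notes that each product of factor balls lies in a $d_{\mathcal S}$-ball of radius $\varepsilon$; this gives $N_{\mathcal X}N_{\mathcal Y}\lesssim \varepsilon^{-(\beta_{\mathcal X}/\xi_{\mathcal X}+\beta_{\mathcal Y}/\xi_{\mathcal Y})}$.
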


	We will need the following local version of large deviation type estimates of stochastic integrals with respect to Gaussian noises that  is similar to that of \cite[Theorem 4.2]{dalang2009some} (also see \cite[Lemma 9.1]{barnes2021effect}). The proof relies on the ball-chaining technique on  metric spaces that can be found, for example, in \cite{ledoux2013probability} and \cite[Chapter 2.4]{khoshnevisan2002multiparameter}. Recall that $Q$ in \eqref{eq:W_covariance} is the covariance operator of $W$, with operator norm denoted by $\|Q\|$.
	\begin{prop}\label{prop:MLDP}
		Let $T,R >0$, $x_0 \in \X$ and assume the following assumptions hold:
		\begin{enumerate}
			\item There is some $\beta_0 > 0$ so that $D(B_R(x_0),d; \varepsilon) \lesssim \varepsilon^{-\beta_0}$ uniformly for $\varepsilon \in (0,1]$, where $D$ is defined at the beginning of this subsection.
			\item $g_{t,x}(s,y):=g(t,s,x,y)$ on $ \R_+\times \R_+\times \X \times \X$ and $\eta(s,y)$ on $\R_+ \times \X$ are non-negative deterministic real-valued functions. There are $\xi_1,\xi_2 \in (0,1]$ such that the supremum below is finite, and we set
			\begin{align}\label{ineq:ldp local holder type condition}
				B:= \|Q\|\sup_{t,t' \in [0,T],\, x,x' \in B_R(x_0)} \frac{\int_0^\infty \left\| \left|g_{t,x}(s,\cdot) - g_{t',x'}(s,\cdot)\right| \eta(s,\cdot)\right\|_{\mathcal H_K}^2 \mathrm ds}{\left(\frac{d(x,x')}{2R}\right)^{\xi_1} + \left|\frac{t-t'}{T}\right|^{\xi_2}}.
			\end{align}
			Moreover, suppose there exists $(t_*,x_*)\in[0,T]\times B_R(x_0)$ such that
			\begin{align}
				\int_0^\infty
				\norm{g_{t_*,x_*}(s,\cdot)\eta(s,\cdot)}_{\mathcal H_K}^2
				\mathrm ds<\infty.
			\end{align}
			\item $\sigma$ is a predictable random field on $\R_+ \times \X$ so that 
			\begin{align}
				\left|\sigma(t,x) \right| \le \eta(t,x),\quad \text{almost surely for all } (t,x) \in \R_+ \times \X.
			\end{align}
		\end{enumerate}
		For $(t,x) \in [0,T]\times B_R(x_0)$, let
		\begin{align}\label{eq:s_conv}
			N(t,x):= \int_0^\infty \int_\X g(t,s,x,y)\sigma(s,y)  W(\mathrm ds,\mathrm dy).
		\end{align}
		Then there is a jointly continuous modification $\widetilde{N}(t,x)$ of $N(t,x)$ on $[0,T] \times B_R(x_0)$, so that for  $c:=  4\sum_{n= 1}^\infty  \sqrt{n}2^{-n/2} $, the following inequality holds uniformly in $\lambda > 0$, with an implicit constant independent of $T$, $B$ and $\sigma$,
		\begin{align}\label{ineq:MLDP_local}
			\bbP \left( \sup_{(t,x),(t',x') \in [0,T]\times B_R(x_0)} \left| \widetilde{N}(t,x) - \widetilde{N}(t',x') \right| > \lambda\right) \lesssim \exp\left(- \frac{\lambda^2}{2c^2B}\right).
		\end{align}
		When $B=0$, the right-hand side of \eqref{ineq:MLDP_local} is understood as $e^{-\infty}=0$.
	\end{prop}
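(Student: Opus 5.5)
Our plan is to reduce \eqref{ineq:MLDP_local} to the classical Gaussian chaining (Garsia--Rodemich--Rumsey/Dudley-type) argument, using an exponential martingale bound for the increments in place of Gaussianity. Equip $\mathcal S:=[0,T]\times B_R(x_0)$ with the pseudo-metric
\[
\rho\bigl((t,x),(t',x')\bigr):=\Bigl(\frac{d(x,x')}{2R}\Bigr)^{\xi_1}+\Bigl|\frac{t-t'}{T}\Bigr|^{\xi_2}.
\]
Since $d(x,x')/(2R)\le 1$ and $|t-t'|/T\le1$, $\mathcal S$ has $\rho$-diameter at most $2$. By Lemma~\ref{lem:product metric entropy}, applied to the rescaled spaces $(B_R(x_0),d/2R)$ and $([0,T],|\cdot|/T)$ with hypothesis (1), we get $D(\mathcal S,\rho;\varepsilon)\lesssim \varepsilon^{-b'}$ with $b'=\beta_0/\xi_1+1/\xi_2$. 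The constant here depends on $R,\beta_0,\xi_i$, but not on $T$, $B$ or $\sigma$, after the time rescaling.

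\emph{Step 1: sub-Gaussian increments.} Fix two points $z=(t,x)$ and $z'=(t',x')$. The increment $N(z)-N(z')$ is the terminal value of the continuous martingale $M_r=\int_0^r\!\int (g_{z}-g_{z'})\sigma\,\mathrm dW$. By Lemma~\ref{lem:HS} and hypothesis (3), its quadratic variation satisfies
\[
\langle M\rangle_\infty\le \|Q\|\int_0^\infty\bigl\||g_z-g_{z'}|\eta\bigr\|^2_{\mathcal H_K}\mathrm ds\le B\rho(z,z')
\]
deterministically. For the $\sigma$-bound we use that $\|\cdot\|_{\mathcal H_K}$ with $K\ge0$ is monotone in $|f|$; if $K$ is only non-negative definite, this is where we instead pass to $\eta$ via the absolute-value form of the norm. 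Finiteness of $M$ comes from the $(t_*,x_*)$ hypothesis together with $B<\infty$. By the exponential martingale inequality (Bernstein),
\[
\bbP\bigl(|N(z)-N(z')|>u\bigr)\le 2\exp\bigl(-u^2/(2B\rho(z,z'))\bigr).
\]
This is exactly the input of Gaussian chaining. In particular all moments satisfy $\|N(z)-N(z')\|_{L^p}\lesssim\sqrt{pB\rho}$, and the Kolmogorov criterion of \cite{kratschmer2022kolmogorov} (using \eqref{ineq:entropy bound}) gives a jointly continuous modification $\widetilde N$. If $B=0$, all increments vanish a.s., and the claim is trivial.

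\emph{Step 2: chaining.} For $n\ge1$, choose $2^{-n}$-nets $\mathcal S_n$ of $(\mathcal S,\rho)$ with $|\mathcal S_n|\lesssim 2^{nb'}$, and projections $\pi_n$. Write $\widetilde N(z)-\widetilde N(\pi_1 z)=\sum_{n\ge1}(\widetilde N(\pi_{n+1}z)-\widetilde N(\pi_nz))$ on the dense set $\bigcup_n\mathcal S_n$, then extend by continuity. Each link has $\rho\le 2\cdot2^{-n}$. Allocate the threshold $\lambda_n:=\lambda\,\sqrt n\,2^{-n/2}/(\sum_k\sqrt k2^{-k/2})$ to level $n$; the factor $4$ in $c$ absorbs the factor $2$ from comparing two points $z,z'$ and the factor $2$ in the link lengths. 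A union bound over at most $|\mathcal S_n||\mathcal S_{n+1}|\lesssim 2^{(2n+1)b'}$ pairs gives
\[
\bbP(\text{fail at level } n)\lesssim 2^{(2n+1)b'}\exp\bigl(-\lambda^2 n/(2c^2B)\cdot 4\bigr)
\]
up to the normalization. Since the exponent is linear in $n$, for $\lambda^2/B$ larger than a fixed constant the sum is dominated by its first term, and we obtain $\lesssim\exp(-\lambda^2/(2c^2B))$. For small $\lambda^2/B$ the bound is trivial after enlarging the implicit constant, because the right side is then bounded below.

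\emph{Main obstacle.} The delicate part is the bookkeeping in Step 2. We must make the weights $\sqrt n2^{-n/2}$ produce exactly the constant $c$ in the exponent, with no lossy prefactor in front of $\lambda^2$. This is needed so that the implicit constant, which absorbs the entropy factors $2^{nb'}$, stays independent of $T$, $B$ and $\sigma$. We would follow \cite[Theorem 4.2]{dalang2009some} line by line, checking that the extra $n$ in the exponent beats the entropy growth $2^{2nb'}$ uniformly once $\lambda^2/B\gtrsim b'$.
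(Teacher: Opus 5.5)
Your proposal is correct and follows essentially the paper's route. You derive a sub-Gaussian increment tail from the deterministic bound $\langle M\rangle_\infty\le B\rho(z,z')$, using Bernstein's exponential martingale inequality where the paper uses Dambis--Dubins--Schwarz and the reflection principle; you then chain over $2^{-n}$-nets of the product metric with thresholds proportional to $\sqrt n\,2^{-n/2}$, take a union bound against the entropy from Lemma~\ref{lem:product metric entropy}, use the trivial bound for small $\lambda^2/B$, and get the modification from a Kolmogorov-type theorem.

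The one bookkeeping point to fix is that your ``factor $2$ from comparing two points'' needs a common root for the chain, whereas in general $\pi_1 z\neq\pi_1 z'$. The paper handles this by working with $\rho/2$, which has diameter at most $1$, and a singleton level $\mathcal T_0=\{t_0\}$; adding such a root level in your setup also works. With either fix the level-$n$ exponent is still at least $n\lambda^2/(2c^2B)$, which is all the summation against the entropy needs.
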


	We will need the following lemma.
	\begin{lemma}\label{lem:mod_cont_sconv}
		Under the assumptions and notations of Proposition \ref{prop:MLDP}, for all \(t,t'\in[0,T]\), \(x,x'\in B_R(x_0)\), and \(\lambda>0\),
		\begin{align}\label{ineq:s_cont1}
			\mathbb{P} \left(\left|N(t,x) - N(t',x') \right| > \sqrt{B}\lambda \right) &\le 4 \exp\left(\frac{ -\,\lambda^2}{2\left( \left(\frac{d(x,x')}{2R} \right)^{\xi_1}+  \left| \frac{t-t'}{T}\right|^{\xi_2}\right)}\right).
		\end{align}
	\end{lemma}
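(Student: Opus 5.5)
The difference $N(t,x)-N(t',x')$ is a stochastic integral of the predictable integrand $\Delta(s,y)\sigma(s,y)$, where $\Delta:=g_{t,x}-g_{t',x'}$. I will write
\[
M_r:=\iint_0^r \Delta(s,y)\sigma(s,y)\,W(\mathrm ds,\mathrm dy),
\]
which is a continuous local martingale. The plan is to bound its quadratic variation deterministically and then apply the exponential martingale inequality.

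\emph{Step 1: deterministic bound on the bracket.} By Lemma~\ref{lem:HS} and the quadratic-variation formula following \eqref{E:StocIntegral},
\[
[M]_\infty\le \|Q\|\int_0^\infty\norm{\Delta(s,\cdot)\sigma(s,\cdot)}_{\mathcal H_K}^2\,\mathrm ds .
\]
Here one must be careful, because $\norm{\cdot}_{\mathcal H_K}$ is not monotone in absolute value for a general signed $K$. However, $K$ is a non-negative measure, so
\[
\norm{\phi}_{\mathcal H_K}^2\le \iint|\phi(x)\phi(y)|\,K(\mathrm dx,\mathrm dy)=\norm{|\phi|}_{\mathcal H_K}^2 .
\]
The right-hand side is monotone in $|\phi|$. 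Using $|\sigma|\le\eta$, this gives
\[
[M]_\infty\le \|Q\|\int_0^\infty\norm{|\Delta|\eta}_{\mathcal H_K}^2\,\mathrm ds\le B\rho,
\qquad \rho:=\Bigl(\tfrac{d(x,x')}{2R}\Bigr)^{\xi_1}+\Bigl|\tfrac{t-t'}{T}\Bigr|^{\xi_2},
\]
by the definition \eqref{ineq:ldp local holder type condition} of $B$. Integrability holds because the finiteness at $(t_*,x_*)$ together with the bound on $B$ gives $\int_0^\infty\norm{|g_{t,x}|\eta}^2_{\mathcal H_K}\,\mathrm ds<\infty$ for every $(t,x)$, via the triangle inequality in the seminorm $\norm{|\cdot|}_{\mathcal H_K}$. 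Hence $N(t,x)$ is well defined and equals the terminal value $M_\infty$, an $L^2$-bounded martingale limit.

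\emph{Step 2: exponential inequality.} For a continuous local martingale with $[M]_\infty\le a$ almost surely, Bernstein's inequality (equivalently, Dambis--Dubins--Schwarz together with the reflection principle) gives
\[
\mathbb P\bigl(\sup_r |M_r|>z\bigr)\le 2e^{-z^2/(2a)} .
\]
Applying this with $z=\sqrt B\lambda$ and $a=B\rho$ yields the bound $2\exp(-\lambda^2/(2\rho))$, which is at most the stated $4\exp(\cdot)$. The factor $4$ leaves room to handle the positive and negative parts of $\sigma$ separately, should one prefer to split $\sigma=\sigma^+-\sigma^-$ and apply the inequality to each part with threshold $\sqrt B\lambda/2$. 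The direct route above avoids that split, so the constant $2$ already suffices.

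\emph{Degenerate cases.} If $\rho=0$, the bracket vanishes and so $M\equiv0$, making the probability zero. If $B=0$, the same argument again gives $M\equiv0$.

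The only delicate point is the monotonicity in Step 1, namely passing from $\Delta\sigma$ to $|\Delta|\eta$ inside the $\mathcal H_K$-norm. This relies on the non-negativity of the measure $K$, which is why the definition of $B$ uses the absolute value $|g_{t,x}-g_{t',x'}|$.
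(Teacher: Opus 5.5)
Your proof is correct and follows the paper's route: you bound the bracket of the difference martingale deterministically by $B\rho$ via Lemma~\ref{lem:HS} and $|\sigma|\le\eta$, pass to the terminal value, and apply a Gaussian exponential bound. Your Bernstein inequality gives the sharper constant $2$, where the paper uses Dambis--Dubins--Schwarz, the reflection principle and the cruder tail $2e^{-\lambda^2/(2\nu)}$ to reach $4$; note also the small slip in Step~1, where it is $N(t,x)-N(t',x')$, not $N(t,x)$, that equals $M_\infty$.
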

	\begin{proof}
		If $B=0$, then the stochastic-integral isometry, Lemma \ref{lem:HS}, \eqref{ineq:ldp local holder type condition}, and $|\sigma|\le\eta$ imply that $N(t,x)-N(t',x')=0$ in $L^2(\Omega)$ for every $(t,x),(t',x')\in[0,T]\times B_R(x_0)$, so \eqref{ineq:s_cont1} is immediate. Hence, we may assume $B>0$.
		
		For $t,t'\in[0,T]$, $x,x'\in B_R(x_0)$, and $\tau\ge0$, define the difference martingale
		\begin{align*}
			M_{(t,t'),(x,x')}(\tau)
			:=\int_0^\infty\int_\X \mathbf 1_{[0,\tau]}(s)
			\big(g_{t,x}(s,y)-g_{t',x'}(s,y)\big)\sigma(s,y)
			W(\mathrm ds,\mathrm dy).
		\end{align*}
		Then $\big(M_{(t,t'),(x,x')}(\tau)\big)_{\tau\ge0}$ is a continuous martingale. By  Lemma \ref{lem:HS}, it holds almost surely for every $\tau\ge0$ that
		\begin{align}
			\left[ M_{(t,t'),(x,x')} \right]_\tau &= \int_0^\tau \norm{Q^{1/2}\left[\left(g_{t,x}(s,\cdot) - g_{t',x'}(s,\cdot)\right) \sigma(s,\cdot)\right]_K}_{\tilde{\mathcal H}_{K}}^2 \mathrm ds\\
			&\,\,\le \|Q\|\int_0^\tau \left\| \left(g_{t,x}(s,\cdot) - g_{t',x'}(s,\cdot)\right)\sigma(s,\cdot)\right\|_{\mathcal H_K}^2 \mathrm ds\\
			&\,\,\le \|Q\|\int_0^\tau \left\| \left|g_{t,x}(s,\cdot) - g_{t',x'}(s,\cdot)\right| \eta(s,\cdot)\right\|_{\mathcal H_K}^2 \mathrm ds\\
			&\overset{\eqref{ineq:ldp local holder type condition}}{\le} \left( \left(\frac{d(x,x')}{2R} \right)^{\xi_1}+  \left| \frac{t-t'}{T}\right|^{\xi_2}\right) B.
		\end{align}
		The last bound shows that this martingale has finite energy and is bounded in $L^2(\Omega)$. Hence, by the martingale convergence theorem and the stochastic-integral isometry,
		\begin{align*}
			M_{(t,t'),(x,x')}(\tau)
			\longrightarrow M_{(t,t'),(x,x')}(\infty)
			=N(t,x)-N(t',x')
		\end{align*}
		almost surely and in $L^2(\Omega)$ as $\tau\to\infty$. By the Dambis--Dubins--Schwarz theorem \cite[Chapter~V, Theorem~(1.7)]{revuz1999continuous}, after enlarging the probability space if necessary, there is a Brownian motion $(\mathcal B_r)_{r\ge0}$ such that
		\(M_{(t,t'),(x,x')}(\tau)=\mathcal B_{[M_{(t,t'),(x,x')}]_\tau}\) for every $\tau\ge0$. Therefore, martingale convergence and the reflection principle give
		\begin{align}
			\mathbb{P}\left(\left|N(t,x)-N(t',x')\right| > \sqrt{B}\lambda\right)
			&=\mathbb{P}\left(\left|M_{(t,t'),(x,x')}(\infty)\right| > \sqrt{B}\lambda\right)\\
			&\le \mathbb{P}\left(\sup_{0 \le r \le  \left( \left(\frac{d(x,x')}{2R} \right)^{\xi_1}+  \left| \frac{t-t'}{T}\right|^{\xi_2}\right) B} |\mathcal B_r| > \sqrt{B}\lambda\right)\\
			&\le 4 \exp\left(\frac{ -\,\lambda^2}{2\left( \left(\frac{d(x,x')}{2R} \right)^{\xi_1}+  \left| \frac{t-t'}{T}\right|^{\xi_2}\right)}\right).
		\end{align}
		Where we used the standard Gaussian tail bound: if $X$ is a centered Gaussian random variable with variance $\nu>0$, then
		\begin{align}
			\bbP \left(|X|> \lambda \right) \le 2 e^{- \frac{\lambda^2}{2\nu}}.
		\end{align}
	\end{proof}
	
	\begin{proof}[Proof of Proposition \ref{prop:MLDP}]
		If $B=0$, then the same zero-increment argument shows that $\widetilde N(t,x):=N(t_*,x_*)$ is a jointly continuous modification of $N$ on $[0,T]\times B_R(x_0)$. Consequently, the probability in \eqref{ineq:MLDP_local} is zero for every $\lambda>0$, and the conclusion follows from the convention above. Hence, suppose $B>0$ for the remainder of the proof.
		
		Since $\xi_1,\xi_2 \in (0,1]$, we may define the set ${\mathcal T}:= [0,T] \times B_R(x_0)$ and the new metric $d_{\mathcal T}$ on $\mathcal T$ by 
		\begin{align}
			d_{\mathcal T}((s,x),(s',x')):= \frac{1}{2} \left(\left(\frac{d(x,x')}{2R} \right)^{\xi_1}+ \left|\frac{s-s'}{T} \right|^{\xi_2}\right),\quad (s,x),\, (s',x') \in \mathcal T.
		\end{align}
		Then $({\mathcal T}, d_\mathcal T)$ is a metric space with diameter  at most $1$.   Recall $D(\mathcal T,d_\mathcal T; \varepsilon)$ defined right before Lemma \ref{lem:product metric entropy}, by Lemma \ref{lem:product metric entropy}, we see for  $\beta = \frac{1}{\xi_2} + \frac{\beta_0}{\xi_1}$, there is a positive constant $\C\label{c:lld_tem1} = \Cr{c:lld_tem1}>0$, depending on the covering constant in assumption~(1) and on $R,\beta_0,\xi_1,\xi_2$, so that 
		\begin{align}\label{ineq:comb_entro}
			D({\mathcal T},d_{\mathcal T}; \varepsilon) \le \Cr{c:lld_tem1} \varepsilon^{-\beta},\quad  \text{ uniformly in }0<\varepsilon  <1.
		\end{align}
		Next, we construct a countable dense sequence of points in $\mathcal T$.
		Denote $B_{\mathcal T}(t,r)$ as the open ball in ${\mathcal T}$ centered at $t \in {\mathcal T}$ with radius $r>0$ under the metric $d_{\mathcal T}$.
		Pick any $t_0 \in {\mathcal T}$ and let ${\mathcal T}_0 = \{t_0\}$.
		For $n \ge 1$, we can find  ${\mathcal T}_n := \{x_i\}_{i \in I_n}$ with $|I_n| \le \Cr{c:lld_tem1} 2^{\beta n}$, where $|I_n|$ denotes the cardinality of $I_n$,  and
		\begin{align}
			{\mathcal T} \subset \bigcup_{i \in I_n} B_{\mathcal T}\left(x_i,\frac{1}{2^{n}}\right).
		\end{align}
		Finally, we let ${\mathcal T}_\infty := \bigcup_{n \ge 0} {\mathcal T}_n$ which is dense in ${\mathcal T}$. 
		
		Now we define the chaining map $\pi_n : {\mathcal T}_{n} \to {\mathcal T}_{n-1}$ for $n \ge 1$ that ‘chains' the points that are close together. By construction, for any $n \ge 1$ and $t \in {\mathcal T}_n$, there is $t_{n-1} \in {\mathcal T}_{n-1}$ so that $d_{\mathcal T}(t,t_{n-1}) \le 2^{-n+1}.$ For each $t \in {\mathcal T}_n$, we choose exactly one such $t_{n-1} \in {\mathcal T}_{n-1}$ and define $\pi_n(t):= t_{n-1}$ and  set $\pi_0(t_0) = t_0$.
		
		For $\rho > 2^{2\beta }$, we define the  events
		$$
		A_\rho := \bigcap_{n \ge 1} \left\{\left|N(t) - N(\pi_n(t)) \right| \le c_n(\rho),\quad \text{ for all } t \in {\mathcal T}_n\right\}
		$$
        where $c_n(\rho) = 2\sqrt{Bn\ln(\rho)}\,2^{-n/2}$.
		Observe that on the event $A_\rho$, for any $n \ge 1$ and $t \in {\mathcal T}_n$, where $\pi_{n+1}\circ \dots \circ \pi_n$ denotes the identity map,
			\begin{align}
				\left|N(t) - N(t_0)\right| &\le \sum_{i = 1}^n \left|N\left(\pi_{i+1}\circ \dots \circ \pi_n(t)\right) - N\left(\pi_i \circ \dots \circ \pi_n(t) \right) \right|\\
				&\le 2\sqrt{B \ln (\rho)} \sum_{i = 1}^\infty  \sqrt{i2^{-i}},
			\end{align}
		and hence  
		\begin{align}\label{ineq: sup distance a rho}
			\sup_{t,t' \in {\mathcal T}_\infty}\left|N(t) - N(t') \right| \le 4\sqrt{B\ln (\rho)} \sum_{n \ge 1}\sqrt{n}2^{-n/2}.
		\end{align}
		Then by Lemma \ref{lem:mod_cont_sconv},
		\begin{align}
			\bbP \left( A^c_\rho\right) &\,\,\,\le \sum_{n = 1}^\infty \sum_{t \in {\mathcal T}_n} \bbP \left(\left|N(t) - N(\pi_n(t)) \right| > c_n(\rho)\right)\\
			&\,\,\,\le 4 \sum_{n = 1}^\infty \sum_{t \in {\mathcal T}_n} \exp\left(-  \frac{c_n(\rho)^2}{4d_{\mathcal T}(t,\pi_n(t)) B} \right)\\
			&\overset{\eqref{ineq:comb_entro}}{\le} 4\Cr{c:lld_tem1}\sum_{n = 1}^\infty 2^{\beta n} \exp \left(-  n \ln(\rho)/2 \right)\\
			&\,\,\,= 4 \Cr{c:lld_tem1} \sum_{n = 1}^\infty\left( 2^\beta  \rho^{-\frac{1}{2}}\right)^n\\
			&\,\,\,= \frac{4 \Cr{c:lld_tem1} 2^\beta }{ \rho^{\frac{1}{2}}- 2^\beta }.
		\end{align}
		
		Fix $\lambda>0$ and set $\rho:=\exp(\lambda^2)$. Let $c:=  4\sum_{n = 1}^\infty  \sqrt{n}2^{-n/2} $. If $\rho > 2^{2\beta +2}$, then by \eqref{ineq: sup distance a rho},
		\begin{align}\label{ineq:mldp_last}
			\bbP \left(\sup_{t,t' \in {\mathcal T}_\infty}\left|N(t) - N(t') \right| > c \lambda \sqrt{B} \right) \le \bbP \left( A_\rho^c\right) \le  \frac{4 \Cr{c:lld_tem1} 2^\beta }{ \rho^{\frac{1}{2}}- 2^\beta } \le \frac{8 \Cr{c:lld_tem1} 2^\beta}{\rho^{\frac{1}{2}}} = \frac{8 \Cr{c:lld_tem1} 2^{\beta}}{\exp\left(\lambda^2/2\right)}.
		\end{align}
		If $\rho  \in \left[1, 2^{2\beta+2}\right]$, we see $\exp \left(-\frac{\lambda^2}{2} \right)\ge 2^{-\beta -1}$.
		Hence $8 \max\left\{\Cr{c:lld_tem1},1 \right\}2^\beta \exp\left(- \lambda^2/2 \right) \ge 1$.
		So we get for all $\lambda >0$
		\begin{align}
			\bbP \left(\sup_{t,t' \in {\mathcal T}_\infty}\left|N(t) - N(t') \right| > c \lambda \sqrt{B} \right) \le \frac{8\max\left\{ \Cr{c:lld_tem1} ,1\right\} 2^{\beta}}{\exp\left(\lambda^2 / 2 \right)}.
		\end{align}
			Finally, integrating \eqref{ineq:s_cont1} yields, for every $p\ge1$ and $z,z'\in\mathcal T$,
			\[
				\mathbb E\left[|N(z)-N(z')|^p\right]\le C_p B^{p/2}d_{\mathcal T}(z,z')^{p/2}.
			\]
			Choosing $p>2\beta$, \cite[Theorem 2.3.1]{khoshnevisan2002multiparameter} gives a continuous modification. Since ${\mathcal T}_\infty$ is countable, on a common full-measure event the modification agrees with $N$ on ${\mathcal T}_\infty$, and by density and continuity the preceding dense-set supremum bound transfers to the supremum over all of ${\mathcal T}$, yielding the desired inequality.
	\end{proof}
	Next, we will extend Proposition \ref{prop:MLDP} globally to the whole space. The following elementary inequality makes the resulting weighted tail estimates summable over the spatial covering in \eqref{ineq:faster_than_ms}.
	\begin{lemma}\label{lem:exp_ineq_elementary}
		Suppose $y >0$ and $x \ge 1$, then
		$\exp\left( - yx\right) \le  \frac{\exp\left( - y/2\right)}{xy}$.
	\end{lemma}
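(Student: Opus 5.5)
The inequality $\exp(-yx)\le \exp(-y/2)/(xy)$ for $y>0$, $x\ge 1$ is elementary, so the plan is to split the exponential and use the bound $e^{-z}\le 1/z$ for $z>0$.

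\emph{Step 1: split the exponent.} Since $x\ge1$, we have $yx - y/2 \ge yx/2$. Hence
\[
\exp(-yx)=\exp(-y/2)\exp\bigl(-(yx-y/2)\bigr)\le \exp(-y/2)\exp(-yx/2).
\]

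\emph{Step 2: bound the remaining factor.} It then suffices to show $\exp(-yx/2)\le 1/(xy)$, i.e.\ $z e^{-z/2}\le 1$ with $z:=xy>0$. The function $z\mapsto z e^{-z/2}$ attains its maximum $2/e<1$ at $z=2$, which proves the claim.

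If one prefers to avoid calculus, an alternative for Step 2 is the inequality $e^{w}\ge 1+w\ge w$, applied with $w=z/2$. This gives $e^{-z/2}\le 2/z$, which loses a factor of $2$. That constant is harmless for the intended application, since it only produces summable weighted tail bounds up to $\lesssim$. The sharper form $\max_z z e^{-z/2}=2/e$ gives exactly the stated constant $1$, so I would use it.

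There is no real obstacle here. The only point requiring care is to keep a full $\exp(-y/2)$ factor while still retaining enough decay in $xy$. This is exactly what the split $yx=y/2+(yx-y/2)$ together with $x\ge1$ provides.
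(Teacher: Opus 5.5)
Your proof is correct, and it takes a slightly different route from the paper's.

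You use the hypothesis $x\ge1$ up front to discard the slack $y(x-1)/2$, writing $e^{-xy}\le e^{-y/2}e^{-xy/2}$. This reduces everything to the one-variable bound $\sup_{z>0} z e^{-z/2}=2/e<1$ with $z=xy$.

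The paper instead keeps both variables. It first proves $xy\,e^{-(x-1)y}<1$ for $x\ge2$ by maximizing $f_x(y)=\ln(xy)-y(x-1)$ over $y$ at fixed $x$. The maximum is $\ln\frac{x}{x-1}-1\le \ln 2-1<0$, attained at $y=1/(x-1)$. It then substitutes $(x,y)\mapsto(2x,y/2)$ to reach the range $x\ge1$ and divides by $xy$.

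The paper's version computes the exact supremum in $y$ for each $x$, but immediately discards that extra precision by bounding it uniformly by $\ln 2-1$. So both arguments end with the same constant $2/e$. Your version is shorter, needs no rescaling step, and makes clear that the only real input is a single-variable fact.

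For your calculus-free alternative, the tangent-line inequality $e^{w}\ge e\,w$ (tangent to $e^w$ at $w=1$) with $w=z/2$ gives $z e^{-z/2}\le 2/e$ directly. This recovers the sharp constant, whereas $e^{w}\ge 1+w$ loses the factor $2$.
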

	The key estimate we will be using is the following,  similar to that of \cite{Zbl074960057}, which will allow us to have tail probability estimates of stochastic integrals globally.
	
	\begin{lemma}\label{lem:mldp_global}
		Suppose the conditions of Proposition \ref{prop:MLDP} hold, $(\X,d)$ satisfies \eqref{ineq:entropy bound} and  $h:\X \to [1,\infty)$ is a weight function.
		Let  $\lambda_0>0$ be as in \eqref{ineq:faster_than_ms} and $\lambda > 0$.
			Assume there is $\alpha > 0$ so that for each $T>0$,  the following inequality holds uniformly in $  (t,x) \in (0,T]\times \X$,
			\begin{align}\label{ineq:var_bound_h}
				\sup_{s,s' \in [0,t],\, y,y' \in B_1(x)} \frac{\int_0^\infty \left\| \left|g_{s,y}(\tau,\cdot) - g_{s',y'}(\tau,\cdot)\right| \eta(\tau,\cdot)\right\|_{\mathcal H_K}^2 \mathrm d\tau}{\left(\frac{d(y,y')}{2}\right)^{\xi_1} + \left|\frac{s-s'}{t}\right|^{\xi_2}}
				\lesssim h(x)^{2\lambda - \lambda_0}t^\alpha.
			\end{align}
			Let $N(t,x)$ be as in \eqref{eq:s_conv}, and assume that the continuous modification provided by Proposition \ref{prop:MLDP} satisfies $N(0,\cdot)\equiv0$ almost surely. Then, for each $T>0$, there exists $ \C\label{c:gld_4} = \Cr{c:gld_4}(T,\lambda) > 0$ so that uniformly for $t \in (0,T]$ and $\varepsilon \in (0,1)$,
			\begin{align}
				\bbP \left(\sup_{(s,x) \in [0,t]\times \X} |N(s,x)|h(x)^{-\lambda} > \varepsilon \right) \lesssim  \varepsilon^{-2} t^\alpha \exp\left( - \frac{\Cr{c:gld_4}}{2t^\alpha}\varepsilon^2\right).
			\end{align}
			For $t=0$, the left hand side is zero, so the estimate is trivial.
	\end{lemma}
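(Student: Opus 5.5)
The strategy is to localize with the covering $(x_n)_{n\in\mathcal I}$ from \eqref{ineq:faster_than_ms}, apply Proposition \ref{prop:MLDP} on each unit ball $B_1(x_n)$, and sum the resulting tail bounds; Lemma \ref{lem:exp_ineq_elementary} makes the sum converge.

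Fix $T>0$, $t\in(0,T]$ and $\varepsilon\in(0,1)$. Since $\X=\bigcup_n B_1(x_n)$, we have
\[
\bbP\Bigl(\sup_{(s,x)\in[0,t]\times\X}|N(s,x)|h(x)^{-\lambda}>\varepsilon\Bigr)\le\sum_{n\in\mathcal I}\bbP\Bigl(\sup_{(s,x)\in[0,t]\times B_1(x_n)}|N(s,x)|h(x)^{-\lambda}>\varepsilon\Bigr).
\]
By \eqref{ineq:ref_regular}, $h(x)\gtrsim h(x_n)$ for $x\in B_1(x_n)$. Hence the $n$-th event is contained in $\{\sup_{[0,t]\times B_1(x_n)}|N(s,x)|>c_0\varepsilon h(x_n)^{\lambda}\}$ for a constant $c_0>0$ depending only on $h$ and $\lambda$. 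Since $N(0,\cdot)\equiv0$, we have $|N(s,x)|=|N(s,x)-N(0,x_n)|$, so this supremum is bounded by the oscillation of $N$ over $[0,t]\times B_1(x_n)$.

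Apply Proposition \ref{prop:MLDP} with $x_0=x_n$, $R=1$ and time horizon $t$ in place of $T$. Assumption (1) holds by \eqref{ineq:entropy bound} with constants uniform in $n$. The normalization in \eqref{ineq:ldp local holder type condition} with $R=1$ is exactly the denominator in \eqref{ineq:var_bound_h}, so \eqref{ineq:var_bound_h} gives $B_n\lesssim h(x_n)^{2\lambda-\lambda_0}t^{\alpha}$ uniformly in $n$ and $t\le T$. The implicit constant in \eqref{ineq:MLDP_local} is independent of $T$, $B$ and $\sigma$, and depends only on the covering constants, which are uniform in $n$. The one point to check is that the $n$-dependence really enters only through $B_n$; the uniformity in \eqref{ineq:entropy bound} for fixed $R=1$ provides this. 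We therefore obtain
\[
\bbP(\cdot)\lesssim\exp\Bigl(-\frac{c_0^2\varepsilon^2h(x_n)^{2\lambda}}{2c^2B_n}\Bigr)\le\exp\Bigl(-\frac{C\varepsilon^2}{t^{\alpha}}h(x_n)^{\lambda_0}\Bigr)
\]
for a constant $C=C(T,\lambda)>0$.

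Finally, apply Lemma \ref{lem:exp_ineq_elementary} with $y=C\varepsilon^2t^{-\alpha}$ and $x=h(x_n)^{\lambda_0}\ge1$. Each term is then at most $\varepsilon^{-2}t^{\alpha}C^{-1}h(x_n)^{-\lambda_0}\exp(-C\varepsilon^2/(2t^\alpha))$. Summing over $n$ and using $\sum_n h(x_n)^{-\lambda_0}<\infty$ from \eqref{ineq:faster_than_ms} yields the claim with $\Cr{c:gld_4}=C$. The main obstacle is the uniformity in $n$ of the local estimate, which the argument above addresses. A minor point is that the continuous modification on each ball must be the same global continuous version, so that the supremum is meaningful. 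This follows because modifications that are continuous and agree on a countable dense set coincide, and $N(0,\cdot)\equiv0$ on that version by hypothesis.
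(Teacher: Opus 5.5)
Your proposal is correct and follows essentially the same route as the paper: cover by the unit balls from \eqref{ineq:faster_than_ms}, use \eqref{ineq:ref_regular} and $N(0,\cdot)\equiv0$ to reduce each term to an oscillation bound, apply Proposition~\ref{prop:MLDP} with $R=1$ on $[0,t]$, and then turn the factor $h(x_n)^{\lambda_0}$ in the exponent into the summable prefactor $h(x_n)^{-\lambda_0}$ via Lemma~\ref{lem:exp_ineq_elementary}. Your explicit remarks on why the implicit constant is uniform in $n$ (through the uniformity of \eqref{ineq:entropy bound}) and on patching the local continuous versions into one global version are points the paper's proof leaves implicit.
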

	\begin{proof}
		We see from condition \eqref{ineq:ref_regular} that for each $\lambda  \ge 0$, there is $C_\lambda>0$ so that for all $x \in \X$,
		\begin{align}\label{ineq:w_ball_reg}
			C_\lambda h(x)^{-\lambda} \ge \sup_{z \in B_1(x)} h(z)^{-\lambda} \ge h(y)^{-\lambda},\quad \text{ for all } y \in B_1(x).
		\end{align}
			Let $\lambda, \lambda_0$ be as in \eqref{ineq:var_bound_h}. By Proposition \ref{prop:MLDP}, applied on the time interval $[0,t]$, with $R=1$ and $x_0=x$, there is $\Cr{c:gld_4} = \Cr{c:gld_4}(\lambda,T)>0$ so that, uniformly in $\varepsilon \in (0,1)$ and $(t,x) \in (0,T] \times \X$,
			\begin{align}
				\bbP \left(\sup_{(s,y) \in [0,t] \times B_1(x)} |N(s,y)|h(y)^{-\lambda } > \varepsilon \right)&\overset{\eqref{ineq:w_ball_reg}}{\le} \bbP \left( \sup_{(s,y) \in [0,t] \times B_1(x)} |N(s,y)| \cdot C_\lambda h(x)^{-\lambda}> \varepsilon \right) \\
				&\,\,\,= \bbP \left(\sup_{(s,y) \in [0,t] \times B_1(x)} |N(s,y)| > C_\lambda^{-1}h(x)^\lambda\varepsilon\right)\\
				&\,\,\,\le \bbP \left(\sup_{\substack{s,s'\in[0,t]\\y,y'\in B_1(x)}} |N(s,y)-N(s',y')| > C_\lambda^{-1}h(x)^\lambda\varepsilon\right)\\
				&\overset{\eqref{ineq:MLDP_local}}{\lesssim} \exp \left(- \Cr{c:gld_4} \frac{h(x)^{\lambda_0}}{t^\alpha}\varepsilon^2\right)\\
				& \,\,\,\lesssim \varepsilon^{-2} t^\alpha \exp\left( - \frac{\Cr{c:gld_4}}{2t^\alpha}\varepsilon^2\right)h(x)^{-\lambda_0},
			\end{align}
			where the assumption $N(0,y) = 0$ for all $y \in \X$ was used in the third line by taking $(s',y')=(0,y)$; in the last inequality we used Lemma \ref{lem:exp_ineq_elementary}, absorbed constants into the implicit constant, and used the fact $h(x) \ge 1$ for all $x \in \X$.
		Now let $\{x_n\}_{n \in \mathcal I}\subset \X$ so that  \eqref{ineq:faster_than_ms} holds.
		Then we see uniformly in $t \in [0,T]$ and $\varepsilon \in (0,1)$
		\begin{align}
			\bbP \left(\sup_{(s,x) \in   [0,t]\times \X}|N(s,x)|h(x)^{-\lambda} > \varepsilon \right) &= \bbP \left(\bigcup_{n \in \mathcal I} \left\{\sup_{(s,x) \in [0,t] \times B_1(x_n)}|N(s,x)|h(x)^{-\lambda} >\varepsilon  \right\}\right)\\
			&\le \sum_{n \in \mathcal I} \bbP \left(\sup_{(s,x) \in [0,t] \times B_1(x_n)}|N(s,x)|h(x)^{-\lambda} >\varepsilon  \right)\\
			&\lesssim \varepsilon^{-2} t^\alpha \sum_{n \in \mathcal I} \exp\left( - \frac{\Cr{c:gld_4}}{2t^\alpha}\varepsilon^2\right)h(x_n)^{-\lambda_0}\\
			&\lesssim \varepsilon^{-2}t^\alpha \exp \left( - \frac{\Cr{c:gld_4}}{2t^\alpha} \varepsilon^2 \right).
		\end{align}
	\end{proof}

	\begin{corollary}\label{lem:m_ldp_g}
		Under the assumptions of Theorem  \ref{thm:main2} with $\lambda_0>0$ as in \eqref{ineq:faster_than_ms}{, let} $\alpha \in (0,1)$, $\lambda>0$ and $\eta:\R_+ \times \X \to \R_+$ be a deterministic measurable function. Suppose there exist $\xi_1,\xi_2\in(0,1]$ so that, for each $T>0$, the following inequality holds uniformly in $(t,x) \in (0,T]\times \X$:
		\begin{align}\label{ineq:var_bound_h_k}
			\sup_{s,s' \in [0,t],\, y,y' \in B_1(x)} \frac{\int_0^{s\vee s'} \left\|\left|G_{s - \tau}(\cdot,y) - G_{s' - \tau}(\cdot,y')\right|\eta(\tau,\cdot)\right\|_{\mathcal H_K}^2 \mathrm d\tau}{d(y,y')^{\xi_1} + \left|s-s'\right|^{\xi_2}}
			\lesssim h(x)^{2\lambda - \lambda_0}t^{1-\alpha}.
		\end{align}
		 Let $T>0$ and suppose $(f(t,x),\, (t,x) \in \R_+ \times \X)$ is a predictable random field so that   $|f(t,x)| \le \eta(t,x)$ almost surely for all $(t,x) \in [0,T] \times \X$. If we let 
		\begin{align}
			N(t,x):= \int_0^t \int_\X G_{t-s}(y,x)f(s,y)W(\mathrm dy\,\mathrm ds),\quad (t,x) \in \R_+ \times \X,
		\end{align}
		then  there exists $ \C\label{c:gld_3} = \Cr{c:gld_3}(T,\lambda) > 0$ so that 
		\begin{align}
			\bbP \left(\sup_{(s,x) \in [0,t]\times \X} |N(s,x)|h(x) ^{-\lambda}> \varepsilon \right) \lesssim \varepsilon^{-2} t^{1-\alpha} \exp\left( - \frac{\Cr{c:gld_3}}{2t^{1-\alpha}}\varepsilon^2\right),
		\end{align}
		uniformly in $t\in (0,T],\, \varepsilon\in (0,1)$.
	\end{corollary}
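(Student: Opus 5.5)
The plan is to reduce Corollary~\ref{lem:m_ldp_g} directly to Lemma~\ref{lem:mldp_global}. Set $g(t,s,x,y):=G_{t-s}(y,x)$, which vanishes for $s\ge t$ by \eqref{eq:GC}; then $N(t,x)=\int_0^\infty\int_\X g(t,s,x,y)f(s,y)W(\mathrm ds,\mathrm dy)$ has the form \eqref{eq:s_conv} with $\sigma:=f$ and the same deterministic majorant $\eta$. Since the $\tau$-integrand in \eqref{ineq:var_bound_h} vanishes for $\tau\ge s\vee s'$, that integral equals the numerator in \eqref{ineq:var_bound_h_k}. The work is therefore to check the hypotheses of Proposition~\ref{prop:MLDP} on each unit ball, to rewrite \eqref{ineq:var_bound_h_k} in the normalized form \eqref{ineq:var_bound_h} with exponent $1-\alpha$ in place of $\alpha$, and to check that $N(0,\cdot)\equiv 0$.

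\emph{Step 1: the hypotheses of Proposition~\ref{prop:MLDP}.} Fix $T>0$, $x\in\X$ and $t\in(0,T]$, and take $R=1$, $x_0=x$ and time horizon $t$.
\begin{itemize}
\item[(a)] Hypothesis (1) follows from \eqref{ineq:entropy bound} with $R=1$, uniformly in $x$.
\item[(b)] Hypothesis (3) is the assumption $|f|\le\eta$.
\item[(c)] For the finiteness requirement in (2), take $(t_*,x_*)=(0,x)$: then $g_{0,x}\equiv0$, so the energy integral vanishes.
\item[(d)] For the normalization, the denominator in \eqref{ineq:var_bound_h_k} is $d(y,y')^{\xi_1}+|s-s'|^{\xi_2}$, whereas \eqref{ineq:var_bound_h} and the constant $B$ in \eqref{ineq:ldp local holder type condition} use $(d(y,y')/2)^{\xi_1}+|(s-s')/t|^{\xi_2}$. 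Since $t\le T$ and $d/2\le d$, we have
\[
|s-s'|^{\xi_2}=t^{\xi_2}\,|(s-s')/t|^{\xi_2}\le T^{\xi_2}\,|(s-s')/t|^{\xi_2},
\qquad
d(y,y')^{\xi_1}=2^{\xi_1}\,(d(y,y')/2)^{\xi_1}.
\]
Hence $d(y,y')^{\xi_1}+|s-s'|^{\xi_2}\lesssim_T (d(y,y')/2)^{\xi_1}+|(s-s')/t|^{\xi_2}$.
\end{itemize}
It follows that the normalized ratio in \eqref{ineq:var_bound_h} is bounded by $C_T\,h(x)^{2\lambda-\lambda_0}\,t^{1-\alpha}$, uniformly in $(t,x)\in(0,T]\times\X$. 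In particular $B<\infty$, and \eqref{ineq:var_bound_h} holds with exponent $1-\alpha$.

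\emph{Step 2: the condition at time zero.} By \eqref{eq:GC}, $g(0,s,x,y)=0$, so $N(0,x)=0$ almost surely for each $x$. The continuous modification from Proposition~\ref{prop:MLDP} must be shown to satisfy $N(0,\cdot)\equiv0$ simultaneously for all $x$, on a single full-measure event. The argument runs as follows. The modifications on the countably many balls $B_1(x_n)$ from \eqref{ineq:faster_than_ms} agree with $N$ at countably many dense points almost surely, so they patch into one continuous field on $[0,T]\times\X$. At time zero this field vanishes on a countable dense set of $x$, hence everywhere by continuity. The same patching shows that the modification does not depend on $t\le T$ when Proposition~\ref{prop:MLDP} is applied on $[0,t]$.

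\emph{Step 3: conclusion.} Apply Lemma~\ref{lem:mldp_global} with $\alpha$ replaced by $1-\alpha$. This yields the asserted bound $\varepsilon^{-2}t^{1-\alpha}\exp\bigl(-\Cr{c:gld_3}\varepsilon^2/(2t^{1-\alpha})\bigr)$ with $\Cr{c:gld_3}=\Cr{c:gld_4}(T,\lambda)$.

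\emph{Main obstacle.} The delicate point is uniformity of the constants. The implicit constant in \eqref{ineq:MLDP_local} must be independent of $x$ and $t$. It depends on $\Cr{c:lld_tem1}$, and hence on the covering constant in \eqref{ineq:entropy bound} and on $\xi_1,\xi_2,\beta_0$, all of which are uniform. When \eqref{ineq:MLDP_local} is invoked inside Lemma~\ref{lem:mldp_global}, one must track the factor $C_\lambda^{-2}$ and the constant $C_T$ from Step~1 so that the exponent takes the form $\Cr{c:gld_4}h(x)^{\lambda_0}\varepsilon^2/t^{1-\alpha}$ with a constant depending only on $(T,\lambda)$.
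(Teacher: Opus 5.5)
Your proposal is correct and is the intended argument: the paper records this as a corollary of Lemma~\ref{lem:mldp_global} without a separate proof, and the reduction is exactly yours. One takes $g_{t,x}(s,y)=G_{t-s}(y,x)$, $\sigma=f$ and $\alpha$ replaced by $1-\alpha$, and absorbs the change of normalization $d(y,y')^{\xi_1}+|s-s'|^{\xi_2}\le\max\{2,T^{\xi_2}\}\bigl((d(y,y')/2)^{\xi_1}+|(s-s')/t|^{\xi_2}\bigr)$ into the constant. The one wrinkle you gloss over is that hypothesis (3) of Proposition~\ref{prop:MLDP} asks for $|\sigma|\le\eta$ on all of $\R_+\times\X$, not just on $[0,T]\times\X$; you get this by replacing $f$ with the predictable field $f\mathbf 1_{[0,T]}$, which leaves $N$ unchanged on $[0,T]\times\X$.
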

	
	\subsection{Proof of strict positivity}
	In the sequel, we work with the following drift-free integral equation and assume that all the hypotheses of  Theorem~\ref{thm:comparison} \eqref{part:main2 strictly positivity}, except the ones on the initial condition, hold, 
	\begin{equation}\label{eq:spde_driftless}
		\begin{cases}
			u_t(x) &= P_t^*f(x) + \int_0^t \int_\X G_{t-s}(y,x) \sigma(s,y,u_s(y)) W(\mathrm dy\,\mathrm ds), \quad (t,x) \in \R_+ \times \X,\\
			u_0 &=f \in \mathcal C^{\mathrm{tem}}_{\X,h}.
		\end{cases}
	\end{equation}
	Let $u$ be the solution to the integral equation \eqref{eq:spde_driftless} and set for $(t,x) \in \R_+ \times \X$
	\begin{align}\label{eq:nt}
		Z(t,x) = \int_0^t \int_\X G_{t-s}(y,x) \sigma(s,y,u_s(y)) W(\mathrm dy,\mathrm ds).
	\end{align}
	
	Recall $b>0$ as in \eqref{ineq:entropy bound}, chosen for balls of radius $2$, and denote $\mathfrak{b}:= (b+1)/\xi$, where 
	$\xi \in (0,1]$ is as in Proposition \ref{prop:existence of C_tem modification}. The following Lemma is a direct consequence of \cite[Theorem 1.1]{kratschmer2022kolmogorov}.
	\begin{lemma}\label{lem:metric_entropy_Lp_chain}
	Let $(\mathcal T, d_{\mathcal T})$ be a complete metric space of diameter at most $2$, suppose  there exists $c_0>0$ so that $D(\mathcal T, d_{\mathcal T}; \varepsilon) \le c_0 \varepsilon^{-\mathfrak{b}}$ for all $\varepsilon \in (0,2)$. Suppose $B>0$, $p >\mathfrak{b}$ and  $(N(t))_{t \in \mathcal T}$ is an almost surely continuous random field on $\mathcal T$ such that
	\begin{align}\label{ineq:metric_entropy_Lp_chain_moment_bound}
		\norm{N(t) - N(t')}_{L^p_\Omega} \le B d_{\mathcal T}(t,t'),\quad \text{ for all } t,t' \in \mathcal T.
	\end{align}
	Then there exists $\C\label{c:kol_constant} = \Cr{c:kol_constant}(p,\mathfrak{b},c_0)>0$ so that for every $t_0 \in \mathcal T$, 
	\begin{align*}
		\norm{\sup_{t \in \mathcal T} |N(t) - N(t_0)|}_{L^p_\Omega} \le \Cr{c:kol_constant} B.
	\end{align*}
	\end{lemma}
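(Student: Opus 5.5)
The plan is to prove the lemma directly by a chaining argument in $L^p_\Omega$, which is the mechanism behind \cite[Theorem 1.1]{kratschmer2022kolmogorov}. We replace the Gaussian tail bounds used in the proof of Proposition~\ref{prop:MLDP} by the crude maximal inequality
\[
\Bigl\|\max_{i\le M}|X_i|\Bigr\|_{L^p_\Omega}\le M^{1/p}\max_{i\le M}\|X_i\|_{L^p_\Omega},
\]
which holds because $\mathbb E[\max_i|X_i|^p]\le\sum_i\mathbb E|X_i|^p$. The assumption $p>\mathfrak b$ is exactly what makes the resulting geometric series converge.

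\emph{Nets and chaining maps.} Fix $t_0\in\mathcal T$ and set $\mathcal T_0:=\{t_0\}$. Since $\operatorname{diam}\mathcal T\le 2$, every point lies within distance $2=2\cdot 2^{0}$ of $t_0$. For $n\ge1$, use the covering hypothesis with $\varepsilon=2^{-n}$ to choose a finite set $\mathcal T_n\subset\mathcal T$ with $|\mathcal T_n|\le c_0 2^{n\mathfrak b}$ and $\mathcal T\subset\bigcup_{s\in\mathcal T_n}B(s,2^{-n})$. If needed, recentre the covering balls at points of $\mathcal T$; this doubles radii and changes only constants. Define $\pi_n:\mathcal T_n\to\mathcal T_{n-1}$ by choosing $\pi_n(s)\in\mathcal T_{n-1}$ with $d_{\mathcal T}(s,\pi_n(s))\le 2\cdot2^{-(n-1)}=4\cdot 2^{-n}$. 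The set $\mathcal T_\infty:=\bigcup_n\mathcal T_n$ is countable and dense in $\mathcal T$.

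\emph{Chaining estimate on the dense set.} For $s\in\mathcal T_n$, telescoping along $s,\pi_n(s),\pi_{n-1}\pi_n(s),\dots,t_0$ gives
\[
|N(s)-N(t_0)|\le\sum_{k=1}^n\max_{r\in\mathcal T_k}|N(r)-N(\pi_k(r))|.
\]
Hence $\sup_{s\in\mathcal T_\infty}|N(s)-N(t_0)|\le\sum_{k\ge1}\Delta_k$, where $\Delta_k:=\max_{r\in\mathcal T_k}|N(r)-N(\pi_k r)|$. By the maximal inequality above and \eqref{ineq:metric_entropy_Lp_chain_moment_bound},
\[
\|\Delta_k\|_{L^p_\Omega}\le (c_0 2^{k\mathfrak b})^{1/p}\cdot 4B\,2^{-k}.
\]
Minkowski's inequality (in its countable form, via monotone convergence) then yields
\[
\Bigl\|\sup_{s\in\mathcal T_\infty}|N(s)-N(t_0)|\Bigr\|_{L^p_\Omega}\le 4c_0^{1/p}B\sum_{k\ge1}2^{-k(1-\mathfrak b/p)}=:\Cr{c:kol_constant}B.
\]
The sum is finite precisely because $p>\mathfrak b$, and $\Cr{c:kol_constant}$ depends only on $(p,\mathfrak b,c_0)$ and not on $t_0$ or $B$.

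\emph{Passage to all of $\mathcal T$.} Since $N$ is almost surely continuous and $\mathcal T_\infty$ is dense, on a full-measure event we have $\sup_{t\in\mathcal T}|N(t)-N(t_0)|=\sup_{s\in\mathcal T_\infty}|N(s)-N(t_0)|$. This also settles measurability of the supremum, and the claim follows. The only real care point, which is the main obstacle though a mild one, is keeping all constants uniform: the recentring of the covering balls, the factor in $d(s,\pi_n s)$, and the zeroth level must all be absorbed into a constant depending only on $(p,\mathfrak b,c_0)$. Completeness of $\mathcal T$ is not essential; separability comes for free from the finite covering numbers.
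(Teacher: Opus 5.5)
Your argument is correct, and it takes a different route from the paper. The paper does not prove this lemma itself. It invokes the Kolmogorov--Chentsov theorem on metric spaces, \cite[Theorem~1.1]{kratschmer2022kolmogorov}, which under the same entropy and moment hypotheses gives an $L^p_\Omega$ bound on the modulus of continuity, namely the $\varrho$-H\"older seminorm for $\varrho<1-\mathfrak b/p$. The paper uses it the same way in the proof of Lemma~\ref{lem:tight_tem_moment}. The stated sup bound then follows by taking $t'=t_0$ and using $\operatorname{diam}\mathcal T\le 2$.

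You instead prove exactly the needed estimate by chaining through dyadic nets with the crude union bound $\mathbb E[\max_i|X_i|^p]\le\sum_i\mathbb E|X_i|^p$. This is the $L^p$ analogue of the Gaussian chaining in the proof of Proposition~\ref{prop:MLDP}. The underlying mechanism is the same as in the cited theorem, which is itself chaining-based.

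What your version buys:
\begin{itemize}
\item It is self-contained and elementary.
\item It makes the constant explicit, $4c_0^{1/p}\sum_{k\ge1}2^{-k(1-\mathfrak b/p)}$. This visibly depends only on $(p,\mathfrak b,c_0)$, which is exactly the uniformity over the covering used in Lemma~\ref{lem:unit_scale_stochastic_part_bound}.
\item It needs neither completeness of $\mathcal T$ nor the construction of a modification, since continuity is assumed.
\end{itemize}
The cited theorem gives more: H\"older regularity, and it would apply without assuming continuity. None of that is needed here.

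Your use of Minkowski's inequality in the last step is legitimate because $p>\mathfrak b=(b+1)/\xi>1$.
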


	Here we shall make the constants that appeared previously explicit. Denote $S_0:= \sum_{n \in \mathcal I} h(x_n)^{-\lambda_0}$, where $\{x_n\}\subset \X$ is as in \eqref{ineq:faster_than_ms}. By \eqref{ineq:ref_regular}, there exists $\mathfrak{c}>0$ so that for all $x,y \in \X$ with $d(x,y) \le 1$, 
	\begin{align}
		\mathfrak{c}^{-1} h(x) \le h(y) \le \mathfrak{c} h(x).
	\end{align}
	Let $\C\label{c:semi_group_norm}>0$ be such that $\sup_{t \in (0,1]}\norm{P_t^* \mathbf 1}_{L^\infty_\X} \le \Cr{c:semi_group_norm}$.
	
	\begin{lemma}\label{lem:unit_scale_stochastic_part_bound}
		Let $u$ be the $\mathcal C^{\mathrm{tem}}_{\X,h}$-valued solution to \eqref{eq:spde_driftless} with $f \in \mathcal C^{\mathrm{tem}}_{\X,h}$ so that $|f(x)| \le 2$ for all $x \in \X$. Recall $Z$ as in \eqref{eq:nt}. Then for each $\lambda >0$, there exists $p_0 = p_0(\lambda) \ge 2$ such that for all $p \ge p_0$, there exists $\kappa := \kappa(p,\lambda)>0$ so that for all $N >0$, 
		\begin{align*}
			\mathbb P \left( \sup_{(t,x) \in [0,1]\times \X} |Z(t,x)|h(x)^{-\lambda} > N \right) \le S_0 \pr{\frac{\kappa}{N}}^p.
		\end{align*}
		In particular, the constants depend on the noise coefficient $\sigma$ only through the implicit constant $L>0$ in \eqref{ineq:g_lip} with $T=1$.
	\end{lemma}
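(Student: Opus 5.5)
We will prove this by covering $\X$ with the unit balls $B_1(x_n)$, $n\in\mathcal I$, from \eqref{ineq:faster_than_ms}. On each ball we obtain a local $L^p$ bound via Lemma~\ref{lem:metric_entropy_Lp_chain}, and then sum with a union bound and Markov's inequality. The weight $h(x_n)^{-\lambda_0}$ needed for summability, which produces the factor $S_0$, will come from choosing $p$ large so that $p\lambda\ge\lambda_0$.

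\emph{Step 1: moment bounds.} Since $|f|\le 2$, Corollary~\ref{cor:moment} with $\lambda=0$ gives $\sup_{t\le1,x}\norm{u(t,x)}_{L^p_\Omega}\le C_p$. Because the equation is drift-free and, under \eqref{cond:pos_zero}, $\sigma(t,x,0)=0$, we have $|\sigma(t,x,u)|\le L|u|$. The constant $C_p$ then depends on $\sigma$ only through $L$: in the Gronwall/contraction argument of Lemma~\ref{lem:b_contract_combined} only $L$ and the heat kernel constants enter. Repeating the computation in Step~4 of the proof of Proposition~\ref{prop:existence of C_tem modification} with $\lambda_1=0$ and the bound $\norm{\sigma(s,y,u_s(y))}_{L^p_\Omega}\le LC_p$, Lemmas~\ref{cor:mtg+mink_cs} and~\ref{lem:hk_abs} give, for $t,t'\in[0,1]$ and $d(x,x')\le 2$ (after splitting into two unit steps),
\[
\norm{Z(t,x)-Z(t',x')}_{L^p_\Omega}\le C'_p\bigl(|t-t'|^{\xi}+d(x,x')^{\xi}\bigr).
\]
Here the constant is uniform in the base point, with no $h$-growth. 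We also have $Z(0,\cdot)=0$.

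\emph{Step 2: local chaining.} Fix $n$ and set $\mathcal T_n:=[0,1]\times \overline{B_1(x_n)}$ with metric $d_{\mathcal T}((t,x),(t',x')):=\tfrac12\bigl(|t-t'|^{\xi}+d(x,x')^{\xi}\bigr)$, which has diameter at most $2$. By \eqref{ineq:entropy bound} for balls of radius $2$ and Lemma~\ref{lem:product metric entropy}, $D(\mathcal T_n,d_{\mathcal T};\varepsilon)\le c_0\varepsilon^{-(b+1)/\xi}=c_0\varepsilon^{-\mathfrak b}$, with $c_0$ independent of $n$. The space $\mathcal T_n$ is complete, since $\X$ is complete. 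Taking $p>\mathfrak b$ and the base point $t_0=(0,x_n)$, Lemma~\ref{lem:metric_entropy_Lp_chain} yields
\[
\norm{\sup_{\mathcal T_n}|Z|}_{L^p_\Omega}\le \Cr{c:kol_constant}\,2C'_p=:\kappa_0(p).
\]

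\emph{Step 3: summation.} By \eqref{ineq:ref_regular}, $h(x)^{-\lambda}\le \mathfrak c^{\lambda}h(x_n)^{-\lambda}$ on $B_1(x_n)$. Hence
\[
\bbP\Bigl(\sup_{[0,1]\times B_1(x_n)}|Z|h^{-\lambda}>N\Bigr)\le \bbP\bigl(\sup_{\mathcal T_n}|Z|>N\mathfrak c^{-\lambda}h(x_n)^{\lambda}\bigr)\le \Bigl(\frac{\mathfrak c^{\lambda}\kappa_0}{N}\Bigr)^p h(x_n)^{-p\lambda}.
\]
Choose $p_0(\lambda):=\max\{2,\mathfrak b+1,\lambda_0/\lambda\}$. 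For $p\ge p_0$ we have $h(x_n)^{-p\lambda}\le h(x_n)^{-\lambda_0}$ because $h\ge1$. A union bound over $n\in\mathcal I$ then gives the claim with $\kappa=\mathfrak c^{\lambda}\kappa_0(p)$.

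The main obstacle is Step~1. We need an increment bound that is uniform over the whole space with no $h$-weight, and whose constants depend on $\sigma$ only through $L$, so that the bound can later be applied to restarted equations. This requires tracking the constants in Corollary~\ref{cor:moment} and Lemma~\ref{lem:hk_abs} with $\lambda=0$; \eqref{cond:exp_hk_lamda} at $\lambda=0$ supplies the needed unweighted kernel bounds.
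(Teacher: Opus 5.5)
Your proposal is correct and follows essentially the paper's argument: cover $\X$ by the balls $B_1(x_n)$, apply Lemma~\ref{lem:metric_entropy_Lp_chain} on $[0,1]\times\overline{B_1(x_n)}$ from the base point $(0,x_n)$ where $Z$ vanishes, then use Markov's inequality and a union bound to produce the factor $S_0$. The only difference is how the weight $h(x_n)^{-\lambda_0}$ is obtained: you use the unweighted moment bound (Corollary~\ref{cor:moment} with $\lambda=0$, available because $f$ is bounded) and take $p\lambda\ge\lambda_0$, whereas the paper uses the weighted increment bound $h(x)^{\lambda'}$ with $\lambda'=\lambda-\lambda_0/p$, first for $\lambda\le 1/2$, and then reduces larger $\lambda$ to $\lambda=1/2$.
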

	\begin{proof}
		Note that since $f$ is continuous and bounded by $2$, we have $|u(0,x)|  = |f(x)| \le 2$ for all $x \in \X$. Let $\{x_n\}_n \subset \X$ be as in \eqref{ineq:faster_than_ms} and set $\mathcal T_n := [0,1] \times \overline{B_1(x_n)}$ for each $n \in \mathcal I$ and $d_\mathcal T((t,x),(t',x')) := \frac12\left(|t-t'|^{\xi} + d(x,x')^{\xi}\right)$ for all $(t,x),(t',x') \in \mathcal T_n$, where $\xi \in (0,1]$ is as in Proposition \ref{prop:existence of C_tem modification}. By examining the proof of Proposition \ref{prop:existence of C_tem modification}, we see for each $\lambda'\in(0,1/2)$, there exists $\C\label{c:moment_for_stochastic_integral} = \Cr{c:moment_for_stochastic_integral}(p,\lambda')>0$ so that for all $n \in \mathcal I$ and $(t,x),(t',x') \in \mathcal T_n$, it holds that
		\begin{align*}
			\norm{Z(t,x) - Z(t',x')}_{L^p_\Omega} \le \Cr{c:moment_for_stochastic_integral}L h(x)^{\lambda'} \left(|t-t'|^{\xi} + d(x,x')^{\xi}\right) = 2\Cr{c:moment_for_stochastic_integral} L h(x)^{\lambda'}d_{\mathcal T}((t,x),(t',x')),
		\end{align*}
		where $L$ is as in \eqref{ineq:g_lip} with $T=1$. First let $\lambda\in(0,1/2]$, and take $p$ sufficiently large so that $\lambda':=\lambda-\lambda_0/p\in(0,1/2)$.
		
		Since the implicit constant in \eqref{ineq:entropy bound} is independent of $x \in \X$, we see by Lemma \ref{lem:metric_entropy_Lp_chain} and Proposition \ref{prop:existence of C_tem modification}, by choosing $p > \mathfrak{b}$,  there exists $\Cr{c:kol_constant} = \Cr{c:kol_constant}(p,\mathfrak{b})>0$ so that for each $n \in \mathcal I$, upon taking $t_0=(0,x_n)$ and using $Z(0,x_n)=0$,
		\begin{align}
			\norm{\sup_{(t,x) \in \mathcal T_n} |Z(t,x)|}_{L^p_\Omega} \le 2\Cr{c:kol_constant} \Cr{c:moment_for_stochastic_integral}L h(x_n)^{\lambda'}.
		\end{align}
		This implies 
		\begin{align*}
			\norm{\sup_{(t,x) \in \mathcal T_n} |Z(t,x)|h(x)^{-\lambda}}_{L^p_\Omega}  \le 2\mathfrak{c}\Cr{c:kol_constant} \Cr{c:moment_for_stochastic_integral}L h(x_n)^{\lambda' - \lambda}= 2\mathfrak{c}\Cr{c:kol_constant} \Cr{c:moment_for_stochastic_integral}L h(x_n)^{-\lambda_0/p}.
		\end{align*}
		Therefore, by Markov's inequality, we see for each $N>0$, it holds that 
		\begin{align*}
			\mathbb P \left(\sup_{(t,x) \in [0,1]\times \X} |Z(t,x)|h(x)^{-\lambda} > N \right) &\le \mathbb P \left(\bigcup_{n \in \mathcal I} \left\{\sup_{(t,x) \in \mathcal T_n} |Z(t,x)|h(x)^{-\lambda} > N \right\}\right)\\
			&\le \sum_{n \in \mathcal I} \mathbb P \left(\sup_{(t,x) \in \mathcal T_n} |Z(t,x)|h(x)^{-\lambda} > N \right)\\
			&\le \sum_{n \in \mathcal I} N^{-p} \norm{\sup_{(t,x) \in \mathcal T_n} |Z(t,x)|h(x)^{-\lambda}}_{L^p_\Omega}^p\\
			&\le \sum_{n \in \mathcal I} N^{-p} \left(2\mathfrak{c}\Cr{c:kol_constant} \Cr{c:moment_for_stochastic_integral}L\right)^p h(x_n)^{-\lambda_0}\\
			&:= S_0 \pr{\frac{\kappa}{N}}^p,
		\end{align*}
		which is the desired result for $\lambda\in(0,1/2]$. If $\lambda>1/2$, then $h\ge1$ implies $h^{-\lambda}\le h^{-1/2}$, so the estimate follows from the case $\lambda=1/2$.
	\end{proof}
	Note that \eqref{ineq:instant_propagation} implies, for each $t,M>0$, $0<r_0<R$ and $x_0\in\X$, there exists $N_0 = N_0(x_0,t,M,r_0,R) \in \N$ so that, for all sufficiently large $Q$ and all $r \in [r_0,R]$,
	\begin{align}\label{ineq:ip_explicite}
		\inf_{s \in [t/Q,2t/Q]} \inf_{x \in B_{r+M/Q}(x_0)}\int_{B_r(x_0)} G_s(y,x)m(\mathrm dy) \ge \frac{1}{N_0}.
	\end{align}
	\begin{prop}\label{prop:one_step_estimates}
		Let $\lambda \in (0,1/2]$, $x_0 \in \X$, $M,t>0$ and $0<r_0<R$ be arbitrary but fixed. Let $N_0=N_0(x_0,t,M,r_0,R)$ be as in \eqref{ineq:ip_explicite}, and denote $H:= \sup_{x \in B_R(x_0)}h(x)^{\lambda+\lambda_0/2}$.
		
		Suppose $\sigma(t,x,0)=0$ almost surely for all $(t,x)\in\R_+\times\X$, and let $L$ be a Lipschitz constant for $\sigma$ on $[0,1]$.
		Then, for all sufficiently large $p \ge2$, all $N \ge4 \Cr{c:semi_group_norm}$, and all sufficiently large $Q\ge 1$ (independent of $N$ and $\sigma$), there exists $\kappa := \kappa(p,\lambda) >0$ such that the following holds uniformly for $r\in[r_0,R-M/Q]$: if $u$ is the solution to \eqref{eq:spde_driftless} with initial condition $g \in \mathcal C^{\mathrm{tem}}_{\X,h}$ satisfying $\mathbf 1_{B_r(x_0)}(x) \le g(x) \le 2$ for all $x \in \X$, then
		\begin{align*}
			\mathbb{P}\left(u(s,x) \ge  \frac{1}{2N_0} \mathbf 1 _{B_{r+M/Q}(x_0)}(x),\,\, \forall(s,x) \in \left[\frac{t}{Q},\frac{2t}{Q}\right]\times \X\right) \ge 1- \varepsilon_{N,Q,p}
		\end{align*}
		where 
		\begin{align*}
			\varepsilon_{N,Q,p} = \Cr{c:one_step_pre}S_0 \pr{LNHN_0}^2 \pr{\frac{t}{Q}}^{1-\alpha_0} \exp \left(-\Cr{c:one_step_rate}\pr{LNHN_0}^{-2} \pr{\frac{Q}{t}}^{1-\alpha_0} \right) + S_0 \pr{\frac{\kappa}{N}}^p,
		\end{align*}
		for some $\C\label{c:one_step_pre},\,\C\label{c:one_step_rate}>0$, independent of $L,N,Q,N_0$, where $\alpha_0\in(0,1)$ is as in Lemma~\ref{lem:hk_abs}, \eqref{ineq:hk abs holder k weighted}.
	\end{prop}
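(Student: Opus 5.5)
The plan is to split the solution as $u(s,x)=P_s^*g(x)+Z(s,x)$, with $Z$ as in \eqref{eq:nt}, and handle the two terms separately. The deterministic term gives the gain from instant propagation. The stochastic term is small with high probability on the short interval $[0,2t/Q]$, and we show this by combining the moment tail of Lemma~\ref{lem:unit_scale_stochastic_part_bound} with the Gaussian-type tail of Corollary~\ref{lem:m_ldp_g}.

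\emph{Deterministic part.} Since $g\ge \mathbf 1_{B_r(x_0)}$ and $G\ge0$, for $s\in[t/Q,2t/Q]$ and $x\in B_{r+M/Q}(x_0)$ we have
\[
P_s^*g(x)\ge\int_{B_r(x_0)}G_s(y,x)\,m(\mathrm dy)\ge \frac1{N_0}
\]
by \eqref{ineq:ip_explicite}, for all $Q$ large, uniformly in $r\in[r_0,R]$. Outside $B_{r+M/Q}(x_0)$ the target is only $u\ge0$, and this already follows from Theorem~\ref{thm:comparison}\eqref{part:main2 weak comparison} with $\sigma(\cdot,\cdot,0)=0$ and $g\ge 0$. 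Hence it suffices to show that, with probability at least $1-\varepsilon_{N,Q,p}$,
\[
\sup_{(s,x)\in[0,2t/Q]\times B_R(x_0)}|Z(s,x)|\le \frac1{2N_0}.
\]

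\emph{Localization by truncation.} Since $|g|\le 2$, we have $|P_s^*g|\le 2\Cr{c:semi_group_norm}\le N/2$. Define $\sigma_N(s,y,v):=\sigma\bigl(s,y,(v\wedge Nh(y)^\lambda)\vee(-Nh(y)^\lambda)\bigr)$. It is Lipschitz with constant $L$, and since $\sigma(\cdot,\cdot,0)=0$ it satisfies $|\sigma_N|\le LNh^\lambda=:\eta$. Let $u_N$ be the solution with $\sigma_N$, and let $Z_N$ be its stochastic convolution.
\begin{itemize}
\item[(i)] On the event $E:=\{\sup_{[0,1]\times\X}|Z|h^{-\lambda}\le N/2\}$ we have $|u|\le Nh^\lambda$ on $[0,1]$. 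There $\sigma_N(u)=\sigma(u)$, so $u$ also solves the truncated equation. Using the stopping-time uniqueness argument of Theorem~\ref{thm:main2} (with $\tau_n$ replaced by the exit time of $|u|h^{-\lambda}$ from $[0,N]$), $u=u_N$ and hence $Z=Z_N$ on $E$ up to time $1$.
\item[(ii)] Lemma~\ref{lem:unit_scale_stochastic_part_bound} (whose constants depend on $\sigma$ only through $L$) gives $\mathbb P(E^c)\le S_0(2\kappa/N)^p$. Renaming $\kappa$, this is the second term of $\varepsilon_{N,Q,p}$.
\end{itemize}

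\emph{Gaussian tail for $Z_N$.} Apply Corollary~\ref{lem:m_ldp_g} to $Z_N$ with $\eta=LNh^\lambda$, weight exponent $\lambda':=\lambda+\lambda_0/2$, and time horizon $2t/Q$. The variance hypothesis \eqref{ineq:var_bound_h_k} comes from \eqref{ineq:hk abs holder k weighted}: the left side is at most
\[
(LN)^2 s'^{\,1-\alpha_0}\bigl(|s-s'|^\xi+d(y,y')^\xi\bigr)h(y)^{2\lambda}.
\]
Using $h(y)\le\mathfrak c h(x)$ on $B_1(x)$ and $2\lambda=2\lambda'-\lambda_0$, this is exactly \eqref{ineq:var_bound_h_k}, with implicit constant $(LN)^2$ times a constant independent of $L,N,Q,N_0$. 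Take the threshold $\varepsilon:=1/(2N_0H)<1$, which is allowed since $H\ge1$ and $N_0\ge1$. Then $\sup|Z_N|h^{-\lambda'}\le\varepsilon$ forces $|Z_N|\le 1/(2N_0)$ on $B_R(x_0)$. Tracking the explicit $(LN)^2$ scaling through Proposition~\ref{prop:MLDP} and Lemma~\ref{lem:mldp_global} (the constant $B$ scales by $(LN)^2$, so $\Cr{c:gld_3}$ scales by $(LN)^{-2}$) produces the first term of $\varepsilon_{N,Q,p}$, with $t$ replaced by $t/Q$. A union bound then finishes the proof.

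\emph{Main obstacle.} The hard step is keeping the constants in Corollary~\ref{lem:m_ldp_g} explicit and uniform. The tail must scale precisely as $(LN)^{-2}$ in the exponent, with $\Cr{c:one_step_pre},\Cr{c:one_step_rate}$ independent of $L,N,Q,N_0$. This requires re-reading Lemma~\ref{lem:mldp_global} with the variance constant factored out and checking that the chaining constant from \eqref{ineq:comb_entro} is uniform over the unit balls. The second delicate point is justifying $Z=Z_N$ on $E$, which is a pathwise identification of solutions of two equations up to a stopping time.
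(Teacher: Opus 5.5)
Your proposal is correct and has the same structure as the paper's proof: the split $u=P_s^*g+Z$, instant propagation plus weak comparison for the lower bound and the sign, localization to an integrand bounded by $LNh^\lambda$, Corollary~\ref{lem:m_ldp_g} with weight exponent $\lambda+\lambda_0/2$ and a threshold of order $(N_0H)^{-1}$, and Lemma~\ref{lem:unit_scale_stochastic_part_bound} for the probability that localization fails. The paper does two steps more economically. First, it localizes with the stopped integrand $\mathbf 1_{\{s<\tau_N\}}\sigma(s,y,u(s,y))$, where $\tau_N$ is the first time $\sup_x|u(s,x)|h(x)^{-\lambda}$ exceeds $N$, so $Z_N=Z$ on $\{\tau_N>2t/Q\}$ follows from the local property of stochastic integrals rather than from a truncated SPDE and a uniqueness argument; second, it applies Corollary~\ref{lem:m_ldp_g} to $(LN)^{-1}Z_N$ with $\eta=h^\lambda$ and threshold $(2LNN_0H)^{-1}$, which makes the corollary a black box with constants independent of $L$ and $N$, so the $(LN)^2$ bookkeeping through Proposition~\ref{prop:MLDP} and Lemma~\ref{lem:mldp_global} that you flag as the main obstacle never arises.
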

	\begin{proof}
		For each $N >0$, let us define 
		\begin{align}
			\tau_N:=\inf\left\{ t>0: \sup_{x \in \X} \abs{u(t,x)}h(x)^{-\lambda} > N \right\},
		\end{align}
		which is a stopping time by the continuity of sample paths. Define for each $N >0$ that 
		\begin{align}
			Z_N(t,x):= \int_0^{t } \int_\X \mathbf 1_{\{s<\tau_N\}}G_{t-s}(y,x) \sigma(s,y,u(s,y)) W(\mathrm dy,\mathrm ds),\quad (t,x) \in \R_+ \times \X.
		\end{align}
		Note that for $s < \tau_N\wedge 1$, by \eqref{ineq:g_lip} and the fact that $\sigma(t,x,0) = 0$ almost surely for all $(t,x) \in \R_+ \times \X$, we see
		\begin{align}
			|\sigma(s,y,u(s,y))| \le L |u(s,y)| \le L N h(y)^{\lambda},\quad \text{ for all } (s,y) \in [0,\tau_N\wedge 1)\times \X.
		\end{align}
		Therefore, by Corollary \ref{lem:m_ldp_g} applied to $(LN)^{-1}Z_N$, with $\eta=h^\lambda$, $\lambda$ replaced by $\lambda+\lambda_0/2$, $\alpha=\alpha_0$, $T=1$, and $\xi_1=\xi_2$ equal to the H\"older exponent in Lemma \ref{lem:hk_abs} \eqref{ineq:hk abs holder k weighted} (which gives its hypothesis), with $\varepsilon= (2LNN_0H)^{-1}$ (we may take $L\ge1$, so that $\varepsilon<1$), there exist $\Cr{c:one_step_pre},\, \Cr{c:one_step_rate}>0$ so that for all $N \ge 1$ and sufficiently large $Q \ge 1$,
		\begin{align}\label{ineq:one_step_noise_LDP}
			&\bbP \left(\sup_{(s,x) \in [0,2t/Q]\times \X} |Z_N(s,x)|h(x)^{-\lambda- \lambda_0/2} > (2N_0H)^{-1} \right)\\
			 \le &\Cr{c:one_step_pre}S_0 \pr{LNHN_0}^2 \pr{\frac{t}{Q}}^{1-\alpha_0}\exp \left(-\Cr{c:one_step_rate}\pr{LNHN_0}^{-2} \pr{\frac{Q}{t}}^{1-\alpha_0} \right).
		\end{align}

		Since $g(x) \ge \mathbf{1}_{B_r(x_0)}(x)$ for all $x \in \X$, we see by \eqref{ineq:ip_explicite} that for all $s \in [t/Q,2t/Q]$ and $x \in B_{r+M/Q}(x_0)$,
		\begin{align}\label{ineq:in_proof_inst_prop}
			P_s^*g(x) \ge \int_{B_r(x_0)} G_s(y,x) m(\mathrm dy) \ge \frac{1}{N_0}.
		\end{align}
		By weak comparison with the zero solution (Theorem \ref{thm:comparison} \eqref{part:main2 weak comparison}), there is a full-measure event
		\begin{align*}
			\Omega_+:=\{u(s,x)\ge0\text{ for all }(s,x)\in\R_+\times\X\}.
		\end{align*}
		On the event $\{\tau_N>2t/Q\}$, one has $u(s,x) = P^*_sg(x) +Z_N(s,x)$ for all $(s,x) \in [0,2t/Q]\times \X$. Hence, on the event
		$$
		\mathcal A_Q:=\left\{\sup_{(s,x) \in [0,2t/Q]\times \X} |Z_N(s,x)|h(x)^{-\lambda- \lambda_0/2} \le  (2N_0H)^{-1}\right\}\bigcap \left\{\tau_N>\frac{2t}{Q}\right\}\bigcap\Omega_+,
		$$
		we see for all  $x \in B_{r+M/Q}(x_0)$, it holds for all $s \in [t/Q,2t/Q]$ that
		\begin{align}\label{ineq:one_step_inclusion_relation}
			 u(s,x) \overset{\eqref{ineq:in_proof_inst_prop}}\ge \frac{1}{N_0} - \abs{Z_N(s,x)} \ge \frac{1}{N_0} - \sup_{x \in B_{r+M/Q}(x_0)}\frac{h(x)^{\lambda+\lambda_0/2}}{2HN_0} \ge \frac{1}{2N_0}.
		\end{align}
		Since $u\ge0$ on $\Omega_+$ we see 
		\begin{align*}
			\mathcal A_Q \subset \left\{u(s,x) \ge  \frac{1}{2N_0} \mathbf 1 _{B_{r+M/Q}(x_0)}(x),\,\, \forall(s,x) \in \left[\frac{t}{Q},\frac{2t}{Q}\right]\times \X \right\}.
		\end{align*}		
		By the continuity of sample paths, we see 
		$$
    		\left\{\tau_N \le \frac{2t}{Q}\right\}\subset \left\{\sup_{(s,x) \in [0,2t/Q]\times \X} \abs{u(s,x)}h(x)^{-\lambda } \ge N\right\}.
		$$
		Since $g(x) \le 2$ for all $x\in \X$, we see $P^*_sg(x) \le 2\Cr{c:semi_group_norm}$ for all $s \in [0,1]$ and $x \in \X$. So if $N \ge 4\Cr{c:semi_group_norm}$, then for large $Q>0$ so that $2t/Q\le 1$, 
		$$
			\left\{\sup_{(s,x) \in[0,2t/Q]\times \X }\abs{u(s,x)}h(x)^{-\lambda } \ge N \right\} \subset \left\{\sup_{(s,x) \in [0,2t/Q]\times \X} \abs{Z(s,x)}h(x)^{-\lambda } \ge N/2 \right\}.
		$$
		We see by Lemma \ref{lem:unit_scale_stochastic_part_bound} that, for $p\ge p_0(\lambda)$, with $p_0(\lambda)$ as in Lemma~\ref{lem:unit_scale_stochastic_part_bound}, there exists $\kappa = \kappa(p,\lambda) >0$, enlarged by a factor of $2$ if necessary, so that  
		\begin{align*}
			\mathbb P\left(\tau_N \le\frac{2t}{Q} \right) \le \mathbb P \left(\sup_{(s,x) \in [0,2t/Q]\times \X} \abs{Z(s,x)}h(x)^{-\lambda} \ge \frac{N}{2}\right)\le S_0 \pr{\frac{\kappa}{N}}^p.
		\end{align*}
		Therefore, by \eqref{ineq:one_step_noise_LDP}, the preceding bound, and the fact that $\mathbb P(\Omega_+)=1$, we see 
		\begin{align*}
			\mathbb{P}\left(u(s,x) \ge  \frac{1}{2N_0} \mathbf 1 _{B_{r+M/Q}(x_0)}(x)\, \forall(s,x) \in \left[\frac{t}{Q},\frac{2t}{Q}\right]\times \X\right)
			&\overset{\eqref{ineq:one_step_inclusion_relation}}{\ge}\mathbb P \left( \mathcal A_Q\right)\\
			&=1- \mathbb P \left(\mathcal A_Q ^c\right)\ge 1 - \varepsilon_{N,Q,p}
		\end{align*}
		which is the desired inequality.
	\end{proof}

	\begin{proof}[Proof of Theorem \ref{thm:comparison} \eqref{part:main2 strong comparison} and \eqref{part:main2 strictly positivity}]
		To prove Theorem \ref{thm:comparison} \eqref{part:main2 strong comparison}, let $u$ and $u'$ be as in Theorem~\ref{thm:comparison}~\eqref{part:main2 strong comparison}, and denote $v_t(x) : =u_t(x) - u'_t(x)$ and $f_0(x):= f(x) - f'(x)$ for $t > 0$ and $x \in \X$. Then $v$ solves the integral equation
		\begin{align}
			v_t(x) = P_t^*f_0(x) + &\int_0^t \int_\X G_{t-s}(y,x)\widetilde{b}(s,y,v_s(y)) m(\mathrm{d}y)\mathrm{d}s\\
			&+\int_0^t \int_\X G_{t-s}(y,x)\widetilde{\sigma}(s,y,v_s(y))W(\mathrm{d}y,\mathrm{d}s),
		\end{align}
		for $t>0$ and $x \in \X$, where for $r\in \R$,
		\begin{align}
			\widetilde{b}(t,x,r) &:= b(t,x,r +u'_t(x)) - b(t,x,u'_t(x))\\ \widetilde{\sigma}(t,x,r) &:= \sigma(t,x,r +u'_t(x)) - \sigma(t,x,u'_t(x)). \label{eq:tilde_coef_diff}
		\end{align}
		By \eqref{ineq:g_lip}, $\widetilde b$ and $\widetilde\sigma$ satisfy the same local Lipschitz condition, and $\widetilde b(t,x,0)=\widetilde\sigma(t,x,0)=0$ almost surely for all $(t,x)\in\R_+\times\X$.

		By Theorem~\ref{thm:comparison}~\eqref{part:main2 weak comparison} and the argument of \cite[pp.~427--428, equations~(4.17)--(4.22)]{MR1271224}, it is enough to consider \eqref{eq:spde_driftless} and assume that, for some $x_0\in\X$ and $\beta,a>0$, the initial value $f$ satisfies
		\begin{align}
			2\beta\ge f(x)\ge\beta\mathbf1_{B_a(x_0)}(x),\qquad x\in\X.
		\end{align}
		
		Let $t,M>0$ be arbitrary but fixed, and let $L>0$ be a Lipschitz constant for $\sigma$ in \eqref{ineq:g_lip} on $[0,t+1]$. Fix $\lambda\in(0,1/2]$ and any $\gamma\in(0,(1-\alpha_0)/2)$, where $\alpha_0\in(0,1)$ is as in Lemma~\ref{lem:hk_abs}, \eqref{ineq:hk abs holder k weighted}.
		By \eqref{ineq:instant_propagation}, we see there exists $N_0 = N_0(x_0,t,M,a/2,a+M) \ge 1$ so that for all sufficiently large $Q\ge 1$, 
		\begin{align*}
			\inf_{s \in [t/Q,2t/Q]} \inf_{r \in [a/2,a+M]} \inf_{x \in B_{r+M/Q}(x_0)}\int_{B_r(x_0)} G_s(y,x)m(\mathrm dy) \ge \frac{1}{N_0}.
		\end{align*}
		Let $H:=\sup_{B_{a+M}(x_0)}h(x)^{\lambda+\lambda_0/2}$ and let $p \ge 2$ be sufficiently large as in Proposition \ref{prop:one_step_estimates} and  assume in addition that $p > 1/\gamma$. Define
		\begin{align*}
			\varepsilon_{Q}: =\Cr{c:one_step_pre}S_0 \pr{LQ^\gamma HN_0}^2 \pr{\frac{t}{Q}}^{1-\alpha_0}\exp \left(-\Cr{c:one_step_rate}\pr{LQ^\gamma HN_0}^{-2} \pr{\frac{Q}{t}}^{1-\alpha_0} \right) + S_0 \pr{\frac{\kappa}{Q^\gamma}}^p.
		\end{align*}
		We note that 
		\begin{align}\label{ineq:strict_pos_Qepsilon_to_0}
			\lim_{Q \to \infty}Q \varepsilon_Q = 0. 
		\end{align}
		Indeed, since $\gamma < \frac{1-\alpha_0}{2}$ and $p > 1/\gamma$, we see for some $C>0$, it holds uniformly in $Q\ge 1$ that 
		\begin{align*}
			Q \varepsilon_Q \lesssim Q^{\alpha_0+2\gamma} \exp \left(-C Q^{1-\alpha_0 - 2\gamma}\right) +Q^{-p\gamma +1}
		\end{align*}
		which converges to $0$ as $Q \to \infty$, which verifies \eqref{ineq:strict_pos_Qepsilon_to_0}.

		For $Q\in \N$ sufficiently large, set $s_j=jt/Q$ for $j=0,\dots,Q+1$, and, for $j=0,\dots,Q$, set
		\begin{align*}
			\zeta_j:=\frac{jM}{2Q^2},\qquad B_j:=B_{a+\frac{jM}{Q}-\zeta_j}(x_0),\qquad \theta_j:=\beta(2N_0)^{-j}.
		\end{align*}
		For $k=1,\dots,Q$, set $B_k':=B_{a+\frac{(k-1)M}{Q}-\zeta_k}(x_0)$ and define
		\begin{gather}\label{def:strict_pos_set_A_J_def}
			\mathcal A_k:=\brc{u(s,x)\ge\theta_k\mathbf 1_{B_k}(x)\ \forall(s,x)\in\brk{s_k,s_{k+1}}\times\X},\\
			J_j:=\brc{u(s_j,\cdot)\ge\theta_j\mathbf 1_{B_j}(\cdot)},\qquad j=0,\dots,Q.
		\end{gather}
		Since $\theta_0=\beta$, $B_0=B_a(x_0)$, and $f\ge\beta\mathbf 1_{B_a(x_0)}$, we have $J_0=\Omega$.

		For each sufficiently large $Q\in \N$ and $k = 1,\dots, Q$, let $\varphi_{k-1} :\X \to [0,1]$ be a continuous function so that $\varphi_{k-1}\equiv 1$ on the closure of $B_k'$ and $\varphi_{k-1}\equiv 0$ outside of $B_{k-1}$. 

		Define for $k = 1,\dots, Q$ that $\sigma_{k}(s,x,r):=\theta_{k-1}^{-1}\sigma(s_{k-1}+s,x,\theta_{k-1}r)$ for $r \in \R$, which is predictable with respect to the filtration $(\mathcal F_{s_{k-1}+s})_{s \ge 0}$ and satisfies \eqref{ineq:g_lip} on $[0,1]$ with the same Lipschitz constant $L>0$ (since $s_{k-1}+1\le t+1$) and $\sigma_k(t,x,0) = 0$ almost surely for all $(t,x) \in \R_+\times \X$.
		
		Therefore, by Theorem \ref{thm:main2}, there exists a unique continuous $\mathcal C^{\mathrm{tem}}_{\X,h}$-valued solution to the integral equation with the shifted noise,
		\begin{align*}
			w_k(t,x) = P^*_t\varphi_{k-1}(x)+ \int_0^t \int_\X G_{t-s}(y,x)\sigma_{k}(s,y,w_k(s,y))W\pr{s_{k-1}+\mathrm ds, \mathrm dy}.
		\end{align*}
		Denote 
		\begin{align*}
			\mathcal B_k:=\left\{w_k(s,x) \ge \frac{1}{2N_0}\mathbf 1_{B_k}(x),\, \forall (s,x) \in \left[\frac{t}{Q},\frac{2t}{Q} \right]\times \X\right\}.
		\end{align*}
		For all large $Q$, the radius of $B_k'$ lies in $[a/2,a+M-M/Q]$ and the radius of $B_k'$ plus $M/Q$ is exactly the radius of $B_k$. Hence Proposition \ref{prop:one_step_estimates}, applied with $r$ equal to the radius of $B_k'$ and $N=Q^\gamma$, gives
		\begin{align}\label{ineq:strict_pos_one_step_estimates}
			\mathbb P \left(\mathcal B_k\right) \ge 1 - \varepsilon_{Q}.
		\end{align}
		We claim that 
		\begin{align}\label{ineq:strict_pos_main_step_claim}
			\mathcal A_{k}^c \cap J_{k-1} \subset \mathcal B_k^c\quad \text{and hence }\mathbb{P}\left( \mathcal A_{k}^c \cap J_{k-1} \right) \le \varepsilon_Q.
		\end{align}
		Now define $v_k(t,x):= \theta_{k-1}w_k(t,x)$ for $(t,x) \in \mathbb R_+\times \X$. Then $v_k$ is the solution to the following shifted integral equation:
		\begin{align*}
			v_k(t,x)= P^*_t\pr{\theta_{k-1}\varphi_{k-1}}(x) + \int_0^t \int_\X G_{t-s}(y,x)\sigma(s_{k-1}+s,y,v_k(s,y))W\pr{s_{k-1}+\mathrm ds, \mathrm dy}.
		\end{align*}
		Therefore, on the event $\mathcal B_k$, we have 
		\begin{align*}
			v_k(s,x) \ge \theta_{k-1} (2N_0)^{-1}\mathbf{1}_{B_k}(x) = \theta_{k}\mathbf{1}_{B_k}(x),\qquad \text{ for all }(s,x)\in \left[\frac{t}{Q},\frac{2t}{Q} \right]\times \X.
		\end{align*}

		Now we set $\widetilde{\varphi}_{k}:= \mathbf{1}_{J_{k-1}} \theta_{k-1}\varphi_{k-1}$, then $\widetilde{\varphi}_{k}$ is $\mathcal F_{s_{k-1}}$ measurable since $J_{k-1}\in \mathcal F_{s_{k-1}}$ and it belongs to $\mathcal C^{\mathrm{tem}}_{\X,h}$. We see 
		\begin{align*}
			\mathbf{1}_{J_{k-1}}v_k(t,x) &= P_t^*\widetilde{\varphi}_k(x)+ \int_0^t \int_\X G_{t-s}(y,x)\mathbf{1}_{J_{k -1}}\sigma(s_{k-1}+s, y, v_k(s,y))W\pr{s_{k-1}+\mathrm ds, \mathrm dy}\\
			&=P_t^*\widetilde{\varphi}_{k}(x) + \int_0^t\int_\X G_{t-s}(y,x)\sigma(s_{k-1}+s,y,\mathbf{1}_{J_{k-1}}v_k(s,y))W\pr{s_{k-1}+\mathrm ds, \mathrm dy},
		\end{align*}
		where we used the fact that $\sigma(t,x,0) = 0$ in the last line. In other words, $\mathbf{1}_{J_{k-1}}v_k$ is a solution to the shifted equation with initial condition $\widetilde{\varphi}_k$.  

		By Corollary \ref{cor:MfT}, \eqref{ineq:bcc D pair explicit}, and \eqref{ineq:hk_m long time}, Lemma \ref{thm:stochastic_fubini} applies. Thus, splitting the mild equation at $s_{k-1}$ and using the semigroup property, we obtain, for $s\ge0$,
		\begin{align*}
			u(s_{k-1}+s,x)
			&=P_{s_{k-1}+s}^*f(x)+\int_0^{s_{k-1}}\int_\X G_{s_{k-1}+s-r}(y,x)\sigma(r,y,u_r(y))W(\mathrm dr,\mathrm dy)\\
			&\quad+\int_{s_{k-1}}^{s_{k-1}+s}\int_\X G_{s_{k-1}+s-r}(y,x)\sigma(r,y,u_r(y))W(\mathrm dr,\mathrm dy)\\
			&=P_s^*\big(u(s_{k-1},\cdot)\big)(x)+\int_0^s\int_\X G_{s-r}(y,x)\sigma(s_{k-1}+r,y,u_{s_{k-1}+r}(y))W\pr{s_{k-1}+\mathrm dr,\mathrm dy}.
		\end{align*}
		Thus $(u(s_{k-1}+s,\cdot))_{s\ge0}$ solves the same shifted equation with initial condition $u(s_{k-1},\cdot)\in\mathcal C^{\mathrm{tem},+}_{\X,h}$ almost surely. It holds for all $x \in \X$ that 
		\begin{align*}
			u(s_{k-1},x) \ge \mathbf{1}_{J_{k-1}} u(s_{k-1},x) \ge  \mathbf{1}_{J_{k-1}}\theta_{k-1} \mathbf{1}_{B_{k-1}}(x) \ge \widetilde{\varphi}_k(x),
		\end{align*}
		where we used the fact that $\widetilde \varphi_{k}$ is supported on $B_{k-1}$. 
		        Set $\mathcal G_s:=\mathcal F_{s_{k-1}+s}$ for $s\ge0$. The shifted noise is a (generalized) Wiener process relative to $(\mathcal G_s)_{s\ge0}$, and the shifted coefficient satisfies the same assumptions. The same contraction, nonnegativity, and truncation arguments extend Theorem~\ref{thm:main2} and Theorem~\ref{thm:comparison}~\eqref{part:main2 weak comparison} to $\mathcal G_0$-measurable $\mathcal C^{\mathrm{tem}}_{\X,h}$-valued initial conditions $g$ satisfying
		\begin{equation}\label{ineq:random_initial_moment}
			\sup_{x\in\X}h(x)^{-\lambda}\norm{g(x)}_{L^p_\Omega}<\infty,
			\qquad \lambda\in(0,1],\quad p\ge2,
		\end{equation}
		with the initial ordering understood in the almost sure sense. The bound \eqref{ineq:random_initial_moment} holds for $u(s_{k-1},\cdot)$ by Corollary~\ref{cor:moment}, and for $\widetilde\varphi_k$ since $0\le\widetilde\varphi_k\le\theta_{k-1}$.
                
        		Therefore, by the comparison principle (Theorem \ref{thm:comparison} \eqref{part:main2 weak comparison}), we see
		\begin{align*}
			u(s_{k-1}+s,x) \ge \mathbf{1}_{J_{k-1}}v_k(s,x),\qquad \forall (s,x) \in\R_+\times \X. 
		\end{align*}
		This implies, on $J_{k-1}$, we have $\mathcal A^c_k \subset \mathcal B_k^c$, combined with \eqref{ineq:strict_pos_one_step_estimates}, we see \eqref{ineq:strict_pos_main_step_claim} holds. 

		Observe that for sufficiently large $Q\in \N$, 
		\begin{align}\label{ineq:strict_pos_last_inclusion}
			\cap_{k = 1}^Q\mathcal A_k \subset \left\{ u_s(x)>0\ \text{on }\brk{\tfrac t2,t}\times B_{a+\tfrac M4}(x_0)\right\}.
		\end{align}
		Indeed, on $\cap_{k = 1}^Q\mathcal A_k$, for each $s \in [t/2,t]$, there is some $Q/2 -1\le k < Q$ so that $s \in [s_{k},s_{k+1}]$, and hence for sufficiently large $Q \in \N$,
		\begin{align*}
			B_{a+ \frac{M}{4}}(x_0) \subset B_{a+ \frac{kM}{Q} - \frac{kM}{2Q^2}}(x_0) \subset B_k,\qquad \frac{Q}{2}-1 \le k <Q. 
		\end{align*}
		Then $\mathcal A_k$ gives $u(s,x) \ge\theta_k>0$ on $B_k$, which implies \eqref{ineq:strict_pos_last_inclusion}.

		Since $J_0=\Omega$ and, for $k\ge2$, $\mathcal A_{k-1} \subset J_{k-1}$ and $\bigcap_{j<k}\mathcal A_j \subset \mathcal A_{k-1}$, we see for all sufficiently large $Q \in \N$,
		\begin{align*}
			1-\bbP\pr{\bigcap_{k=1}^Q\mathcal A_k}
			\ =\ \bbP\pr{\bigcup_{k=1}^{Q}\mathcal A_k^c}
			\ &=\ \sum_{k=1}^{Q}\bbP\pr{\mathcal A_k^c\cap\bigcap_{j<k}\mathcal A_j}\\
			\ &\le\ \sum_{k=1}^{Q}\bbP\pr{\mathcal A_k^c\cap J_{k-1}}
			\ \overset{\eqref{ineq:strict_pos_main_step_claim}}{\le }\ \sum_{k=1}^{Q}\bbP\pr{\mathcal B_k^c}
			\ \le\ Q\,\varepsilon_Q.
		\end{align*}
		whence
		\[
			\bbP\pr{u_s(x)>0\ \text{on }\brk{\tfrac t2,t}\times B_{a+\tfrac M4}(x_0)}\ \overset{\eqref{ineq:strict_pos_last_inclusion}}{\ge}\ \limsup_{Q \uparrow \infty} \bbP\pr{\bigcap_{k=1}^Q\mathcal A_k}\ \ge\ 1-\lim_{Q \uparrow \infty}Q\,\varepsilon_Q \overset{\eqref{ineq:strict_pos_Qepsilon_to_0}}{=} 1.
		\]
		Apply this with $t_n=2^n$, $n\in\mathbb Z$, and $M_m=m$, $m\in\mathbb N$, and intersect the resulting countable family of probability-one events. Since the union of the intervals $[2^{n-1},2^n]$ covers $(0,\infty)$ for $n \in \Z$ and the balls $B_{a+m/4}(x_0)$ exhaust $\X$ for $m \in \N$, we obtain
		\begin{align*}
			\bbP\pr{u(t,x)>0\ \text{for all }(t,x) \in (0,\infty) \times \X } = 1.
		\end{align*}
		This proves Theorem \ref{thm:comparison} \eqref{part:main2 strong comparison}.

		Finally, Theorem \ref{thm:comparison} \eqref{part:main2 strictly positivity} follows by applying \eqref{part:main2 strong comparison} to the solutions with initial conditions $f$ and $0$, since the latter is non-negative by Proposition \ref{prop:non-negative2} under \eqref{cond:pos_zero}.
	\end{proof}

\section*{Acknowledgements}
Much of this work was carried out as part of the Ph.D. thesis of Yifan (Johnny) Yang, submitted to Indiana University in 2025 and supported by the National Science Foundation (DMS-2152103). 
Zhenyao Sun is partially supported by the National Key R\&D Program of China (Grant No. 2023YFA1010102) and the National Natural Science Foundation of China (Grant No. 12301173).
Louis W.-T. Fan is partially supported by National Science Foundation  (DMS-2152103, DMS-2532574 and DMS-2534011).

\section*{AI use disclosure}

The authors used AI tools during the revision of this paper, after the main results and their proofs had been completed. 
Namely,  Claude-5 and Claude-5.5 family were used to
write simulation codes for Figure \ref{fig:pam-stable}. 
GPT-5.6, GPT-6 and DeepSeek V4 were used to
improve English grammar and sentence flow and to help identify typographical errors, inconsistencies in notation, and potential errors in the manuscript. 
The authors reviewed all suggestions and revised the manuscript as appropriate. They take full responsibility for the final content, including the correctness of the mathematical arguments and results and the completeness and accuracy of references to prior work.
	
\newpage
	
	\appendix

	\section{Appendix}\label{sec:appendix}

	\subsection{Proof of Proposition \ref{prop:sub_g_mmd_framework}}
    In this section, we verify the sub-Gaussian heat kernel estimates in Proposition \ref{prop:sub_g_mmd_framework}. Due to the lengthiness of the proof, we break it down into several propositions (Propositions \ref{prop:sub_g_cond_no_noise}, \ref{prop:sub_g_condition_noise} and \ref{prop:sub_g_strict_pos_condition} below), whose proofs will be provided at the end of the section.

	\begin{prop}\label{prop:sub_g_cond_no_noise}
		Let $V,\Psi : [0,\infty) \to [0,\infty)$ be doubling functions satisfying 
		\[
		\frac{R^2}{r^2} \lesssim \frac{\Psi(R)}{\Psi(r)} \lesssim \frac{R\,V(R)}{r\,V(r)},\qquad\text{uniformly in } 0<r\le R.
		\]
		Then $\Psi$ is a scale function.
		Let $(\X,d,m,\mathcal E,\mathcal F)$ be an MMD with heat kernel satisfying $\mathrm{HKE}_{\rm full}(\Psi)$ and having volume growth profile $V$. Fix an arbitrary $x_0 \in \X$ and define weight function
		\[
		h(x) := \exp \pr{d(x_0,x)},\qquad x \in \X.
		\]
		Then the following hold:
		\begin{enumerate}
			\item \label{it_prop_subg_no_noise:1} The MMS $(\X,d,m)$ satisfies \eqref{ineq:entropy bound}, and the weight function $h$ satisfies \eqref{ineq:faster_than_ms} and \eqref{ineq:ref_regular}. 
			\item \label{it_prop_subg_no_noise:2} Condition \eqref{cond:ctem continuous} and the first inequality in condition \eqref{cond:exp_hk_lamda} hold. 
			\item \label{it_prop_subg_no_noise:3} Condition \eqref{cond:time_holder} holds. 
		\end{enumerate}
	\end{prop}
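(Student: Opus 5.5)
The plan is to verify the scale-function claim and items \eqref{it_prop_subg_no_noise:1}--\eqref{it_prop_subg_no_noise:3} one at a time. Each step uses only the volume profile $V$ and the bounds in $\mathrm{HKE}_{\rm full}(\Psi)$. I expect the time-regularity part of \eqref{cond:time_holder} to be the main obstacle.

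\emph{Scale function and item (1).} Doubling of $V$ gives $V(R)/V(r)\lesssim (R/r)^{d_0}$ for $0<r\le R$, where $d_0=\log_2$ of the doubling constant. Combined with \eqref{eq:2doublingfunctions}, this yields $(R/r)^2\lesssim \Psi(R)/\Psi(r)\lesssim (R/r)^{1+d_0}$. So $\Psi$ is a scale function with $\beta_1=2$ and $\beta_2=1+d_0$.

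For \eqref{ineq:entropy bound}, take a maximal $\varepsilon$-separated set in $B_R(x)$. The balls of radius $\varepsilon/2$ around its points are disjoint and lie in $B_{R+1}(x)$. The volume profile (uniform in $x$) then bounds the cardinality by
\[
V(R+1)/V(\varepsilon/2)\lesssim \varepsilon^{-d_0},
\]
and the balls of radius $\varepsilon$ around the same points cover $B_R(x)$.

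For \eqref{ineq:faster_than_ms}, take a maximal $1$-separated net $(x_n)$ in $\X$; the balls $B_1(x_n)$ cover $\X$. The number of net points in the annulus $B_{k+1}(x_0)\setminus B_k(x_0)$ is at most $V(k+2)/V(1/2)\lesssim k^{d_0}$, and each such point has $h(x_n)\ge e^{k}$. Hence $\sum_n h(x_n)^{-\lambda}<\infty$ for every $\lambda>0$. Condition \eqref{ineq:ref_regular} is immediate from the triangle inequality: $h(y)\le e\,h(x)$ whenever $d(x,y)\le 1$.

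\emph{Item (2).} For the first inequality in \eqref{cond:exp_hk_lamda}, note that $h(y)^\lambda\le h(x)^\lambda e^{\lambda d(x,y)}$, so it suffices to show
\[
\sup_{t\le1,\,x}\int G_t(x,y)e^{d(x,y)}m(\mathrm dy)<\infty .
\]
Split the integral into two regions.
\begin{itemize}
\item[(i)] On $\{d(x,y)\le 1\}$, use sub-Markovianity, $\int G_t(x,y)\,m(\mathrm dy)\le 1$, together with $e^{d(x,y)}\le e$.
\item[(ii)] On $\{d(x,y)>1\}$, use the upper bound in $\mathrm{HKE}(\Psi)$ and the definition of $\Psi^*$. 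For any fixed $r>0$ and $t\le1$,
\[
t\Psi^*(b_U R/t)\ge b_U R/r - 1/\Psi(r).
\]
A second lower bound is $t\Psi^*(R/t)\gtrsim (\Psi(R)/t)^{1/(\beta_2-1)}\gtrsim t^{-1/(\beta_2-1)}$ for $R\ge1$. Splitting the exponent in half, the first bound with $r$ small beats $e^{d(x,y)}$. The second kills the polynomial factor $V(k)/V(\Psi^{-1}(t))\lesssim (k/\Psi^{-1}(t))^{d_0}$, which arises when summing over the annuli $\{k\le d(x,y)<k+1\}$.
\end{itemize}

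For \eqref{cond:ctem continuous}, symmetry gives $P_t^*=P_t$, which is Feller and strongly continuous on $\mathcal C^0_\X$; joint continuity of the kernel gives continuity in $x$. Given $f\in\mathcal C^{\mathrm{tem}}_{\X,h}$ and $\lambda>0$, approximate $f$ by $g\in\mathcal C^0_\X$ in $\|\cdot\|_{(\lambda/2)}$, using Lemma~\ref{lem:C_temp_density_0}. The weighted $L^1$ bound just proved, extended via Lemma~\ref{lem:hk longtime}, gives $\|P_t^*(f-g)\|_{(\lambda)}\lesssim\|f-g\|_{(\lambda/2)}$ uniformly on compact time intervals. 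Strong continuity of $t\mapsto P_t g$ in sup norm then yields continuity of $t\mapsto P^*_tf$ in $\|\cdot\|_{(\lambda)}$.

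\emph{Item (3).} For the spatial part, $\mathrm{HKE}(\Psi)$ implies the parabolic Harnack inequality and hence Hölder continuity of the kernel (Barlow--Grigor'yan--Kumagai, Grigor'yan--Hu--Lau):
\[
|G_t(x,y)-G_t(x',y)|\lesssim \left(\frac{d(x,x')}{\Psi^{-1}(t)}\right)^{\theta}\frac{1}{V(\Psi^{-1}(t))}\,e^{-c\,t\Psi^*(c\,d(x,y)/t)}.
\]
Integrating in $y$, using the near-diagonal volume bound, gives an $L^1$ difference $\lesssim (d(x,x')/\Psi^{-1}(t))^\theta\lesssim d(x,x')^\theta t^{-\theta/2}$, since $\Psi^{-1}(t)\gtrsim t^{1/2}$ for $t\le1$ by $\beta_1=2$. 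Interpolating with the trivial bound $2$, via $\min(2,a)\le 2^{1-s}a^{s}$, we may take $\xi_2=s\theta$ and $\beta=s\theta/2<1$.

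For the time part, I would use analyticity of the symmetric Markov semigroup together with the Davies/Grigor'yan bound $|\partial_tG_t(x,y)|\lesssim t^{-1}V(\Psi^{-1}(t))^{-1}e^{-c\,t\Psi^*(c\,d/t)}$. This gives $\|\partial_tG_t(\cdot,x)\|_{L^1}\lesssim t^{-1}$, hence $\|G_t-G_{t'}\|_{L^1}\lesssim |t-t'|/(t\wedge t')$. Interpolating with $2$ again gives \eqref{cond:time_holder} with small exponents, and we take $\beta$ to be the larger of the spatial and temporal exponents. This time-derivative estimate is the step I expect to be hardest. If a ready reference does not apply, I would fall back on the semigroup identity $G_{t'}-G_t=(P_{t'-t}-I)G_t$ combined with the spatial Hölder bound, at the cost of smaller exponents.
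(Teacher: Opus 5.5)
Your proposal is correct. The scale-function claim and item (1) are essentially the paper's argument: the paper quotes the covering bound from \cite{kaenmaki2017measures} and covers annuli by $\lesssim N^b$ unit balls, where you count a maximal separated set. Items (2) and (3), however, take a different route.

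For the weighted $L^1$ bound, the paper quotes the exponential moment estimate \eqref{ineq_CM:sub_g_exponential_moment_bound} from \cite{barlow2012equivalence}. Your annulus computation from the upper bound and the Legendre-transform lower bounds is a self-contained substitute.

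For \eqref{cond:ctem continuous}, the paper proves $\norm{P_tf-f}_{(\lambda)}\to0$ by hand, splitting with $g=e^{-\lambda r/2}f$ and using conservativeness together with \eqref{ineq_CM:sub_g_exponential_moment_bound}. Your density/Feller argument is shorter. It also covers $P_tf\in\mathcal C^{\mathrm{tem}}_{\X,h}$ and continuity at positive times, which the paper treats as routine; it is exactly what the paper does for manifolds in Lemma~\ref{lem:manifold_semigroup}.

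For item (3), the paper avoids pointwise kernel bounds and time derivatives entirely. It applies the oscillation inequality \eqref{ineq_CM:osc_inequality} to the caloric function $u(s,y)=P_sf(y)$ with $\norm{f}_{L^\infty_\X}\le1$ and uses $L^1$--$L^\infty$ duality. This gives $\min\{1,(d(x',x'')/\Psi^{-1}(t/2))^\alpha\}$ and $\min\{1,(\Psi^{-1}(t'-t)/\Psi^{-1}(t/2))^\alpha\}$ at once.

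Your primary temporal route needs a bound on $\abs{\partial_tG_t}$ with sub-Gaussian off-diagonal decay for a general scale function $\Psi$. $L^2$ analyticity alone does not give the $L^1$ bound, and you would have to locate or prove such a decay estimate beyond power-type $\Psi$, so this is the weak spot.

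Your fallback closes the gap and should be the main argument. Write $P_{t+\delta}f-P_tf=(P_\delta-I)P_tf$. The spatial bound gives $\abs{P_tf(y)-P_tf(z)}\lesssim\min\{1,(d(y,z)/\Psi^{-1}(t))^\theta\}\norm{f}_{L^\infty_\X}$. Conservativeness (from \cite{lierl2015scale}) and $\int_\X G_\delta(y,z)d(y,z)^\theta m(\mathrm dz)\lesssim\Psi^{-1}(\delta)^\theta$ (Lemma~\ref{lem_CM:subg_space_time_scaling}) then yield, by duality, $\norm{G_{t+\delta}(x,\cdot)-G_t(x,\cdot)}_{L^1_\X}\lesssim(\Psi^{-1}(\delta)/\Psi^{-1}(t))^\theta$. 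The smaller exponents are harmless, since \eqref{cond:time_holder} only asks for some positive $\xi_1,\xi_2$ and $\beta<1$.

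Similarly, the spatial part does not need the pointwise H\"older bound with an off-diagonal factor, which would itself require combining the Harnack oscillation estimate with the upper bound on a parabolic cylinder. Duality against $\norm{f}_{L^\infty_\X}\le1$ suffices. No interpolation with $2$ is needed there either, because $(d/\Psi^{-1}(t))^\theta\lesssim d^\theta t^{-\theta/2}$ already has exponent $\theta/2<1$.
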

	
	\begin{prop}\label{prop:sub_g_condition_noise}
		Let $V,\Psi$, and $(\X,d,m, \mathcal E, \mathcal F)$ and the function $h$ on $\X$ be as in Proposition \ref{prop:sub_g_cond_no_noise}.  Suppose $\alpha_0 \in (0,1)$. 
		Then the following holds:  
		\begin{enumerate}
			\item \label{it_prop_subg_noise:1} If $\pr{V\pr{\Psi^{-1}(t)}}^{-1} \lesssim t^{-\alpha_0}$ uniformly in $t \in (0,1]$, then the second inequality in \eqref{cond:exp_hk_lamda} and \eqref{cond:hk_uni_bound} hold with $\alpha$ being replaced by $\alpha_0$ and the underlying measure $K$ in the definition of $\mathcal H_K$ being replaced by $K_0$.
			\item \label{it_prop_subg_noise:2} If $\gamma\neq 0$ and it holds uniformly in $t \in (0,1]$ that 
			\begin{align*}
				\int_0^\infty \frac{s^\gamma}{V\pr{\Psi^{-1}(t+s)}} \frac{\mathrm ds}{s} \lesssim t^{-\alpha_0},
			\end{align*}
			then the second inequality in \eqref{cond:exp_hk_lamda} and \eqref{cond:hk_uni_bound} hold with $\alpha$ being replaced by $\alpha_0$, and the underlying measure $K$ in the definition of $\mathcal H_K$ being replaced by $K_\gamma$. 
		\end{enumerate}
	\end{prop}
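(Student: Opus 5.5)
The plan is to treat the two noise kernels separately. Throughout we use the two-sided bounds of $\mathrm{HKE}_{\rm full}(\Psi)$ and the first inequality of \eqref{cond:exp_hk_lamda}, already supplied by Proposition~\ref{prop:sub_g_cond_no_noise}. We also use one elementary consequence of the hypotheses on $V,\Psi$. Combining \eqref{eq:2doublingfunctions} with the lower scale bound $\Psi(R)/\Psi(r)\gtrsim (R/r)^{\beta_1}$ gives $V(R)/V(r)\gtrsim (R/r)^{\beta_1-1}$. Hence $V(\Psi^{-1}(t))^{-1}$ grows at most polynomially in $t^{-1}$ as $t\downarrow0$.

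\emph{A pointwise weighted bound.} The first step is a lemma: for every $\mu\in[0,2]$, uniformly in $t\in(0,1]$ and $x,y\in\X$,
\[
G_t(y,x)\,e^{\mu d(x,y)}\lesssim \frac{1}{V(\Psi^{-1}(t))}.
\]
Since $\beta_2<\infty$, the Legendre transform satisfies $t\Psi^*(b_U d/t)\gtrsim d^{\beta_2/(\beta_2-1)}t^{-1/(\beta_2-1)}$ for $d\ge\Psi^{-1}(t)$. This is superlinear in $d$, so it dominates $\mu d$ uniformly for $t\le1$. For $d\le 1$ the factor $e^{\mu d}$ is simply bounded. Because $h(y)^\lambda\le h(x)^\lambda e^{\lambda d(x,y)}$, the same argument combined with volume doubling also gives $\int G_t(y,x)h(y)^{\mu}m(\mathrm dy)\lesssim h(x)^{\mu}$ for $\mu\le2$, extending the first inequality of \eqref{cond:exp_hk_lamda} slightly.

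\emph{Case $\gamma=0$.} Here $\|f\|_{\mathcal H_{K_0}}^2=\int f^2\,\mathrm dm$. We bound one factor pointwise and the other by its integral:
\[
\int G_t(y,x)^2h(y)^{2\lambda}m(\mathrm dy)\le \sup_y\big(G_t(y,x)h(y)^\lambda\big)\int G_t(y,x)h(y)^\lambda m(\mathrm dy)\lesssim \frac{h(x)^{2\lambda}}{V(\Psi^{-1}(t))}\lesssim t^{-\alpha_0}h(x)^{2\lambda}.
\]
For \eqref{cond:hk_uni_bound}, the same splitting gives
\[
\int_{B_\delta(x)^c}G_\varepsilon^2\,\mathrm dm\le \sup_{d(x,y)\ge\delta}G_\varepsilon(y,x)\lesssim V(\Psi^{-1}(\varepsilon))^{-1}\exp\!\big(-c_U\varepsilon\Psi^*(b_U\delta/\varepsilon)\big).
\]
This tends to zero uniformly in $x$: the exponential factor decays faster than any power of $\varepsilon$, while $V(\Psi^{-1}(\varepsilon))^{-1}$ grows only polynomially by the remark above.

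\emph{Case $\gamma\ne0$.} Set $k_\gamma(y,z):=\int_0^\infty e^{-s}s^{\gamma-1}G_s(y,z)\,\mathrm ds\ge0$ and $F:=G_t(\cdot,x)h^\lambda$. Then $\|F\|_{\mathcal H_{K_\gamma}}^2=\iint F(y)k_\gamma(y,z)F(z)\,m(\mathrm dy)m(\mathrm dz)$. The main obstacle is to absorb the weights $h(y)^\lambda h(z)^\lambda\le h(x)^{2\lambda}e^{\lambda d(x,y)+\lambda d(x,z)}$ while keeping the Chapman--Kolmogorov structure. My first attempt is a kernel comparison $G_t(y,x)e^{\lambda d(x,y)}\lesssim G_{At}(y,x)$ for $t\le1$ and some large $A$. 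On $\{d(x,y)\le1\}$ this follows from the near-diagonal lower bound and doubling. Off the diagonal I would compare the upper exponent $c_Ut\Psi^*(b_Ud/t)$ with the full lower exponent $c_LAt\Psi^*(b_Ld/(At))$, using the $\beta_1,\beta_2$ scaling of $\Psi^*$ and the superlinear gap that absorbs $\lambda d$. Given this comparison, the semigroup property \eqref{eq:G} gives
\[
\|F\|_{\mathcal H_{K_\gamma}}^2\lesssim h(x)^{2\lambda}\int_0^\infty e^{-s}s^{\gamma-1}G_{2At+s}(x,x)\,\mathrm ds\lesssim h(x)^{2\lambda}\int_0^\infty\frac{s^{\gamma}}{V(\Psi^{-1}(2At+s))}\frac{\mathrm ds}{s}\lesssim t^{-\alpha_0}h(x)^{2\lambda},
\]
using the hypothesis at time $2At$ together with doubling of $V$ and $\Psi$ to return to $t$. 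If the comparison fails, the fallback is to split $F$ into $F\mathbf 1_{B_1(x)}$ and $F\mathbf 1_{B_1(x)^c}$. The first piece is handled directly, since its weight is bounded by a constant times $h(x)^\lambda$. For the second piece, its superpolynomially small sup-norm is paired with the bound $\int k_\gamma(y,z)\,m(\mathrm dz)\le\Gamma(\gamma)$ (valid for $\gamma>0$), exactly as in the next step. Finally, for \eqref{cond:hk_uni_bound} with $F=G_\varepsilon(\cdot,x)\mathbf 1_{B_\delta(x)^c}$,
\[
\|F\|^2_{\mathcal H_{K_\gamma}}\le \|F\|_\infty\int F(y)\int k_\gamma(y,z)\,m(\mathrm dz)\,m(\mathrm dy)\le\Gamma(\gamma)\,\|F\|_\infty,
\]
which tends to zero uniformly in $x$ by the off-diagonal estimate from the case $\gamma=0$.
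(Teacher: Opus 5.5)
Your proposal is correct in substance, and for the main estimate in case (2) your primary route is the paper's. The paper proves your kernel comparison as Lemma~\ref{lem_CM:subg_space_time_scaling}, in the scale-invariant form $e^{\lambda d(x,y)/\Psi^{-1}(t)}G_t(x,y)\lesssim G_{at}(x,y)$ with $a$ independent of $\lambda$. It absorbs $\lambda d/\Psi^{-1}(t)$ into half of the upper exponent and then compares $c_Lat\Psi^*(b_Lr/(at))$ with $c_Ut\Psi^*(b_Ur/t)$ via the polynomial scaling of $\Psi$. Chapman--Kolmogorov then gives $G_{2at+s}(x,x)$ exactly as in your display. Your case (1) bound is what the paper's one-line ``direct consequence of the upper bound and Proposition~\ref{prop:sub_g_cond_no_noise}'' amounts to.

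You depart from the paper in \eqref{cond:hk_uni_bound}. The paper uses the moment form of the same lemma, $d(x,y)^kG_t\lesssim\Psi^{-1}(t)^kG_{at}$, together with $\mathbf 1_{B_\delta(x)^{\mathrm c}}\le (d/\delta)^k$, $\Psi^{-1}(t)^k\lesssim t$ and the already established $t^{-\alpha_0}$ bound, which yields the explicit rate $\delta^{-2k}t^{1-\alpha_0}$. You instead use the Schur-type estimate $\|F\|^2_{\mathcal H_{K_\gamma}}\le\Gamma(\gamma)\|F\|_\infty\|F\|_{L^1_\X}$ for $F\ge0$ (and $\|F\|_\infty\|F\|_{L^1_\X}$ for $K_0$), combined with the superpolynomial off-diagonal decay of the upper bound.

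Your ``fallback'' is in fact a complete proof on its own. For the near piece you have monotonicity of $\|\cdot\|_{\mathcal H_{K_\gamma}}$ on nonnegative functions and the identity $\|G_t(\cdot,x)\|^2_{\mathcal H_{K_\gamma}}=\int_0^\infty e^{-s}s^{\gamma-1}G_{2t+s}(x,x)\,\mathrm ds$; for the far piece you have the Schur bound. That route uses only the upper half of $\mathrm{HKE}_{\rm full}(\Psi)$ plus doubling, whereas Lemma~\ref{lem_CM:subg_space_time_scaling} needs the full off-diagonal lower bound. In exchange, the paper obtains weight absorption, moment bounds and \eqref{cond:hk_uni_bound} from a single lemma without any splitting.

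Three slips need fixing.
\begin{enumerate}
\item $V(R)/V(r)\gtrsim(R/r)^{\beta_1-1}$ gives a \emph{lower} bound on $V(\Psi^{-1}(t))^{-1}$, not the upper bound you need. At-most-polynomial growth comes from doubling of $V$ (Lemma~\ref{lem_CM:doubling_function_polynomial_bound}) together with $\Psi^{-1}(t)\gtrsim\sqrt t$ (Lemma~\ref{lem_CM:escape_polynomial_bound}). In case (2) it also follows directly from the hypothesis restricted to $s\le t$.
\item In your comparison sketch, the near-diagonal lower bound only applies for $d(x,y)\le\delta\Psi^{-1}(At)$, a region that shrinks as $t\downarrow0$, so it does not cover $\{d\le1\}$. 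The strip $\delta\Psi^{-1}(At)<d\le1$ must also go through the exponent comparison with the full lower bound, as in the paper.
\item If $\X$ is bounded, the on-diagonal bound is $G_s(x,x)\lesssim V(\Psi^{-1}(s)\wedge\mathrm{diam}(\X))^{-1}$ by \eqref{ineq_CM:volume_all_scales}. You must therefore keep $e^{-s}$ for large $s$ and pick up an extra constant $\Gamma(\gamma)/V(1\wedge\mathrm{diam}(\X))$, as the paper does. Relatedly, note that the hypothesis of (2) forces $\gamma>0$; this is what licenses your use of $\int k_\gamma(y,z)\,m(\mathrm dz)\le\Gamma(\gamma)$.
\end{enumerate}
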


	\begin{prop}\label{prop:sub_g_strict_pos_condition}
		Let $V,\Psi$ and $(\X,d,m,\mathcal E,\mathcal F)$ be as in Proposition \ref{prop:sub_g_cond_no_noise}. There exists a metric $\rho$ that is bi-Lipschitz equivalent to $d$ on $\X$ so that condition \eqref{ineq:instant_propagation} holds under $\rho$.
	\end{prop}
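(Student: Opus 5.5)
We need to prove Proposition \ref{prop:sub_g_strict_pos_condition}: there is a metric $\rho$, bi-Lipschitz equivalent to $d$, under which \eqref{ineq:instant_propagation} holds. The plan is to take $\rho$ geodesic (or at least with the chain/midpoint property) and then use the near-diagonal lower bound of $\mathrm{HKE}(\Psi)$ together with volume doubling.

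\emph{Step 1: choice of $\rho$.} Under $\mathrm{HKE}_{\rm full}(\Psi)$ with doubling volume profile $V$, the space satisfies a chain condition. Indeed, the full lower bound forces $d$ to be comparable to the length-type metric $d_\varepsilon$ obtained from $\varepsilon$-chains. This is the standard argument of Grigor'yan--Telcs / Barlow via $\Psi^*$: the upper and lower off-diagonal exponents must match. Following \cite{murugan2024diffusions}, I will take $\rho$ to be the intrinsic length metric
\[
\rho(x,y)=\inf\Bigl\{\sum_i d(z_{i-1},z_i)\Bigr\},
\]
where the infimum runs over finite chains from $x$ to $y$. The chain condition yields $d\le\rho\lesssim d$. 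Under $\rho$ the space is (approximately) geodesic, so for $r<R$ every point of $B_R(x_0)$ lies within distance $R-r+\delta$ of $B_r(x_0)$. Since $\rho$ and $d$ are bi-Lipschitz, $\mathrm{HKE}(\Psi)$ and the volume profile $V$ persist under $\rho$, with changed constants and with $\Psi$ doubling.

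\emph{Step 2: the lower bound.} Fix $0<r_0<v$ and $x_0$, take $t\in(\varepsilon,2\varepsilon)$, $r\in(r_0,v)$, and $x\in B_{r+v\varepsilon}(x_0)$. For small $\varepsilon$ we have $v\varepsilon\ll\Psi^{-1}(\varepsilon)$, because $\Psi(s)\gtrsim s^{\beta_2}$ near $0$ with $\beta_2\ge 2>1$, which gives $\Psi^{-1}(\varepsilon)\gtrsim\varepsilon^{1/\beta_2}\gg\varepsilon$. Set $\ell:=\delta\Psi^{-1}(t)/2$. Using the geodesic property, pick $z$ with
\[
\rho(z,x)\le v\varepsilon+\ell/2,\qquad B_{\ell/2}(z)\subset B_r(x_0),
\]
which is possible since $r\ge r_0$ is much larger than $\ell$. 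Then every $y\in B_{\ell/2}(z)$ satisfies $\rho(x,y)\le\delta\Psi^{-1}(t)$ for small $\varepsilon$. The near-diagonal lower bound therefore gives
\[
\int_{B_r(x_0)}G_t(y,x)\,m(\mathrm dy)\ \ge\ \frac{C_2\,m(B_{\ell/2}(z))}{m(B_{\Psi^{-1}(t)}(y))}\ \gtrsim\ \frac{V(\ell/2)}{V(\Psi^{-1}(t))}\ \gtrsim\ 1,
\]
using volume doubling together with symmetry $G_t(y,x)=G_t(x,y)$. This is the $\boldsymbol\Theta(1)$ bound, and it is uniform in all the parameters. The bound only applies for radii below $\operatorname{diam}(\X)$; for bounded $\X$ with $r\ge\operatorname{diam}$, the ball $B_r(x_0)=\X$ and the integral equals $P_t\mathbf 1=1$ by conservativeness, which follows from $\mathrm{HKE}$.

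\emph{Step 3: the main obstacle.} The main obstacle is Step 1: justifying that the length metric $\rho$ is bi-Lipschitz to $d$, that is, that the chain condition holds. I would derive it from the full lower bound. For fixed $x,y$ and $t\downarrow0$, the upper and lower estimates involve $t\Psi^*(b\,d/t)$ with distinct constants, and comparing them with the semigroup property along $n$-step chains,
\[
G_t(x,y)\ge\int\!\cdots\!\int\prod G_{t/n},
\]
shows that $d$ must be comparable to $d_\varepsilon$ uniformly in $\varepsilon$. Otherwise the lower bound would contradict the upper bound. If that argument proves delicate, I would instead cite that the construction in \cite{murugan2024diffusions} produces a space already carrying such a geodesic, bi-Lipschitz metric, and simply adopt it as $\rho$.
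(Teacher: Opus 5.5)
Your overall architecture matches the paper's, and Step~2 is correct. It is essentially Lemma~\ref{prop:on_diagonal_to_instant_propagation}: move along a geodesic from $x$ toward $x_0$, place a ball of radius $\asymp\Psi^{-1}(t)$ inside $B_r(x_0)$ within $\delta\Psi^{-1}(t)$ of $x$, then apply the near-diagonal lower bound and doubling, using $t/\Psi^{-1}(t)\to0$.

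The gap is Step~1. The paper does not prove it but cites \cite[Theorem 2.11]{murugan2020length} and \cite[Proposition A.1]{kajino2020singularity} for a geodesic metric bi-Lipschitz to $d$. Your self-contained version does not work as written.

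\emph{The definition of $\rho$.} The displayed $\rho$ has no mesh restriction, so by the triangle inequality it is just $d$, which need not be geodesic. You need $d_\varepsilon$ with the constraint $d(z_{i-1},z_i)<\varepsilon$ and $\rho:=\lim_{\varepsilon\downarrow0}d_\varepsilon$. For this $\rho$, the bound $d\le\rho\le Cd$ is exactly the chain condition.

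\emph{Geodesicity.} Even for the corrected $\rho$, ``(approximately) geodesic'' is not automatic. Near-optimal $\varepsilon$-chains give midpoints for $d_\varepsilon$, not for $\rho$. You need a compactness argument, using that closed balls are compact by completeness and doubling. Take near-midpoints $z_\varepsilon$ of near-optimal $\varepsilon$-chains. Using $d_{\varepsilon'}\le d_\varepsilon$ for $\varepsilon<\varepsilon'$ and $d_{\varepsilon'}\le Cd$, a limit point $z$ satisfies $\rho(x,z),\rho(z,y)\le\frac12\rho(x,y)$. Completeness then gives a geodesic.

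\emph{The chain condition.} Your proposed derivation points the wrong way. Using the Chapman--Kolmogorov identity with \emph{lower} bounds on the factors only produces lower bounds in terms of the chain distance $d_\varepsilon\ge d$. These are weaker than the assumed full lower bound in terms of $d$, so no contradiction can arise. What you need is an \emph{upper} bound of the form $\exp\bigl(-c\,t\Psi^*(c'd_\varepsilon(x,y)/t)\bigr)$, up to controllable errors, as $t\downarrow0$. One source is path continuity of the diffusion together with exit-time estimates. Another is the sub-Gaussian upper estimate applied to every factor of the Chapman--Kolmogorov chain: split off chains with a step $\ge\varepsilon$ and use convexity of $\Psi^*$ on the rest. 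Comparing such an upper bound with the full lower bound is what forces $d_\varepsilon\lesssim d$.

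\emph{The fallback.} Adopting the metric from the construction in \cite{murugan2024diffusions} is not valid. The proposition concerns an arbitrary MMD satisfying $\mathrm{HKE}_{\rm full}(\Psi)$ with volume profile $V$, not only the spaces built there.

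Replace Step~1 by the two citations above, or carry out these three repairs, and the proof is complete.
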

	\begin{remark}\label{rmk:bi_lip_metric_remark}
		Since $\rho$ and $d$ are bi-Lipschitz equivalent, the heat kernel has similar $\mathrm{HKE}_{\rm full}(\Psi)$ under the metric $\rho$ and the conclusions of Propositions \ref{prop:sub_g_cond_no_noise} and \ref{prop:sub_g_condition_noise} hold under the metric $\rho$ as well. 
	\end{remark}
	\begin{proof}[Proof of Proposition \ref{prop:sub_g_mmd_framework}]
		From Propositions \ref{prop:sub_g_cond_no_noise}--\ref{prop:sub_g_condition_noise} and the assumptions, we see all conditions of Theorem \ref{thm:main2} hold. Hence the well-posedness part of the result holds. In addition, by Proposition \ref{prop:sub_g_strict_pos_condition} and Remark \ref{rmk:bi_lip_metric_remark}, we see all conditions of Theorem \ref{thm:comparison} hold, which implies the strict positivity property. 
	\end{proof}
    
	We shall prove Propositions \ref{prop:sub_g_cond_no_noise}--\ref{prop:sub_g_strict_pos_condition} in the rest of this subsection. 
	We will need the following elementary results.
	\begin{lemma}\label{lem_CM:doubling_function_polynomial_bound}
		Suppose $V:\R_+\to \R_+$ is a doubling function as in \eqref{ineq:doubling_function}. Then there exists a constant $D>1$ such that
		\begin{align*}
			\frac{V(R)}{V(r)} \le D \pr{\frac{R}{r}}^{\log_2(D)},\qquad \text{ for }0<r\le R.
		\end{align*}
			\end{lemma}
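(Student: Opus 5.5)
Iterate the doubling inequality along dyadic scales. By \eqref{ineq:doubling_function} there is a constant $C_V\ge 1$ with $V(2s)\le C_V V(s)$ for every $s>0$. Set $D:=2C_V>1$, so that $\log_2 D=1+\log_2 C_V\ge 1$. We use $D$ rather than $C_V$ itself because $C_V$ might equal $1$; the choice $D>1$ also leaves room to absorb one extra factor.

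Given $0<r\le R$, let $k\ge 0$ be the smallest integer with $2^k r\ge R$. Then $2^{k-1}r<R$ when $k\ge1$, which gives $2^k<2R/r$. When $k=0$ we have $R=r$ and the claim is trivial, since $D\ge 1$.

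For $k\ge 1$, applying the doubling bound $k$ times gives $V(2^k r)\le C_V^k V(r)$. Since $V$ is an increasing homeomorphism of $[0,\infty)$ and $R\le 2^k r$, monotonicity yields
\[
\frac{V(R)}{V(r)}\le C_V^k=(2^k)^{\log_2 C_V}\le \Big(\frac{2R}{r}\Big)^{\log_2 C_V}=C_V\Big(\frac{R}{r}\Big)^{\log_2 C_V}.
\]

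Finally, because $C_V\le D$, $R/r\ge 1$ and $\log_2 C_V\le \log_2 D$, the right-hand side is at most $D\,(R/r)^{\log_2 D}$, which is the claimed bound.

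The only point needing care is the monotonicity of $V$. It holds because a homeomorphism of $[0,\infty)$ onto itself must fix $0$ and be increasing. With that in place there is no real obstacle.
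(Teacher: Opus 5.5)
Your proof is correct and follows the paper's argument. Both pick the dyadic exponent with $2^{k-1}<R/r\le 2^k$, iterate the doubling bound $k$ times, use monotonicity of $V$, and conclude $V(R)/V(r)\le C^k\le C\,(R/r)^{\log_2 C}$. The only cosmetic difference is that you set $D=2C_V$ to guarantee $D>1$, whereas the paper simply takes the doubling constant itself (enlarged if necessary) as $D>1$.
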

	\begin{proof}
		Let $0<r\le R$, choose $n \in \N\cup\{0\}$ so that $2^{n-1} < \frac{R}{r}\le 2^n$, then by \eqref{ineq:doubling_function}, there exists $D>1$ so that 
		\begin{align*}
			V(R) \le V(2^n r) \le D^n V(r).
		\end{align*}
		Since $D^n = 2^{n \log_2(D)}$, we get for $0<r\le R$, 
		\begin{align*}
			\frac{V(R)}{V(r)} \le D^n \le D\pr{\frac{R}{r}}^{\log_2(D)}.
		\end{align*}
	\end{proof}
	
	\begin{lemma}\label{lem_CM:escape_polynomial_bound}
		Let $V,\Psi$ be as in Remark \ref{rem:sub-g-general}. Then there exists $\beta\ge 1$ so that 
		\begin{align*}
			\frac{V(R)}{V(r)} \lesssim \pr{\frac{R}{r}}^\beta,\qquad \text{ uniformly in }0<r\le R.
		\end{align*}
		In addition, for any $T>0$, it holds uniformly in $t \in (0,T]$ that 
		\begin{align*}
			\sqrt{t} \lesssim \Psi^{-1}(t) \lesssim t^{\frac{1}{\beta+1}}.
		\end{align*}
	\end{lemma}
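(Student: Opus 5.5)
The plan is to get the volume bound straight from Lemma~\ref{lem_CM:doubling_function_polynomial_bound}, and then to compare $\Psi$ at $r=\Psi^{-1}(t)$ with $\Psi$ at a fixed reference radius tied to $T$.

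\emph{Step 1 (volume bound).} Since $V$ is doubling, Lemma~\ref{lem_CM:doubling_function_polynomial_bound} gives $D>1$ with $V(R)/V(r)\le D(R/r)^{\log_2 D}$ for $0<r\le R$. Set $\beta:=\max\{1,\log_2 D\}\ge 1$. Since $R/r\ge1$, we have $(R/r)^{\log_2 D}\le (R/r)^{\beta}$, which is the first claim.

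\emph{Step 2 (two-sided power bounds on $\Psi$).} Inserting Step~1 into the right-hand inequality of \eqref{ineq:sub_g_escape_vol_constrain} gives
\[
\frac{R^2}{r^2}\lesssim\frac{\Psi(R)}{\Psi(r)}\lesssim\Big(\frac{R}{r}\Big)^{\beta+1},\qquad 0<r\le R.
\]

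\emph{Step 3 (inverting).} Fix $T>0$ and set $R_T:=\Psi^{-1}(T)$, which is well defined and positive because $\Psi$ is an increasing homeomorphism of $[0,\infty)$. For $t\in(0,T]$, put $r:=\Psi^{-1}(t)$. Monotonicity gives $r\le R_T$, so Step~2 applies with $R=R_T$, where $\Psi(R_T)/\Psi(r)=T/t$.
\begin{itemize}
\item The lower inequality gives $(R_T/r)^2\lesssim T/t$, so $r\gtrsim R_T\,T^{-1/2}\,t^{1/2}$.
\item The upper inequality gives $T/t\lesssim (R_T/r)^{\beta+1}$, so $r\lesssim R_T\,T^{-1/(\beta+1)}\,t^{1/(\beta+1)}$.
\end{itemize}
The constants depend only on $T$, the constants in \eqref{ineq:sub_g_escape_vol_constrain} and $D$, so $\sqrt t\lesssim\Psi^{-1}(t)\lesssim t^{1/(\beta+1)}$ uniformly in $t\in(0,T]$.

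There is no real obstacle. The only points needing care are that the comparison has to be anchored at the fixed radius $R_T$, so that the implicit constants depend only on $T$, and that $\beta$ is taken to be at least $1$ as the statement requires.
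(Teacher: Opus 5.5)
Your proposal is correct and follows the paper's proof essentially step for step. You combine the doubling lemma with \eqref{ineq:sub_g_escape_vol_constrain} to get $(R/r)^2\lesssim\Psi(R)/\Psi(r)\lesssim(R/r)^{\beta+1}$, then anchor at $\Psi^{-1}(T)$ and invert. The only cosmetic difference is how $\beta\ge1$ is obtained. The paper takes $\beta=\log_2 D$ and deduces $\beta\ge1$ from that two-sided chain, since it forces $\beta+1\ge2$; you impose it directly by setting $\beta:=\max\{1,\log_2 D\}$. Both are fine.
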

	\begin{proof}
		By \eqref{ineq:sub_g_escape_vol_constrain} and Lemma \ref{lem_CM:doubling_function_polynomial_bound}, we see there exist $C,C'>1$ and $\beta>0$ so that 
		\begin{align*}
			C^{-1} \pr{\frac{R}{r}}^2 \le \frac{\Psi(R)}{\Psi(r)} \le C'\frac{RV(R)}{rV(r)} \le C\pr{\frac{R}{r}}^{\beta+1},\qquad \text{for }0<r\le R. 
		\end{align*}
		This implies $\beta \ge 1$. It also implies that for $0<t\le T$, 
		\begin{align*}
			C^{-1} \pr{\frac{T}{t}}^{\frac{1}{\beta+1}} \le \frac{\Psi^{-1}(T)}{\Psi^{-1}(t)} \le C \pr{\frac{T}{t}}^{1/2}.
		\end{align*}
		We may now fix any $T>0$ and multiply this inequality by $\Psi^{-1}(t)$ to see the desired result.
	\end{proof}

	Note that, in both cases $\text{diam}(\X)<\infty$ and $\text{diam}(\X)=\infty$, the volume growth profile together with $\mathrm{HKE}_{\rm full}(\Psi)$ implies
	\begin{align}\label{ineq_CM:volume_all_scales}
		m\pr{B_r(x)} \asymp V\pr{r\wedge \text{diam}(\X)},\qquad \text{uniformly in }x \in \X,\, r>0.
	\end{align}

	\begin{proof}[Proof of Proposition \ref{prop:sub_g_cond_no_noise}]
		For \eqref{it_prop_subg_no_noise:1}, it is known that when a metric measure space $(\X,d,m)$ has volume growth profile $V$ for a doubling function $V$, then by \eqref{ineq_CM:volume_all_scales} a stronger version of \eqref{ineq:entropy bound} holds (cf. \cite[pp.~167--168]{kaenmaki2017measures}): there exists $b >0$ so that
		\begin{align}\label{ineq_CM:entropy_grwoth_sub_g_case}
			D_r\pr{B_R(x)} \lesssim \pr{\frac{r}{R}}^{-b},\qquad \text{uniformly in }x \in \X, \, 0<r<R.
		\end{align}
		It can be easily checked that condition \eqref{ineq:ref_regular} holds for exponential functions.
		For \eqref{ineq:faster_than_ms}, we see from \eqref{ineq_CM:entropy_grwoth_sub_g_case} by setting $r = 1$ that for each $N \in \N$, the ball $B_N(x_0)$, hence $A_N:= B_{N}(x_0) \backslash B_{N-1}(x_0)$, can be covered by $\lfloor CN^b\rfloor$ unit balls, where $C\ge1$ does not depend on $N$, denote their centers as $\{x^N_i\}_{i = 1}^{\lfloor CN^b\rfloor}$.
		Now define $I_N:=\{x_i^N: A_N \cap B_1(x_i^N) \neq \emptyset\}$.
		Clearly,
		\begin{align*}
			A_N \subset\bigcup_{x_i \in I_N}B_1(x_i),\qquad \abs{I_N} \lesssim N^b \text{ uniformly in }N.
		\end{align*}
		Let $I = \cup_{N =1}^\infty I_N$, then 
		\begin{align*}
			\X = \bigcup_{N \in \N}A_N \subset \bigcup_{N \in \N}\bigcup_{x_i \in I_N}B_1(x_i) = \bigcup_{x_i \in I} B_1(x_i).
		\end{align*}
		Now consider 
		\begin{align*}
			\sum_{x_i \in I} \exp \pr{-d(x_0,x_i)} = \sum_{N = 1}^\infty \sum_{x_i \in I_N} \exp \pr{-d(x_0,x_i)}\lesssim\sum_{N = 1}^\infty e^{-(N-2)}N^b < \infty,
		\end{align*}
		which gives \eqref{ineq:faster_than_ms}. 
		
		For \eqref{it_prop_subg_no_noise:2}, we see from \cite[Lemma 4.8 \& Theorem 3.1]{barlow2012equivalence}, there exist $c_1,c_2>0$ so that for each $k \in \N$, 
		\begin{align}\label{ineq_CM:sub_g_exponential_moment_bound}
			\int_\X \exp \left(c_1 \frac{d(x,y)}{\Psi^{-1}(t/k)} \right)G_t(x,y) m(\mathrm dy) \le c_2^k,\qquad\text{ for all }x \in \X,\, t>0. 
		\end{align}
		Note that for each $\lambda >0$, we may select a sufficiently large $k_\lambda  \ge 1$ so that $\frac{c_1}{\Psi^{-1}(t/k_\lambda)} \ge \lambda$ for all $t \in (0,1]$. Hence, consider for $t \in (0,1]$ that 
		\begin{align*}
			e^{-\lambda d(x_0,x)}\int_\X e^{\lambda d(x_0,y)}G_t(x,y) m(\mathrm dy)&\le \int_\X e^{\lambda d(x,y)}G_t(x,y)m(\mathrm dy)\\
			&\le \int_\X \exp \left(c_1 \frac{d(x,y)}{\Psi^{-1}(t/k_\lambda)} \right) G_t(x,y) m(\mathrm dy)\\
			&\le c_2^{k_\lambda},
		\end{align*}
		which implies the first inequality of \eqref{cond:exp_hk_lamda}. 
		
		It remains to show \eqref{cond:ctem continuous}, that is, for any $f \in \mathcal C^{\mathrm{tem}}_{\X,h}$, 
		\begin{align}\label{ineq_CM:ctemp_continuous_subg}
			\lim_{t \downarrow 0}\norm{P_tf - f}_{(\lambda)}	 = 0,\qquad \text{ for all } \lambda > 0. 
		\end{align}
		Let us denote $r(x):=d(x_0,x)$ for $x \in \X$, then $e^{-\lambda r(x)} f(x) \in \mathcal C^0_\X$ for any $\lambda >0$, hence uniformly continuous on $\X$. Next, we note that by \eqref{ineq_CM:volume_all_scales} and \cite[Theorem 3.2]{lierl2015scale}, we have $\int_\X G_t(x,y)m(\mathrm dy) = 1$ for all $x \in \X $ and $t>0$.
		
		Let $\lambda >0$ be arbitrary but fixed,  denote $g(x):= e^{-\lambda r(x)/2}f(x)$ and consider
		\begin{align*}
			P_tf(x) - f(x)&=\int_\X G_t(x,y) \pr{g(y)e^{\lambda r(y)/2} - g(x)e^{\lambda r(x)/2}}m(\mathrm dy)\\
			&=\int_\X G_t(x,y) \pr{\bigl(g(y)-g(x)\bigr)e^{\lambda r(y)/2}
				+
				g(x)\pr{e^{\lambda r(y)/2}-e^{\lambda r(x)/2}}}m(\mathrm dy).
		\end{align*}
		Hence 
		\begin{align*}
			e^{-\lambda r(x)}\abs{P_tf(x) - f(x)} \le A_t(x)+B_t(x)
		\end{align*}
		where 
		\begin{align*}
			A_t(x) &:= \int_\X G_t(x,y)e^{-\lambda r(x)}\abs{g(y) - g(x)}e^{\lambda r(y)/2}m(\mathrm dy)\\
			B_t(x) &:= \int_\X G_t(x,y) e^{-\lambda r(x)}\abs{g(x)}\, \abs{e^{\lambda r(y) / 2} - e^{\lambda r(x) /2}} m(\mathrm dy).
		\end{align*}
		We will consider $B_t(x)$ first by noting 
		\begin{align*}
			e^{-\lambda r(x)}\abs{e^{\lambda r(y)/2} - e^{\lambda r(x)/2}} = e^{-\lambda r(x)/2}\abs{e^{\lambda (r(y) - r(x))/2} -1} \le e^{-\lambda r(x)/2}\abs{e^{\lambda d(x,y)/2}-1}
		\end{align*}
		Therefore, 
		\begin{align*}
			B_t(x) &\le \abs{g(x)}e^{-\lambda r(x)/2}\int_\X G_t(x,y) \pr{e^{\lambda d(x,y)/2}-1}m(\mathrm dy)\\
			&\mathrel{\le} \norm{g}_{L^\infty_\X}\int_\X G_t(x,y) \pr{e^{\lambda d(x,y)/2}-1}m(\mathrm dy)
		\end{align*}
		Let $c_1 > 0$ be as in \eqref{ineq_CM:sub_g_exponential_moment_bound} and denote $\eta_t:= c_1/ \Psi^{-1}(t)$.
		Let $\varepsilon >0$ be arbitrary but fixed, then choose $\delta >0$ so that $e^{\lambda \delta/2} - 1 < \varepsilon $.
		Consider
		\begin{align*}
			\int_\X G_t(x,y) \pr{e^{\lambda d(x,y) /2} -1}m(\mathrm dy)  &= \pr{\int_{d(x,y) \mathrel{<} \delta} + \int_{d(x,y) \ge \delta}} G_t(x,y) \pr{e^{\lambda d(x,y) /2} -1}m(\mathrm dy)\\
			&:=I_1(t,x) +I_2(t,x). 
		\end{align*}
		Note that $I_1(t,x) < \varepsilon $ for all $t>0$ and $x \in \X$.
		Now, let $t_0>0$ be sufficiently small so that $\eta_t > \lambda$ for $0< t< t_0$.
		Hence for $d(x,y) \mathrel{\ge} \delta$ and $0<t<t_0$, we have
		\begin{align*}
			e^{\lambda d(x,y)/2} = e^{-(\eta_t -\lambda/2)\delta}e^{(\eta_t - \lambda/2) \delta}e^{\lambda d(x,y)/2} \mathrel{\le} e^{-(\eta_t - \lambda/2)\delta}e^{\eta_t d(x,y)}.
		\end{align*}
		Therefore, for all $0<t<t_0$ 
		\begin{align*}
			\sup_{x \in \X}I_2(t,x) &\le e^{-(\eta_t - \lambda/2)\delta} \sup_{x\in \X} \int_\X G_t(x,y) e^{\eta_t d(x,y)} m(\mathrm dy)\\
			&\overset{\eqref{ineq_CM:sub_g_exponential_moment_bound}}{\le} e^{-(\eta_t-\lambda/2)\delta}c_2.
		\end{align*}
		Hence 
		\begin{align*}
			\mathop{\limsup}\limits_{t \downarrow 0} \sup_{x \in \X}B_t(x) \le \norm{g}_{L^\infty_\X}\mathop{\limsup}\limits_{t \downarrow 0} \sup_{x\in \X} \pr{I_1(t,x) +I_2(t,x)} \le \norm{g}_{L^\infty_\X}\varepsilon,
		\end{align*}
		since $\varepsilon >0$ can be arbitrarily small, we see the limit is zero.
		
		For ${A}_t(x)$, for any $\varepsilon > 0$,  we can choose sufficiently small $\delta \mathrel{\in} (0,1]$ so that $\abs{g(x) - g(y)} < \varepsilon$ if $d(x,y) < \delta$. Hence, 
		\begin{align*}
			A_t(x) &\le \pr{\int_{d(x,y) < \delta} +\int_{d(x,y) \ge \delta}}G_t(x,y) \abs{g(y)-g(x)}e^{-\lambda r(x) +\lambda r(y)}{m(\mathrm dy)}\\
			&\le \pr{\int_{d(x,y) < \delta} +\int_{d(x,y) \ge \delta}}G_t(x,y) \abs{g(y)-g(x)}e^{\lambda d(x,y)}m(\mathrm dy)\\
			&:= I_3(t,x)+I_4(t,x).
		\end{align*}
		Note that $I_3(t,x) \le e^{\lambda}\varepsilon$ for all $t>0,\, x \in \X$. In addition, since $\sup_{x \in \X} \abs{g(x)} < \infty$, it follows from the same argument as in $I_2$ that 
		\begin{align*}
			\lim_{t \downarrow 0} \sup_{x \in \X} I_4(t,x) = 0. 
		\end{align*}
		Hence, $\mathop{\limsup}_{t \downarrow 0} \sup_{x\in \X} A_t(x) \mathrel{\le} e^{\lambda}\varepsilon$, and since $\varepsilon>0$ can be arbitrarily small, we see the limit is zero. Finally, collecting all terms, we see \eqref{ineq_CM:ctemp_continuous_subg} holds.  
		
		For \eqref{it_prop_subg_no_noise:3}, we let $L^{\infty,c}_\X$ denote the collection of $L^\infty_\X$ functions with compact essential support. Note by \cite[Theorem 3.1 \& Corollary  4.2 \& Corollary 4.5]{barlow2012equivalence} (also see \cite[p. 1122]{barlow2012equivalence}), there exist $\alpha, \delta \in (0,1)$ and $C>0$ so that for any $(t,x) \in (0,\infty)\times \X$, $0< R< \Psi^{-1}(t)$,  $f \in L^{\infty,c}_\X$, the function $u(s,y):= P_{s}f(y)$ for $s>0$ and $y \in \X$ satisfies
		\begin{align}\label{ineq_CM:osc_inequality}
			\abs{u(s_1,y_1) - u(s_2,y_2)} \le C \pr{\frac{\Psi^{-1}\pr{\abs{s_1-s_2}} +d(y_1,y_2)}{R}}^\alpha \sup_{(s,y) \in D((t,x), R)} \mathopen{\lvert}u(s,y)\mathclose{\rvert}
		\end{align}
		for all $(s_1,y_1),(s_2,y_2) \in D((t,x),\delta R)$, where $D((t,x),r):= (t-\Psi(r), t]\times B_r(x)$ for $t\in \R$, $x \in \X$ and $r>0$.
		
		Recall $\delta, \alpha \in (0,1)$ in \eqref{ineq_CM:osc_inequality}, we note by \cite[p.1126 Proof of Lemma 4.6]{barlow2012equivalence}, it holds uniformly in $t>0$, $x',x'' \in \X$ with $d(x',x'') \le \delta \Psi^{-1}(t/2)$ and $f \in L^{\infty,c}_\X$,
		\begin{align*}
			\abs{\int_\X \pr{G_t(x',y) - G_t(x'',y)} f(y)m(\mathrm dy)} \lesssim \pr{\frac{d(x',x'')}{\Psi^{-1}(t/2)}}^\alpha \norm{f}_{L^\infty_\X}.
		\end{align*}
		On the other hand, if $\norm{f}_{L^\infty_\X} \le 1$, then by conservativeness, the left hand side is bounded above by $2$. Hence, 
		\begin{align*}
			\norm{G_t(x',\cdot) - G_t(x'',\cdot)}_{L^1_\X}&= \sup_{f \in L^{\infty,c}_\X\atop \norm{f}_{L^\infty_\X} \le 1 }\abs{\int_\X \pr{G_t(x',y) - G_t(x'',y)} f(y)m(\mathrm dy)}\\
			&\lesssim \min\left\{1,\pr{\frac{d(x',x'')}{\Psi^{-1}(t/2)}}^\alpha  \right\}.
		\end{align*}
		Now let $x \in \X$ be arbitrary and let $0<t< t'< \infty$. Suppose first that 
		$$
		t'-t < \Psi(\delta \Psi^{-1}(t/2)),
		$$
		then $(t,x),(t',x) \in D\pr{(t^{\prime},x),\delta \Psi^{-1}(t/2)}$. By \eqref{ineq_CM:osc_inequality}, we see it holds uniformly in   $f \in L^{\infty,c}_\X$ with $\norm{f}_{L^\infty_\X} \le 1$, $0<t'-t<\Psi(\delta\Psi^{-1}(t/2))$ and $x \in \X$ that
		\begin{align*}
			\abs{P_tf(x) - P_{t'}f(x)}\lesssim\pr{\frac{\Psi^{-1}\pr{\abs{t-t'}}}{\Psi^{-1}(t/2)}}^\alpha.
		\end{align*}
		If instead $t'-t\ge  \Psi\pr{\delta \Psi^{-1}(t/2)}$, by monotonicity of $\Psi^{-1}$, 
		\begin{align*}
			\Psi^{-1}\pr{t'-t} \ge \delta \Psi^{-1}(t/2),
		\end{align*}
		which implies 
		\begin{align*}
			\delta^\alpha\le \pr{\frac{\Psi^{-1}(t'-t)}{\Psi^{-1}(t/2)} }^\alpha.
		\end{align*}
		Hence, for any $f \in L^{\infty,c}_\X$ with $\norm{f}_{L^\infty_\X} \le 1$, $x \in \X$ and $\Psi\pr{\delta \Psi^{-1}(t/2)}\le t'-t$,
		\begin{align*}
			\abs{P_tf(x) - P_{t'}f(x)} \le 2\mathrel{\le}2\delta^{-\alpha} \pr{\frac{\Psi^{-1}(t'-t)}{\Psi^{-1}(t/2)} }^\alpha.
		\end{align*}
		Therefore, it holds uniformly in $0<t< t'< \infty$ and $x \in \X$ that 
		\begin{align*}
			\norm{G_t(x,\cdot) - G_{t'}(x,\cdot)}_{L^1_\X} &= \sup_{f \in L^{\infty,c}_\X\atop \norm{f}_{L^\infty_\X} \le 1}\abs{P_tf(x) -P_{t'}f(x)}\\
			&\lesssim \min\left\{ 1, \pr{\frac{\Psi^{-1}(t'-t)}{\Psi^{-1}(t/2)} }^\alpha\right\}.
		\end{align*}
		Finally, we see \eqref{it_prop_subg_no_noise:3} holds by Lemma \ref{lem_CM:escape_polynomial_bound} and the fact that $\alpha \in (0,1]$. 
	\end{proof}

	We will need the following elementary result.
	\begin{lemma}\label{lem_CM:subg_space_time_scaling}
		There exists $a>0$ so that for each $\lambda>0$, 
		\begin{align*}
			\exp \pr{\lambda \frac{d(x,y)}{\Psi^{-1}(t)}}G_t(x,y) \lesssim G_{at}(x,y),
		\end{align*}
		uniformly in $t>0$, $x,y \in \X$. Furthermore, it holds uniformly in $t>0,x,y \in \X$ that
		\begin{align*}
			d(x,y)^\lambda G_{t}(x,y) \lesssim \Psi^{-1}(t)^\lambda G_{at}\left( x,y\right).
		\end{align*}
	\end{lemma}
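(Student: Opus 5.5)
Both claims are to be read off the two-sided bounds in $\mathrm{HKE}_{\rm full}(\Psi)$ (upper bound with constants $c_U,b_U$, lower bound with $c_L,b_L$), together with the volume profile \eqref{ineq_CM:volume_all_scales} and the polynomial comparability of $\Psi$ from Lemma~\ref{lem_CM:escape_polynomial_bound}. The plan is to bound $G_t(x,y)$ from above by a prefactor times a decaying exponential, and $G_{at}(x,y)$ from below by a comparable prefactor times a more slowly decaying exponential. The gap between the two exponentials, after choosing $a$ large, should absorb $\exp(\lambda d/\Psi^{-1}(t))$.

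\emph{Step 1: estimates on $\Psi^*$.} With $r=d(x,y)$, I will use the standard facts for a scale function with exponents $1<\beta_1\le\beta_2$. First, $t\Psi^*(r/t)$ is comparable to a function of $r/\Psi^{-1}(t)$ alone. Second, it vanishes when $r\lesssim\Psi^{-1}(t)$. Third, it grows at least like $(r/\Psi^{-1}(t))^{\beta_1/(\beta_1-1)}$ when $r\ge\Psi^{-1}(t)$. Since $\beta_1/(\beta_1-1)>1$, this growth is superlinear in $r/\Psi^{-1}(t)$. The comparison I need between times $t$ and $at$ is
\[
c_L\,at\,\Psi^*\!\Big(b_L\frac{r}{at}\Big)\le \frac{c_U}{2}\,t\,\Psi^*\!\Big(b_U\frac{r}{t}\Big)+C,
\]
with $C$ independent of $t,x,y$. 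To obtain it, I first use $s\Psi^*(r/s)$ decreasing in $s$ on the left side. Then I exploit that $\Psi^{-1}(at)\ge c\,a^{1/\beta_2}\Psi^{-1}(t)$, so enlarging $a$ shrinks the left side's scaled argument by a large factor. Choosing the constant $b'$ to compensate for the mismatch between $b_L$ and $b_U$ is the main obstacle. I would handle it through the scaling property $\Psi^*(\kappa s)\asymp$ a power of $\kappa$ times $\Psi^*(s)$ for large arguments, which follows from the two-sided bounds on $\Psi$.

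\emph{Step 2: prefactors and the first claim.} By \eqref{ineq_CM:volume_all_scales} and the doubling of $V$, $m(B_{\Psi^{-1}(at)}(x))\lesssim m(B_{\Psi^{-1}(t)}(x))$, with constant depending only on $a$. It follows that $G_t(x,y)\lesssim G_{at}(x,y)\exp(-\tfrac{c_U}{2}t\Psi^*(b_Ur/t))$. Superlinearity of the remaining exponent then gives, for every $\lambda$, $\sup_{u\ge0}\big(\lambda u-c\,u^{\beta_1/(\beta_1-1)}\big)<\infty$, and the region $u\lesssim1$ is trivial. This yields the first claim with constant depending on $\lambda$ but with $a$ independent of $\lambda$.

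\emph{Step 3: the second claim.} I use $d^\lambda\le \lambda^\lambda e^{d}\cdot(\text{const})$ applied to $u=d/\Psi^{-1}(t)$, that is, $u^\lambda\lesssim_\lambda e^{u}$. The first claim with $\lambda=1$ then gives $d(x,y)^\lambda G_t\lesssim\Psi^{-1}(t)^\lambda G_{at}$.
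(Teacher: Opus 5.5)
Your outline is correct and has the same architecture as the paper's proof. You bound $G_t$ above with exponent $c_Ut\Psi^*(b_Ur/t)$ and $G_{at}$ below with exponent $c_Lat\Psi^*(b_Lr/(at))$. You then choose $a$, independent of $\lambda$, so that the latter is at most $\frac{c_U}{2}t\Psi^*(b_Ur/t)$, compare the volume prefactors by doubling, absorb $e^{\lambda d(x,y)/\Psi^{-1}(t)}$ into the spare half of the exponent, and use $u^\lambda\lesssim_\lambda e^{\lambda u}$ for the second claim.

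The paper differs in how it handles $\Psi^*$: it never uses asymptotics of $\Psi^*$ and works directly inside the supremum. Write $c_Lat\Psi^*(b_Lr/(at))=\sup_{s>0}\bigl(c_Lb_Lr/s-c_Lat/\Psi(s)\bigr)$. The substitution $s\mapsto s\,c_Lb_L/(c_Ub_U)$, together with $\Psi(\theta s)\le C\theta^{\beta+1}\Psi(s)$ for $\theta\ge1$, gives your comparison exactly, with no additive constant, once $a$ is large. Inserting the single test point $s=\frac{b_Uc_U}{2\lambda}\Psi^{-1}(t)$ into the supremum gives $\frac{c_U}{2}t\Psi^*(b_Ur/t)\ge\lambda r/\Psi^{-1}(t)-O_\lambda(1)$. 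This is the absorption step, obtained without any growth estimate.

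That route is worth adopting because several facts in your Step 1 are misstated:
\begin{itemize}
\item Since $\Psi(\rho)/\rho\to\infty$, $t\Psi^*(r/t)>0$ for every $r>0$. It does not vanish; it is only bounded when $r\lesssim\Psi^{-1}(t)$.
\item The growth guaranteed by the scale bounds is $(r/\Psi^{-1}(t))^{\beta_2/(\beta_2-1)}$. The exponent $\beta_1/(\beta_1-1)$ you wrote is the upper rate.
\item When $\beta_1<\beta_2$, the scale bounds make neither $t\Psi^*(r/t)$ comparable to a single function of $r/\Psi^{-1}(t)$, nor $\Psi^*(\kappa s)$ comparable to a single power of $\kappa$ times $\Psi^*(s)$. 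You only get two-sided bounds with different exponents.
\end{itemize}
None of this is fatal, since you only need boundedness near the diagonal and superlinear growth.

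The step you call the main obstacle is closed by the one-sided bound $\Psi^*(\kappa s)\ge c\,\kappa^{\beta_2/(\beta_2-1)}\Psi^*(s)$, valid for $\kappa\ge\kappa_0$ and all $s>0$, which follows from the same change of variables. With $\kappa=ab_U/b_L$ it yields $c_Lat\Psi^*(b_Lr/(at))\lesssim a^{-1/(\beta_2-1)}\,t\Psi^*(b_Ur/t)$, so the superlinearity beats the prefactor $a$.

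The monotonicity of $s\mapsto s\Psi^*(r/s)$ does not help on its own. Using it to discard $a$ leaves you needing $c_Lt\Psi^*(b_Lr/t)\le\frac{c_U}{2}t\Psi^*(b_Ur/t)+C$. That inequality cannot hold: combined with the two-sided estimate for $G_t$ itself, it would force $t\Psi^*(b_Ur/t)$ to stay bounded.
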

	
	\begin{proof}
		Let $b_U,b_L,c_U,c_L>0$ be as in $\mathrm{HKE}_{\rm full}(\Psi)$, and we may assume without loss of generality that $b_U\le b_L$, $c_U \le c_L$ and $b_U,c_U \le 1$. Observe by Lemma \ref{lem_CM:escape_polynomial_bound}, for each $\theta >0$, 
		\begin{align*}
			\Psi(\theta r) \asymp \Psi(r),\qquad \text{ uniformly in }r >0. 
		\end{align*}
		
		Let $r_\lambda(t):= \frac{b_Uc_U}{2\lambda}\Psi^{-1}(t)$ and $K_\lambda(t) := \frac{c_U}{2 \Psi\pr{r_\lambda(t)}}$, then for $r>0$, 
		\begin{align*}
			\frac{c_U}{2}t\Psi^*\pr{b_U \frac{r}{t}} \ge \frac{c_Ub_U r}{2r_\lambda(t)} - \frac{c_U t}{2\Psi(r_\lambda(t))}= \frac{\lambda r}{\Psi^{-1}(t)} - K_\lambda(t) t,
		\end{align*}
		In other words, 
		\begin{align*}
			\frac{\lambda r}{\Psi^{-1}(t)} - c_U t\Psi^*\pr{b_U \frac{r}{t}} \le K_\lambda(t) t - \frac{c_U}{2} t\Psi^*\pr{b_U \frac{r}{t}}.
		\end{align*}
		However, $K_\lambda(t) \asymp \frac{1}{t}$, hence by \eqref{ineq_CM:volume_all_scales} it holds uniformly in $t>0,x,y \in \X$ that
		\begin{align*}
			\exp \left(\lambda \frac{d(x,y)}{\Psi^{-1}(t)} \right)G_t(x,y) \lesssim \frac{1}{V(\Psi^{-1}(t)\wedge \text{diam}(\X))} \exp \pr{-\frac{c_U}{2} t\Psi^*\pr{b_U \frac{d(x,y)}{t}} }.
		\end{align*}
		By Lemma \ref{lem_CM:escape_polynomial_bound}, for any $\theta\ge 1$, there exist $\beta \ge 1$ and $C>0$ so that $\Psi(\theta r) \le C \theta^{\beta + 1}\Psi(r)$ for all $r>0$.  Hence, we may choose an appropriate $a>0$ so that for all $t,r>0$, 
		\begin{align*}
			c_L (at)\Psi^*\pr{b_L  \frac{r}{at}} &= \sup_{s>0} \frac{c_Lb_L r}{s}- \frac{c_Lat}{\Psi(s)}\\
			&= \sup_{s>0} \frac{c_Ub_U r}{s} - \frac{c_L at}{\Psi\pr{s \frac{c_Lb_L}{c_Ub_U}}}\\
			&\le \sup_{s>0} \frac{c_Ub_Ur}{s} - \frac{c_La}{C \pr{\frac{c_L b_L}{c_Ub_U}}^{\beta+1}}\frac{t}{\Psi(s)}\\
			&\le \sup_{s>0} \frac{c_Ub_U r}{s} - \frac{c_U t}{\Psi(s)}\\
			&=c_Ut\Psi^*\pr{b_U \frac{r}{t}}
		\end{align*}
		Hence, by replacing $c_U$ by $c_U/2$ if needed and \eqref{ineq_CM:volume_all_scales},  it holds uniformly in $t>0, x,y \in \X$ that
		\begin{align*}
			\exp \left(\lambda \frac{d(x,y)}{\Psi^{-1}(t)} \right)G_t(x,y) \lesssim \frac{1}{V(\Psi^{-1}(t)\wedge \text{diam}(\X))} \exp \pr{-c_L at\Psi^*\pr{b_L \frac{d(x,y)}{at}} } \lesssim G_{at}(x,y).
		\end{align*}
		The second inequality is obtained by noting  
		\begin{align*}
			d(x,y)^\lambda = \Psi^{-1}(t)^\lambda \pr{\frac{d(x,y)}{\Psi^{-1}(t)}}^{\lambda} \lesssim \Psi^{-1}(t)^\lambda \exp \pr{\lambda \frac{d(x,y)}{\Psi^{-1}(t)}}.
		\end{align*}
	\end{proof}

	\begin{proof}[Proof of Proposition \ref{prop:sub_g_condition_noise}]
		We note \eqref{cond:exp_hk_lamda} of \eqref{it_prop_subg_noise:1} is a direct consequence of the sub-Gaussian heat kernel upper bound and Proposition \ref{prop:sub_g_cond_no_noise}. For \eqref{cond:hk_uni_bound}, fix $\delta\in(0,1)$ and let $k \ge 1$ to be determined later and consider by Lemma \ref{lem_CM:subg_space_time_scaling} that 
		\begin{align*}
			\int_\X \mathbf 1_{d(x,y)\ge \delta} G_t(x,y)^2 m(\mathrm dy) &\lesssim  t^{-\alpha_{0}} \delta^{-k}\int_\X d(x,y)^k G_t(x,y) m(\mathrm dy)\\
			&\lesssim t^{-\alpha_{0}} \delta^{-k} \Psi^{-1}(t)^k.
		\end{align*}
		By Lemma \ref{lem_CM:escape_polynomial_bound}, we see there exists $k \ge 1$ so that $\Psi^{-1}(t)^k \lesssim t$ uniformly in $t \in (0,1]$, which implies the desired inequality. 
		
		To see \eqref{it_prop_subg_noise:2}, let us denote $r(x):= d(x_0,x)$ and let $\lambda \ge 0$ and $T\ge 1$ be fixed. It holds uniformly in $t \in (0, T]$ and $x \in \X$ that  
		\begin{align*}
			&e^{-2\lambda r(x)}\iint_{\X^2}G_t(x,y)e^{\lambda \pr{r(y)+r(z)}}G_s(y,z) G_t(x,z) m(\mathrm dz)m(\mathrm dy)\\
			\le & \quad\iint_{\X^2}  e^{\lambda \pr{d(x,y)+d(x,z)}} G_t(x,y)G_{s}(z,y)G_t(x,z)m(\mathrm dy)m(\mathrm dz)\\
			\overset{\text{Lem }\ref{lem_CM:subg_space_time_scaling} }{\lesssim} &\quad \iint_{\X^2} G_{at}(x,y)G_{at}(x,z) G_s(z,y) m(\mathrm dy)m(\mathrm dz)\\
			=& \quad G_{2at+s}(x,x).
		\end{align*}
		Since $V$ is increasing, $\frac{1}{V(r\wedge \text{diam}(\X))}\le \frac{1}{V(r)}+\frac{1}{V(1\wedge \text{diam}(\X))}$ for all $r>0$. Therefore, by \eqref{ineq_CM:volume_all_scales}, uniformly in $t \in (0,1]$ and $x \in \X$
		\begin{align*}
			e^{-2\lambda r(x)}\norm{G_t(x,\cdot)h(\cdot)^\lambda}_{\mathcal H_{K_{\gamma}}}^2
			&\lesssim \int_0^\infty e^{-s}s^{\gamma} G_{2at +s}(x,x) \frac{\mathrm ds}{s}\\
			&\mathrel{\lesssim} \int_0^\infty e^{-s}s^\gamma \frac{1}{V\pr{\Psi^{-1}(2at+s)\wedge \text{diam}(\X)}}\frac{\mathrm ds}{s}\\
			&\mathrel{\le} \int_0^\infty s^\gamma \frac{1}{V\pr{\Psi^{-1}(2at+s)}}\frac{\mathrm ds}{s} + \frac{\Gamma(\gamma)}{V(1\wedge \text{diam}(\X))}\\
			&\lesssim \int_0^\infty s^{\gamma} \frac{1}{V\pr{\Psi^{-1}(t+s)}}\frac{\mathrm ds}{s} \mathbin{+} 1\\
			&\lesssim t^{-\alpha_{0}},
		\end{align*}
		which implies \eqref{cond:exp_hk_lamda} holds. Finally, we may take $k \ge 1$ sufficiently large so that $\Psi^{-1}(t/a)^{2k} \lesssim t$ uniformly in $t \in (0,1]$ and consider for each $\delta>0$ that
		\begin{align*}
			\norm{\mathbf 1_{B_\delta(x)^{\mathrm c}} G_{t/a}(x,\cdot)}_{\mathcal H_{K_{\gamma}}}^2&\le \delta^{-2k}\norm{d(x,\cdot)^kG_{t/a}(x,\cdot)}_{\mathcal H_{K_{\gamma}}}^2\\
			&\lesssim \delta^{-2k} \Psi^{-1}(t/a)^{2k} \norm{G_{t}(x,\cdot)}_{\mathcal H_{K_{\gamma}}}^2\\
			&\lesssim \delta^{-2k}\, t\, t^{-\alpha_0}\\
			&\lesssim \delta^{-2k}\, t^{1-\alpha_{0}}
		\end{align*}
		uniformly in $t \in (0,1]$ and $x \in \X$. Since $0< \alpha_{0}<1$, we see the desired result holds.
	\end{proof}
		The next Lemma shows that the on-diagonal lower heat kernel estimates give \eqref{ineq:instant_propagation} if the metric space is geodesic and local volume doubling property holds. 
	\begin{lemma}\label{prop:on_diagonal_to_instant_propagation}
		Let $\rho:\X\times \X \to \R_+$ be a geodesic metric on the measure space $(\X,m)$, denote $B_r^\rho(x)$ be the open ball of radius $r>0$ centered at $x \in \X$ under the metric $\rho$. Let $V:\R_+\to \R_+$ be a doubling function so that for some $s_0>0$, 
		\begin{align}\label{ineq:local_volume_doubling}
			V(s) \asymp m(B_s^\rho(x)),\qquad \text{uniformly in } x \in \X, \, s \in (0,s_0]. 
		\end{align}
		Let $\Psi:\R_+\to \R_+$ be an increasing homeomorphism for which there exists $\delta_0>0$ so that 
		\begin{align}\label{ineq:on_diagonal_lower_bound}
			G_t(y,x) \gtrsim \frac{1}{V(\Psi^{-1}(t))}, \qquad \text{ uniformly for } t \in (0,1]\text{ and } \rho(x,y) \le \delta_0 \Psi^{-1}(t),
		\end{align}
		and 
		\begin{align}\label{ineq:on_diagonal_space_time_scaling}
			\lim_{t\downarrow 0}\frac{t}{\Psi^{-1}(t)} =0.
		\end{align}
		Then condition \eqref{ineq:instant_propagation} holds under the metric $\rho$. 
	\end{lemma}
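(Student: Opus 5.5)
The statement \eqref{E:hk_uni_bound} only asks for a lower bound: $\boldsymbol{\Theta}(1)$ means the integral stays bounded below by a positive constant. The plan is to produce, for each admissible $(t,r,x)$, a small $\rho$-ball that lies inside $B^\rho_r(x_0)$ and in the near-diagonal region of $x$. On such a ball \eqref{ineq:on_diagonal_lower_bound} applies, and the ball's volume is comparable to $V(\Psi^{-1}(t))$.

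Fix $0<r_0<v$ and $x_0$. For $t\in(\varepsilon,2\varepsilon)$ set $s=s(t):=\tfrac14\delta_0\Psi^{-1}(t)$. Since $\Psi$ is an increasing homeomorphism, $\Psi^{-1}(t)\to0$ as $t\downarrow0$. By \eqref{ineq:on_diagonal_space_time_scaling}, $v\varepsilon\le vt=\mathbf o(\Psi^{-1}(t))$. Hence for all sufficiently small $\varepsilon$, uniformly in $t\in(\varepsilon,2\varepsilon)$, we have $2\varepsilon\le1$, $v\varepsilon\le s/2$, $s<r_0/2$, and $s<s_0$.

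Given $r\in(r_0,v)$ and $x\in B^\rho_{r+v\varepsilon}(x_0)$, I choose a centre $z$ in two cases.
\begin{itemize}
\item[(a)] If $\rho(x_0,x)<r-s$, take $z=x$.
\item[(b)] Otherwise, $r-s\le\rho(x_0,x)<r+v\varepsilon$. Since $\rho$ is geodesic, take $z$ on a minimizing geodesic from $x_0$ to $x$ with $\rho(x_0,z)=r-s$; this is possible because $0<r-s\le\rho(x_0,x)$. Then $\rho(z,x)=\rho(x_0,x)-(r-s)<s+v\varepsilon\le\tfrac32 s$.
\end{itemize}
In both cases $\rho(x_0,z)\le r-s$ and $\rho(z,x)<\tfrac32 s$.

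Now consider $y\in B^\rho_{s/2}(z)$. We get $\rho(x_0,y)<r-s/2<r$, so $B^\rho_{s/2}(z)\subset B^\rho_r(x_0)$. We also get $\rho(x,y)<2s=\tfrac12\delta_0\Psi^{-1}(t)$. Therefore \eqref{ineq:on_diagonal_lower_bound} gives
\begin{align*}
\int_{B^\rho_r(x_0)}G_t(y,x)\,m(\mathrm dy)\ \ge\ \int_{B^\rho_{s/2}(z)}G_t(y,x)\,m(\mathrm dy)\ \gtrsim\ \frac{m(B^\rho_{s/2}(z))}{V(\Psi^{-1}(t))}.
\end{align*}
Since $s/2<s_0$, \eqref{ineq:local_volume_doubling} gives $m(B^\rho_{s/2}(z))\asymp V(\delta_0\Psi^{-1}(t)/8)$. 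By Lemma~\ref{lem_CM:doubling_function_polynomial_bound}, with $R=\Psi^{-1}(t)$ and $r=\delta_0R/8$, we have $V(\Psi^{-1}(t))\le D(8/\delta_0)^{\log_2 D}\,V(\delta_0\Psi^{-1}(t)/8)$. The ratio above is therefore bounded below by a positive constant that does not depend on $t$, $r$, $x$ or $\varepsilon$, which gives \eqref{E:hk_uni_bound}.

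The main obstacle is case (b), when $x$ lies at or outside the boundary of $B^\rho_r(x_0)$, up to the overshoot $v\varepsilon$. This is the only place where the geodesic property and \eqref{ineq:on_diagonal_space_time_scaling} are used. The overshoot must be small compared with the diffusive scale $\Psi^{-1}(t)$, so that a ball of comparable radius fits inside $B^\rho_r(x_0)$ and still lies within $\delta_0\Psi^{-1}(t)$ of $x$. The remaining care is checking that every smallness requirement ($2\varepsilon\le1$, $v\varepsilon\le s/2$, $s<r_0/2$, $s<s_0$) holds uniformly in $t\in(\varepsilon,2\varepsilon)$ and $r\in(r_0,v)$. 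This follows from the monotonicity of $\Psi^{-1}$ together with \eqref{ineq:on_diagonal_space_time_scaling}.
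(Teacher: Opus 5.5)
Your proposal is correct and follows essentially the same route as the paper. Both use the geodesic to place a $\rho$-ball of radius $\frac{\delta_0}{8}\Psi^{-1}(t)$ inside $B^\rho_r(x_0)$ and within $\delta_0\Psi^{-1}(t)$ of $x$, absorb the overshoot $v\varepsilon$ via $t/\Psi^{-1}(t)\to0$, and conclude from the near-diagonal lower bound, local volume comparability and doubling of $V$; the only difference is cosmetic, since you split into two cases and centre the ball at distance $r-s$ from $x_0$, while the paper moves a distance $\min\{r_x+\frac{\delta_0}{4}\Psi^{-1}(t),\rho(x,x_0)\}$ from $x$ toward $x_0$, with $r_x=\max\{\rho(x,x_0)-r,0\}$.
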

	\begin{proof}[Proof of Lemma \ref{prop:on_diagonal_to_instant_propagation}]
		Let $x_0 \in \X$ and $0<r_0<v$ be arbitrary but fixed. By \eqref{ineq:on_diagonal_space_time_scaling}, there exists $\varepsilon_0 \in (0,1/2]$ so that for all $\varepsilon \in (0, \varepsilon_0)$, 
		\begin{align}\label{ineq:on_diagonal_to_ip_step_1}
			v \varepsilon \le \frac{\delta_0}{8} \Psi^{-1}(t),\qquad \frac{\delta_0}{8}\Psi^{-1}(t) < r_0 \wedge s_0,\qquad \text{ for all }t \in (\varepsilon, 2\varepsilon).
		\end{align}
		Let $\varepsilon \in (0,\varepsilon_0),\, t \in (\varepsilon, 2\varepsilon),\, r \in (r_0, v)$ and $x \in B_{r+\varepsilon v}^\rho(x_0)$. Set 
		\begin{align*}
			r_x:= \rho(x, B^\rho_r(x_0)):= \inf_{y \in B^\rho_r(x_0)} \rho(x,y).
		\end{align*}
		Since $\rho$ is a geodesic metric, $r_x=\max\{\rho(x,x_0)-r,0\}\leq v\varepsilon$. Let $\zeta:[0, \rho(x,x_0)]\to \X$ be a geodesic from $x$ to $x_0$ and set
		\begin{align*}
			\ell:= \min\{ r_x +\frac{\delta_0}{4} \Psi^{-1}(t), \rho(x,x_0)\},\qquad w := \zeta(\ell).
		\end{align*}
		We claim that
		\begin{align}\label{ineq:on_diagonal_set_inclusion}
			B^\rho_{\frac{\delta_0}{8}\Psi^{-1}(t)}(w) \subset B_r^{\rho}(x_0).
		\end{align}
		Indeed, if $\ell = r_x +\frac{\delta_0}{4}\Psi^{-1}(t)$, then $\rho(x_0,w) = \rho(x_0,x) - \rho(x,w)= \rho(x,x_0) - r_x - \frac{\delta_0}{4}\Psi^{-1}(t)$.
		Hence for all $y \in B^\rho_{\frac{\delta_0}{8}\Psi^{-1}(t)}(w)$, we see 
		\begin{align*}
			\rho(y,x_0)& \le \rho(w,y)+\rho(w,x_0) \\
			&\le \frac{\delta_0}{8}\Psi^{-1}(t) +\rho(x,x_0) - r_x - \frac{\delta_0}{4}\Psi^{-1}(t)\\
			&= \rho(x,x_0)-r_x-\frac{\delta_0}{8}\Psi^{-1}(t)\\
			&\le r - \frac{\delta_0}{8}\Psi^{-1}(t)\\
			&< r.
		\end{align*}
		If $\ell = \rho(x,x_0)$, then $w = x_0$ and \eqref{ineq:on_diagonal_set_inclusion} holds by \eqref{ineq:on_diagonal_to_ip_step_1}. This proves \eqref{ineq:on_diagonal_set_inclusion}.

		In addition, we see it holds for all $y \in B^\rho_{\frac{\delta_0}{8}\Psi^{-1}(t)}(w)$ that
		\begin{align*}
			\rho(y,x) &\le \rho(y,w) + \rho(w,x) \\
			&\le \frac{\delta_0}{8}\Psi^{-1}(t) + r_x +\frac{\delta_0}{4}\Psi^{-1}(t)\\
			& \le  \frac{3\delta_0}{8}\Psi^{-1}(t) + r_x \\
			&\le  \frac{3\delta_0}{8}\Psi^{-1}(t) +v\varepsilon\\
			&\overset{\eqref{ineq:on_diagonal_to_ip_step_1}}{\le} \frac{\delta_0}{2}\Psi^{-1}(t)\\
			&< \delta_0 \Psi^{-1}(t).
		\end{align*}
		Therefore,
		\begin{align*}
			\int_{B^\rho_r(x_0)} G_t(y,x) m(\mathrm dy) &\overset{\eqref{ineq:on_diagonal_set_inclusion}}{\ge} \int_{B^\rho_{\frac{\delta_0}{8}\Psi^{-1}(t)}(w)} G_t(y,x) m(\mathrm dy)\\
			&\overset{\eqref{ineq:on_diagonal_lower_bound}{,\,\eqref{ineq:local_volume_doubling}}}{\gtrsim} \frac{V(\frac{\delta_0}{8}\Psi^{-1}(t))}{V(\Psi^{-1}(t))}\\
			&\overset{\eqref{ineq:doubling_function}}{\gtrsim} 1,
		\end{align*}
		note that the implicit constant depends only on the doubling property of $V$ and the constant in \eqref{ineq:on_diagonal_lower_bound}, which gives the desired inequality.
	\end{proof}
	\begin{proof}[Proof of Proposition \ref{prop:sub_g_strict_pos_condition}]
		By \cite[Theorem 2.11]{murugan2020length} and \cite[Proposition A.1]{kajino2020singularity}, there exists a geodesic metric $\rho$ on $\X$ so that 
		\begin{align*}
			\rho(x,y) \asymp d(x,y),\qquad \text{uniformly in }x,y \in \X. 
		\end{align*}
		Note that $\mathrm{HKE}_{\rm full}(\Psi)$ holds with $d(x,y)$ replaced by $\rho(x,y)$ for $x,y \in \X$. 

		By Lemma \ref{lem_CM:escape_polynomial_bound}, we see \eqref{ineq:on_diagonal_space_time_scaling} holds and the desired inequality is a direct consequence of Lemma \ref{prop:on_diagonal_to_instant_propagation}.
	\end{proof}

\subsection{Proof of Proposition~\ref{prop:local_manifolds}}

We shall prove Proposition \ref{prop:local_manifolds} through three lemmas: consequences of the Gaussian estimates, verification of the geometric and semigroup assumptions, and verification of the noise assumptions.

\begin{lemma}[Consequences of the Gaussian estimates]\label{lem:manifold_gaussian_consequences}
 Assume \eqref{eq:manifold_gaussian}. The heat kernel is conservative:
 \begin{align}
  \int_M G_t(x,y)\,m(\mathrm dy)=1,
  \qquad t>0,\quad x\in M.
  \label{eq:manifold_conservation}
 \end{align}
  Moreover, the following estimates hold.
 \begin{enumerate}
  \item There are constants $c,C,\kappa>0$, independent of the centers and radii, such that
 \begin{align}
  c r^n\leq m(B_r(x))\leq C r^n,
  \qquad x\in M,\quad 0<r\leq1.
  \label{eq:manifold_local_volume}
 \end{align}
  At all radii,
 \begin{align}
  m(B_R(x))\leq C R^n e^{\kappa R},\qquad x\in M,\quad R>0.
  \label{eq:manifold_volume_growth}
 \end{align}
  \item For every $q\geq0$,
 \begin{align}
  \sup_{x\in M,\,0<t\leq1}
  \int_M G_t(x,y)e^{q d(x,y)}\,m(\mathrm dy)<\infty.
  \label{eq:manifold_exp_moment}
 \end{align}
  \item There are constants $\eta\in(0,1)$ and $C>0$ such that, uniformly for $x,x'\in M$ and $t,t'\in(0,1]$,
 \begin{align}
  \|G_t(x,\cdot)-G_t(x',\cdot)\|_{L^1_M}
  &\leq C\frac{d(x,x')^\eta}{t^{\eta/2}},\nonumber\\
  \|G_t(x,\cdot)-G_{t'}(x,\cdot)\|_{L^1_M}
  &\leq C\frac{|t-t'|^{\eta/2}}{(t\wedge t')^{\eta/2}}.
  \label{eq:manifold_holder}
 \end{align}
 \end{enumerate}
\end{lemma}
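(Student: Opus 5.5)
The plan is to derive every item from the two-sided bound \eqref{eq:manifold_gaussian}, combined with the semigroup property \eqref{eq:G} and symmetry. The order is: local volume, then conservativeness, then global volume growth, then exponential moments, then H\"older continuity.

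\emph{Local volume \eqref{eq:manifold_local_volume}.} For the lower bound, integrate the upper Gaussian bound over $y$. Semigroup property and symmetry give $G_{2t}(x,x)=\int_M G_t(x,y)^2\,m(\mathrm dy)$. The lower bound in \eqref{eq:manifold_gaussian} gives $G_{2t}(x,x)\ge c_0(2t)^{-n/2}$. We also have $\int_{B_{\sqrt t}(x)}G_t(x,y)m(\mathrm dy)\le 1$, by sub-Markovianity of $P_t$. Combining, the lower Gaussian bound on $B_{\sqrt t}(x)$ gives
\[
m(B_{\sqrt t}(x))\,c_0t^{-n/2}e^{-C_0}\le 1,
\]
which is the upper bound $m(B_r(x))\le Cr^n$ for $r=\sqrt t\le 1$. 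For the lower volume bound, split
\[
c_0(2t)^{-n/2}\le \int_M G_t^2\le C_0t^{-n/2}\int_{B_{R\sqrt t}(x)}G_t\,\mathrm dm+\sup_{d\ge R\sqrt t}G_t\cdot 1 .
\]
The supremum is at most $C_0t^{-n/2}e^{-c_0R^2}$, so a large fixed $R$ absorbs it. The remaining term gives $\int_{B_{R\sqrt t}}G_t\gtrsim 1$, and with the upper bound, $m(B_{R\sqrt t}(x))\gtrsim t^{n/2}$. Rescaling radii handles $R>1$ through $t\le 1/R^2$.

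\emph{Conservativeness and global growth \eqref{eq:manifold_volume_growth}.} Cover $B_{k+1}(x)\setminus B_k(x)$ using the geodesic structure: a maximal $1/2$-separated set in $B_R(x)$ has disjoint $1/4$-balls. A chaining argument along geodesics bounds the number of points at distance in $[k,k+1]$ by $A^{k}$, where $A$ is the local doubling constant from \eqref{eq:manifold_local_volume}. This gives $m(B_R)\le Ce^{\kappa R}$, and the polynomial factor is harmless. Given this growth, the Gaussian tail $e^{-c_0d^2/t}$ makes $\int_M G_t(x,y)e^{qd(x,y)}\,m(\mathrm dy)$ finite and uniform in $t\le1$, which is \eqref{eq:manifold_exp_moment}. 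For this, sum annuli of unit width: the $k$-th annulus contributes at most $t^{-n/2}e^{-c_0k^2/t}e^{(\kappa+q)(k+1)}$, and the $k=0$ term is handled by the local volume bound. Conservativeness then follows from Grigor'yan's volume test, since $\int^\infty r/\log m(B_r)\,\mathrm dr=\infty$ for at most exponential growth. Alternatively, stochastic completeness can be cited as a consequence of uniform Gaussian bounds.

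\emph{H\"older bounds \eqref{eq:manifold_holder}; this is the main obstacle.} Two-sided Gaussian bounds on unit scales imply a local parabolic Harnack inequality, by the equivalence cited from Saloff-Coste on scales $\le1$. That Harnack inequality yields H\"older continuity of caloric functions: for $u(s,y)=P_sf(y)$ with $\|f\|_\infty\le1$,
\[
|u(s_1,y_1)-u(s_2,y_2)|\lesssim\Big(\frac{|s_1-s_2|^{1/2}+d(y_1,y_2)}{\sqrt t}\Big)^{\eta}
\]
on a parabolic cylinder of size $\sqrt t$. This mirrors \eqref{ineq_CM:osc_inequality} with $\Psi(r)=r^2$. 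Taking the supremum over $f$ gives the $L^1$ norms, by symmetry $G_t(x,y)=G_t(y,x)$. Outside the cylinder, the trivial bound $2$ dominates the claimed right-hand side, exactly as in the proof of Proposition~\ref{prop:sub_g_cond_no_noise}. The delicate part is the passage from Gaussian bounds to local Harnack without global doubling. I would first try to restrict everything to scales $\le1$, where \eqref{eq:manifold_local_volume} supplies doubling and the Poincar\'e inequality follows from the Gaussian bounds. If that stalls, I would instead derive the gradient bound $\|\partial_tG_t\|_{L^1}\lesssim t^{-1}$ from analyticity of the self-adjoint semigroup and interpolate to get the time estimate.
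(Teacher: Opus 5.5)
Your proposal is correct. On the local volume bounds and on \eqref{eq:manifold_holder} it follows the paper. For the lower volume bound you use the same splitting of $G_{2t}(x,x)=\int_M G_t(x,y)^2\,m(\mathrm dy)$. For the H\"older estimates, the paper likewise imports H\"older continuity of $P_tf$ from the local (scale $\le 1$) Harnack/H\"older theory of Hebisch--Saloff-Coste, then takes the supremum over $\|f\|_\infty\le 1$.

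The genuine difference is in the middle of the argument. The paper quotes the exponential growth \eqref{eq:manifold_volume_growth} and conservativeness from the literature. It then combines a quoted integrated Gaussian tail bound $\int_{B_r(x)^c}G_t(x,y)\,m(\mathrm dy)\le Ce^{-cr^2/t}$ with conservativeness and a layer-cake formula to obtain \eqref{eq:manifold_exp_moment}. You instead do three things:
\begin{enumerate}
\item prove the volume growth directly, by packing maximal $1/2$-separated sets in the geodesic space;
\item obtain \eqref{eq:manifold_exp_moment} by summing the pointwise upper bound over unit annuli against that growth, which needs only sub-Markovianity and not conservativeness;
\item get conservativeness from Grigor'yan's volume test, valid because $M$ is complete and $\log m(B_r(x))\lesssim r$.
\end{enumerate}
Your route is more self-contained, and it shows explicitly that $e^{-ck^2}$ decay beating $e^{\kappa k}$ volume growth is what drives \eqref{eq:manifold_exp_moment}. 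The paper's route is shorter and yields a tail estimate valid at every radius.

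One small fix: bound the $k=0$ annulus by $e^{q}\int_M G_t(x,y)\,m(\mathrm dy)\le e^{q}$. Estimating it by $\sup G_t\cdot m(B_1(x))$ with the local volume bound is not uniform as $t\downarrow 0$. Also, your analyticity fallback would need more than $L^2$ analyticity to control $\|\partial_t G_t(x,\cdot)\|_{L^1_M}$. Your main plan does not rely on it.
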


\begin{proof}
 For $0<r\leq1$, the sub-Markov property  gives
 \[
  1\geq\int_{B_r(x)}G_{r^2}(x,y)\,m(\mathrm dy)
  \overset{\eqref{eq:manifold_gaussian}}{\geq} c_0e^{-C_0}r^{-n}m(B_r(x)).
 \]
 For the reverse volume bound, set $a=c_0 2^{-n/2}$ and choose $A\geq\sqrt2$ such that $C_0e^{-c_0A^2}\leq a/2$. Splitting the semigroup integral over $B_{A\sqrt t}(x)$ and its complement gives, for $0<t\leq1/2$,
 \begin{align*}
	at^{-n/2}&\overset{\eqref{eq:manifold_gaussian}}{\le} G_{2t}(x,x)
	=\pr{\int_{B_{A\sqrt t}(x)}+\int_{B_{A\sqrt t}(x)^c}} G_t(x,y)^2\,m(\mathrm dy)\\
	&\leq C_0^2t^{-n}m(B_{A\sqrt t}(x))
       +\frac a2t^{-n/2}.
 \end{align*}
 Taking $t=(r/A)^2$ yields
 \[
  \frac{a}{2C_0^2A^n}r^n\leq m(B_r(x))
  \leq\frac{e^{C_0}}{c_0}r^n,
  \qquad x\in M,\quad 0<r\leq1,
 \]
 proving \eqref{eq:manifold_local_volume}.

 Conservation \eqref{eq:manifold_conservation} follows from \eqref{eq:manifold_gaussian}, \eqref{eq:manifold_local_volume} and \cite[p.~1001]{coulhon2012heat}. The volume bound \eqref{eq:manifold_volume_growth} follows from \eqref{eq:manifold_local_volume} and \cite[Section~2.3, (2.6)]{hebisch2001relation}.

 By \cite[Lemma~3.6]{hebisch2001relation} and \eqref{eq:manifold_conservation}, there are $c,C>0$ such that
 \begin{align}
  \int_{B_r(x)^c}G_t(x,y)\,m(\mathrm dy)
  \leq Ce^{-cr^2/t},\qquad x\in M,\quad 0<t\leq1,\quad r>0.
  \label{eq:manifold_gaussian_tail}
 \end{align}
 For $q=0$, \eqref{eq:manifold_exp_moment} is conservation. For $q>0$, Tonelli's theorem gives
 \begin{align*}
  \int_M G_t(x,y)e^{q d(x,y)}\,m(\mathrm dy)&=\int_M G_t(x,y)\pr{1+q\int_0^{d(x,y)}e^{qr}\,\mathrm dr}\,m(\mathrm dy)\\
  &=1+q\int_0^\infty e^{qr}
       \int_{\{y:d(x,y)>r\}}G_t(x,y)\,m(\mathrm dy)\,\mathrm dr\\
  &{\overset{\eqref{eq:manifold_gaussian_tail}}{\leq}} 1+Cq\int_0^\infty e^{qr-cr^2/t}\,\mathrm dr\\
  &\le 1+C\int_0^\infty e^{qr - cr^2}\,\mathrm dr
 \end{align*}
 which is finite and independent of $x$ and $0<t\leq1$. This proves \eqref{eq:manifold_exp_moment}.

 By \cite[pp.~1000--1001]{coulhon2012heat}, \cite[Theorem~2.3(2)]{hebisch2001relation} and the $L^\infty$-contractivity of $(P_t)$, there exist $C>0$ and $\eta\in(0,1)$ such that, for every bounded measurable $f$, $x,x'\in M$ and $t,t'\in(0,1]$,
 \[
  |P_tf(x)-P_{t'}f(x')|
  \leq C\left(\frac{d(x,x')+|t-t'|^{1/2}}{\sqrt{t\wedge t'}}\right)^\eta
       \|f\|_\infty.
 \]
 Taking the supremum over $\|f\|_\infty\leq1$ gives the same bound for $\|G_t(x,\cdot)-G_{t'}(x',\cdot)\|_{L^1_M}$ by duality. Setting $t'=t$ or $x'=x$ proves the two estimates in \eqref{eq:manifold_holder}.
\end{proof}

\begin{lemma}[Geometric and weighted semigroup assumptions]\label{lem:manifold_semigroup}
 Assume \eqref{eq:manifold_gaussian} and let $h(x)=e^{d(o,x)}$. Then $(M,d,m)$ satisfies \eqref{ineq:entropy bound}, and $h$ satisfies \eqref{ineq:faster_than_ms} and \eqref{ineq:ref_regular}. The heat semigroup is Feller and satisfies \eqref{cond:ctem continuous}, the $L^1$ inequality in \eqref{cond:exp_hk_lamda}, and \eqref{cond:time_holder} with $\beta=\xi_1=\eta/2$ and $\xi_2=\eta$, where $\eta$ is as in Lemma~\ref{lem:manifold_gaussian_consequences}. Moreover, \eqref{ineq:instant_propagation} holds with the geodesic distance $d$.
\end{lemma}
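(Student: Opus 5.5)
The plan is to verify the listed conditions one at a time from Lemma~\ref{lem:manifold_gaussian_consequences}, reusing the arguments in the proofs of Proposition~\ref{prop:sub_g_cond_no_noise} and Lemma~\ref{prop:on_diagonal_to_instant_propagation} wherever they carry over.

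\emph{Geometric conditions.} For \eqref{ineq:entropy bound}, I fix $R\ge1$ and $0<\varepsilon\le1$ and take a maximal $\varepsilon/2$-separated subset of $B_R(x)$. The balls of radius $\varepsilon/4$ around its points are disjoint and lie in $B_{R+1}(x)$. Comparing volumes with \eqref{eq:manifold_local_volume} and \eqref{eq:manifold_volume_growth} gives
\[
D_\varepsilon(B_R(x))\lesssim \frac{R^ne^{\kappa(R+1)}}{\varepsilon^n},
\]
uniformly in $x$, so one may take $b=n$. For \eqref{ineq:faster_than_ms}, I cover each annulus $B_N(o)\setminus B_{N-1}(o)$ by at most $CN^ne^{\kappa N}$ unit balls, using the same packing bound. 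The resulting series is bounded by
\[
\sum_N N^ne^{\kappa N}e^{-\lambda(N-2)},
\]
which converges once $\lambda_0>\kappa$. Condition \eqref{ineq:ref_regular} is immediate from the triangle inequality, since $h(y)\le e\,h(x)$ when $d(x,y)\le1$.

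\emph{Semigroup conditions.} The Feller property follows from conservation \eqref{eq:manifold_conservation}, the Gaussian tail \eqref{eq:manifold_gaussian_tail}, and the joint H\"older continuity in the proof of Lemma~\ref{lem:manifold_gaussian_consequences}; together these show that $P_t$ maps $\mathcal C^0_M$ into itself and is strongly continuous. The first inequality in \eqref{cond:exp_hk_lamda} uses $h(y)^\lambda\le h(x)^\lambda e^{\lambda d(x,y)}$ together with \eqref{eq:manifold_exp_moment}. For \eqref{cond:ctem continuous}, I will copy the $A_t/B_t$ splitting from the proof of Proposition~\ref{prop:sub_g_cond_no_noise}, replacing \eqref{ineq_CM:sub_g_exponential_moment_bound} by \eqref{eq:manifold_exp_moment}. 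The needed small-time decay of $\int_{d\ge\delta}G_te^{qd}\,\mathrm dm$ comes from applying Cauchy--Schwarz to \eqref{eq:manifold_gaussian_tail} and \eqref{eq:manifold_exp_moment}; continuity at positive times then follows from the semigroup property. Condition \eqref{cond:time_holder} is exactly \eqref{eq:manifold_holder} (using $G_t(x,y)=G_t(y,x)$), with $\beta=\xi_1=\eta/2$ and $\xi_2=\eta$. Finally, \eqref{ineq:instant_propagation} follows from Lemma~\ref{prop:on_diagonal_to_instant_propagation} with the inputs $\rho=d$ (geodesic by Hopf--Rinow), $V(r)=r^n$, $s_0=1$, and $\Psi(r)=r^2$. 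The lower bound in \eqref{eq:manifold_gaussian} gives \eqref{ineq:on_diagonal_lower_bound}, and $t/\sqrt t\to0$ gives \eqref{ineq:on_diagonal_space_time_scaling}.

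\emph{Noise conditions.} First, $K$ is a symmetric, non-negative definite Radon measure. Symmetry is inherited from $G$. Non-negative definiteness holds because
\[
\|f\|_{\mathcal H_K}^2=\frac1{\Gamma(\gamma)}\int_0^\infty e^{-s}s^{\gamma-1}\|P_{s/2}f\|_{L^2}^2\,\mathrm ds\ge0.
\]
The Radon property follows from the local bounds on $\int_0^\infty e^{-s}s^{\gamma-1}G_s\,\mathrm ds$ obtained below. For the second inequality in \eqref{cond:exp_hk_lamda}, I bound $h(y)^\lambda h(z)^\lambda\le h(x)^{2\lambda}e^{\lambda(d(x,y)+d(x,z))}$ and split the $s$-integral at $s=1$. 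On $s\le1$, a Gaussian absorption $e^{\lambda d}G_t\lesssim G_{at}$ (an analogue of Lemma~\ref{lem_CM:subg_space_time_scaling}, valid for $t\le1$ by \eqref{eq:manifold_gaussian} and \eqref{eq:manifold_local_volume}) together with Chapman--Kolmogorov reduces the term to
\[
\int_0^1 s^{\gamma-1}(2at+s)^{-n/2}\,\mathrm ds\lesssim t^{-\alpha},
\qquad \alpha=\max\{0,\,n/2-\gamma\}+\epsilon<1 .
\]
This is exactly where $\gamma>n/2-1$ from \eqref{eq:manifold_gamma} is used. On $s>1$, $G_s$ is bounded by $\sup_y G_1(y,y)\lesssim1$ via the semigroup property. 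Combined with \eqref{eq:manifold_exp_moment}, the weighted $L^1$ norms stay finite, and $e^{-s}$ handles integrability at infinity. Condition \eqref{cond:hk_uni_bound} follows as in the proof of Proposition~\ref{prop:sub_g_condition_noise}: bound $\mathbf 1_{d\ge\delta}\le \delta^{-k}d^k$, absorb $d^k$ into a time change at the cost of $t^{k/2}$, and choose $k$ with $k/2>\alpha$.

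The main obstacle is the large-$s$ part of the colored-noise estimate. There the heat kernel is not small-time Gaussian, and the weight $e^{\lambda d}$ competes with possible exponential volume growth, so I need the bounded-by-$G_1$ argument combined with \eqref{eq:manifold_exp_moment} to close uniformly in $x$.
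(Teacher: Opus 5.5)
Your verification of everything in the statement is correct, and most of it is the paper's argument. This covers the packing bound for \eqref{ineq:entropy bound} (the paper just cites the volume-to-covering argument), the triangle inequality for \eqref{ineq:ref_regular}, and \eqref{eq:manifold_exp_moment} for the $L^1$ inequality in \eqref{cond:exp_hk_lamda}. It also covers \eqref{eq:manifold_holder} for \eqref{cond:time_holder}, and Lemma~\ref{prop:on_diagonal_to_instant_propagation} with $\rho=d$, $V(r)=r^n$, $\Psi(r)=r^2$ for \eqref{ineq:instant_propagation}.

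There are two minor differences. For \eqref{ineq:faster_than_ms}, the paper takes a $5r$-cover with disjoint $B_{1/10}(x_j)$ and compares $\sum_j h(x_j)^{-\lambda_0}$ with $\lambda_0\int_0^\infty e^{-\lambda_0 r}m(B_r(o))\,\mathrm dr$, instead of counting annulus by annulus. For the Feller property, it only checks $P_t\mathcal C^0_M\subset\mathcal C^0_M$ and cites a general criterion, whereas you also verify strong continuity by hand.

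The genuinely different step is \eqref{cond:ctem continuous}. The paper does not redo the $A_t/B_t$ splitting. Instead it approximates $f$ by $f_j\in\mathcal C^0_M$ (Lemma~\ref{lem:C_temp_density_0}), bounds $\|P_t(f-f_j)\|_{(\lambda)}\le C_\lambda\|f-f_j\|_{(\lambda)}$ by the weighted $L^1$ estimate, and uses Feller strong continuity of $P_tf_j$. Your direct route is also valid. The Cauchy--Schwarz bound
\[
\int_{B_\delta(x)^{\mathrm c}}G_t(x,y)e^{q d(x,y)}\,m(\mathrm dy)\le\Bigl(\int_{B_\delta(x)^{\mathrm c}}G_t(x,y)\,m(\mathrm dy)\Bigr)^{1/2}\Bigl(\int_M G_t(x,y)e^{2q d(x,y)}\,m(\mathrm dy)\Bigr)^{1/2}\lesssim e^{-c\delta^2/(2t)}
\]
is exactly what replaces the growing-rate moment \eqref{ineq_CM:sub_g_exponential_moment_bound}, and it even gives a rate. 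The cost is re-proving what the paper gets for free from the Feller property.

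What your route does not deliver is $P_tf\in\mathcal C^{\mathrm{tem}}_{M,h}$, i.e.\ continuity of $x\mapsto P_tf(x)$ for unbounded $f$. Add the paper's one-line fix: $|P_tf-P_tf_j|\le C_\lambda h^\lambda\|f-f_j\|_{(\lambda)}$ holds pointwise. Hence $P_tf$ is a locally uniform limit of the continuous functions $P_tf_j$.

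Your final paragraph on the noise is outside this lemma. The properties of $K$, the $\mathcal H_K$ inequality in \eqref{cond:exp_hk_lamda}, and \eqref{cond:hk_uni_bound} are proved separately in Lemma~\ref{lem:manifold_covariance}, via $\|v\|_{\mathcal H_K}\le\|v\|_{L^2_M}$ and a direct bound on $\langle v,P_sv\rangle_{L^2_M}$. If you keep your version there, note that the absorption $e^{\lambda d}G_t\lesssim G_{at}$ forces $a>C_0/c_0\ge1$. So the lower bound in \eqref{eq:manifold_gaussian} only justifies it for $t\le 1/a$. The range $t\in(1/a,1]$ needs a separate bound, e.g.\ $\langle v,P_sv\rangle_{L^2_M}\le\|v\|_{L^1_M}\|v\|_{L^\infty_M}\lesssim t^{-n/2}h(x)^{2\lambda}$ for $v=G_t(x,\cdot)h^\lambda$.
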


\begin{proof}
 The volume-to-covering argument in \cite[pp.~167--168]{kaenmaki2017measures}, applied at bounded scales using \eqref{eq:manifold_local_volume}--\eqref{eq:manifold_volume_growth}, gives $D_\varepsilon(B_R(x))\leq C_R\varepsilon^{-n}$ uniformly in $x\in M$ and $0<\varepsilon\leq1$, for each $R\geq1$. This proves \eqref{ineq:entropy bound}.

 By the $5r$-covering lemma \cite[Theorem~1.2, p.~2]{heinonen2001lectures} and separability, there is an at most countable family $(x_j)_{j\in\mathcal I}$ such that $M=\bigcup_{j\in\mathcal I}B_1(x_j)$ and the balls $B_{1/10}(x_j)$ are pairwise disjoint. Recall $\kappa>0$ in \eqref{eq:manifold_volume_growth} and fix $\lambda_0>\kappa$. By \eqref{eq:manifold_local_volume}, the triangle inequality and disjointness, followed by Tonelli's theorem, there exists $C_{\lambda_0}>0$ such that
 \begin{align*}
  \sum_{j\in\mathcal I}h(x_j)^{-\lambda_0}
  \leq C_{\lambda_0}\int_M e^{-\lambda_0d(o,y)}\,m(\mathrm dy)
  =C_{\lambda_0}\lambda_0\int_0^\infty e^{-\lambda_0r}m(B_r(o))\,\mathrm dr\overset{\eqref{eq:manifold_volume_growth}}{<}\infty,
 \end{align*}
this proves \eqref{ineq:faster_than_ms} for every $\lambda\geq\lambda_0$. Condition \eqref{ineq:ref_regular} follows from $h(y)\leq e^{d(x,y)}h(x)$.

 The Gaussian upper bound in \eqref{eq:manifold_gaussian} and the semigroup property yield $P_t\mathcal C_M^0\subset\mathcal C_M^0$ for all $t>0$. The Feller property then follows from \cite[Example~4.2, p.~10]{benamor2023essential}. Moreover, \eqref{eq:manifold_exp_moment} gives  the $L^1$ inequality in \eqref{cond:exp_hk_lamda}.
 
 Fix $\lambda>0$ and $f\in\mathcal C^{\mathrm{tem}}_{M,h}$. By Lemma~\ref{lem:C_temp_density_0} (whose proof does not depend on this  subsection), choose $f_j\in\mathcal C_M^0$ with $\|f-f_j\|_{(\lambda)}\to0$. Then for each $\lambda>0$, there exists $C_\lambda >0$ so that 
 \[
  \limsup_{t\downarrow0}\|P_t f-f\|_{(\lambda)}
  \leq C_\lambda\|f-f_j\|_{(\lambda)}
       +\limsup_{t\downarrow0}\|P_t f_j-f_j\|_\infty =C_\lambda\|f-f_j\|_{(\lambda)}
 \]
 which converges to $0$ as $j\to\infty$. The same approximation shows that $P_t f\in\mathcal C^{\mathrm{tem}}_{M,h}$, and the semigroup property gives continuity at every time. Since $P_t^*=P_t$, this proves \eqref{cond:ctem continuous}. Equation~\eqref{eq:manifold_holder} gives \eqref{cond:time_holder}.

 Finally, $(M,d)$ is geodesic by the Hopf--Rinow theorem \cite[Chapter~7, Theorem~2.8]{docarmo1992riemannian}. With $\rho=d$, $V(r)=r^n$ and $\Psi(r)=r^2$, \eqref{eq:manifold_local_volume} and the Gaussian lower bound verify the volume and near-diagonal hypotheses of Lemma~\ref{prop:on_diagonal_to_instant_propagation}. Since $t/\Psi^{-1}(t)=\sqrt t\to0$, that lemma gives \eqref{ineq:instant_propagation}.
\end{proof}

\begin{lemma}[Covariance estimates]\label{lem:manifold_covariance}
 Assume \eqref{eq:manifold_gaussian} and \eqref{eq:manifold_gamma}. The measure $K$ in \eqref{eq:manifold_covariance} is symmetric, non-negative definite and Radon. Moreover, $L^2_M\subset\mathcal H_K$ and
 \begin{align}
  0\leq\|v\|_{\mathcal H_K}^2\leq\|v\|_{L^2_M}^2,
  \qquad v\in L^2_M.
  \label{eq:manifold_noise_contraction}
 \end{align}
 For every $\alpha$ with $\max\{0,n/2-\gamma\}<\alpha<1$, the $\mathcal H_K$ inequality in \eqref{cond:exp_hk_lamda} and condition~\eqref{cond:hk_uni_bound} hold.
\end{lemma}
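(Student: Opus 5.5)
The plan is to work directly with the density of $K$. By Tonelli's theorem,
\[
K(\mathrm dx,\mathrm dy)=k(x,y)\,m(\mathrm dx)m(\mathrm dy),\qquad k(x,y):=\frac1{\Gamma(\gamma)}\int_0^\infty e^{-s}s^{\gamma-1}G_s(x,y)\,\mathrm ds\ \ge 0 .
\]
Symmetry of $K$ is immediate from $G_s(x,y)=G_s(y,x)$.

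\textbf{Structural properties of $K$.} For $v\in L^2_M$, Fubini, the semigroup property and self-adjointness give
\[
\iint v(x)v(y)k(x,y)\,m(\mathrm dx)m(\mathrm dy)=\frac1{\Gamma(\gamma)}\int_0^\infty e^{-s}s^{\gamma-1}\,\|P_{s/2}v\|_{L^2_M}^2\,\mathrm ds .
\]
This is non-negative, and by $L^2$-contractivity of $P_{s/2}$ it is at most $\|v\|_{L^2_M}^2$, since the weights integrate to $\Gamma(\gamma)$. Running the same computation with $|v|$ in place of $v$ shows $\iint|v(x)v(y)|\,K(\mathrm dx,\mathrm dy)\le\|v\|_{L^2_M}^2<\infty$. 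Hence $L^2_M\subset\mathcal H_K$ and \eqref{eq:manifold_noise_contraction} holds. Taking $v=\mathbf 1_A$ for compact $A$ gives $K(A\times A)\le m(A)<\infty$, so $K$ is Radon. Non-negative definiteness on general $\mathcal H_K$ functions then follows by truncation and monotone/dominated convergence.

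\textbf{The weighted $\mathcal H_K$ bound in \eqref{cond:exp_hk_lamda}.} First, $h(y)^\lambda\le h(x)^\lambda e^{\lambda d(x,y)}$. Next, for $t\le1$ the upper bound in \eqref{eq:manifold_gaussian} gives
\[
e^{\lambda d}e^{-c_0d^2/t}\lesssim e^{-c_0d^2/(2t)},
\]
because $\lambda d-c_0d^2/(2t)\le\lambda^2t/(2c_0)$. Comparing with the lower bound in \eqref{eq:manifold_gaussian}, I aim for
\[
e^{\lambda d(x,y)}G_t(x,y)\lesssim G_{at}(x,y)
\]
with $a:=2C_0/c_0$, valid when $at\le1$. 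For $t\in[1/a,1]$ the kernel $G_t$ is bounded uniformly, so that range is handled by replacing $t$ with $1/a$ at the cost of a constant. With this comparison,
\[
h(x)^{-2\lambda}\|G_t(x,\cdot)h^\lambda\|_{\mathcal H_K}^2\lesssim\int_0^\infty e^{-s}s^{\gamma-1}\,G_{2at+s}(x,x)\,\mathrm ds .
\]
By Chapman--Kolmogorov and conservation \eqref{eq:manifold_conservation}, $G_u(x,x)\lesssim u^{-n/2}\wedge1$. The $s$-integral is therefore bounded when $\gamma>n/2$, is $O(\log(1/t))$ when $\gamma=n/2$, and is $O(t^{\gamma-n/2})$ otherwise. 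In every case it is $\lesssim t^{-\alpha}$ for any $\alpha>\max\{0,n/2-\gamma\}$, and \eqref{eq:manifold_gamma} guarantees such $\alpha$ can be chosen below $1$.

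\textbf{Condition \eqref{cond:hk_uni_bound}.} On $B_\delta(x)^{\mathrm c}$ we have $\mathbf 1\le\delta^{-2}d(x,\cdot)^2$. From the Gaussian bounds, $d^2e^{-c_0d^2/\varepsilon}\lesssim\varepsilon e^{-c_0d^2/(2\varepsilon)}$, and hence $d(x,y)^2G_\varepsilon(x,y)\lesssim\varepsilon\,G_{a\varepsilon}(x,y)$. Since $K$ has a non-negative kernel, the norm is monotone in $|f|$, so the previous step gives
\[
\|\mathbf 1_{B_\delta(x)^{\mathrm c}}G_\varepsilon(\cdot,x)\|_{\mathcal H_K}^2\lesssim\delta^{-4}\varepsilon^2\varepsilon^{-\alpha}\to0
\]
uniformly in $x$.

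\textbf{Main obstacle.} The step I expect to need the most care is the Gaussian domination $e^{\lambda d}G_t\lesssim G_{at}$. The two-sided estimate is only assumed for $t\le1$, and there is no global volume doubling, so large-time and large-distance behaviour has to be handled through the semigroup property and conservation rather than through volume comparisons.
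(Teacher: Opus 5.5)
Your proof is correct. The structural part follows the paper, but the two analytic estimates take a different route.

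The structural part matches the paper's argument: Tonelli with $|v|$, the identity $\|v\|_{\mathcal H_K}^2=\Gamma(\gamma)^{-1}\int_0^\infty e^{-s}s^{\gamma-1}\|P_{s/2}v\|_{L^2_M}^2\,\mathrm ds$, and $L^2$-contractivity. Your observation $K(A\times A)\le m(A)$ is a welcome explicit check of the Radon property.

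For the weighted bound, the paper never compares kernels pointwise. It sets $v=G_t(x,\cdot)h^\lambda$ and gets $\|v\|_{L^1_M}\lesssim h(x)^\lambda$ and $\|v\|_{L^2_M}^2\lesssim t^{-n/2}h(x)^{2\lambda}$ from the upper Gaussian bound and \eqref{eq:manifold_exp_moment}. It then interpolates, $\langle v,P_sv\rangle\le\min\{\|v\|_{L^2_M}^2,\sup G_s\,\|v\|_{L^1_M}^2\}\lesssim h(x)^{2\lambda}(t+s)^{-n/2}$. For \eqref{cond:hk_uni_bound} it just combines the contraction \eqref{eq:manifold_noise_contraction} with the Gaussian tail, obtaining the exponentially small bound $t^{-n/2}e^{-c\delta^2/t}$.

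You instead transplant the sub-Gaussian machinery of Lemma~\ref{lem_CM:subg_space_time_scaling} and Proposition~\ref{prop:sub_g_condition_noise}. The comparisons $e^{\lambda d}G_t\lesssim G_{at}$ and $d^2G_\varepsilon\lesssim\varepsilon G_{a\varepsilon}$ reduce everything to the on-diagonal quantity $G_{2at+s}(x,x)$.

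This route has some costs:
\begin{itemize}
\item It needs the lower Gaussian bound.
\item It forces a separate treatment of $t\in[1/a,1]$, where no pointwise comparison with a kernel at time $\le1$ is available. That range is most cleanly closed by \eqref{eq:manifold_noise_contraction} together with $\sup_{y,z}G_t(y,z)\lesssim1$ there, or by the semigroup property plus Lemma~\ref{lem:Min_H}.
\item It yields only the polynomial rate $\delta^{-4}\varepsilon^{2-\alpha}$ in \eqref{cond:hk_uni_bound}, which is still sufficient.
\end{itemize}
In exchange you get a reusable pointwise kernel comparison and an explicit parallel with the sub-Gaussian examples.

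One notational slip: the on-diagonal bound you actually use is $G_u(x,x)\lesssim u^{-n/2}\vee 1$, not $u^{-n/2}\wedge 1$. Your subsequent case analysis in $\gamma$ versus $n/2$ is consistent with the former.
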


\begin{proof}
 Clearly, $K$ is a symmetric Radon measure. Applying Tonelli's theorem and $L^2$-contractivity to $|v|$ gives $v\in\mathcal H_K$ for every $v\in L^2_M$. Symmetry and the semigroup property then yield
 \[
  0\leq\|v\|_{\mathcal H_K}^2
  =\frac1{\Gamma(\gamma)}\int_0^\infty e^{-s}s^{\gamma-1}
       \|P_{s/2}v\|_{L^2_M}^2\,\mathrm ds
  \leq\|v\|_{L^2_M}^2.
 \]
 This proves non-negative definiteness and \eqref{eq:manifold_noise_contraction}. For $v(y)=G_t(x,y)h(y)^\lambda$, the Gaussian bound and \eqref{eq:manifold_exp_moment} give, uniformly for $x\in M$, $0<t\leq1$ and $0\leq\lambda\leq1$,
 \[
  \|v\|_{L^1_M}\leq C h(x)^\lambda,
  \qquad \|v\|_{L^2_M}^2\leq C t^{-n/2}h(x)^{2\lambda}.
 \]
 For $s\leq1$,
 \[
  \langle v,P_s v\rangle_{L^2_M}
  \leq\min\{\|v\|_{L^2_M}^2,\,
             \sup_{y,z}G_s(y,z)\|v\|_{L^1_M}^2\}
  \leq C h(x)^{2\lambda}(t+s)^{-n/2};
 \]
 for $s\geq1$, it is at most $C h(x)^{2\lambda}$ by the semigroup property. Since \eqref{eq:manifold_gamma} gives $\gamma>0$ and $n/2-\gamma<1$, we may fix
 \[
  \max\{0,n/2-\gamma\}<\alpha<1.
 \]
 In particular, $\gamma+\alpha-n/2>0$. For $0<t\leq1$, splitting at $s=t$ gives
 \begin{align*}
  \int_0^1s^{\gamma-1}(t+s)^{-n/2}\,\mathrm ds
  &\leq t^{-n/2}\int_0^t s^{\gamma-1}\,\mathrm ds
       +\int_t^1s^{\gamma-n/2-1}\,\mathrm ds\\
  &\leq\frac{t^{\gamma-n/2}}{\gamma}
       +t^{-\alpha}\int_t^1s^{\gamma+\alpha-n/2-1}\,\mathrm ds\\
  &\leq\left(\frac1\gamma+\frac1{\gamma+\alpha-n/2}\right)t^{-\alpha}.
 \end{align*}
 Therefore, using $1\leq t^{-\alpha}$,
 \begin{align}
  \|G_t(x,\cdot)h^\lambda\|_{\mathcal H_K}^2
  \leq C h(x)^{2\lambda}
    \left(1+\int_0^1s^{\gamma-1}(t+s)^{-n/2}\,\mathrm ds\right)
  \leq C t^{-\alpha}h(x)^{2\lambda},
  \label{eq:manifold_noise_bound}
 \end{align}
 This proves the $\mathcal H_K$ inequality in \eqref{cond:exp_hk_lamda} with $\alpha\in(0,1)$, as required. Equation~\eqref{eq:manifold_noise_contraction} and the Gaussian bound also give
 \[
  \|G_t(x,\cdot)\mathbf1_{B_\delta(x)^c}\|_{\mathcal H_K}^2
  \leq\|G_t(x,\cdot)\mathbf1_{B_\delta(x)^c}\|_{L^2_M}^2
  \leq C t^{-n/2}e^{-c\delta^2/t}\longrightarrow0,\quad t\downarrow0,
 \]
 uniformly in $x$ for each $\delta>0$, proving \eqref{cond:hk_uni_bound}.
\end{proof}

\begin{proof}[Proof of Proposition~\ref{prop:local_manifolds}]
 Lemmas~\ref{lem:manifold_semigroup} and~\ref{lem:manifold_covariance} verify all the geometric, semigroup and noise assumptions.
\end{proof}

	\subsection{Proof of \texorpdfstring{Lemmas}{Lemmas} \ref{lem:H0} and \ref{lem:HS}}

    The next three lemmas ensure that the state space in which the solution paths are constructed is nice.

	\begin{lemma}\label{lem:ref_grow}
		Let $(\X,d)$ be a metric space with infinite diameter and let $h$ be a  weight function on it satisfying \eqref{ineq:faster_than_ms} and \eqref{ineq:ref_regular}. Then for each $x_0 \in \X$,
		\begin{align}\label{cond:ref_grow}
			\lim_{R \to \infty} \inf_{x \in B_R(x_0)^c} h(x) = \infty.
		\end{align}
	\end{lemma}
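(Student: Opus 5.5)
We prove the statement, Lemma~\ref{lem:ref_grow}, by contradiction, using the summability in \eqref{ineq:faster_than_ms} together with the infinite diameter of $\X$ and the local comparability \eqref{ineq:ref_regular}. Fix $x_0\in\X$. The quantity $R\mapsto\inf_{x\in B_R(x_0)^c}h(x)$ is nondecreasing in $R$, since the sets $B_R(x_0)^c$ shrink; the sets are nonempty for every $R$ because $\operatorname{diam}\X=\infty$. Hence the limit exists in $[1,\infty]$. Suppose it equals some finite $C_*$. Then for every $k\in\N$ we can pick $y_k\in B_{R_k}(x_0)^c$ with $h(y_k)\le C_*+1$, where the radii $R_k$ are chosen inductively by
\[
R_{k+1}:=d(x_0,y_k)+3 .
\]
With this choice $d(x_0,y_{k+1})\ge d(x_0,y_k)+3$, so the points are pairwise at distance at least $3$ by the triangle inequality, and $h$ is uniformly bounded along the sequence.

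Next we transfer this to the covering points. By \eqref{ineq:faster_than_ms}, each $y_k$ lies in some ball $B_1(x_{n_k})$ with $n_k\in\mathcal I$. Since the $y_k$ are pairwise at distance at least $3>2$, the indices $n_k$ are pairwise distinct: a single ball of radius $1$ cannot contain two of them. In particular $\mathcal I$ is infinite. By \eqref{ineq:ref_regular}, with a constant $\mathfrak c$ independent of $k$, we get $h(x_{n_k})\le \mathfrak c\,h(y_k)\le \mathfrak c(C_*+1)$ for all $k$. Therefore
\[
\sum_{n\in\mathcal I}h(x_n)^{-\lambda_0}\ \ge\ \sum_{k\in\N}h(x_{n_k})^{-\lambda_0}\ \ge\ \sum_{k\in\N}\bigl(\mathfrak c(C_*+1)\bigr)^{-\lambda_0}=\infty,
\]
contradicting \eqref{ineq:faster_than_ms}. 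Hence the limit is $\infty$, which is \eqref{cond:ref_grow}.

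There is no real obstacle here. The only point requiring care is making the selected points well separated, so that the covering indices are distinct and the divergent sum counts each term once. The inductive choice of radii above handles this, and nothing beyond \eqref{ineq:faster_than_ms}, \eqref{ineq:ref_regular} and the triangle inequality is needed.
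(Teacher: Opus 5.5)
Your proof is correct and follows essentially the same contradiction argument as the paper: take far-away points with bounded $h$, move them to the covering centers via \eqref{ineq:ref_regular}, and make \eqref{ineq:faster_than_ms} diverge. The only difference is cosmetic. You force distinct covering indices by choosing the $y_k$ pairwise $3$-separated, whereas the paper passes to a subsequence using $d(x_0,x_{n_k})\ge d(x_0,y_k)-1\to\infty$.
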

	\begin{proof}
		Suppose, to the contrary, that \eqref{cond:ref_grow} fails. Then for some $x_0\in\X$ and $M\geq1$, there are points $y_k\in\X$ such that $d(x_0,y_k)\to\infty$ and $h(y_k)\leq M$. Let $\lambda_0>0$ and $(x_n)_{n\in\mathcal I}$ be as in \eqref{ineq:faster_than_ms}. For each $k$, choose $n_k$ so that $y_k\in B_1(x_{n_k})$. Since $d(x_0,x_{n_k})\geq d(x_0,y_k)-1\to\infty$ as $k \to \infty$, after passing to a subsequence we may assume that the indices $n_k$ are distinct. By \eqref{ineq:ref_regular}, there is some $C\geq1$ such that $h(x_{n_k})\leq Ch(y_k)\leq CM$. Consequently,
		\begin{align*}
			\sum_{n\in\mathcal I}h(x_n)^{-\lambda_0}
			\geq\sum_{k\in\N}h(x_{n_k})^{-\lambda_0}
			\geq\sum_{k\in\N}(CM)^{-\lambda_0}=\infty,
		\end{align*}
		contradicting \eqref{ineq:faster_than_ms}.
	\end{proof}

	\begin{lemma} \label{lem:PS}
		$\mathcal C^{\mathrm{tem}}_{\X,h}$ is a Polish space.
	\end{lemma}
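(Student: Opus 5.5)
The plan is to show that the topology generated by the norms $\{\|\cdot\|_{(\lambda)}:\lambda>0\}$ is metrizable by a complete metric, and then to produce a countable dense subset.

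\emph{Metrizability.} Since $h\ge1$, we have $\|f\|_{(\lambda)}\le\|f\|_{(\lambda')}$ whenever $\lambda'\le\lambda$. So the family is monotone, and it suffices to use the countable subfamily $\lambda=1/k$, $k\in\N$. We set
\[
\rho(f,g):=\sum_{k\ge1}2^{-k}\bigl(1\wedge\|f-g\|_{(1/k)}\bigr).
\]
By the standard Fréchet-space construction (cf.~\cite[Theorem 1.37]{MR1157815}), $\rho$ is a translation-invariant metric inducing the given topology. The essential supremum can be replaced by the supremum because $f$ is continuous and $m$ has full support.

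\emph{Completeness.} Let $(f_j)$ be $\rho$-Cauchy. For each $k$, the sequence $(h^{-1/k}f_j)$ is uniformly Cauchy on $\X$, so $f_j$ converges locally uniformly to a continuous limit $f$. Passing to the limit in $\sup_x h(x)^{-1/k}|f_j(x)-f_l(x)|\le\varepsilon$ gives $\|f_j-f\|_{(1/k)}\to0$ for each $k$. Hence $\|f\|_{(1/k)}<\infty$ for every $k$, so $f\in\mathcal C^{\mathrm{tem}}_{\X,h}$, and $\rho(f_j,f)\to0$.

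\emph{Separability.} We use that $\mathcal C^0_\X$ is dense in $\mathcal C^{\mathrm{tem}}_{\X,h}$ (Lemma~\ref{lem:C_temp_density_0}). Since $\X$ is locally compact and separable, $\mathcal C^0_\X$ is a separable Banach space under the uniform norm. Convergence in sup-norm implies convergence in every $\|\cdot\|_{(\lambda)}$, since $\|g\|_{(\lambda)}\le\|g\|_\infty$ because $h\ge1$. Therefore a countable sup-norm-dense subset of $\mathcal C^0_\X$ is $\rho$-dense in $\mathcal C^{\mathrm{tem}}_{\X,h}$.

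The main obstacle is the separability of $\mathcal C^0_\X$ together with the density lemma. If Lemma~\ref{lem:C_temp_density_0} is proved only later, we would instead truncate directly. Fix an exhaustion by compacts and continuous cutoffs $\chi_R$ that equal $1$ on $B_R(x_0)$ and have compact support; compactness of closed balls follows from local compactness together with \eqref{ineq:entropy bound} and completeness. For $\lambda>0$ this gives
\[
\|f-\chi_Rf\|_{(\lambda)}\le\|f\|_{(\lambda/2)}\,\sup_{d(x,x_0)\ge R}h(x)^{-\lambda/2}\to0,
\]
by Lemma~\ref{lem:ref_grow}; the finite-diameter case is trivial. Finally, separability of $\mathcal C_c$ on each compact set follows from Stone–Weierstrass.
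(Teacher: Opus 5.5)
Your proposal is correct and follows essentially the same route as the paper: the same metric $\sum_k 2^{-k}\bigl(1\wedge\|f-g\|_{(1/k)}\bigr)$, the cutoff truncation $\|f-\chi_R f\|_{(\lambda)}\le\|f\|_{(\lambda/2)}\sup_{B_R(x_0)^{\mathrm c}}h^{-\lambda/2}$ via Lemma~\ref{lem:ref_grow}, and a countable sup-norm-dense subset of $\mathcal C^0_\X$ combined with $\|\cdot\|_{(\lambda)}\le\|\cdot\|_\infty$. The paper only asserts completeness and the compactness of closed balls; your proposal supplies both. For completeness you use local uniform convergence (via continuity of $h$) and pass to the limit in each $\|\cdot\|_{(1/k)}$, and for closed balls you combine \eqref{ineq:entropy bound} with completeness.
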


	\begin{proof}[Proof of Lemma \ref{lem:PS}]
		It is well known that if a metric space $(\mathbb X,d)$ is a locally compact separable complete metric space, then $\mathcal C_\X^0$  is separable.
		Let us fix an arbitrary $x_0 \in \X$.
		Take $\{g_n\}_{n \in \mathbb N} \subset \mathcal C_\X^0$ to be a dense subset and let $\mathcal X_n$  be continuous functions with values in $[0,1]$ whose support is contained in $B_{n+1}(x_0)$ and $\mathcal X_n(x) = 1$ for all $x \in B_n(x_0)$.
		Denote $g_{n,k}:= g_n \cdot \mathcal X_k$ for each $n,k \in \mathbb N$.
		Then $\mathcal A_d:=\{g_{n,k}\}_{n,k \in \mathbb N}$ is countably dense in $\mathcal C_\X^0$.
		
		We first show $\mathcal C^{\mathrm{tem}}_{\X,h}$ is Polish. Define the metric $d_\mathrm{tem}$ on $\mathcal C^{\mathrm{tem}}_{\X,h}$ by 
		\begin{align*}
			d_\mathrm{tem}(f,g):= \sum_{n = 1}^\infty 2^{-n}\pr{1 \wedge \norm{f - g}_{(n^{-1})}},
		\end{align*}
		which makes it a complete metric space.
		Suppose $f \in \mathcal C^{\mathrm{tem}}_{\X,h}$, define $f_n:= f \cdot \mathcal X_n$ for each $n \in \mathbb N$ and recall that the support of $\mathcal X_n$ is contained in $B_{n+1} := B_{n+1}(x_0)$ for some $x_0$.
		Then consider for any fixed $\ell>0$,
		\begin{align}\label{ineq:first compact approx temp}
			\lim_{n\to \infty}\| f - f_n\|_{(\ell)} = \lim_{n\to \infty}\sup_{x \in B_n^c} h(x)^{-\ell}\left| f(x)\right| \le \|f \|_{(\ell/2)}\lim_{n\to \infty} \sup_{x \in B_n^c}h(x)^{-\ell/2},
		\end{align}
		which is equal to zero by Lemma \ref{lem:ref_grow} if $\X$ has infinite diameter; if $\X$ has finite diameter, then $f_n=f$ for all large $n$.
		Now for each $n$, take $\tilde g_{n,k}\mathrel{\in} \mathcal A_d$, $k\in\mathbb N$, so that $\lim_{k \to \infty} \|f_n - \tilde g_{n,k}\|_\infty = 0$; this is possible since $f_n\in\mathcal C^0_\X$ and $\mathcal A_d$ is dense in $\mathcal C^0_\X$.
		Hence, given $\varepsilon>0$, pick $n \in \mathbb N$ large enough so that $\| f - f_n\|_{(\ell)} < \frac{\varepsilon}{2}$, and pick $k = k(n) \in \mathbb N$ large enough so that $\| f_n - \tilde g_{n,k}\|_\infty < \frac{\varepsilon}{2}$.
		Hence we obtain
		\begin{align}
			\|f - \tilde g_{n,k}\|_{(\ell)}  \le \| f- f_n\|_{(\ell)} + \|f_n - \tilde g_{n,k}\|_{(\ell)} < \frac{\varepsilon}{2} + \| f_n - \tilde g_{n,k}\|_\infty < \varepsilon.
		\end{align}
		This shows the set of functions $\mathcal A_d$ is a dense subset of $\mathcal C^{\mathrm{tem}}_{\X,h}$.	
	\end{proof}

\begin{lemma}\label{lem:C_temp_density_0}
		$\mathcal C^0_\X$ is dense in $\mathcal C^{\mathrm{tem}}_{\X,h}$.
	\end{lemma}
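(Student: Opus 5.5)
Given $f\in\mathcal C^{\mathrm{tem}}_{\X,h}$ and a target accuracy, the plan is to approximate $f$ in every norm $\|\cdot\|_{(\ell)}$ by the compactly supported truncations $f_n:=f\cdot\mathcal X_n$. Here $\mathcal X_n$ are the continuous cutoffs from the proof of Lemma~\ref{lem:PS}: they take values in $[0,1]$, equal $1$ on $B_n(x_0)$, and are supported in $B_{n+1}(x_0)$. Each $f_n$ is continuous with bounded support. By \eqref{ineq:entropy bound}, closed bounded balls are totally bounded, and $\X$ is complete, so these balls are compact. Hence $f_n\in\mathcal C^0_\X$. The topology of $\mathcal C^{\mathrm{tem}}_{\X,h}$ is generated by the family $\{\|\cdot\|_{(\ell)}\}_{\ell>0}$, and these norms decrease in $\ell$ because $h\ge1$. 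It therefore suffices to show that $\|f-f_n\|_{(\ell)}\to0$ for each fixed $\ell>0$, or equivalently convergence in the metric $d_{\mathrm{tem}}$.

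The estimate proceeds in two cases.

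\emph{Infinite diameter.} Since $f-f_n$ vanishes on $B_n(x_0)$ and $|1-\mathcal X_n|\le1$,
\[
\|f-f_n\|_{(\ell)}\le \sup_{x\in B_n(x_0)^c}h(x)^{-\ell}|f(x)|\le \|f\|_{(\ell/2)}\sup_{x\in B_n(x_0)^c}h(x)^{-\ell/2}.
\]
The first factor is finite because $f$ is tempered. The second factor tends to $0$ by Lemma~\ref{lem:ref_grow}.

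\emph{Finite diameter.} Here $B_n(x_0)=\X$ for all large $n$, so $f_n=f$ eventually and there is nothing to prove.

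The only delicate point is confirming that $f_n$ really vanishes at infinity, that is, that bounded closed sets are compact. I would handle this through the uniform covering bound \eqref{ineq:entropy bound} together with completeness, as described above. Alternatively, one can choose $\mathcal X_n$ with compact support directly, using local compactness and separability, at the cost of replacing the balls by an exhausting sequence of compacts. Either way, the growth of $h$ from Lemma~\ref{lem:ref_grow} is the essential ingredient.
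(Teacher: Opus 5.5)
Your proposal is correct and follows the paper's proof essentially verbatim. Both use the truncations $f\mathcal X_n$ with the cutoffs from Lemma~\ref{lem:PS}, bound $\|f-f_n\|_{(\ell)}$ by $\|f\|_{(\ell/2)}\sup_{B_n(x_0)^c}h^{-\ell/2}$, invoke Lemma~\ref{lem:ref_grow} in the infinite-diameter case, and treat the finite-diameter case as trivial. Your explicit argument that closed balls are compact, via \eqref{ineq:entropy bound} and completeness, is the same fact the paper relies on, though the paper uses it only implicitly when it calls the cutoffs compactly supported.
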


	\begin{proof}[Proof of Lemma \ref{lem:C_temp_density_0}]
		If $\X$ has finite diameter, then \eqref{ineq:entropy bound} and completeness imply that $\X$ is compact, so the assertion is immediate. Suppose that $\X$ has infinite diameter. Fix $x_0\in\X$ and let $(\mathcal X_n)_{n\in\N}$ be the compactly supported cutoff functions used in the proof of Lemma \ref{lem:PS}, chosen so that $0\leq\mathcal X_n\leq1$ with $\mathcal X_n=1$ on $B_n(x_0)$ and $\mathcal X_n =0$ on $B_{n+1}(x_0)^c$. For $f\in\mathcal C^{\mathrm{tem}}_{\X,h}$, set $f_n:=f\mathcal X_n\in\mathcal C^0_\X$. Then for every $\lambda>0$, 
		\begin{align*}
			\lim_{n \to \infty}\norm{f-f_n}_{(\lambda)}
			&\leq \lim_{n \to \infty}\norm{f}_{(\lambda/2)}
			\sup_{x\in B_n(x_0)^\mathrm c}h(x)^{-\lambda/2}=0,
		\end{align*}
		by Lemma \ref{lem:ref_grow}. Hence $f_n\to f$ in $\mathcal C^{\mathrm{tem}}_{\X,h}$.
	\end{proof}

We can now finish the proofs of Lemma \ref{lem:H0} and Lemma \ref{lem:HS}.
	
	\begin{proof}[Proof of Lemma \ref{lem:H0}]
		Since $\X$ is locally compact and separable, fix an increasing sequence of compact sets $(C_n)_{n\in\N}$ with $\X=\bigcup_{n\in\N}C_n$.
		Let $\mathcal S$ be the space of compactly supported real-valued Borel simple functions on $\X$, and define
		\begin{align*}
			B(f,g):=\iint_{\X^2}f(x)g(y)K(\mathrm dx,\mathrm dy),\qquad f,g\in\mathcal S.
		\end{align*}
		The assumptions on $K$ imply that $B$ is a symmetric non-negative definite bilinear form on $\mathcal S$, and hence satisfies the Cauchy--Schwarz inequality.

		For each $f\in\mathcal H_K$, choose $f_n\in\mathcal S$ supported in $C_n$ such that $f_n\to f$ pointwise and $|f_n|\leq|f|$.
		For $f,g\in\mathcal H_K$, Fatou's lemma and the Cauchy--Schwarz inequality applied to $|f_n|,|g_n|$ give
		\begin{align}\label{ineq:H0 mixed absolute}
			\iint_{\X^2}|f(x)g(y)|K(\mathrm dx,\mathrm dy)
			&\leq\liminf_{n\to\infty}B(|f_n|,|g_n|)\nonumber\\
			&\leq\left(\iint_{\X^2}|f(x)f(y)|K(\mathrm dx,\mathrm dy)\right)^{1/2}\nonumber\\
			&\qquad\times\left(\iint_{\X^2}|g(x)g(y)|K(\mathrm dx,\mathrm dy)\right)^{1/2}<\infty.
		\end{align}
		Thus $\mathcal H_K$ is a vector space, and the domain and the integral formula of $B$ extend to $\mathcal H_K$.
		By dominated convergence, $B(f,f)=\lim_{n\to\infty}B(f_n,f_n)\geq0$ for every $f\in\mathcal H_K$.
		Consequently,
		\begin{align}\label{ineq:H0 Cauchy Schwarz}
			|B(f,g)|\leq B(f,f)^{1/2}B(g,g)^{1/2},\qquad f,g\in\mathcal H_K,
		\end{align}
		and $\norm{f}_{\mathcal H_K}=B(f,f)^{1/2}$ is a Hilbertian seminorm in the sense of \cite[Section 1.1]{Zbl054760064}.
		Moreover, since $|f-f_n|\leq2|f|$, dominated convergence also yields
		\begin{align}\label{ineq:H0 compact truncation}
			\lim_{n \to \infty}\norm{f-f_n}_{\mathcal H_K}^2
			\leq\lim_{n\to \infty}\iint_{\X^2}|f(x)-f_n(x)||f(y)-f_n(y)|K(\mathrm dx,\mathrm dy)=0.
		\end{align}

		To prove separability, for each $n\in\N$ let $\mu_n$ be the finite Radon measure on $C_n$ defined by $\mu_n(A):=K(A\times C_n)$ for Borel sets $A\subset C_n$.
		By the separability of $\mathcal C(C_n;\R)$ under the uniform norm and \cite[Proposition~7.9, p.~217]{folland1999real}, choose a countable set $\mathcal D_n\subset\mathcal C(C_n;\R)$ dense in $L^2(C_n,\mu_n)$, and extend its elements by zero outside $C_n$.
		For every bounded Borel function $\psi$ supported in $C_n$, we see by the symmetry of $K$ that
		\begin{align*}
			\norm{\psi}_{\mathcal H_K}^2
			&=\iint_{C_n^2}{\psi}(x)\psi(y)K(\mathrm dx,\mathrm dy)\\
			&\leq\frac12\iint_{C_n^2}\bigl(|\psi(x)|^2+|\psi(y)|^2\bigr)K(\mathrm dx,\mathrm dy)
			=\norm{\psi}_{L^2(C_n,\mu_n)}^2.
		\end{align*}
		Given $f\in\mathcal H_K$ and $\varepsilon>0$, first use \eqref{ineq:H0 compact truncation} to choose $n$ with $\norm{f-f_n}_{\mathcal H_K}<\varepsilon/2$.
		Since $f_n$ is bounded and supported in $C_n$, choose $g\in\mathcal D_n$ with $\norm{f_n-g}_{L^2(C_n,\mu_n)}<\varepsilon/2$.
		Applying the preceding estimate to $\psi=f_n-g$ gives
		\begin{align*}
			\norm{f-g}_{\mathcal H_K}
			&\leq\norm{f-f_n}_{\mathcal H_K}+\norm{f_n-g}_{\mathcal H_K}\\
			&\leq\norm{f-f_n}_{\mathcal H_K}+\norm{f_n-g}_{L^2(C_n,\mu_n)}<\varepsilon.
		\end{align*}
		Thus $\bigcup_{n\in\N}\mathcal D_n$ is a countable dense subset of $\mathcal H_K$.

		Finally, $[0]_K$ is a linear subspace by the seminorm properties, and \eqref{ineq:H0 Cauchy Schwarz} shows that $B$ vanishes whenever one of its arguments belongs to $[0]_K$.
		Thus
		\begin{align*}
			\langle[f]_K,[g]_K\rangle:=B(f,g)
		\end{align*}
		is a well-defined inner product on $\mathcal H_K/[0]_K$, whose induced norm is the unique norm satisfying $\norm{[f]_K}_{\widetilde{\mathcal H}_K}=\norm{f}_{\mathcal H_K}$.
		Its completion is therefore a separable Hilbert space, as asserted.
	\end{proof}
	
	\begin{proof}[Proof of Lemma \ref{lem:HS}]
		Fix $g\in\mathcal H_K$.
		By the self-adjointness of $Q^{1/2}$, for every $f\in\widetilde{\mathcal H}_K$,
		\[
			(\Gamma_g\circ Q^{1/2})(f)
			=\left\langle[g]_K,Q^{1/2}f\right\rangle_{\widetilde{\mathcal H}_K}
			=\left\langle Q^{1/2}[g]_K,f\right\rangle_{\widetilde{\mathcal H}_K}.
		\]
		Thus $\Gamma_g\circ Q^{1/2}$ is a Hilbert--Schmidt  operator on $\widetilde{\mathcal H}_K$, whose norm is $\|Q^{1/2}[g]_K\|_{\widetilde{\mathcal H}_K}$ by Parseval's identity.
		For every $f\in\operatorname{Ker}(Q^{1/2})^\bot$, the definitions of $Q^{-1/2}$ and the induced norm give
		\[
			\|Q^{1/2}f\|_{Q^{1/2}\widetilde{\mathcal H}_K}
			=\|Q^{-1/2}Q^{1/2}f\|_{\widetilde{\mathcal H}_K}
			=\|f\|_{\widetilde{\mathcal H}_K};
		\]
		see \cite[Appendix~B.2, p.~429]{da2014stochastic}.
		Thus $Q^{1/2}$ restricts to an isometric isomorphism from $\operatorname{Ker}(Q^{1/2})^\bot$ onto $Q^{1/2}\widetilde{\mathcal H}_K$.
		Since $\Gamma_g\circ Q^{1/2}$ vanishes on $\operatorname{Ker}(Q^{1/2})$, the Hilbert--Schmidt norm identity in \cite[p.~96 and Section~4.2.1]{da2014stochastic} applies.
		Consequently, $\Gamma_g$ is Hilbert--Schmidt and
		\begin{align*}
			\|\Gamma_g\|^2_{\mathcal L^{\mathrm{HS}}_{Q^{1/2}\widetilde{\mathcal H}_K}}
			=\|\Gamma_g\circ Q^{1/2}\|^2_{\mathcal L^{\mathrm{HS}}_{\widetilde{\mathcal H}_K}}
			=\left\|Q^{1/2}[g]_K\right\|^2_{\widetilde{\mathcal H}_K}
			\leq\|Q^{1/2}\|^2\|g\|^2_{\mathcal H_K}.
		\end{align*}
	\end{proof}

		\subsection{Proof of Lemmas  \ref{cor:mtg+mink_cs}, \ref{lem:hk longtime}, \ref{lem:hk_abs} and \ref{lem:tight_tem_moment} 
    }

	We  need the following analytical result in the proof of Lemma \ref{lem:hk_abs}.
		
	\begin{lemma}\label{lem:Min_H}
		Suppose that $(\Psi(z,x): z\in E, x\in \X)$ is a non-negative measurable function on the product space $(E\times \X, \mathscr E\otimes \mathscr B_\X, \nu\otimes m)$ where $(E, \mathscr E, \nu)$ is a $\sigma$-finite measure space.
		Set
		\begin{align*}
			F(x):=\int_{E}\Psi(z,x)\nu(\mathrm dz),\qquad x\in\X,
		\end{align*}
		and suppose that $F\in\mathcal B_\X$, $\Psi(z,\cdot)\in\mathcal H_K$ for $\nu$-almost every $z\in E$, and
		\begin{align*}
			\int_{E}\norm{\Psi(z,\cdot)}_{\mathcal H_K}\nu(\mathrm dz)<\infty.
		\end{align*}
		Then $F\in\mathcal H_K$ and
		\begin{equation}
			\norm{\int_{E} \Psi(z, \cdot)\nu(\mathrm dz)}_{\mathcal H_K}
			\leq \int_{E} \norm{\Psi(z, \cdot)}_{\mathcal H_K} \nu(\mathrm dz).
		\end{equation}
	\end{lemma}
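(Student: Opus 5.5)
The statement is Minkowski's integral inequality in the Hilbertian seminorm $\|\cdot\|_{\mathcal H_K}$. The obstacle is that $K$ is only non-negative definite, not a positive measure on the relevant integrand, so $\|F\|_{\mathcal H_K}^2=\iint F(x)F(y)K(\mathrm dx,\mathrm dy)$ cannot be bounded pointwise. The plan is to reduce to finite sums via duality with the Cauchy--Schwarz inequality \eqref{ineq:H0 Cauchy Schwarz} from the proof of Lemma~\ref{lem:H0}.

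\emph{Membership.} Membership of $F$ in $\mathcal H_K$ requires $\iint |F(x)F(y)|K(\mathrm dx,\mathrm dy)<\infty$. Since $\Psi\ge0$, we have $F\ge0$. By Tonelli (with $|K|$ is not available, so I use that $K$ is a non-negative measure? No: $K$ is only a signed non-negative-definite Radon measure in general.) I would therefore write
\[
\iint F(x)F(y)\,|K|(\mathrm dx,\mathrm dy)=\int_E\int_E\iint \Psi(z,x)\Psi(z',y)\,|K|(\mathrm dx,\mathrm dy)\,\nu(\mathrm dz)\,\nu(\mathrm dz').
\]
Here I hope to use that the paper's $\mathcal H_K$ is defined with $|f(x)f(y)|$ integrated against $K$, with $K$ a non-negative Radon measure, as in the examples. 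Under that reading, the mixed-absolute bound \eqref{ineq:H0 mixed absolute} gives
\[
\iint \Psi(z,x)\Psi(z',y)\,K(\mathrm dx,\mathrm dy)\le \|\Psi(z,\cdot)\|_{\mathcal H_K}\,\|\Psi(z',\cdot)\|_{\mathcal H_K}
\]
for non-negative functions, because then $|f|$-norms coincide with $f$-norms. Integrating this over $z,z'$ yields $\iint F F\,K\le\bigl(\int_E\|\Psi(z,\cdot)\|_{\mathcal H_K}\nu(\mathrm dz)\bigr)^2<\infty$. This gives both membership and the inequality.

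\emph{Inequality.} The inequality follows directly once Tonelli is justified. The integrand $(z,z',x,y)\mapsto\Psi(z,x)\Psi(z',y)$ is non-negative and jointly measurable. $\nu$ is $\sigma$-finite and $K$ is Radon on a $\sigma$-compact space, so Fubini--Tonelli applies and interchanges the $E\times E$ and $\X^2$ integrals.

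\emph{Main obstacle.} The difficulty is the sign of $K$. If $K$ is allowed to be a genuinely signed non-negative-definite measure, I would first truncate $\Psi$ to $\Psi_n:=(\Psi\wedge n)\mathbf 1_{E_n\times C_n}$ with $\nu(E_n)<\infty$. For each $n$, I would show $F_n:=\int\Psi_n\,\mathrm d\nu\in\mathcal H_K$ with the bound, via the bilinear form $B$: expand $B(F_n,F_n)=\iint_{E^2}B(\Psi_n(z,\cdot),\Psi_n(z',\cdot))\,\nu(\mathrm dz)\,\nu(\mathrm dz')$, which is justified by bounded integrands on finite measure sets, and apply \eqref{ineq:H0 Cauchy Schwarz} inside. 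I would then pass $n\to\infty$ by monotone convergence in the absolute-value integral using \eqref{ineq:H0 mixed absolute}.
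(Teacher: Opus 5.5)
Your core argument is exactly the paper's proof. You use Tonelli's theorem (justified by the $\sigma$-finiteness of $\nu$ and of the Radon measure $K$) to rewrite $\iint F(x)F(y)K(\mathrm dx,\mathrm dy)$ as $\int_E\int_E \iint \Psi(z,x)\Psi(z',y)K(\mathrm dx,\mathrm dy)\,\nu(\mathrm dz)\nu(\mathrm dz')$, and then bound the inner integral by $\norm{\Psi(z,\cdot)}_{\mathcal H_K}\norm{\Psi(z',\cdot)}_{\mathcal H_K}$ via Cauchy--Schwarz for non-negative functions, for $\nu\otimes\nu$-a.e.\ $(z,z')$. In the paper $K$ is a (positive) Radon measure that is additionally non-negative definite, so your concern about a signed $K$, the stray $|K|$, and the truncation scheme in your last paragraph are unnecessary and can be dropped.
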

	
	\begin{proof}[Proof of Lemma \ref{lem:Min_H}]
		By changing $\Psi$ on a $\nu$-null subset of $E$ if necessary, we may assume that $\Psi(z,\cdot)\in\mathcal H_K$ for every $z\in E$.
		By Tonelli's theorem and \eqref{ineq:H0 Cauchy Schwarz}, we see
		\begin{align*}
			\iint_{\X^2}F(x)F(y)K(\mathrm dx,\mathrm dy)
			&=\int_{E}\int_{E}\iint_{\X^2}\Psi(z,x)\Psi(z',y)K(\mathrm dx,\mathrm dy)\nu(\mathrm dz)\nu(\mathrm dz')\\
			&\leq\int_{E}\int_{E}\norm{\Psi(z,\cdot)}_{\mathcal H_K}\norm{\Psi(z',\cdot)}_{\mathcal H_K}\nu(\mathrm dz)\nu(\mathrm dz')\\
			&=\left(\int_{E}\norm{\Psi(z,\cdot)}_{\mathcal H_K}\nu(\mathrm dz)\right)^2<\infty.
		\end{align*}
		Therefore, $F\in\mathcal H_K$, and taking square roots proves the desired inequality.
	\end{proof}

	\begin{proof}[Proof of Lemma \ref{cor:mtg+mink_cs}]
		By Lemma \ref{lem:HS} and the boundedness of $Q$ on $\tilde{\mathcal H}_{K}$, we see by the  Burkholder-Davis-Gundy inequality (cf. \cite[Theorem 4.37]{da2014stochastic}) that, uniformly in $T >0$, 
		\begin{equation}\label{ineq:bdg_gen_mtg_bdg}
			\begin{aligned}
				\norm{\sup_{t \in [0,T]}\left|\iint_0^t \Phi(s,y)W(\mathrm ds,\mathrm dy)\right|}_{L^p_\Omega} &\lesssim \pr{\int_0^T \mathbb E\left[\langle Q[\Phi(s,\cdot)]_K,[\Phi(s,\cdot)]_K\rangle_{\tilde{\mathcal H}_{K}}^\frac{p}{2}\right]^\frac{2}{p}\mathrm ds}^\frac{1}{2}\\
				&\lesssim \pr{\int_0^T \mathbb E\left[\langle \Phi(s,\cdot),\Phi(s,\cdot)\rangle_{\mathcal H_K}^\frac{p}{2}\right]^\frac{2}{p}\mathrm ds}^\frac{1}{2}.
			\end{aligned}
		\end{equation}
		For each $s \ge 0$, we see by Minkowski's and the Cauchy-Schwarz inequalities  that
		\begin{align}
			\mathbb E\left[\langle \Phi(s,\cdot),\Phi(s,\cdot)\rangle_{\mathcal H_K}^\frac{p}{2}\right]^\frac{2}{p}&=  \norm{\iint_{\X^2}\Phi(s,x) \Phi(s,y)K\pr{\mathrm dy,\mathrm dx}}_{L^\frac{p}{2}_\Omega}\\
			&\le \iint_{\X^2} \norm{\Phi(s,x)\Phi(s,y)}_{L^\frac{p}{2}_\Omega} K\pr{\mathrm dy,\mathrm dx}\\
			&= \iint_{\X^2}\mathbb E \left[\abs{\Phi(s,x)}^\frac{p}{2}\abs{\Phi(s,y)}^\frac{p}{2} \right]^\frac{2}{p}K(\mathrm dy,\mathrm dx)\\
			&\le \iint_{\X^2} \norm{\Phi(s,x)}_{L^p_\Omega} \norm{\Phi(s,y)}_{L^p_\Omega} K(\mathrm dy,\mathrm dx)\\
			&=\norm{\norm{\Phi(s,\cdot)}_{L^p_\Omega}}_{\mathcal H_K}^2.
		\end{align}
		Inserting this into \eqref{ineq:bdg_gen_mtg_bdg}, we see the following inequality holds uniformly in $T>0$,
		\begin{align}
			\norm{\sup_{t \in [0,T]}\left|\iint_0^t \Phi(s,y)W(\mathrm ds,\mathrm dy)\right|}_{L^p_\Omega}\lesssim \pr{\int_0^T \norm{\norm{\Phi(s,\cdot)}_{L^p_\Omega}}_{\mathcal H_K}^2\mathrm ds}^\frac{1}{2} = \norm{\Phi}_{L^2_{[0,T]}\mathcal H_K L^p_\Omega},
		\end{align}
		which is the desired result.
		
	\end{proof}

	\begin{proof}[Proof of Lemma \ref{lem:hk longtime}]
		Suppose that \eqref{cond:exp_hk_lamda} holds for the given $\lambda$ with $\alpha$ replaced by $\tilde\alpha$, and let $t>0$.
		Then $t = n+a$ for some $n \in \N\cup\{0\}$ and $a \in [0,1]$.
		Hence, there is $C>1$ such that for all $x \in \X$ and $a \in [0,1]$, we have
		\begin{align}
			\left(P_a^* h^\lambda\right)(x) \le  C h(x)^\lambda.
		\end{align}
		Therefore, by the semigroup property, we see 
		\begin{equation}\label{ineq:weighted semigroup bound long time temporary}
			\begin{aligned}
				\left(P_t^* h^\lambda\right)(x) &= \left(\left(P_{1}^* \right)^n P_a^* h^\lambda\right) (x)\\
				&\le C^{n+1}h(x)^\lambda\\
				&\le b e^{\kappa t}h(x)^\lambda
			\end{aligned}
		\end{equation}
		where $b = \sup_{a \in [0,1]} C^{1-a}$, and $\kappa = \ln(\max\{1,C\})>0$, which proves \eqref{ineq:hk_m long time}.
		
		By Lemma \ref{lem:H0} and the assumed bound with exponent $\tilde\alpha$, the following inequality holds uniformly in $\delta \in (0,1]$ and $y_1,y_2 \in \X$,
		\begin{align}\label{ineq:weighted cs temporary}
			\left \langle G_\delta(\cdot,y_1) h^\lambda,G_\delta(\cdot,y_2) h^\lambda \right \rangle_{ \mathcal H_K } &\le \|G_\delta(\cdot, y_1)h^\lambda\|_{\mathcal H_K}\|G_\delta(\cdot, y_2)h^\lambda\|_{\mathcal H_K}\\
			&\lesssim\delta^{-\tilde\alpha} h(y_1)^{\lambda} h(y_2)^{\lambda}.
		\end{align}
		For any $t >1$, fix $\delta \in [1/2,1]$. Then by the semigroup property and Fubini's theorem, we see that 
		\begin{align}
			\left\|G_t(\cdot,x) h(\cdot)^\lambda\right\|_{ \mathcal H_K}^2 & = \iint_{\X^2}G_t(z_1,x) h(z_1)^\lambda G_t(z_2,x) h(z_2)^\lambda K(\mathrm dz_1,\mathrm dz_2)\\
			&= \iint_{\X^2} \pr{\int_{\X} G_\delta(z_1,y_1)G_{t-\delta}(y_1,x) m(\mathrm dy_1)} h(z_1)^\lambda  \\
			&\qquad  \qquad\qquad \times \pr{\int_\X  G_{\delta}(z_2,y_2) G_{t-\delta}(y_2,x) m(\mathrm dy_2)} h(z_2)^\lambda K(\mathrm dz_1,\mathrm dz_2) \\
			&= \int_\X \int_\X G_{t - \delta}(y_1,x)G_{t-\delta}(y_2,x)\\
			&\quad \quad \quad  \times \iint_{\X^2} G_\delta(z_1,y_1) h(z_1)^\lambda G_\delta(z_2,y_2) h(z_2)^\lambda K(\mathrm dz_1,\mathrm dz_2) m(\mathrm d y_1) m(\mathrm dy_2)\\
			&=\int_\X \int_\X G_{t-\delta}(y_1,x) G_{t-\delta}(y_2,x)\left \langle G_\delta(\cdot,y_1)h^\lambda, G_{\delta}(\cdot,y_2) h^\lambda\right\rangle _{\mathcal H_K} m(\mathrm dy_1) m(\mathrm dy_2)\\
			&\overset{\eqref{ineq:weighted cs temporary}}{\lesssim} \delta^{-\tilde\alpha}\pr{P^*_{t-\delta}\pr{h^\lambda}(x)}^2\\
			&\overset{\eqref{ineq:weighted semigroup bound long time temporary}}{\lesssim} \delta^{-\tilde\alpha}b^2e^{-2\kappa \delta}e^{2\kappa t}h(x)^{2\lambda}\\
			&\lesssim t^{-\tilde\alpha}e^{\tilde\kappa t}h(x)^{2\lambda},
		\end{align}
		where $\tilde\kappa:=2\kappa+1$ and we used $\delta\ge1/2$ and $t^{\tilde\alpha}\le e^t$ for $t>1$.
		Together with the assumed bound for $t\in(0,1]$, this proves \eqref{ineq:hk_k long time}.
		
		To show \eqref{ineq:time_holder_long_time}, we first note that for each $T>0$, it holds uniformly for $T\ge t' \ge t>0$ with $t'-t \le \frac{1}{2}$ and $x \in \X$ that 
		\begin{align}\label{ineq:time_holder_proof_half}
			\norm{G_t(\cdot,x) - G_{t'}(\cdot,x)}_{L^1_\X} \lesssim \frac{\abs{t-t'}^{\xi_1}}{t^\beta}.
		\end{align}
		To see this, consider the case where $0 < t \le \frac{1}{2}$, then $t' \le 1$ and the estimate is exactly \eqref{cond:time_holder}. Now suppose $t > \frac{1}{2}$ then by the semi-group property, we see uniformly in $T \ge t' >t \ge \frac{1}{2}$ and $x \in \X$, 
		\begin{align}
			&\norm{\int_\X G_{t - \frac{1}{2}}(\cdot, y)\pr{G_{\frac{1}{2}}(y,x) - G_{t'-t+ \frac{1}{2}}(y,x)  } m(\mathrm dy)    }_{L^1_\X}\\
			\le & \int_\X \norm{G_{t - \frac{1}{2}}(\cdot,y)}_{L^1_\X} \abs{G_{\frac{1}{2}}(y,x) - G_{t'-t+ \frac{1}{2}}(y,x) } m(\mathrm dy) \\
			\overset{\eqref{ineq:hk_m long time} }\lesssim & \int_\X \abs{G_{\frac{1}{2}}(y,x) - G_{t'-t+ \frac{1}{2}}(y,x) } m(\mathrm dy)\\
			\overset{\eqref{cond:time_holder} }\lesssim& \frac{\abs{t-t'}^{\xi_1}}{(1/2)^\beta } \lesssim \frac{\abs{t-t'}^{\xi_1}}{t^\beta },
		\end{align}
		where we used the fact that $t'-t+\frac{1}{2} \in (0,1]$.
		This proves \eqref{ineq:time_holder_proof_half}.
		Suppose $0< t< t' \le T$, and $\abs{t-t'} \ge \frac{1}{2}$.
		Let $n_1  \le n_N\in \N$ be
		\begin{align}
			n_1 := \min\left\{n \in \N: t \le \frac{n}{2}\right\},\qquad n_N:= \max\left\{n \in \N: \frac{n}{2} \le t' \right\},
		\end{align} 		
		Then by \eqref{ineq:time_holder_proof_half}, it holds uniformly in $x \in \X$ that 
		\begin{equation}\label{ineq:time_holder_end_case}
			\begin{aligned}
				\norm{G_t(\cdot,x) - G_{\frac{n_1}{2}}(\cdot,x)}_{L^1_\X} &\lesssim \frac{\abs{t - \frac{n_1}{2}}^{\xi_1\wedge 1}}{t^\beta}\\
				\norm{G_{t'}(\cdot,x) - G_{\frac{n_N}{2}}(\cdot,x)}_{L^1_\X} &\lesssim \frac{\abs{t' - \frac{n_N}{2}}^{\xi_1\wedge 1}}{(n_N/2)^\beta}.
			\end{aligned}
		\end{equation}
		By the triangle inequality,
		\begin{align}
			\norm{G_t(\cdot,x) - G_{t'}(\cdot,x)}_{L^1_\X}
			&\le \norm{G_t(\cdot,x) - G_{\frac{n_1}{2}}(\cdot,x)}_{L^1_\X}
			+\norm{G_{t'}(\cdot,x) - G_{\frac{n_N}{2}}(\cdot,x)}_{L^1_\X} \\
			&\qquad + \sum_{j=n_1+1}^{n_N}\norm{G_{\frac{j}{2}}(\cdot,x)-G_{\frac{j-1}{2}}(\cdot,x)}_{L^1_\X} \\
			&\overset{\eqref{ineq:time_holder_end_case},\,\eqref{ineq:time_holder_proof_half}}{\lesssim}
			\frac{\abs{t-\frac{n_1}{2}}^{\xi_1\wedge1}}{t^\beta}
			+\frac{\abs{t'-\frac{n_N}{2}}^{\xi_1\wedge1}}{(n_N/2)^\beta}
			+\sum_{j=n_1+1}^{n_N}\frac{\pr{\frac12}^{\xi_1}}{\pr{\frac{j-1}{2}}^\beta} \\
			&\lesssim \frac{\abs{t-\frac{n_1}{2}}^{\xi_1\wedge1}}{t^\beta}
			+\frac{\abs{t'-\frac{n_N}{2}}^{\xi_1\wedge1}}{t^\beta}
			+\frac{n_N-n_1}{t^\beta} \\
			&\le t^{-\beta}\pr{2\abs{t-t'}^{\xi_1\wedge1}+2|t-t'|} \\
			&\lesssim \frac{\abs{t-t'}^{\xi_1\wedge1}}{t^\beta}.
		\end{align}
		Here we used that $(j-1)/2\ge n_1/2\ge t$ in the third inequality. 
		This proves the desired result.
		To prove \eqref{ineq:spatial_holder_long_time}, fix $T>0$. For $t\in(0,1]$, the estimate is exactly \eqref{cond:time_holder}. If $1<t\le T$, then the semigroup property, \eqref{ineq:hk_m long time} with $\lambda=0$ and \eqref{cond:time_holder} at  $t=1$ give
		\begin{align}
			\norm{G_t(\cdot,x)-G_t(\cdot,x')}_{L^1_\X}
			&\le \int_\X \norm{G_{t-1}(\cdot,y)}_{L^1_\X}
			\abs{G_1(y,x)-G_1(y,x')}m(\mathrm dy)\\
			&\lesssim d(x,x')^{\xi_2}\\
			&\lesssim \frac{d(x,x')^{\xi_2}}{t^\beta},
		\end{align}
		uniformly in $x,x' \in \X$ with $d(x,x') \le 1$, and $t \in [1,T]$. This proves \eqref{ineq:spatial_holder_long_time}.
	\end{proof}
	
	\begin{proof}[Proof of Lemma \ref{lem:hk_abs}]
		We first show \eqref{ineq:hk abs holder m} holds for $\lambda =0$.  Let $x,x' \in \X$  with $d(x,x') \le 1$, consider first for $t > 1$ that
		\begin{align}
			\int_0^{t} \int_\X \left|G_{t - s}(y,x) - G_{t - s}(y,x') \right|m(\mathrm dy)\mathrm ds & = I_1+I_2,
		\end{align}
		where 
		\begin{align}
			I_1 &{=}\int_{t-1}^t\int_\X\abs{G_{t - s}(y,x) - G_{t-s}(y,x')}m(\mathrm dy)\mathrm ds  \\
			&{=}\int_0^1 \int_\X \left|G_{1-s}(y,x) - G_{1-s}(y,x') \right|m(\mathrm dy) \mathrm ds \\
			&{\overset{\eqref{cond:time_holder}}{\lesssim}} d(x,x')^{\xi_2},
		\end{align}
		uniformly in $x,x' \in \X$ with $d(x,x') \le 1$ and $t>1$, and 
		\begin{align}
			I_2 &= \int_0^{t - 1}\int_\X\left| G_{t - s}(y,x) - G_{t - s}(y,x')\right| m(\mathrm dy) \mathrm ds.
		\end{align}
		By the semigroup property and Tonelli's theorem, it holds uniformly in $x,x' \in \X$ with $d(x,x') \le 1$ and $t>1$ that
		\begin{align}
			I_2 
			&\,\,\,= \int_0^{t-1} \int_\X \left|\int_\X G_{t - s -1} (y,z) \left(G_{1}(z,x) - G_1(z,x') \right) m(\mathrm dz) \right| m(\mathrm dy) \mathrm ds\\
			&\,\,\,\le \int_0^{t-1}\int_\X \left(\int_\X G_{t-s-1}(y,z)m(\mathrm dy)\right)
			\left|G_1(z,x)-G_1(z,x')\right|m(\mathrm dz)\mathrm ds\\
			&\overset{\eqref{ineq:hk_m long time}}{\lesssim} \int_0^{t-1} e^{\kappa(t-s-1)}\mathrm ds
			\int_\X \left|G_1(z,x)-G_1(z,x')\right|m(\mathrm dz)\\
			&\overset{\eqref{cond:time_holder}}{\lesssim} d(x,x')^{\xi_2}\int_0^{t-1}e^{\kappa(t-s-1)}\mathrm ds,
		\end{align}
		where \eqref{ineq:hk_m long time} was used with $\lambda=0$.
		Note that the time integral is increasing in $t$ and bounded for each $t \ge 1$.
		For $t\in(0,1]$, the same spatial bound follows by integrating \eqref{cond:time_holder}, since $\beta<1$.
		Hence we see for each $T> 0$, it holds uniformly for $d(x,x') \le 1$ and $t \in [0,T]$ that
		\begin{align}\label{ineq:hk spatial temporary}
			\int_0^t \int_\X \left|G_{t - s}(y,x) - G_{t - s}(y,x') \right| m(\mathrm dy) \mathrm ds \lesssim d(x,x')^{\xi_2}.
		\end{align}
		
		Fix $T>0$ and consider $\delta\in(0,1]$ and $t\in[0,T]$. Then
		\begin{align}
			\int_0^{t+\delta} \int_\X \left| G_{t+\delta - s}(y,x) - G_{t-s}(y,x) \right| m(\mathrm dy) \mathrm ds = I_1+ I_2,
		\end{align}
		where 
		\begin{align}
			I_1 = \int_{t}^{t+\delta} \int_\X G_{t+\delta-s}(y,x) m(\mathrm dy) \mathrm ds\overset{\eqref{cond:exp_hk_lamda}}{\lesssim} \delta,
		\end{align}
		uniformly in $(t,x) \in [0,T] \times \X$ and $\delta \in (0,1]$, and 
		\begin{align}
			I_2 = \int_0^t \int_\X \left| G_{t+\delta-s}(y,x) - G_{t-s} (y,x)\right| m(\mathrm dy) \mathrm ds.
		\end{align}
		
		By the change of variables  and \eqref{ineq:time_holder_long_time}, applied with time horizon $T+1$, we see it holds uniformly in $(t,x) \in [0,T] \times \X$ and $\delta \in (0,1]$ that
		\begin{align}
			I_2
			=\int_0^t\norm{G_{r+\delta}(\cdot,x)-G_r(\cdot,x)}_{L^1_\X}\mathrm dr
			\lesssim \delta^{\xi_1\wedge1}\int_0^t r^{-\beta}\mathrm dr
			\lesssim \delta^{\xi_1\wedge1},
		\end{align}
		where the endpoint $r=0$ is immaterial and the last inequality follows from $\beta\in[0,1)$. Since $\delta\in(0,1]$, we also have $\delta\le\delta^{\xi_1\wedge1}$.
		
		Therefore, for each $T>0$,  it holds uniformly for $(t,x) \in [0,T] \times \X$ and $\delta \in (0,1]$  that
		\begin{align}\label{ineq:hk temporal temporary}
			\int_0^{t+\delta} \int_\X \left|G_{t +\delta- s}(y,x) - G_{t-s}(y,x) \right| m(\mathrm dy) \mathrm ds \lesssim \delta^{\xi_1\wedge 1}.
		\end{align}
		Therefore, we see by \eqref{ineq:hk temporal temporary}, \eqref{ineq:hk spatial temporary} and the triangular inequality that, for each $T>0$, it holds uniformly for $x,x' \in \X$ with $d(x,x') \le 1$,  $t \in [0,T]$ and $\delta \in (0,1]$ that
		\begin{align}
			\int_0^{t+\delta} \int_\X \left|G_{t+\delta-s}(y,x') - G_{t-s}(y,x) \right|m(\mathrm dy) \mathrm ds \lesssim \delta^{\xi_1\wedge 1} + d(x,x')^{\xi_2}.
		\end{align}
		For $t,t'\in[0,T]$ with $|t-t'|>1$, \eqref{eq:GC}, the triangle inequality and \eqref{ineq:hk_m long time} with $\lambda=0$ give
		\begin{align*}
			\int_0^T\norm{G_{t-s}(\cdot,x)-G_{t'-s}(\cdot,x')}_{L^1_\X}\,\mathrm ds
			&\le \int_0^t\norm{G_{t-s}(\cdot,x)}_{L^1_\X}\,\mathrm ds
			+\int_0^{t'}\norm{G_{t'-s}(\cdot,x')}_{L^1_\X}\,\mathrm ds\\
			&\lesssim \int_0^t e^{\kappa(t-s)}\,\mathrm ds
			+\int_0^{t'}e^{\kappa(t'-s)}\,\mathrm ds\\
			&\le 2T e^{\kappa T}
			\lesssim |t-t'|^{\xi_1\wedge1}.
		\end{align*}
		The last inequality uses $|t-t'|^{\xi_1\wedge1}\ge1$.
		Thus, by the triangle inequality, uniformly for $t,t'\in[0,T]$ and $x,x'\in\X$ with $d(x,x')\le1$,
		\begin{align}\label{ineq:hk abs holder m lambda = 0}
			\norm{G(\cdot,\cdot;t,x) - G(\cdot,\cdot; t',x')}_{L^1_{[0,T)} L^1_\X} \lesssim  \abs{t - t'}^{\xi_1\wedge 1} + d(x,x')^{\xi_2},
		\end{align}
		which implies \eqref{ineq:hk abs holder m} when $\lambda=0$. To show \eqref{ineq:hk abs holder m} for $\lambda \in [0,1/2]$, we see by H\"{o}lder's inequality that
		\begin{align}
			\norm{h(\cdot)^\lambda \pr{G(\cdot,\cdot;t,x) - G(\cdot,\cdot; t',x')}}_{L^1_{[0,T)} L^1_\X} &\lesssim \norm{ \abs{G(\cdot,\cdot;t,x) - G(\cdot,\cdot;t',x')}^{1/2} h(\cdot)^\lambda }_{L^2_{[0,T]\times\X}}
			\\& \qquad \times \left\| \left|G(\cdot,\cdot;t,x) - G(\cdot,\cdot;t',x')\right|^{1/2} \right\|_{L^2_{[0,T]\times\X}}. \label{eq:Phi1U}
		\end{align}
		
		Noticing $2\lambda \in [0,1]$, uniformly for every $t,t'\in [0,T]$ and $x,x'\in \X$ with $d(x, x')\leq 1$,
		\begin{align}
			& \norm{\left|G(\cdot,\cdot;t,x) - G(\cdot,\cdot;t',x')\right|^{1/2}h(\cdot)^\lambda }^2_{L^2_{[0,T]\times\X}}
			\\& = \int_0^T \mathrm ds \int_\X \abs{G(s,y;t,x) - G(s,y;t',x')} h(y)^{2\lambda} m(\mathrm dy)
			\\&\leq \int_0^T \left(\left\| G_{t-s}(\cdot,x) h(\cdot)^{2\lambda} \right\|_{L^1_\X} + \left\| G_{t'-s}(\cdot,x') h(\cdot)^{2\lambda} \right\|_{L^1_\X}\right) \mathrm ds 
			\\&\label{eq:Phi1Ua}
			\overset{\text{Lemma \ref{lem:hk longtime}}} \lesssim 
			h(x)^{2\lambda} + h(x')^{2\lambda}
			\overset{\eqref{ineq:ref_regular}} \lesssim 
			h(x)^{2\lambda}. 
		\end{align}
		On the other hand, uniformly for every $t,t'$ in $[0,T]$ and $x,x'\in \X$ with $d(x,x')\leq 1$,
		\begin{align}
			&\left\| \left|G(\cdot,\cdot;t,x) - G(\cdot,\cdot;t',x')\right|^{1/2} \right\|_{L^2_{[0,T]\times\X}} = \left\| G(\cdot,\cdot;t,x) - G(\cdot,\cdot;t',x') \right\|^{1/2}_{L^1_{[0,T]\times\X}}
			\\
			& \overset{\eqref{ineq:hk abs holder m lambda = 0}} \lesssim 
			\left(|t-t'|^{\xi_1 \wedge 1} + d(x,x')^{\xi_2}\right)^{1/2} \leq |t-t'|^{(\xi_1\wedge 1)/2}+ d(x,x')^{\xi_2/2}
			\\
			\label{eq:Phi1Ub} 
			&\lesssim |t-t'|^{(\xi_1\wedge 1)\over 2}+ d(x,x')^{{\xi_2 \over 2}\wedge 1}.
		\end{align}
		Putting \eqref{eq:Phi1Ua} and \eqref{eq:Phi1Ub} back to \eqref{eq:Phi1U}, we obtain that uniformly for every $t,t'\in [0,T]$ and $x,x'\in \X$ with $d(x, x')\leq 1$,
		\begin{align}
			\norm{h(\cdot)^\lambda \pr{G(\cdot,\cdot;t,x) - G(\cdot,\cdot; t',x')}}_{L^1_{[0,T)} L^1_\X}
			\lesssim \left(|t-t'|^{{{\xi_1 \wedge 1}\over 2}} + d(x,x')^{{\xi_2 \over 2}\wedge 1}\right) h(x)^{\lambda},
		\end{align}
		which is exactly \eqref{ineq:hk abs holder m}.
		
		To prove \eqref{ineq:hk abs holder k}, recall that $\alpha\in(0,1)$ in \eqref{cond:exp_hk_lamda}, and that $\beta\in[0,1)$ and $\xi_1,\xi_2>0$ are given in \eqref{cond:time_holder}. Take $q\ge2$ large enough so that $0<\bar\alpha:=\alpha/2+\beta/q<1/2$ and $\xi_2/q<1$.
		Take $q' \geq 1$ so that $\frac{1}{q} + \frac{1}{q'} = 1$.
		Define $\tilde \xi_1:=\min(\frac{\xi_1\wedge 1}{q}, \frac{1-\alpha}{2}) \in (0,1)$ and $\tilde \xi_2 = \frac{\xi_2}{q} \in (0,1)$. Let $\lambda_1\in[0,\frac{1}{2}]\subset[0,1/q']$.
		Pay attention that \eqref{cond:exp_hk_lamda} holds with $\lambda$ being replaced by either $\lambda_1 q'$ or $\lambda_1$.
		
		Hence, it holds uniformly for  every $0< t \leq t'$ in $[0,T]$, and $x,x' \in \X$ with $d(x,x') \le 1$ that,
		\begin{align}
			&\qquad
			\norm{\abs{G_t(\cdot,x) - G_{t'}(\cdot,x')} 
				h(\cdot)^{\lambda_1}}_{\mathcal H_K}
			\\ &\overset{\eqref{eq:G}}=\norm{\abs{\int_\X G_{\frac{t}{2}}(\cdot,y)\pr{G_{\frac{t}{2}}(y, x)- G_{t'-\frac{t}{2}}(y,x')} m(\mathrm dy) }
				h(\cdot)^{\lambda_1}
			}_{\mathcal H_K}
			\\  &\leq 
			\norm{\pr{\int_\X G_{\frac{t}{2}}(\cdot,y)\abs{G_{\frac{t}{2}}(y, x)- G_{t'-\frac{t}{2}}(y,x')} m(\mathrm dy)}h(\cdot)^{\lambda_1}}_{\mathcal H_K} 
			\\ &\overset{\text{Lemma \ref{lem:Min_H}}}\le \int_\X \norm{G_{\frac{t}{2}}(\cdot, y) 
				h(\cdot)^{\lambda_1}
			}_{\mathcal H_K} \abs{G_{\frac{t}{2}}(y, x)- G_{t'-\frac{t}{2}}(y,x')} m(\mathrm dy)
			\\ & \overset{\text{Lemma \ref{lem:hk longtime}}}{\lesssim } t^{-\alpha/2}\int_\X h(y)^{\lambda_1} \abs{G_{\frac{t}{2}}( y, x)- G_{t'-\frac{t}{2}}(y,x')} m(\mathrm dy). \label{ineq:continuity stochastic part middle step 1} 
		\end{align}
		
		Note uniformly in  $t, t'$ in $[0,T]$ and $x,x'\in \X$
		with $d(x,x') \le 1$ that
		\begin{align}
			&\int_\X h(y)^{\lambda_1}  \left|G_t(y,x) -G_{t'}(y,x') \right| m(\mathrm dy)
			\\&=\int_\X h(y)^{\lambda_1}  \left|G_t(y,x) -G_{t'}(y,x') \right|^\frac{1}{q'} \left|G_t(y,x) -G_{t'}(y,x') \right|^\frac{1}{q} m(\mathrm dy)
			\\&\overset{\text{H\"older}}{\leq} \norm{h(\cdot)^{\lambda_1} \abs{G_t(\cdot,x)- G_{t'}(\cdot,x')}^{\frac{1}{q'}}}_{L^{q'}_{\X}} \norm{\abs{G_t(\cdot,x)- G_{t'}(\cdot,x')}^{\frac{1}{q}}}_{L^{q}_{\X}}
			\\&=\norm{h(\cdot)^{\lambda_1 q'} \abs{G_t(\cdot,x)- G_{t'}(\cdot,x')}}^{1\over q'}_{L^1_{\X}} \norm{G_t(\cdot,x)- G_{t'}(\cdot,x')}^{1\over q}_{L^1_{\X}}
			\\&\le \norm{h(\cdot)^{\lambda_1 q'} \pr{ G_t(\cdot,x)+G_{t'}(\cdot,x')}}^{1\over q'}_{L^1_\X} \norm{G_t(\cdot,x) - G_{t'}(\cdot,x')}^{1\over q}_{L^1_\X}
			\\&\overset{\text{Minkowski}}\le \pr{\norm{h(\cdot)^{\lambda_1 q'}  G_t(\cdot,x)}_{L^1_\X} + \norm{h(\cdot)^{\lambda_1 q'} G_{t'}(\cdot,x')}_{L^1_\X}}^{1\over q'} \norm{G_t(\cdot,x) - G_{t'}(\cdot,x')}^{1\over q}_{L^1_\X}
			\\&\overset{\text{Lemma \ref{lem:hk longtime}}}\lesssim  \pr{h(x)^{\lambda_1 q'} +h(x')^{\lambda_1 q'}}^{\frac{1}{q'}} \norm{G_t(\cdot,x) - G_{t'}(\cdot,x')}^{1\over q}_{L^1_\X}
			\\&\overset{\eqref{ineq:ref_regular}}\lesssim  h(x)^{\lambda_1} \norm{G_t(\cdot,x) - G_{t'}(\cdot,x')}^{1\over q}_{L^1_\X}. \label{eq:Gtt}
		\end{align}
		Therefore, by the triangular inequality, we have uniformly for every 
		$0< t \leq t'$ in $[0,T]$, and $x,x' \in \X$ with $d(x,x') \le 1$ that, 

		\begin{align}
			&\norm{\abs{G_t(\cdot,x) - G_{t'}(\cdot,x')} 
				h(\cdot)^{\lambda_1}
			}_{\mathcal H_K}
			\\ & {\overset{\eqref{ineq:continuity stochastic part middle step 1}}{\lesssim}} t^{-\alpha/2}\int_\X h(y)^{\lambda_1} \left|G_{\frac{t}{2}}( y, x)- G_{t'-\frac{t}{2}}(y,x') \right|m(\mathrm dy)
			\\ & {\overset{\eqref{eq:Gtt}}{\lesssim} }t^{-\alpha/2} h(x)^{\lambda_1} \norm{ 
				G_{\frac{t}{2}}(\cdot,x) - 
				G_{t'-\frac{t}{2}}(\cdot,x')
			}^{1\over q}_{L^1_\X} 
			\\& {\overset{\text{
					Minkowski
			}}{\leq}} t^{-\alpha/2} h(x)^{\lambda_1} \pr{\norm{ G_{\frac{t}{2}}(\cdot,x) - G_{\frac{t}{2}}(\cdot,x')}_{L^1_\X} + \norm{G_{\frac{t}{2}}(\cdot,x') - G_{t'-\frac{t}{2}}(\cdot,x') }_{L^1_\X} }^{1\over q}
			\\& {\overset{
			\eqref{ineq:spatial_holder_long_time},\ \eqref{ineq:time_holder_long_time}}{\lesssim}}t^{-\alpha/2} h(x)^{\lambda_1} \pr{\frac{d(x,x')^{\xi_2}}{\pr{t/2}^\beta} + \frac{\abs{t'-t}^{\xi_1 \wedge 1}}{\pr{t/2}^\beta}}^{\frac{1}{q}}
			\\& \lesssim t^{-\bar\alpha} h(x)^{\lambda_1} \pr{d(x,x')^{\xi_2} + \abs{t'-t}^{\xi_1 \wedge 1}}^{\frac{1}{q}}. \label{eq:G-GS}
		\end{align}
		On one hand, uniformly for $0<t\leq t'$ in $[0,T]$, and $x,x'\in\X$ with $d(x,x')\leq1$, since $2\bar\alpha\in(0,1)$,
		\begin{align}
			&\int_0^t \norm{\abs{G_{t-s}(\cdot,x) - G_{t'-s}(\cdot,x')} 
				h(\cdot)^{\lambda_1}
			}_{\mathcal H_K}^2\mathrm ds 
			\\& \overset{\eqref{eq:G-GS}}\lesssim \int_0^t (t-s)^{-2\bar\alpha}h(x)^{2\lambda_1}\pr{d(x,x')^{\xi_2}+\abs{t'-t}^{\xi_1\wedge1}}^{\frac{2}{q}}\,\mathrm ds
			\\& \lesssim t^{1-2\bar\alpha}h(x)^{2\lambda_1}\pr{d(x,x')^{\xi_2}+\abs{t'-t}^{\xi_1\wedge1}}^{\frac{2}{q}}. \label{eq:Gtq}
		\end{align}
		On the other hand, uniformly in $0<t\leq t'$ in $[0,T],$ and $x,x' \in \X$ with $d(x,x')\leq 1$, 
		\begin{align}
			\int_{t}^{t'} \norm{G_{t'-s}(\cdot,x') h(\cdot)^{\lambda_1}}_{\mathcal H_K}^2\mathrm ds 
			& \overset{\text{Lemma \ref{lem:hk longtime}}}\lesssim \int_{t}^{t'} (t'-s)^{-\alpha} h(x')^{2\lambda_1} \mathrm ds \\
			&= h(x')^{2\lambda_1} \int_0^{t'-t} r^{-\alpha} \mathrm dr \\
			&\lesssim h(x')^{2\lambda_1} |t'-t|^{1-\alpha}\\
			&\overset{\eqref{ineq:ref_regular}}{\lesssim} h(x)^{2\lambda_1} |t'-t|^{1-\alpha}. \label{eq:Gt'q}
		\end{align}
		Now, we see uniformly in $0 <t\leq t'$ in $[0,T]$, and $x,x' \in \X$ with $d(x,x') \le 1$ that   
		\begin{align}
			&\int_0^T \norm{\abs{G_{t-s}(\cdot,x) - G_{t'-s}(\cdot,x')} h(\cdot)^{\lambda_1}}_{\mathcal H_K}^2\mathrm ds
			\\&\overset{\eqref{eq:GC}}=\int_0^t \norm{\abs{G_{t-s}(\cdot,x) - G_{t'-s}(\cdot,x')} h(\cdot)^{\lambda_1}}_{\mathcal H_K}^2\mathrm ds +  \int_{t}^{t'} \norm{G_{t'-s}(\cdot,x')h(\cdot)^{\lambda_1}}_{\mathcal H_K}^2\mathrm ds
			\\&\overset{\eqref{eq:Gtq} \& \eqref{eq:Gt'q}}\lesssim h(x)^{2\lambda_1} \pr{d(x,x')^{\xi_2} + \abs{t'-t}^{\xi_1\wedge1}}^{\frac{2}{q}} + h(x)^{2\lambda_1} |t'-t|^{1-\alpha}
			\\&\lesssim h(x)^{2\lambda_1} \pr{d(x,x')^{\frac{2\xi_2}{q}}+ |t'-t|^{\frac{2\pr{\xi_1\wedge1}}{q}} + |t'-t|^{1-\alpha}}
			\\& \lesssim h(x)^{2\lambda_1} \pr{d(x,x')^{2\tilde \xi_2}+ |t'-t|^{2\tilde \xi_1}}.
		\end{align}
		This proves \eqref{ineq:hk abs holder k}.

		It remains to prove \eqref{ineq:hk abs holder k weighted}. Let us fix $T>0$. Since $q\ge2$, its conjugate exponent satisfies $q'\le2$, and hence $\lambda q'\le1$ for every $\lambda\in[0,1/2]$. Thus, by \eqref{eq:Gtq} and \eqref{eq:Gt'q}, with $\lambda_1$ replaced by $\lambda$, it holds uniformly in $0<t\leq t'$ in $[0,T]$, and $x,x' \in \X$ with $d(x,x') \le 1$ that
		\begin{align*}
			&\int_0^{t'}\left\|\left|G_{t-s}(\cdot,x)-G_{t'-s}(\cdot,x')\right|h(\cdot)^\lambda\right\|_{\mathcal H_K}^2\,\mathrm ds\\
			\lesssim &h(x)^{2\lambda}\left[t^{1-2\bar\alpha}\left(d(x,x')^{2\xi_2/q}+|t-t'|^{2(\xi_1\wedge1)/q}\right)+|t-t'|^{1-\alpha}\right].
		\end{align*}
		Set
		\[
			\alpha_0:=\frac{1+2\bar\alpha}{2},\qquad
			\xi:=1\wedge\frac{2\xi_2}{q}\wedge\frac{2(\xi_1\wedge1)}{q}\wedge(\alpha_0-\alpha).
		\]
		Since $t\le t'$ and $|t-t'|\le t'$, this proves \eqref{ineq:hk abs holder k weighted}; the same estimate also covers $t=0$.
	\end{proof}
	\noindent  The following corollary follows from the proof of Lemma \ref{lem:hk_abs} (inequality \eqref{eq:G-GS}).
	\begin{corollary}\label{cor:hk_abs}
		For each $T>0$, there
		exist $\xi\in (0,1]$ and $\alpha_0\in(0,1)$ so that the following inequality holds uniformly in $t,t' \in (0,T]$ and $x,y \in \X$ with $d(x,y) \le 1$,
		\begin{align*}
			\norm{
			\abs{G_t(\cdot,x) - G_{t'}(\cdot,y)}
			}_{\mathcal H_K}^2 \lesssim \frac{\abs{t-t'}^\xi + d(x,y)^\xi}{\pr{t\wedge t'}^{
			\alpha_0}},
		\end{align*}
	\end{corollary}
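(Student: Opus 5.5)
The plan is to read off the estimate from the chain of inequalities leading to \eqref{eq:G-GS} in the proof of Lemma~\ref{lem:hk_abs}, specialised to the weight exponent $\lambda_1=0$ (so that $h(\cdot)^{\lambda_1}\equiv 1$). Fix $T>0$ and first consider $0<t\le t'\le T$ and $x,y\in\X$ with $d(x,y)\le 1$. Exactly as in \eqref{ineq:continuity stochastic part middle step 1}, I would do three things:
\begin{itemize}
\item write $G_t(\cdot,x)=\int_\X G_{t/2}(\cdot,z)G_{t/2}(z,x)\,m(\mathrm dz)$ and $G_{t'}(\cdot,y)=\int_\X G_{t/2}(\cdot,z)G_{t'-t/2}(z,y)\,m(\mathrm dz)$ by \eqref{eq:G};
\item bound the absolute value of the difference pointwise by $\int_\X G_{t/2}(\cdot,z)\,|G_{t/2}(z,x)-G_{t'-t/2}(z,y)|\,m(\mathrm dz)$;
\item apply the Minkowski-type inequality of Lemma~\ref{lem:Min_H}, with $E=\X$ and $\nu(\mathrm dz)=|G_{t/2}(z,x)-G_{t'-t/2}(z,y)|\,m(\mathrm dz)$.
\end{itemize}
Its integrability hypothesis follows from \eqref{ineq:hk_k long time} (with $\lambda=0$) together with \eqref{ineq:hk_m long time}. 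This gives
\[
\norm{\abs{G_t(\cdot,x)-G_{t'}(\cdot,y)}}_{\mathcal H_K}\lesssim t^{-\alpha/2}\norm{G_{t/2}(\cdot,x)-G_{t'-t/2}(\cdot,y)}_{L^1_\X},
\]
uniformly in these parameters, since $t/2\le T$. Here $\alpha$ is the exponent in \eqref{cond:exp_hk_lamda}.

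Next, I would bound the $L^1$ norm by the triangle inequality through the intermediate point $G_{t/2}(\cdot,y)$. I use \eqref{ineq:spatial_holder_long_time} for the spatial piece and \eqref{ineq:time_holder_long_time} for the temporal piece (note $(t'-t/2)-t/2=t'-t$ and $(t/2)\wedge(t'-t/2)=t/2$). Both pieces are also trivially bounded by $2$ via \eqref{ineq:hk_m long time}. Interpolating between the Hölder bound and this trivial bound, i.e.\ raising to the power $1/q$ for the large $q\ge2$ fixed in the proof of Lemma~\ref{lem:hk_abs}, yields \eqref{eq:G-GS} with $\lambda_1=0$:
\[
\norm{\abs{G_t(\cdot,x)-G_{t'}(\cdot,y)}}_{\mathcal H_K}\lesssim t^{-\bar\alpha}\bigl(d(x,y)^{\xi_2}+|t'-t|^{\xi_1\wedge1}\bigr)^{1/q},\qquad \bar\alpha=\tfrac{\alpha}{2}+\tfrac{\beta}{q}<\tfrac12 .
\]

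Squaring this and using $(a+b)^{2/q}\le a^{2/q}+b^{2/q}$ (valid because $2/q\le1$), I would set $\alpha_0:=2\bar\alpha\in(0,1)$ and
\[
\xi:=1\wedge\tfrac{2\xi_2}{q}\wedge\tfrac{2(\xi_1\wedge1)}{q}.
\]
Since $d(x,y)\le1$ and $|t-t'|\le T$, any power of these quantities with exponent at least $\xi$ is $\lesssim$ the same quantity to the power $\xi$, with constants depending only on $T$. This gives the claimed bound with $(t\wedge t')^{-\alpha_0}=t^{-\alpha_0}$. The case $t'<t$ follows by exchanging the roles of $(t,x)$ and $(t',y)$, because both the hypothesis $d(x,y)\le1$ and the right-hand side are symmetric.

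The only point requiring care, and the one I expect to be the main obstacle, is the justification of the Lemma~\ref{lem:Min_H} step: measurability of $\Psi(z,\cdot)=G_{t/2}(\cdot,z)\,|G_{t/2}(z,x)-G_{t'-t/2}(z,y)|$ in $z$, and finiteness of $\int\norm{\Psi(z,\cdot)}_{\mathcal H_K}\,m(\mathrm dz)$. Both are handled by the uniform bound $\norm{G_{t/2}(\cdot,z)}_{\mathcal H_K}\lesssim t^{-\alpha/2}$ from Lemma~\ref{lem:hk longtime} and the $L^1$ bound on the difference, as was already done in the proof of Lemma~\ref{lem:hk_abs}.
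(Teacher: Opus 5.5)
Your proposal is correct and is exactly the paper's argument: the paper obtains the corollary directly from \eqref{eq:G-GS} in the proof of Lemma~\ref{lem:hk_abs}, which is what you get by setting $\lambda_1=0$, squaring, taking $\alpha_0=2\bar\alpha$ and $\xi=1\wedge\frac{2\xi_2}{q}\wedge\frac{2(\xi_1\wedge1)}{q}$, and handling $t'<t$ by symmetry. One cosmetic point: the trivial $L^1$ bound from \eqref{ineq:hk_m long time} is a $T$-dependent constant rather than $2$, since $P_t^*\mathbf 1$ need not be at most $1$ for nonsymmetric kernels; this does not affect the argument.
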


	\subsubsection{Proof of Lemma \ref{lem:tight_tem_moment}  }
	
	We will now work towards the proof of Lemma \ref{lem:tight_tem_moment}. 
    	
	\begin{proof}[Proof of Lemma \ref{lem:tight_tem_moment}]
		Let $T\geq1$ and let $j\leq T$ be a non-negative integer. Then, for any $\lambda>0$ and sufficiently large $p\geq1$, the following inequality holds uniformly in $x\in\X$:
		\begin{align}
			\E\left[ |X(j,x)|^p\right] &\lesssim \E\left[|X(0,x)|^p \right] + \sum_{k = 1}^{j} \E\left[\left|X(k - 1,x)- X(k,x) \right|^p \right]\\
			&\lesssim h(x)^{\lambda}.
		\end{align}
		Let $d_\mathcal S$ be the metric on $\R_+\times\X$ defined by
		\begin{align}
			d_\mathcal S((t,y),(t',y')):=|t-t'|^{\tilde \xi_1}+d(y,y')^{\tilde \xi_2},
		\end{align}
		and, for each non-negative integer $j$ and $x\in\X$, set $\mathcal S_{j,x}:=[j,j+1]\times B_1(x)$. Let $b>0$ be as in \eqref{ineq:entropy bound} for $R=1$ and set $\kappa:=1/\tilde \xi_1+b/\tilde \xi_2$. By Lemma \ref{lem:product metric entropy}, each $\mathcal S_{j,x}$ is totally bounded under $d_\mathcal S$ and
		\begin{align}
			D(\mathcal S_{j,x},d_\mathcal S;\varepsilon)\lesssim\varepsilon^{-\kappa}
		\end{align}
		uniformly in $j$, $x$ and $\varepsilon\in(0,1]$. Hence, using \eqref{cond:tight_moment} with time horizon $T+1$, we may apply \cite[Theorem~1.1]{kratschmer2022kolmogorov} to $\mathcal S_{j,x}$ for each $j$ and $x$.
		Patching the resulting modifications over the unit time intervals and the countable spatial cover in \eqref{ineq:faster_than_ms}, we obtain a single locally H\"older-continuous modification, which we still denote by $X$, such that, for any $\lambda>0$, sufficiently large $p\geq1$ and $\varrho\in(0,1-\kappa/p)$,
		\begin{align}\label{ineq:holder norm bound temp}
			\E\left[\sup_{(t,y),(t',y')\in\mathcal S_{j,x}\atop(t,y)\neq(t',y')}\frac{\abs{X(t,y)-X(t',y')}^p}{\pr{\abs{t-t'}^{\tilde \xi_1}+d(y,y')^{\tilde \xi_2}}^{\varrho p}}\right]\lesssim h(x)^\lambda
		\end{align}
		uniformly in $x\in\X$ and non-negative integers $j\leq T$.
		From now on, we work with this modification. Therefore, uniformly in $x\in\X$ and non-negative integers $j\leq T$,
		\begin{align}
			\E\left[ \sup_{y \in B_1(x)\atop s \in [j,j+1]} \left| X(s,y)\right|^p \right] & \lesssim  \E\left[\left|X(j,x) \right|^p \right] + \E \left[\sup_{y \in B_1(x)\atop s \in [j,j+1]} \left| X(s,y) - X(j,x) \right|^p \right]\nonumber\\
			&\lesssim h(x)^\lambda\label{ineq:tight_x0_bound}.
		\end{align}
		Let $\{x_i\}_{i \in \mathcal I}\subset \X$ and $\lambda_0 >0$ be as in condition \eqref{ineq:faster_than_ms}. By Markov's inequality we see for each $\lambda >0$, it holds uniformly in $A>0$ that
		\begin{align}
			\bbP \left(\sup_{s \le T} \|X(s)\|_{(\lambda)} > A \right)&\le   \sum_{0 \le j \le T}\sum_{i\in\mathcal I} \bbP \left( \sup_{t\in [j,j+1]\atop x \in B_1(x_i)}  \left|X(t,x) \right|^p h(x)^{-\lambda p} \ge  A^p \right)\\
			&\le  \sum_{0 \le j \le T}\sum_{i\in\mathcal I} \bbP \left( \sup_{t \in [j,j+1]\atop x \in B_1(x_i)}  \left|X(t,x) \right|^p  \ge  A^p \inf_{x \in B_1(x_i)}h(x)^{\lambda p}\right)\\
			&\overset{\eqref{ineq:tight_x0_bound}}{\lesssim} \sum_{i\in\mathcal I}\frac{1}{A^p}h(x_i)^{\lambda}  \sup_{x \in B_1(x_i)}h(x)^{-\lambda p} \\
			&\overset{\eqref{ineq:ref_regular}}{\lesssim} \frac{1}{A^p}\sum_{i\in\mathcal I} h(x_i)^{-\lambda p + \lambda}.
		\end{align}
		By \eqref{ineq:faster_than_ms}, we may choose $p$ to be large enough so that $-\lambda p+\lambda <- \lambda_0$, which makes the last sum finite. Therefore, we see by the continuity of the probability measure that
		\begin{align*}
			\mathbb P \pr{\sup_{t \in [0,T]}\norm{X(t)}_{(\lambda)} < \infty} = 1.
		\end{align*}
		Fix $T\geq1$, $\lambda>0$ and $x_0\in\X$, and work on a probability-one event on which
		\begin{align*}
			M:=\sup_{s\in[0,T]}\norm{X(s)}_{(\lambda/2)}<\infty.
		\end{align*}
		For $t,t'\in[0,T]$ and $R\geq1$,
		\begin{align*}
			\norm{X(t)-X(t')}_{(\lambda)}
			&\leq\sup_{x\in\overline{B_R(x_0)}}\abs{X(t,x)-X(t',x)}
			+2M\sup_{x\in B_R(x_0)^c}h(x)^{-\lambda/2}.
		\end{align*}
		The first term converges to zero as $t' \to t$ by joint continuity and compactness of $\overline{B_R(x_0)}$. If $\X$ has finite diameter, the second term vanishes for all sufficiently large $R$. Otherwise, we see by Lemma \ref{lem:ref_grow} that the second term can be made arbitrarily small by choosing $R>0$ large enough. 
		Therefore, $t \mapsto X(t,\cdot)$ is continuous under $\norm{\cdot}_{(\lambda)}$ on $[0,T]$. Finally, we may take a countable intersection over $T\in\N$ and $\lambda=n^{-1}$ for $n\in\N$ to conclude the desired result. 
	\end{proof}

	\subsection{Proof of Lemmas \ref{lem:PsiM}, \ref{lem:CMS} \texorpdfstring{and}{and} \ref{lem:b_contract_combined}}
	
	\begin{proof}[Proof of Lemma \ref{lem:PsiM}]
			Suppose that both $(B,\Sigma)$ and $(B',\Sigma')$ are Duhamel-integrable pairs and that $B'$ is a predictable modification of $B$.
			Then, for each $y\in\X$, $B(s,y,\omega)=B'(s,y,\omega)$ for $\Lambda\otimes\mathbb P$-almost every $(s,\omega)\in[0,T]\times\Omega$.
			Therefore, for each $(t,x)\in[0,T]\times\X$, we see by Tonelli's theorem that
			\begin{align*}
				&\mathbb E\left[\int_0^t\mathrm ds\int_\X G(s,y;t,x)\abs{B(s,y)-B'(s,y)}m(\mathrm dy)\right]\\
				&\quad=\int_0^t\mathrm ds\int_\X G(s,y;t,x)\mathbb E\left[\abs{B(s,y)-B'(s,y)}\right]m(\mathrm dy)=0,
			\end{align*}
			which implies  $\Psi_1(B')$ is a modification of $\Psi_1(B)$.
			
			Now suppose that $\Sigma'$ is a predictable modification of $\Sigma$, and let $\widetilde\Sigma(s,y):=\Sigma(s,y)-\Sigma'(s,y)$ for $(s,y)\in[0,T]\times\X$.
			For each $y\in\X$, Fubini's theorem gives
			\begin{align*}
				\norm{\widetilde\Sigma(r,y)}_{L^2_\Omega}=0,\qquad \text{for $\Lambda$-almost every $r\in[0,T]$}.
			\end{align*}
			Fix $(t,x)\in[0,T]\times\X$, and for $y,z\in\X$, define
			\begin{align}
				I_1(y,z;t,x)&:=\pr{\int_0^tG(r,y;t,x)G(r,z;t,x)\norm{\widetilde\Sigma(r,y)}_{L^2_\Omega}^2\mathrm dr}^{1/2},\\
				I_2(y,z;t,x)&:=\pr{\int_0^tG(r,y;t,x)G(r,z;t,x)\norm{\widetilde\Sigma(r,z)}_{L^2_\Omega}^2\mathrm dr}^{1/2}.
			\end{align}
			It follows that $I_1(y,z;t,x)=I_2(y,z;t,x)=0$ for every $y,z\in\X$.
			By Tonelli's theorem and the Cauchy--Schwarz inequality, we see
			\begin{align}
				&\int_0^t\norm{G(s,\cdot;t,x)\norm{\widetilde\Sigma(s,\cdot)}_{L^2_\Omega}}_{\mathcal H_K}^2\mathrm ds\\
				&=\int_0^t\iint_{\X^2}G(s,y;t,x)G(s,z;t,x)\norm{\widetilde\Sigma(s,y)}_{L^2_\Omega}\norm{\widetilde\Sigma(s,z)}_{L^2_\Omega}K(\mathrm dy,\mathrm dz)\mathrm ds\\
				&=\iint_{\X^2}\int_0^tG(s,y;t,x)G(s,z;t,x)\norm{\widetilde\Sigma(s,y)}_{L^2_\Omega}\norm{\widetilde\Sigma(s,z)}_{L^2_\Omega}\mathrm dsK(\mathrm dy,\mathrm dz)\\
				&\leq\iint_{\X^2}I_1(y,z;t,x)I_2(y,z;t,x)K(\mathrm dy,\mathrm dz)=0.
			\end{align}
			Therefore, by the linearity of the stochastic integral and Lemma \ref{cor:mtg+mink_cs},
			\begin{align*}
				\norm{\Psi_2(\Sigma)(t,x)-\Psi_2(\Sigma')(t,x)}_{L^2_\Omega}^2
				&=\norm{\iint_0^tG(s,y;t,x)\widetilde\Sigma(s,y)W(\mathrm ds,\mathrm dy)}_{L^2_\Omega}^2\\
				&\lesssim\int_0^t\norm{G(s,\cdot;t,x)\norm{\widetilde\Sigma(s,\cdot)}_{L^2_\Omega}}_{\mathcal H_K}^2\mathrm ds=0.
			\end{align*}
			Hence, $\Psi_2(\Sigma')$ is a modification of $\Psi_2(\Sigma)$.
		\end{proof}

		\begin{proof}[Proof of Lemma \ref{lem:CMS}]
			Fix arbitrary $\lambda \in \R$ and $p \ge 1$, and observe that we may assume without loss of generality that $A = 0$. Clearly, $\norm{\cdot}_{\mathbb M^\sharp_{\lambda,p}}$ is a norm on $\mathbb M^\sharp_{\lambda,p}$, hence it is enough to show the space $(\mathbb M^\sharp_{\lambda,p}, \norm{\cdot}_{\mathbb M^\sharp_{\lambda,p}})$ is complete.
			
			\firststep
			Suppose now $\{X^{(n)}\}_{n \in \mathbb N}$ is Cauchy in $(\mathbb M^\sharp_{\lambda,p}, \norm{\cdot}_{\mathbb M^\sharp_{\lambda,p}})$. Then there exists $\{X_n\}_{n \in \mathbb N}$ in $\mathbb M_{\lambda,p}$ so that 
			\begin{align}
				[X_n]_{\mathrm p} = X^{(n)} \qquad \text{ in }\mathbb M^\sharp_{\lambda,p},
			\end{align}
			and $\{X_n\}_{n \in \N}$ is Cauchy in $(\mathbb M_{\lambda,p},\norm{\cdot}_{\mathbb M_{\lambda,p}})$. We will first show the sequence $\{X_n\}_{n \in \mathbb N}$ has a predictable limit $X$ so that $\lim_{n \to \infty}\norm{X_n - X}_{\mathbb M_{\lambda, p}} = 0$.

			Let $\mathcal P_T:=\mathcal P|_{\Omega\times[0,T]}$, and equip $\widehat\Omega:=\Omega\times[0,T]$ with $\mathcal P_T$ and the probability measure $\widehat{\mathbb P}:=T^{-1}(\mathbb P\otimes\Lambda)|_{\mathcal P_T}$.
			For every $x\in\X$ and $n,m\in\N$, we have
			\begin{align*}
				\norm{X_n(\cdot,x)-X_m(\cdot,x)}_{L^p(\widehat{\mathbb P})}
				\leq h(x)^\lambda\norm{X_n-X_m}_{\mathbb M_{\lambda,p}}.
			\end{align*}
			Hence, for every fixed $x\in\X$, the sequence $\{X_n(\cdot,x)\}_{n\in\N}$ is Cauchy in $L^p(\widehat{\mathbb P})$ and therefore Cauchy in $\widehat{\mathbb P}$-probability.
			Regarding $x\in\X$ as a parameter, it follows from \cite[Lemma~5.6 and Corollary~5.33]{MR4226142} that there exists a $\mathcal P_T\otimes\mathscr B_\X$-measurable random field $X$ such that $X_n(\cdot,x)$ converges to $X(\cdot,x)$ in $\widehat{\mathbb P}$-probability for every $x\in\X$.
			In particular, $X$ is jointly predictable.
			
			\nextstep We now prove that $X_n$ converges to $X$ in the $\norm{\cdot}_{\mathbb M_{\lambda,p}}$ norm.
			For $N\in\N$, set
			\begin{align*}
				r_N:=\sup_{n,m\geq N}\norm{X_n-X_m}_{\mathbb M_{\lambda,p}},
			\end{align*}
			so that $r_N\downarrow0$ as $N\to\infty$.
			Fix $x\in\X$ and $n\geq N$.
			By \cite[Lemma~5.2]{MR4226142}, there exists a subsequence $\{X_{m_k}(\cdot,x)\}_{k\in\N}$, with $m_k\geq N$, which converges to $X(\cdot,x)$ $\widehat{\mathbb P}$-almost surely.
			Therefore, Fubini's theorem and Fatou's lemma imply that, for $\Lambda$-almost every $t\in[0,T]$,
			\begin{align*}
				h(x)^{-p\lambda}\mathbb E\left[\abs{X_n(t,x)-X(t,x)}^p\right]
				&\leq\liminf_{k\to\infty}h(x)^{-p\lambda}\mathbb E\left[\abs{X_n(t,x)-X_{m_k}(t,x)}^p\right]\\
				&\leq r_N^p.
			\end{align*}
			Taking the $p$-th root, the essential supremum in $t$, and then the supremum in $x$, we obtain
			\begin{align*}
				\norm{X_n-X}_{\mathbb M_{\lambda,p}}\leq r_N,\qquad n\geq N.
			\end{align*}
			Since $r_N\to0$, it follows that $X\in\mathbb M_{\lambda,p}$ and $\lim_{n\to\infty}\norm{X_n-X}_{\mathbb M_{\lambda,p}}=0$.
			
			\nextstep We see $[X]_{\mathrm p} \in \mathbb M_{\lambda,p}^\sharp$, and 
			\begin{align}
				\lim_{n \to \infty} \norm{X^{(n)}-[X]_{\mathrm p}}_{\mathbb M_{\lambda,p}^\sharp} &= \lim_{n \to \infty} \norm{[X_n]_{\mathrm p}-[X]_{\mathrm p}}_{\mathbb M_{\lambda,p}^\sharp} \\
				&=\lim_{n \to \infty}\norm{X_n - X}_{\mathbb M_{\lambda,p}}\\
				&=0,
			\end{align}
			which concludes the proof.
		\end{proof}
	
	\begin{proof}[Proof of Lemma \ref{lem:b_contract_combined}]
		We first show that $(B_U, \Sigma_U)$ is a Duhamel-integrable pair.
		Note that the predictability of $B_U$ and $\Sigma_U$ is given by the fact that the coefficients $\sigma(t,x,u),\, b(t,x,u)$ are predictable functions on $\Omega\times\mathbb R_+\times\X\times\mathbb R$ by \eqref{eq:DC}--\eqref{eq:NC}, and $U(t,x)$ itself is a  predictable random field.
		Hence, it is enough to show for some $p \ge 2$ that
		\begin{align}\label{ineq:bcc D pair}
			\norm{\norm{G(\cdot,\cdot; t,x) B_U}_{L^1_{[0,T]}L_\X^1}}_{L^p_\Omega} + \norm{ \norm{G(\cdot,\cdot;t,x)\Sigma_U}_{L^2_{[0,T]}\mathcal H_K} }_{L^p_{\Omega}}<\infty.
		\end{align}
		Observe by Minkowski's inequality that it holds uniformly in $(t,x) \in [0,T]\times \X$ and $A \ge 1$ that
		\begin{align*}
			\norm{\norm{G(\cdot,\cdot; t,x) B_U}_{L^1_{[0,T]}L_\X^1}}_{L^p_\Omega}&\,\,=\norm{\int_0^t \mathrm ds \int_\X  G(s,y;t,x) |B_U(s,y)|m(\mathrm dy)}_{L^p_\Omega}\\
			&\,\,\le \int_0^t \mathrm ds  \int_\X  G(s,y;t,x) \norm{B_U(s,y)}_{L^p_\Omega}m(\mathrm dy)\\
			&\overset{\eqref{eq:LiG}}{\lesssim}\int_0^t \mathrm ds \int_\X G(s,y;t,x) \pr{1+\norm{U(s,y)}_{L^p_\Omega}} m(\mathrm dy)\\
			&\,\,\le \int_0^t\mathrm ds \int_\X e^{sA}G(s,y;t,x) h(y)^{\lambda} m(\mathrm dy) \pr{1+\norm{U}_{\mathbb M_{\lambda, p,A}}}\\
			&\overset{\text{Lemma~\ref{lem:hk longtime}, \eqref{ineq:hk_m long time}}}{\lesssim}h(x)^\lambda \pr{1+\norm{U}_{\mathbb M_{\lambda,p,A}}}\int_0^t \mathrm ds \,e^{sA} \\
			&\,\,\le A^{-1} e^{At} h(x)^\lambda \pr{1+\norm{U}_{\mathbb M_{\lambda,p,A}}}.
		\end{align*}

		Following the proof of Lemma \ref{cor:mtg+mink_cs}, we see it holds uniformly in $(t,x)\in [0,T]\times \X$ and $A \ge 0$ that 
		\begin{align*}
			\norm{\norm{G(\cdot,\cdot; t,x) \Sigma_U}_{L^2_{[0,T]}\mathcal H_K}}_{L^p_\Omega}^2 
			&\,\,\le \int_0^t \mathrm ds \norm{G(s,\cdot; t,x) \norm{\Sigma_U(s,\cdot)}_{L^{p}_\Omega} }_{\mathcal H_K}^2\\
			&\,\,=\int_0^t \mathrm ds \norm{G(s,\cdot; t,x) h(\cdot)^\lambda h(\cdot)^{-\lambda} \norm{\Sigma_U(s,\cdot)}_{L^{p}_\Omega} }_{\mathcal H_K}^2\\
			&\,\,\le \norm{\Sigma_U}_{\mathbb M_{\lambda, p,A}}^2\int_0^t\mathrm ds\, e^{2As}\norm{G(s,\cdot; t,x) h(\cdot)^\lambda}_{\mathcal H_K}^2  \\
			&\overset{\text{Lemma~\ref{lem:hk longtime}, \eqref{ineq:hk_k long time}}}{\lesssim} \norm{\Sigma_U}_{\mathbb M_{\lambda,p,A}}^2 h(x)^{2\lambda}\int_0^t\mathrm ds \, \pr{e^{2As} (t-s)^{-\widetilde\alpha}}.
		\end{align*}
		Note for every $\widetilde\alpha \in [0,1)$, it holds uniformly in $A,t >0$ that 
		\begin{align*}
			\int_0^t e^{2As}(t-s)^{-\widetilde\alpha} \mathrm ds  \lesssim A^{\widetilde\alpha - 1} e^{2At}.
		\end{align*}
		Hence it holds uniformly in $(t,x) \in [0,T]\times \X$ and $A \ge 1$ that 
		\begin{align*}
			\norm{\norm{G(\cdot,\cdot;t,x) \Sigma_U}_{L^2_{[0,T]}\mathcal H_K}}_{L^p_\Omega} \lesssim  A^\frac{\widetilde\alpha -1}{2}e^{A t}\norm{\Sigma_U}_{\mathbb M_{\lambda,p,A}} h(x)^\lambda \lesssim A^\frac{\widetilde\alpha - 1}{2} e^{At} h(x)^\lambda\pr{1+\norm{U}_{\mathbb M_{\lambda,p,A}}}.
		\end{align*}
		Hence \eqref{ineq:bcc D pair} holds. More precisely, for each $p \ge 2$, the following inequality holds uniformly in $A \ge 1$ and $(t,x) \in [0,T]\times \X$ 
		\begin{align}\label{ineq:bcc D pair explicit}
			&\norm{\norm{G(\cdot,\cdot; t,x) B_U}_{L^1_{[0,T]}L_\X^1}}_{L^p_\Omega} + \norm{ \norm{G(\cdot,\cdot;t,x)\Sigma_U}_{L^2_{[0,T]}\mathcal H_K} }_{L^p_{\Omega}}\\
			\lesssim & A^{\frac{\widetilde\alpha - 1}{2}}h(x)^\lambda e^{A t} \pr{1+\norm{U}_{\mathbb M_{\lambda, p,A}}}.
		\end{align}

		We next show that $\Psi(g,B_U,\Sigma_U)$ admits a predictable modification in $\mathbb M_{\lambda,p}$.
		
		We first verify predictability. By \eqref{cond:ctem continuous}, $\Psi_0(g)$ is deterministic and jointly continuous.
		For $n\in\N$, let $\delta_n:=2^{-n}T$ and define $\Psi^{(n)}(0,x):=g(x)$. For $t\in(j\delta_n,(j+1)\delta_n]$, where $j=0,\ldots,2^n-1$, set
		\begin{align}\label{eq:predictable_approximation}
			\Psi^{(n)}(t,x)
			&:=(P_t^*g)(x)+\int_0^{j\delta_n}\mathrm ds\int_\X G(s,y;t,x)B_U(s,y)m(\mathrm dy)\\
			&\qquad+\iint_0^{j\delta_n}G(s,y;t,x)\Sigma_U(s,y)W(\mathrm ds,\mathrm dy).
		\end{align}

		Set $S:=(j\delta_n,(j+1)\delta_n]\times\X$. The drift integral in the definition of $\Psi^{(n)}(t,x)$ has an $\mathscr F_{j\delta_n}\otimes\mathscr B(S)$-measurable version. The stochastic integral vanishes when $j=0$, so suppose that $j\ge1$.

		By Lemma \ref{lem:H0}, the standard elementary predictable approximations of $G(\cdot,\cdot;t,x)\Sigma_U$ on $[0,j\delta_n]$, using  finite-dimensional projections, can be chosen jointly measurable in $(t,x)\in S$. 
		Hence, by \cite[Corollary~5.33]{MR4226142}, the third term on the right hand side of \eqref{eq:predictable_approximation} 
		has an $\mathscr F_{j\delta_n}\otimes\mathscr B(S)$-measurable version.
		Since $\mathscr F_{j\delta_n}\otimes\mathscr B((j\delta_n,(j+1)\delta_n])\subset\mathcal P_T$ on $(j\delta_n,(j+1)\delta_n]$, patching the measurable versions of the drift and stochastic integrals over these time intervals, together with the deterministic term $(P_t^*g)(x)$ and the value $g(x)$ at $t=0$, gives a $\mathcal P_T\otimes\mathscr B_\X$-measurable version of $\Psi^{(n)}$, where $\mathcal P_T:=\mathcal P|_{\Omega\times[0,T]}$. We use this predictable version of $\Psi^{(n)}$ below.

		By Minkowski's inequality and Lemma \ref{cor:mtg+mink_cs}, using \eqref{eq:LiG} and \eqref{ineq:hk_m long time}--\eqref{ineq:hk_k long time}, we have, uniformly in $n\in\N$,
		\begin{align*}
			\norm{\Psi^{(n)}}_{\mathbb M_{\lambda,p}}
			\lesssim \norm{g}_{(\lambda)}+1+\norm{U}_{\mathbb M_{\lambda,p}}<\infty.
		\end{align*}
		Together with the predictability established above, this proves that $\Psi^{(n)}\in\mathbb M_{\lambda,p}$.

		For $t\in(j\delta_n,(j+1)\delta_n]$, \eqref{eq:Psi} and \eqref{eq:predictable_approximation} give
		\begin{align*}
			\Psi(g,B_U,\Sigma_U)(t,x)-\Psi^{(n)}(t,x)
			&=\int_{j\delta_n}^t\mathrm ds\int_\X G(s,y;t,x)B_U(s,y)m(\mathrm dy)\\
			&\qquad\quad+\iint_{j\delta_n}^tG(s,y;t,x)\Sigma_U(s,y)W(\mathrm ds,\mathrm dy).
		\end{align*}
		By \eqref{eq:LiG}, Minkowski's inequality, Lemma \ref{cor:mtg+mink_cs}, and \eqref{ineq:hk_m long time}--\eqref{ineq:hk_k long time}, it follows uniformly in $n\in\N$, $A\geq1$, and $(t,x)\in(0,T]\times\X$ that
		\begin{align*}
			&e^{-At}h(x)^{-\lambda}\norm{\Psi^{(n)}(t,x)-\Psi(g,B_U,\Sigma_U)(t,x)}_{L^p_\Omega}\\
			&\lesssim\pr{1+\norm{U}_{\mathbb M_{\lambda,p,A}}}
			\left[\int_{j\delta_n}^t e^{-A(t-s)}\mathrm ds
			+\left(\int_{j\delta_n}^t e^{-2A(t-s)}(t-s)^{-\widetilde\alpha}\mathrm ds\right)^{1/2}\right]\\
			&\lesssim\pr{1+\norm{U}_{\mathbb M_{\lambda,p,A}}}\left(\delta_n+\delta_n^{(1-\widetilde\alpha)/2}\right).
		\end{align*}
		The last inequality uses $0<t-j\delta_n\le\delta_n$ and $\widetilde\alpha<1$. The estimate also holds at $t=0$, since both random fields equal $g(x)$. 
		Since $\sum_{n\in\N}(\delta_n+\delta_n^{(1-\widetilde\alpha)/2})<\infty$, we see $\Psi^{(n)}(t,x)$ converges almost surely to $\Psi(g,B_U,\Sigma_U)(t,x)$. Taking the pointwise limit of $\Psi^{(n)}$ wherever it exists and is finite, and setting the field equal to zero elsewhere, yields a $\mathcal P_T\otimes\mathscr B_\X$-measurable modification of $\Psi(g,B_U,\Sigma_U)$ whose value at $t=0$ is $g(x)$. We use this modification below without changing notation.

		It remains to bound $\norm{\Psi(g,B_U,\Sigma_U)}_{\mathbb M_{\lambda,p}}$. For $p\ge2$, the following estimate holds uniformly in $(t,x)\in[0,T]\times\X$ and $A\ge1$:
		\begin{align*}
			&\norm{\Psi(g,B_U,\Sigma_U)(t,x)}_{L^p_\Omega}\\
			&\quad\lesssim \abs{P^*_t g(x)} +\norm{\norm{G(\cdot,\cdot; t,x) B_U}_{L^1_{[0,T]}L_\X^1}}_{L^p_\Omega}+ \norm{ \norm{G(\cdot,\cdot;t,x)\Sigma_U}_{L^2_{[0,T]}\mathcal H_K} }_{L^p_{\Omega}}\\
			&\quad\overset{\text{Lemma~\ref{lem:hk longtime}, \eqref{ineq:hk_m long time}},\eqref{ineq:bcc D pair explicit}}{\lesssim} h(x)^\lambda + \pr{A^\frac{\widetilde\alpha-1}{2}e^{At}} h(x)^\lambda  \pr{1+\norm{U}_{\mathbb M_{\lambda,p,A}}}.
		\end{align*}
		Taking $A=1$ in this estimate gives $\norm{\Psi(g,B_U,\Sigma_U)}_{\mathbb M_{\lambda,p}}<\infty$.
		
		Finally, to prove the difference estimate, we write
		\begin{align*}
			\Psi(g,B_U,\Sigma_U) - \Psi(g,B_V,\Sigma_V) &= \int_0^t\mathrm ds  \int_\X  G(s,y; t,x) \widetilde B_{U - V}(s,y) m(\mathrm dy)\\
			&\qquad + \iint_0^t G(s,y; t,x) \widetilde \Sigma_{U - V}(s,y) W(\mathrm ds,\mathrm dy),
		\end{align*}
		where 
		\begin{gather*}
			\widetilde B_{X}(s,y):= \widetilde b(s,y,X(s,y)):=b(s,y,X(s,y)+V(s,y)) - b(s,y,V(s,y)),\\ \widetilde \Sigma _{X}(s,y):= \widetilde\sigma(s,y,X(s,y)):=\sigma(s,y,X(s,y)+V(s,y)) - \sigma(s,y,V(s,y)).
		\end{gather*}
		We see $\widetilde b$ and $\widetilde \sigma$ satisfy \eqref{ineq:g_lip} and 
		\begin{align*}
			|\widetilde b(t,x,u)| + |\widetilde \sigma(t,x,u)| \lesssim |u|,\qquad \text{ uniformly in } (t,x,u) \in [0,T]\times \X \times  \R.
		\end{align*}
		Hence, by repeating the above argument, we see it holds uniformly in $(t,x) \in [0,T]\times \X$ and $A \ge 1$ that
		\begin{align*}
			\norm{\Psi(g,B_U,\Sigma_U)(t,x) - \Psi(g,B_V,\Sigma_V)(t,x)}_{L^p_\Omega} \lesssim \pr{A^\frac{\widetilde\alpha - 1}{2} e^{At} } h(x)^\lambda \norm{U - V}_{\mathbb M_{\lambda,p,A}},
		\end{align*}
		which implies the desired inequality.
	\end{proof}

	\subsection{Proof of Lemma \ref{lem:approx_estimates}}

	We recall the following standard estimate for centered moments of Poisson random variables; we include a proof for completeness.
	\begin{lemma}\label{lem:poisson moment bounds}
		Let \(Z\sim \operatorname{Pois}(\lambda)\), \(\lambda>0\). For every positive even integer \(p=2m\), \(m\in\N\),
		there exists \(C_p<\infty\) such that
		\[
		\mathbb E|Z-\lambda|^p
		\le C_p\bigl(\lambda+\lambda^{p/2}\bigr).
		\]
	\end{lemma}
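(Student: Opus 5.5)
I will write $Z-\lambda$ in centered-cumulant form and bound the even moment via the moment–cumulant formula. The cumulant generating function of $Z\sim\operatorname{Pois}(\lambda)$ is $\log\mathbb E e^{\theta Z}=\lambda(e^\theta-1)$. So every cumulant $\kappa_j$ of $Z$ equals $\lambda$ for $j\ge1$. Consequently the centered variable $Y:=Z-\lambda$ has $\kappa_1(Y)=0$ and $\kappa_j(Y)=\lambda$ for all $j\ge2$.

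By the moment–cumulant formula,
\[
\mathbb E Y^{p}=\sum_{\pi}\prod_{B\in\pi}\kappa_{|B|}(Y),
\]
where the sum runs over set partitions $\pi$ of $\{1,\dots,p\}$. Since $\kappa_1(Y)=0$, only partitions with no singleton blocks contribute. Each such partition with $k$ blocks contributes exactly $\lambda^k$. Because every block has size at least $2$, we have $1\le k\le p/2=m$. Hence
\[
\mathbb E|Z-\lambda|^{p}=\mathbb E Y^{2m}=\sum_{k=1}^{m}S_{p,k}\,\lambda^k,
\]
where $S_{p,k}$ is the number of partitions of $\{1,\dots,p\}$ into $k$ blocks of size at least two. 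Since $p$ is even, $|Z-\lambda|^p=Y^p$, so no absolute-value issue arises.

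It then remains to note the elementary bound $\lambda^k\le\lambda+\lambda^{m}$ for $1\le k\le m$. Indeed, if $\lambda\le1$ then $\lambda^k\le\lambda$, and if $\lambda>1$ then $\lambda^k\le\lambda^m$. Summing over $k$ gives the claim with $C_p:=\sum_{k=1}^m S_{p,k}$, which is at most the Bell number $B_p$.

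No step is a serious obstacle; the only point needing care is justifying the moment–cumulant expansion. If a self-contained argument is preferred, I would instead expand $\mathbb E e^{\theta Y}=\exp(\lambda(e^\theta-1-\theta))$ as a power series in $\theta$. Every term of $e^\theta-1-\theta$ has degree at least $2$, so the coefficient of $\theta^{2m}/(2m)!$ is a polynomial in $\lambda$ with nonnegative coefficients and terms only of degrees $1$ through $m$. This reaches the same conclusion.
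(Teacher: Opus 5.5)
Your proposal is correct and takes essentially the same route as the paper. The paper quotes the identity $\mathbb E[(Z-\lambda)^{2m}]=\sum_{a}\lambda^a S_2(2m,a)$ (counting partitions into blocks of size at least two) from Privault and then bounds each $\lambda^a$ by $\lambda+\lambda^m$; you instead derive that identity yourself from the Poisson cumulants via the moment--cumulant formula (or from the generating function $\exp(\lambda(e^\theta-1-\theta))$), which makes the argument self-contained.
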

	
	\begin{proof}
		Let \(p=2m\) be a positive even integer. By \cite[Lemma 3.1]{privault2011generalized},
		\[
		\mathbb E[(Z-\lambda)^{2m}]
		=
		\sum_{a=0}^{2m}\lambda^a S_2(2m,a),
		\]
		where \(S_2(n,a)\) denotes the number of partitions of an \(n\)-element set into \(a\) blocks
		of size at least \(2\). Hence \(S_2(2m,a)=0\) for \(a>m\). Indeed, if there are $a$ number of sets of size at least $2$, then the total elements is at least $2a$. Hence, for each $m \in \N$, there exists $C_m>0$, independent of $\lambda>0$, so that 
		\[
		\mathbb E|Z-\lambda|^{2m}
		=
		\sum_{a=1}^{m}\lambda^a S_2(2m,a)
		\le C_m(\lambda+\lambda^m).
		\]
		This proves the lemma.
	\end{proof}
	
	\begin{proof}[Proof of Lemma \ref{lem:approx_estimates}]
		(1) Let $t>0$, $\varepsilon \in (0,1]$ and $x \in \X$. We see by Lemma \ref{lem:H0}, Lemma \ref{lem:hk longtime} and {Jensen's} inequality that uniformly in $x \in \X$, $t >0 $ and $\varepsilon \in (0,1]$,
		\begin{align}
			\|r_t^\varepsilon(\cdot,x)\|_{\mathcal H_K}^2 &\le \left(e^{-t/\varepsilon} \sum_{n = 1}^\infty \frac{(t/\varepsilon)^n}{n!}\|G(n\varepsilon,\cdot,x)\|_{\mathcal H_K} \right)^2\\
			&\,\,\le e^{-t/\varepsilon} \sum_{n = 1}^\infty \frac{(t/\varepsilon)^n}{n!} \|G(n\varepsilon,\cdot,x)\|_{\mathcal H_K}^2\\
			&\overset{\eqref{ineq:hk_k long time}}{\lesssim} e^{-t/\varepsilon} \sum_{n = 1}^\infty \frac{(t/\varepsilon)^n}{n!} (n\varepsilon)^{-\alpha} e^{\kappa \varepsilon n}\\
			&\,\, =\exp \left(t\left(e^{\kappa \varepsilon} - 1 \right) /\varepsilon \right) e^{-te^{\kappa \varepsilon}/\varepsilon} \sum_{n = 1}^\infty \frac{\left(e^{\kappa \varepsilon}t/ \varepsilon\right)^n}{n!} (n\varepsilon)^{-\alpha},
		\end{align}
		for some $\alpha \in (0,1)$. Hence, by {Jensen's} inequality,
		\begin{align}
			e^{-te^{\kappa \varepsilon}/\varepsilon} \sum_{n = 1}^\infty \frac{\left(e^{\kappa \varepsilon}t/ \varepsilon\right)^n}{n!} (n\varepsilon)^{-\alpha} &\le \left (e^{-t e^{\kappa \varepsilon}/\varepsilon} \sum_{n = 1}^\infty \frac{(e^{\kappa \varepsilon}t/\varepsilon)^{n}}{n!} (n\varepsilon)^{-1} \right)^{\alpha}\\
			&{\le}\left(e^{-te^{\kappa \varepsilon}/\varepsilon}\sum_{n=1}^\infty \frac{\left(e^{\kappa \varepsilon}t/\varepsilon \right)^{n+1}}{(n+1)!} \frac{n+1}{n}t^{-1}\right)^\alpha \\
			& \le 2^\alpha t^{-\alpha}.
		\end{align}
		Finally, observe that there is $\kappa' \in \R$ so that for $t \ge 0$,
		\begin{align}
			\sup_{ \varepsilon \in (0,1]} \exp \left(t \left(e^{\kappa \varepsilon} - 1 \right) /\varepsilon\right) \le  e^{\kappa ' t}.
		\end{align}
		Collecting all terms, we see it holds uniformly in $x \in \X$, $t >0$ and $\varepsilon \in (0,1]$ that
		\begin{align}
			\|r_t^\varepsilon(\cdot,x)\|_{\mathcal H_K }^2 \lesssim e^{\kappa' t} t^{-\alpha}
		\end{align}
		for some $\kappa ' \in \R$. Moreover, by Lemma \ref{lem:hk longtime}, there exists $\kappa'' \in \R$ so that for $t>0$, $\varepsilon \in (0,1]$ and $x \in \X$, it holds uniformly in $x \in \X$, $t >0 $ and $\varepsilon \in (0,1]$ that
		\begin{align}
			\|r_t^\varepsilon(\cdot,x)\|_{L^1_\X}
			&\le e^{-t/\varepsilon}\sum_{n=1}^\infty \frac{(t/\varepsilon)^n}{n!}
			\|G_{n\varepsilon}(\cdot,x)\|_{L^1_\X}\\
			&\lesssim e^{-t/\varepsilon}\sum_{n=1}^\infty \frac{(t/\varepsilon)^n}{n!}e^{\kappa n\varepsilon}
			\le \exp\left(t\frac{e^{\kappa\varepsilon}-1}{\varepsilon}\right)
			\lesssim e^{\kappa''t},
		\end{align}
		Since $1\lesssim t^{-\alpha}e^t$ for $t>0$, enlarging the exponential constant if necessary proves both bounds in \eqref{ineq:R_bound} with a single $\kappa$ independent of $\varepsilon$.
		
		(2) The objective is to show
		\begin{align}\label{lim:sup_cov}
			\lim_{\varepsilon \downarrow 0} \sup_{x\in \X} \int_0^t \left\|{\abs{\int_\X r_{t-s}^\varepsilon(y,x)G_\varepsilon(\cdot,y)m(\mathrm dy) - G(t-s,\cdot,x)}}\right\|^2_{\mathcal H_K}\,\mathrm ds {=} 0.
		\end{align}
		For $t>0$, by Lemma \ref{lem:H0} and {Jensen's} inequality, it holds uniformly in $x \in \X$ that
		\begin{align}
			&\int_0^t \left\|{\abs{\int_\X r_{t-s}^\varepsilon({y,x}) G_\varepsilon({\cdot,y}) m(\mathrm dy) - G(t-s,\cdot,x)}}\right\|^2_{\mathcal H_K} \mathrm ds\\
			\overset{\text{Lemma~\ref{lem:hk longtime}, \eqref{ineq:hk_k long time}}}{\lesssim}& \int_0^t  \left\|e^{-(t-s)/\varepsilon}\sum_{n = 1}^\infty \frac{((t-s)/\varepsilon)^n}{n!} {\abs{G((n+1)\varepsilon,\cdot,x) - G(t-s, \cdot,x)}}\right\|^2_{\mathcal H_K } \mathrm ds\\
			&\qquad + \int_0^t e^{-2(t-s)/\varepsilon} (t-s)^{-\alpha} \mathrm ds\\
			\le \,\,\,&\int_0^t e^{-(t-s)/\varepsilon} \sum_{n = 1}^\infty \frac{\left(\frac{t-s}{\varepsilon}\right)^n}{n!} \left\|{\abs{G(\varepsilon(n+1),\cdot,x) - G(t-s,\cdot,x)}}\right\|^2_{\mathcal H_K } \mathrm ds +  \int_0^t e^{-2s/\varepsilon} s^{-\alpha} \mathrm ds\\
			=: \,\,&\int_0^t {\left(I_{1,T}(t-s,x,\varepsilon)+I_{2,T}(t-s,x,\varepsilon)\right)} \mathrm ds +\int_0^t e^{-2s/\varepsilon} s^{-\alpha} \mathrm ds,
		\end{align}
		where we used {Jensen's} inequality and Lemma \ref{lem:H0} in the second inequality and
		\begin{gather*}
			I_{1,T}(t-s,x,\varepsilon) :=e^{-(t-s)/\varepsilon}{\sum_{n\varepsilon > T+1}} \frac{\left(\frac{t-s}{\varepsilon}\right)^n}{n!} \left\|{\abs{G(\varepsilon(n+1),\cdot,x) - G(t-s,\cdot,x)}}\right\|^2_{\mathcal H_K }{,}\\
			I_{2,T}(t-s,x,\varepsilon):= e^{-(t-s)/\varepsilon}\sum_{\substack{\\n \varepsilon \le T+1}}\frac{\left(\frac{t-s}{\varepsilon}\right)^n}{n!} \left\|{\abs{G(\varepsilon(n+1),\cdot,x) - G(t-s,\cdot,x)}}\right\|^2_{\mathcal H_K }{.}
		\end{gather*}

		First, for an arbitrary but fixed $T>0$, it holds uniformly in $0 < s< t$  and $x \in \X$ that
		\begin{align}
			&e^{-(t-s)/\varepsilon}\sum_{n \varepsilon > T+1} \frac{\left( \frac{t-s}{\varepsilon}\right)^n}{n!} \left\|{\abs{G\left(\varepsilon(n+1), \cdot, x \right)  - G(t-s, \cdot,x)}}\right\|_{\mathcal H_K }^2\\
			{\lesssim} &e^{-(t-s)/\varepsilon}\sum_{n \varepsilon > T+1} \frac{\left( \frac{t-s}{\varepsilon}\right)^n}{n!} \left(\left\|G\left(\varepsilon(n+1), \cdot, x \right)\right\|_{\mathcal H_K }^2 + \left\| G(t-s, \cdot,x)\right\|_{\mathcal H_K }^2\right)\\
			\lesssim& e^{-(t-s)/\varepsilon}\sum_{n\varepsilon>T+1} \frac{\left( \frac{t-s}{\varepsilon}\right)^n}{n!} \left((\varepsilon (n+1))^{-\alpha}e^{\kappa n\varepsilon} + (t-s)^{-\alpha}\right)\\
			\le & e^{-(t-s)/\varepsilon} \sum_{n \varepsilon >T+1} \frac{\left( \frac{t-s}{\varepsilon}e^{\kappa \varepsilon}\right)^n}{n!} T^{-\alpha} + e^{-(t-s)/\varepsilon} \sum_{n \varepsilon >T+1} \frac{\left( \frac{t-s}{\varepsilon}e^{\kappa \varepsilon}\right)^n}{n!}(t-s)^{-\alpha}\\
			= &{e^{(e^{\kappa \varepsilon}-1)(t-s)/\varepsilon}} \left(T^{-\alpha}+(t-s)^{-\alpha} \right)e^{- \frac{t-s}{\varepsilon}e^{\varepsilon \kappa }}\sum_{n\varepsilon>T+1}  \frac{\left( \frac{t-s}{\varepsilon}e^{\kappa \varepsilon}\right)^n}{n!}.
		\end{align}
		
		Let $\psi(u):= (1+u) \ln(u+1)- u$ for $u \ge 0$ and note that $\psi$ is an increasing function on $\R_+$. Hence, by {Bennett's inequality} for Poisson random variables, we see for sufficiently small $\varepsilon>0$ and $0<s<t\le T$,
		\begin{align}
			e^{- \frac{t-s}{\varepsilon}e^{\varepsilon \kappa }}\sum_{n\varepsilon>T+1}  \frac{\left( \frac{t-s}{\varepsilon}e^{\kappa \varepsilon}\right)^n}{n!} &\le \exp \left( - e^{\kappa \varepsilon }\frac{t-s}{\varepsilon}\psi\left( \frac{T+1}{t-s}e^{-\kappa \varepsilon}-1\right)\right)\\
			&\le \exp \left( - e^{\kappa \varepsilon }\frac{t-s}{\varepsilon}\psi\left( \frac{T+1}{T}e^{-\kappa \varepsilon}-1\right)\right),
		\end{align}
		which is uniformly bounded by $1$ and converges to zero as $\varepsilon \downarrow 0$. Hence, by the dominated convergence theorem,
		\begin{align}
			\lim_{\varepsilon \downarrow 0}\sup_{x \in \X}\int_0^t I_{1,T}(t-s,x,\varepsilon) \mathrm ds = 0 \qquad \text{ for each }T\ge t >0.
		\end{align}
		{On the other hand, since  $\varepsilon(n+1)\le T+2$, Corollary~\ref{cor:hk_abs}, applied with time horizon $T+2$ implies that, there exist $\xi \in (0,1]$ and $\beta \in (0,1)$, so that the following estimate uniformly in $\varepsilon\in(0,1)$, $0\le s<t\le T+1$, and $x\in\X$,}
		\begin{align}
			I_{2,T}(t-s,x,\varepsilon)&\lesssim e^{-(t-s)/\varepsilon} \sum_{n \varepsilon \le T+1} \frac{\left( \frac{t-s}{\varepsilon}\right)^n}{n!} \frac{{\abs{\varepsilon(n+1) - (t-s)}^\xi}}{\varepsilon^\beta(n+1)^\beta \wedge (t-s)^\beta}\nonumber\\
			&\le e^{-(t-s)/\varepsilon} \sum_{n\varepsilon {\le} T+1} \frac{\left(\frac{t-s}{\varepsilon}\right)^n}{n!} \frac{{\abs{\varepsilon(n+1) - (t-s)}^\xi}}{\varepsilon^\beta (n+1)^\beta} \nonumber\\
			&\qquad +e^{-(t-s)/\varepsilon} \sum_{n\varepsilon {\le} T+1} \frac{\left(\frac{t-s}{\varepsilon}\right)^n}{n!}\frac{{\abs{\varepsilon(n+1) - (t-s)}^\xi}}{(t-s)^\beta}\nonumber\\
			&:=J_{1,T}(t-s,\varepsilon) + J_{2,T}(t-s,\varepsilon),\label{ineq:j1j2 composition approx poisson}
		\end{align}
		
		For each $t,\,\varepsilon>0$, let $N_\varepsilon(t)\sim {\operatorname{Pois}}(t/\varepsilon)$. Using Jensen's inequality and the fact that $\xi \in (0,1]$, we see for each $\varepsilon \in (0,1]$ and $t>s \ge 0$,
		\begin{equation}\label{ineq:j2 decomposition poisson moments}
			\begin{aligned}
				J_{2,T}(t-s,\varepsilon)&\le \frac{\E\left[ \abs{{\varepsilon\bigl(N_\varepsilon(t-s)+1\bigr)} -(t-s) }^\xi\mathbf{1}_{\{{\varepsilon N_\varepsilon(t-s)\le T+1}\}}\right]}{(t-s)^\beta}\\
				&\le \frac{\E\left[ \abs{{\varepsilon\bigl(N_\varepsilon(t-s)+1\bigr)} -(t-s) }^\xi\right]}{(t-s)^\beta}\\
				&\le \frac{\E\left[ \abs{{\varepsilon\bigl(N_\varepsilon(t-s)+1\bigr)} -(t-s) }^2\right]^\frac{\xi}{2}}{(t-s)^\beta}\\
				&\le \frac{\pr{2\varepsilon(t-s) +\varepsilon^2}^\frac{\xi}{2}}{(t-s)^\beta},
			\end{aligned}
		\end{equation}
		where we used the variance of the Poisson distribution in the last line.
		
		For the first term, let $p \ge {2}$ be a sufficiently large {even integer} so that $\beta \frac{p}{p-1}<1$, then by H\"{o}lder's inequality, we see
		\begin{equation}\label{ineq:j1 decomposition approx poisson}
			\begin{aligned}
				J_{1,T}(t-s,\varepsilon) & \le \left(e^{-(t-s)/\varepsilon} \sum_{n\varepsilon {\le} T+1} \frac{\left(\frac{t-s}{\varepsilon}\right)^n}{n!}  {\abs{\varepsilon(n+1) - (t-s)}^{\xi p}}\right)^\frac{1}{p}\\
				&\qquad \times\left( e^{-(t-s)/\varepsilon} \sum_{n\varepsilon {\le} T+1} \frac{\left(\frac{t-s}{\varepsilon}\right)^n}{n!} \left(\varepsilon(n+1) \right)^{-\beta \frac{p}{p-1}} \right)^\frac{p-1}{p}
			\end{aligned}
		\end{equation}
		Since $\xi \in (0,1]$, we see by Jensen's inequality and  Lemma \ref{lem:poisson moment bounds} that the first term is bounded by
		\begin{equation}\label{ineq:j1 decomposition poisson moments}
			\begin{aligned}
				\mathbb E\left[ \abs{{\varepsilon\bigl(N_\varepsilon(t-s)+1\bigr)} - (t-s)}^{p\xi} \right]^\frac{1}{p}&\le \varepsilon^\xi\E\left[\abs{N_\varepsilon(t-s) + 1 - \frac{t-s}{\varepsilon}}^p \right]^\frac{\xi}{p}\\
				&\lesssim \varepsilon^\xi +\varepsilon^\xi \E\left[\abs{N_\varepsilon(t-s)  - \frac{t-s}{\varepsilon}}^p \right]^\frac{\xi}{p} \\
				&\lesssim \varepsilon^\xi \pr{1+ \pr{\frac{t-s}{\varepsilon}}^\frac{\xi}{2} +\pr{\frac{t-s}{\varepsilon}}^\frac{\xi}{p} }\\
				&\le \varepsilon^\xi + (t-s)^\frac{\xi}{2}\varepsilon^\frac{\xi}{2} + {(t-s)^\frac{\xi}{p}}\varepsilon^\frac{\xi(p-1)}{p}.
			\end{aligned}
		\end{equation}
		For the second term, apply {Jensen's} inequality to see
		\begin{equation}\label{ineq:j11 bound approx poisson}
			\begin{aligned}
				\left( e^{-(t-s)/\varepsilon} \sum_{n\varepsilon {\le} T+1} \frac{\left(\frac{t-s}{\varepsilon}\right)^n}{n!} \left(\varepsilon(n+1) \right)^{-\beta \frac{p}{p-1}} \right)^\frac{p-1}{p} &\le \left(e^{-(t-s)/\varepsilon}\sum_{n\varepsilon {\le} T+1} \frac{\left(\frac{t-s}{\varepsilon}\right)^n}{n!} \left(\varepsilon(n+1) \right)^{-1}\right)^{\beta }\\
				& \le \left(e^{-(t-s)/\varepsilon} \sum_{n = 0}^\infty \frac{\left(\frac{t-s}{\varepsilon} \right)^n}{n!} \frac{1}{\varepsilon(n+1)}\right)^{\beta}\\
				&= \left(e^{-(t-s)/\varepsilon} \sum_{n = 0}^\infty  \frac{1}{t-s}\frac{\left( \frac{t-s}{\varepsilon}\right)^{n+1}}{(n+1)!}\right)^{\beta}\\
				&= \left(\frac{1}{t-s} e^{-(t-s)/\varepsilon} \sum_{n = 1}^\infty \frac{\left( \frac{t-s}{\varepsilon}\right)^n}{n!}\right)^{\beta}\\
				&\le (t-s)^{-\beta}
			\end{aligned}
		\end{equation}
		Inserting \eqref{ineq:j11 bound approx poisson} and \eqref{ineq:j1 decomposition poisson moments} into \eqref{ineq:j1 decomposition approx poisson}, we see it holds uniformly in $T+1\ge t>s\ge 0$ and $\varepsilon \in (0,1]$ that
		\begin{align}\label{ineq:j1 final bound}
			J_{1,T}(t-s,\varepsilon) \lesssim \pr{\varepsilon^\xi + (t-s)^\frac{\xi}{2}\varepsilon^\frac{\xi}{2} + {(t-s)^\frac{\xi}{p}}\varepsilon^\frac{\xi(p-1)}{p}} (t-s)^{-\beta}.
		\end{align}
		Inserting this and \eqref{ineq:j2 decomposition poisson moments} into \eqref{ineq:j1j2 composition approx poisson}, we see
		\begin{align*}
			\lim_{\varepsilon \downarrow 0} \sup_{t \in (0,T+1]\atop x \in \X} \int_0^tI_{2,T}(t-s,x,\varepsilon) \mathrm ds = 0.
		\end{align*}
		Collecting all terms, we see \eqref{lim:sup_cov} holds.

		For (3), {let $\beta$ be as in \eqref{cond:time_holder}.} By Lemma \ref{lem:hk longtime}, we see for each $T>0$, it holds uniformly in $t \in (0,T]$ that
		\begin{align}
			&\int_\X \left|r_t^\varepsilon({y,x}) - G_t({y,x}) \right| m(\mathrm dy)\\
			\lesssim& e^{-\frac{t}{\varepsilon}} \left( 1+\sum_{n \varepsilon \le T + 1}  \frac{(t/\varepsilon)^n}{n!} \int_\X\left|G_{n\varepsilon}({y,x}) - G_t({y,x}) \right|m(\mathrm dy) +  2\sum_{n \varepsilon > T + 1}  \frac{(e^{\kappa \varepsilon}t/\varepsilon)^n}{n!}\right).
		\end{align}
		Let $\psi(\lambda) = (\lambda+1)\ln(\lambda+1) - \lambda$ and {$c_T = \min\left\{\frac12\psi\left(\frac{1}{2T}\right),1\right\}$}. By Bennett's inequality for Poisson random variables, it holds uniformly in sufficiently small $\varepsilon>0$ and all $t \in (0,T]$ that 
		\begin{align}
			e^{-t/\varepsilon}\sum_{n\varepsilon > T+1} \frac{(e^{\kappa \varepsilon}t/\varepsilon)^n}{n!}
			&= {e^{(e^{\kappa\varepsilon}-1)t/\varepsilon}}e^{-e^{\kappa\varepsilon}t/\varepsilon}
			\sum_{n\varepsilon>T+1}\frac{(e^{\kappa\varepsilon}t/\varepsilon)^n}{n!}\\
			&\le {e^{(e^{\kappa\varepsilon}-1)t/\varepsilon}}
			\exp\left(-\frac{e^{\kappa\varepsilon}t}{\varepsilon}
			\psi\left({\frac{T+1}{t}e^{-\kappa\varepsilon}-1}\right)\right)\\
			&\lesssim {\exp\left(-c_T\frac{t}{\varepsilon}\right)}.
		\end{align}
		Here we used that $(e^{\kappa\varepsilon}-1)/\varepsilon$ is bounded for $\varepsilon\in(0,1]$, and, for all sufficiently small $\varepsilon>0$, $e^{\kappa\varepsilon}\ge\frac12$ and $\frac{T+1}{t}e^{-\kappa\varepsilon}-1\ge\frac1{2T}$.
		This gives the required exponential term in \eqref{ineq:p_approx_m}.

		Let $\xi:=\xi_1\wedge1$, choose $k\in\N$ sufficiently large, and set $p:=2k/\xi$ and $q:=p/(p-1)$ so that $q\beta<1$. By \eqref{ineq:time_holder_long_time} and H\"{o}lder's inequality, the following inequality holds uniformly in $(t,x) \in [0,T]\times \X$ and $\varepsilon \in (0,1]$:
		\begin{align}
			&e^{-t/\varepsilon}\sum_{n\varepsilon\le T + 1}  \frac{(t/\varepsilon)^n}{n!} \int_\X\left|G_{n\varepsilon}({y,x}) - G_t({y,x}) \right|m(\mathrm dy)\\
			\lesssim&  e^{-t/\varepsilon} \sum_{n\varepsilon\le T+1} \frac{(t/\varepsilon)^n}{n!}\left(\frac{1}{(n\varepsilon)^\beta} + \frac{1}{t^\beta} \right)  |n\varepsilon - t|^{\xi}\\
			\le& \eta_\varepsilon(t) \left( t^{-\beta} + I(t) \right)
		\end{align}
		where {$\eta_\varepsilon (t):= \left(e^{-t/\varepsilon} \sum_{n\varepsilon\le T+1} \frac{(t/\varepsilon)^n}{n!} |n\varepsilon - t|^{p\xi} \right)^\frac{1}{p}$} and
		\begin{align}
			I(t) &:=\left( e^{-t/\varepsilon}\sum_{n\varepsilon\le T+1} \frac{(t/\varepsilon)^n}{n!} \frac{1}{\left(n\varepsilon \right)^{q\beta}}\right)^\frac{1}{q}\\
			&\le \left( e^{-t/\varepsilon}\sum_{n = 1}^\infty \frac{(t/\varepsilon)^n}{n!} \frac{1}{n\varepsilon}\right)^\beta\\
			&\le \frac{2}{t^\beta} \left(e^{-t/\varepsilon} \sum_{n = 0}^\infty \frac{(t/\varepsilon)^n}{n!} \right)^\beta\\
			&= \frac{2}{t^\beta},
		\end{align}
		where we used Jensen's inequality in the second line. Combining all the above, we see uniformly in $(t,x) \in {(0,T]} \times \X$ and sufficiently small $\varepsilon >0$, it holds that
		\begin{align}
			\int_\X \left|r_t^\varepsilon(y,x) - G_t(y,x) \right| {m(\mathrm dy)} \lesssim e^{-c_T \frac{t}{\varepsilon}} + \eta_\varepsilon(t) t^{-\beta}.
		\end{align}
		{Finally, by the even-moment case of Lemma \ref{lem:poisson moment bounds}, it holds uniformly in $t \in (0,T]$ and $\varepsilon \in (0,1]$ that}
		\begin{align*}
			\eta_\varepsilon(t)
			&\le {\varepsilon^\xi\E\left[\abs{N_\varepsilon(t)-\frac{t}{\varepsilon}}^{p\xi}\right]^\frac1p}\\
			&\lesssim {\varepsilon^{\xi-\frac1p}t^{\frac1p}
			+\varepsilon^{\frac\xi2}t^{\frac\xi2}}\\
			&\lesssim {\varepsilon^{\xi-\frac1p}+\varepsilon^{\frac\xi2}}
			=: \gamma_\varepsilon,
		\end{align*}
		Since $p\xi\ge2$, {$\lim_{\varepsilon\downarrow0}\gamma_\varepsilon=0$}, which proves the statement.
	\end{proof}

	\subsection{Proof of Proposition \ref{prop:approximation of coefficients}}
	The following lemma will be useful.
	\begin{lemma}\label{lem:approx of identity of heat kernel}
		Denote for $x,y \in \X$ that $d_1(x,y):= d(x,y) \wedge 1$. Then for any $\xi >0$ and $\lambda >0$, 
		\begin{align}
			\lim_{\varepsilon \downarrow 0} \sup_{ x \in \X} h(x)^{-\lambda}\int_\X G_\varepsilon(y,x) d_1(x,y)^\xi m(\mathrm dy) = 0.
		\end{align}
	\end{lemma}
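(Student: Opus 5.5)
The plan is to split the integral at a small radius $\delta\in(0,1)$ and bound the near and far parts separately. Since $d_1\le 1$ and $d_1(x,y)^\xi\le\delta^\xi$ on $B_\delta(x)$, we get, for every $x\in\X$ and $\varepsilon\in(0,1]$,
\begin{align*}
\int_\X G_\varepsilon(y,x)\, d_1(x,y)^\xi\, m(\mathrm dy)
\le \delta^\xi \norm{G_\varepsilon(\cdot,x)}_{L^1_\X}
+ \int_{B_\delta(x)^{\mathrm c}} G_\varepsilon(y,x)\, m(\mathrm dy).
\end{align*}
By \eqref{cond:exp_hk_lamda} with $\lambda=0$, the first term is $\lesssim\delta^\xi$ uniformly in $x$ and $\varepsilon$. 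Since $h\ge1$, this gives a bound $\lesssim\delta^\xi$ even before multiplying by $h(x)^{-\lambda}$. Once the far term is shown to vanish as $\varepsilon\downarrow0$, letting $\delta\downarrow0$ finishes the proof.

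The main obstacle is the $L^1$ tail $\int_{B_\delta(x)^{\mathrm c}}G_\varepsilon(y,x)m(\mathrm dy)$, uniformly in $x$ after the weight $h(x)^{-\lambda}$. Condition \eqref{cond:hk_uni_bound} controls only the $\mathcal H_K$-norm of the tail, which does not dominate the $L^1$ mass in general. The available $L^1$ information is \eqref{cond:ctem continuous} and the Feller property, and the weight $h(x)^{-\lambda}$ is what makes the estimate uniform, since it suppresses the non-compact part of $\X$.

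To handle the tail, I fix $x$ and pick a cutoff $\phi_x\in\mathcal C^0_\X$ with $0\le\phi_x\le1$, $\phi_x(x)=0$, and $\phi_x=1$ off $B_\delta(x)$. For instance, $\phi_x(y)=\min\{d(x,y)/\delta,1\}\cdot\chi(y)$, where $\chi$ is a large compact cutoff. Then the tail is bounded by $P_\varepsilon^*\phi_x(x)-\phi_x(x)$ plus the mass outside the support of $\chi$. Rather than vary $\phi_x$ with $x$, which would lose uniformity, I use the global function $f(y):=d_1(y,\cdot)$ differently. The key identity is
\begin{align*}
\int_\X G_\varepsilon(y,x)\, d_1(x,y)^\xi\, m(\mathrm dy)
\le \sup_{z}\Bigl(P^*_\varepsilon g_z(x)-g_z(x)\Bigr)\Big|_{z=x},
\qquad g_z:=d_1(z,\cdot)^\xi,
\end{align*}
and the family $\{g_z\}$ is uniformly bounded and equicontinuous, with modulus $d^{\xi\wedge1}$. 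So I approximate it in $\|\cdot\|_{(\lambda)}$ on compacts by finitely many $g_{z_i}$, using \eqref{ineq:entropy bound} to cover $B_R(x_0)$ by finitely many $\delta$-balls. The equicontinuity estimate $|g_z-g_{z_i}|\le d(z,z_i)^{\xi\wedge1}$ costs $\lesssim\delta^{\xi\wedge 1}$ via the $L^1$ bound.

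Each fixed $g_{z_i}$ is bounded, continuous, and in $\mathcal C^{\mathrm{tem}}_{\X,h}$, so \eqref{cond:ctem continuous} gives $\|P^*_\varepsilon g_{z_i}-g_{z_i}\|_{(\lambda)}\to0$. Evaluating at $x\in B_\delta(z_i)$ then gives the near-compact uniformity. For $x$ outside $B_R(x_0)$, I use $h(x)^{-\lambda}\cdot\sup_\varepsilon\|G_\varepsilon(\cdot,x)\|_{L^1}\lesssim\sup_{B_R^c}h^{-\lambda}$, which tends to $0$ as $R\to\infty$ by Lemma~\ref{lem:ref_grow} (this step is trivial if the diameter is finite). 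Combining, the $\limsup$ is $\lesssim \delta^{\xi\wedge1}+\sup_{B_R(x_0)^{\mathrm c}}h^{-\lambda}$, and letting $\delta\downarrow0$ and $R\uparrow\infty$ concludes the proof.
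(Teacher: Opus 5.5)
Your proposal is correct and follows essentially the same route as the paper. You cover the region where $h^{-\lambda}$ is not small by finitely many small balls centered at points $z_i$, and apply \eqref{cond:ctem continuous} to the finitely many bounded continuous functions $d_1(z_i,\cdot)^\xi$. You then compare $d_1(x,\cdot)^\xi$ with $d_1(z_i,\cdot)^\xi$ using $\sup_{\varepsilon\in(0,1]}\sup_x P_\varepsilon^*\mathbf 1(x)<\infty$, where the paper uses the quasi-triangle inequality for $d_1^\xi$ and you use equicontinuity. The remote region is handled via Lemma~\ref{lem:ref_grow}, and your opening near/far split at radius $\delta$ is superfluous, since the covering argument bounds the whole integral directly.
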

	\begin{proof}
		Fix $\xi,\lambda>0$ and restrict $\varepsilon$ to $(0,1]$. By \eqref{cond:exp_hk_lamda} with $\lambda=0$,
		\begin{align*}
			M:=\sup_{0<\varepsilon\le1}\sup_{x\in\X}(P_\varepsilon^*1)(x)<\infty.
		\end{align*}
		Let $r\in(0,1)$. By Lemma \ref{lem:ref_grow} and compactness of closed bounded balls, choose a compact set $K\subset\X$ such that $h(x)^{-\lambda}\le r^\xi$ for $x\notin K$; if $\X$ is compact, take $K=\X$. Choose $x_1,\ldots,x_N\in K$ with $K\subset\bigcup_{i=1}^N B_r(x_i)$, and set $f_i(y):=d_1(y,x_i)^\xi$. Each $f_i$ is bounded and continuous, so \eqref{cond:ctem continuous} gives
		\begin{align}\label{eq:approx identity of hk uniform convergence K}
			E_\varepsilon:=\max_{1\le i\le N}\sup_{x\in K}\abs{(P_\varepsilon^*f_i)(x)-f_i(x)}\longrightarrow0
			\qquad\text{as }\varepsilon\downarrow0.
		\end{align}
		For $x\in K$, choose $i$ with $d(x,x_i)<r$. The triangle inequality for $d_1$ gives, with $C:=\max\{1,2^{\xi-1}\}$,
		\begin{align*}
			\int_\X G_\varepsilon(y,x)d_1(y,x)^\xi m(\mathrm dy)
			&\le C\int_\X G_\varepsilon(y,x)\pr{d_1(y,x_i)^\xi+d_1(x_i,x)^\xi}m(\mathrm dy)\\
			&\le C\pr{(P_\varepsilon^*f_i)(x)+r^\xi(P_\varepsilon^*1)(x)}\\
			&\le C\pr{E_\varepsilon+(1+M)r^\xi}.
		\end{align*}
		For $x\notin K$,
		\begin{align*}
			h(x)^{-\lambda}\int_\X G_\varepsilon(y,x)d_1(y,x)^\xi m(\mathrm dy)
			\le h(x)^{-\lambda}(P_\varepsilon^*1)(x)
			\le Mh(x)^{-\lambda}\le Mr^\xi.
		\end{align*}
		Combining the estimates above and using $h\ge1$, we obtain
		\begin{align*}
			\limsup_{\varepsilon\downarrow0}\sup_{x\in\X}h(x)^{-\lambda}
			\int_\X G_\varepsilon(y,x)d_1(y,x)^\xi m(\mathrm dy)
			\le C(1+M)r^\xi.
		\end{align*}
		Letting $r\downarrow0$ proves the assertion.
	\end{proof}
	\begin{proof}[Proof of Proposition \ref{prop:approximation of coefficients}]
		\emph{Step 1.} We first construct the regularized
		coefficients and verify the coefficient properties required in
		\eqref{eq:FineNConditions}.
		After redefining $b$ and $\sigma$ to be zero on the common exceptional null set, if necessary, we may assume that the standing coefficient conditions hold for every $\omega\in\Omega$.
		Let us define for each $(t,\omega,u,x) \in \R_+\times \Omega \times \R\times \X$ and $\varepsilon \in (0,1]$ that
		\begin{gather}
			b_\varepsilon(\omega,t,x,u):= \int_\X G_\varepsilon(y,x) b(\omega,t,y,u) m(\mathrm dy),\\
			\sigma_\varepsilon(\omega,t,x,u):= \int_\X G_\varepsilon(y,x) \sigma(\omega,t,y,u) m(\mathrm dy).
		\end{gather}
		Since $G_\varepsilon$ is deterministic and non-negative, both
		functions are predictable, and  \eqref{ineq:non_negativity_coef_bound_at_zero} gives  
		\begin{align}
			b_\varepsilon(\omega,t,x,0)&\geq0,
			&\sigma_\varepsilon(\omega,t,x,0)&=0.
		\end{align}
		Moreover, by \eqref{eq:LiG}, \eqref{ineq:g_lip},
		\eqref{ineq:non_negativity_coef_bound_at_zero}, and
		\eqref{cond:exp_hk_lamda} with $\lambda=0$, for each $T>0$, uniformly
		in $\omega\in\Omega$, $(t,x)\in[0,T]\times\X$,
		$\varepsilon\in(0,1]$, and $u,v\in\R$,
		\begin{equation}\label{ineq:approx_coef_uniform_bounds}
			\begin{aligned}
			\abs{b_\varepsilon(\omega,t,x,u)}\lesssim1&+|u|,
			\qquad
			\abs{\sigma_\varepsilon(\omega,t,x,u)}\lesssim|u|,\\
			\abs{b_\varepsilon(\omega,t,x,u)-b_\varepsilon(\omega,t,x,v)}
			&+\abs{\sigma_\varepsilon(\omega,t,x,u)-\sigma_\varepsilon(\omega,t,x,v)}
			\lesssim|u-v|.
			\end{aligned}
		\end{equation}
		In addition, by
		\eqref{cond:time_holder} when $d(x,y)\le1$, and by \eqref{ineq:approx_coef_uniform_bounds} together with $\varepsilon^{-\beta}d(x,y)^{\xi_2}\ge1$ when $d(x,y)>1$, it holds uniformly in $\omega\in\Omega$, $(t,x,y)\in[0,T]\times\X^2$, $u\in\R$, and
		$\varepsilon\in(0,1]$ that 
		\begin{equation}\label{ineq:approx_coef_joint_holder}
			\begin{aligned}
				\abs{b_\varepsilon(\omega,t,x,u)-b_\varepsilon(\omega,t,y,u)}
				&\lesssim \varepsilon^{-\beta}d(x,y)^{\xi_2}(1+|u|),\\
				\abs{\sigma_\varepsilon(\omega,t,x,u)-\sigma_\varepsilon(\omega,t,y,u)}
				&\lesssim \varepsilon^{-\beta}d(x,y)^{\xi_2}|u|,
			\end{aligned}
		\end{equation}
		where $\xi_2 \in (0,1]$ is as in \eqref{cond:time_holder}.
		Thus \eqref{ineq:non_negative_approx_coef_ineq} holds with $\eta=\xi_2$.
		Together with the Lipschitz bounds in $u$, the spatial estimates also
		imply \eqref{eq:con} for $b_\varepsilon$ and $\sigma_\varepsilon$.
		Consequently, all coefficient properties required in
		\eqref{eq:FineNConditions} hold at the $\varepsilon$-level. Therefore,
		for each $\varepsilon\in(0,1]$ and
		$f\in\mathcal C_\X^{\mathrm b}\subset\mathcal C^{\mathrm{tem}}_{\X,h}$, there exists a
		unique solution $u_\varepsilon$ to
		\begin{align}
			u_\varepsilon(t,x) = P^*_tf(x)&+ \int_0^t \mathrm ds \int_\X G(t-s,y,x) b_\varepsilon(s,y,u_\varepsilon(s,y)) m(\mathrm dy)\\
			&+\iint_0^t G(t-s,y,x) \sigma_\varepsilon(s,y,u_\varepsilon(s,y)) W(\mathrm dy,\mathrm ds).
		\end{align}
		\begin{samepage}
		\emph{Step 2.} It remains to prove
		\eqref{eq:Approximation}. Let $u$ be the solution to \eqref{eq:spde}, fix arbitrary $T>0$ and
		$\lambda\in(0,1]$, and set for $\varepsilon \in (0,1]$ and $(t,x)\in[0,T]\times\X$ that
		\begin{align}
			D_\varepsilon(t,x)&:=u_\varepsilon(t,x)-u(t,x),&
			V_\varepsilon(t)&:=\sup_{x\in\X}h(x)^{-2\lambda}
			\E\big[|D_\varepsilon(t,x)|^2\big].
		\end{align}
		\end{samepage}
		To separate the coefficient differences into the solution error, a
		spatial-averaging error, and an approximation-of-the-identity residual,
		write
		\[
			B_s(x):=b(s,x,u(s,x)),\qquad
			\Sigma_s(x):=\sigma(s,x,u(s,x)),
		\]
		and set
		\[
			Q_\varepsilon(s,x):=\int_\X G_\varepsilon(y,x)
			\norm{u(s,y)-u(s,x)}_{L^2_\Omega}m(\mathrm dy).
		\]
		By the definition of $b_\varepsilon$, Minkowski's inequality and \eqref{ineq:g_lip}, it
		holds uniformly in $(s,x)\in(0,T]\times\X$ and
		$\varepsilon \in (0,1]$ that
		\begin{align*}
			\norm{b_\varepsilon(s,x,u_\varepsilon(s,x))-B_s(x)}_{L^2_\Omega}
			&\leq
			\norm{b_\varepsilon(s,x,u_\varepsilon(s,x))
			-b_\varepsilon(s,x,u(s,x))}_{L^2_\Omega}\\
			&\quad+\norm{\int_\X G_\varepsilon(y,x)
			\left[b(s,y,u(s,x))-b(s,y,u(s,y))\right]m(\mathrm dy)}_{L^2_\Omega}\\
			&\quad+\norm{P_\varepsilon^*B_s(x)-B_s(x)}_{L^2_\Omega}\\
			&{\overset{\eqref{ineq:g_lip}}{\lesssim}}
			(P_\varepsilon^*\mathbf1)(x)
			\norm{D_\varepsilon(s,x)}_{L^2_\Omega}+Q_\varepsilon(s,x)
			+\norm{P_\varepsilon^*B_s(x)-B_s(x)}_{L^2_\Omega}\\
			&{\overset{\eqref{cond:exp_hk_lamda}}{\lesssim}}
			\norm{D_\varepsilon(s,x)}_{L^2_\Omega}+Q_\varepsilon(s,x)
			+\norm{P_\varepsilon^*B_s(x)-B_s(x)}_{L^2_\Omega}.
		\end{align*}
		Similarly, it holds uniformly in $(s,x)\in(0,T]\times\X$ and $\varepsilon \in (0,1]$ that
		\begin{align*}
			\norm{\sigma_\varepsilon(s,x,u_\varepsilon(s,x))-\Sigma_s(x)}_{L^2_\Omega}
			{\lesssim}
			\norm{D_\varepsilon(s,x)}_{L^2_\Omega}+Q_\varepsilon(s,x)
			+\norm{P_\varepsilon^*\Sigma_s(x)-\Sigma_s(x)}_{L^2_\Omega}.
		\end{align*}
		Hence, it holds uniformly in $\varepsilon \in (0,1]$ and $t\in[0,T]$ that 
		\begin{align*}
			&\sup_{x\in\X}h(x)^{-2\lambda}
			\norm{\int_0^t\!\int_\X G(t-s,y,x)
			\left[b_\varepsilon(s,y,u_\varepsilon(s,y))-B_s(y)\right]
			m(\mathrm dy)\mathrm ds}_{L^2_\Omega}^2\\
			{\overset{\eqref{ineq:hk_m long time}}{\lesssim}}&
			\left(\int_0^t\left[
			V_\varepsilon(s)^{1/2}
			+\sup_{y\in\X}h(y)^{-\lambda}Q_\varepsilon(s,y)
			+\sup_{y\in\X}h(y)^{-\lambda}
			\norm{P_\varepsilon^*B_s(y)-B_s(y)}_{L^2_\Omega}
			\right]\mathrm ds\right)^2\\
			{\lesssim}&
			\int_0^tV_\varepsilon(s)\,\mathrm ds
			+\left(\int_0^t\sup_{y\in\X}h(y)^{-\lambda}
			Q_\varepsilon(s,y)\,\mathrm ds\right)^2+
			\left(\int_0^t\sup_{y\in\X}h(y)^{-\lambda}
			\norm{P_\varepsilon^*B_s(y)-B_s(y)}_{L^2_\Omega}
			\,\mathrm ds\right)^2.
		\end{align*}
		In addition, it holds uniformly in $\varepsilon \in (0,1]$ and $t\in[0,T]$ that
		\begin{align*}
			&\sup_{x\in\X}h(x)^{-2\lambda}
			\norm{\iint_0^t G(t-s,y,x)
			\left[\sigma_\varepsilon(s,y,u_\varepsilon(s,y))-\Sigma_s(y)\right]
			W(\mathrm ds,\mathrm dy)}_{L^2_\Omega}^2\\
			{\overset{\eqref{ineq:mtg+mink+cs}}{\lesssim}}&
			\sup_{x\in\X}h(x)^{-2\lambda}\int_0^t
			\norm{G(t-s,\cdot,x)
			\norm{\sigma_\varepsilon(s,\cdot,u_\varepsilon(s,\cdot))
			-\Sigma_s(\cdot)}_{L^2_\Omega}}_{\mathcal H_K}^2\,\mathrm ds\\
			{\overset{\eqref{ineq:hk_k long time}}{\lesssim}}&
			\int_0^t(t-s)^{-\alpha}\left[
			V_\varepsilon(s)^{1/2}
			+\sup_{y\in\X}h(y)^{-\lambda}Q_\varepsilon(s,y)
			+\sup_{y\in\X}h(y)^{-\lambda}
			\norm{P_\varepsilon^*\Sigma_s(y)-\Sigma_s(y)}_{L^2_\Omega}
			\right]^2\mathrm ds\\
			{\lesssim}&
			\int_0^t(t-s)^{-\alpha}V_\varepsilon(s)\,\mathrm ds
			+\int_0^t(t-s)^{-\alpha}
			\left[\sup_{y\in\X}h(y)^{-\lambda}Q_\varepsilon(s,y)\right]^2
			\mathrm ds\\
			&\qquad+
			\int_0^t(t-s)^{-\alpha}\sup_{y\in\X}h(y)^{-2\lambda}
			\norm{P_\varepsilon^*\Sigma_s(y)-\Sigma_s(y)}_{L^2_\Omega}^2
			\,\mathrm ds.
		\end{align*}
		Therefore, it holds uniformly in $\varepsilon \in (0,1]$ and $t\in[0,T]$ that
		\begin{align}\label{ineq:approximation_master}
			V_\varepsilon(t)\lesssim{}&
			\int_0^t\bigl(1+(t-s)^{-\alpha}\bigr)
			V_\varepsilon(s)\,\mathrm ds
			+\left(\int_0^t\sup_{x\in\X}h(x)^{-\lambda}
			Q_\varepsilon(s,x)\,\mathrm ds\right)^2\nonumber\\
			&+\int_0^t(t-s)^{-\alpha}
			\left[\sup_{x\in\X}h(x)^{-\lambda}
			Q_\varepsilon(s,x)\right]^2\mathrm ds\nonumber\\
			&+\left(\int_0^t\sup_{x\in\X}h(x)^{-\lambda}
			\norm{P_\varepsilon^*B_s(x)-B_s(x)}_{L^2_\Omega}
			\,\mathrm ds\right)^2\nonumber\\
			&+\int_0^t(t-s)^{-\alpha}\sup_{x\in\X}h(x)^{-2\lambda}
			\norm{P_\varepsilon^*\Sigma_s(x)-\Sigma_s(x)}_{L^2_\Omega}^2
			\,\mathrm ds.
		\end{align}
		\emph{Step 3.} We first control the spatial-averaging term
		$Q_\varepsilon$, which gives the second and third terms on the
		right-hand side of \eqref{ineq:approximation_master}. Similar to the
		$J_7$ term in the proof of
		Proposition~\ref{thm:non_negtive}, we see \eqref{ineq:u_bounded_spatial_increment} holds. Now, let
		$\xi'\in(0,1)$ be as in \eqref{ineq:u_bounded_spatial_increment}, and set
		\[
			q_\varepsilon:=\sup_{x\in\X}h(x)^{-\lambda}
			\int_\X G_\varepsilon(y,x)d_1(x,y)^{\xi'}m(\mathrm dy),
		\]
		where $d_1$ is as in Lemma~\ref{lem:approx of identity of heat kernel}. Hence, $\lim_{\varepsilon \downarrow 0 }q_\varepsilon=0$.
		Thus, by \eqref{ineq:u_bounded_spatial_increment} and
		\eqref{cond:exp_hk_lamda}, it holds uniformly in $\varepsilon \in (0,1]$ and $s\in(0,T]$ that
		\begin{align}\label{ineq:approximation_Qepsilon}
			\sup_{x\in\X}h(x)^{-\lambda}Q_\varepsilon(s,x)
			\lesssim 1\wedge\big(q_\varepsilon s^{-\beta}\big),
		\end{align}
		where $\beta \in [0,1)$ is as in \eqref{ineq:u_bounded_spatial_increment}. For the drift term, Lemma~\ref{lem:approx of identity of heat kernel} and dominated convergence give
		\begin{align}\label{ineq:approximation_2nd_term}
			\lim_{\varepsilon \downarrow 0 }\int_0^T\sup_{x\in\X}h(x)^{-\lambda}
			Q_\varepsilon(s,x)\,\mathrm ds  =0.
		\end{align}
		\begin{samepage}
		For the third term on the right hand side of \eqref{ineq:approximation_master}, we may split the time integral to see that it holds uniformly in $\delta \in (0,T]$ and $\varepsilon \in (0,1]$ that
		\begin{align*}
			\sup_{t\leq T}\int_0^t(t-s)^{-\alpha}
			\left[\sup_{x\in\X}h(x)^{-\lambda}
			Q_\varepsilon(s,x)\right]^2\mathrm ds
			&\overset{\eqref{ineq:approximation_Qepsilon}}{\lesssim} \sup_{t \le T} \int_0^t (t-s)^{-\alpha} \pr{1\wedge \pr{q_\varepsilon s^{-\beta}}}^2\mathrm ds\\
			&\lesssim
			\sup_{t\leq T}\int_0^{t\wedge\delta}(t-s)^{-\alpha}\mathrm ds
			+q_\varepsilon^2\delta^{-2\beta}
			\sup_{t\leq T}\int_{t\wedge\delta}^t(t-s)^{-\alpha}\mathrm ds\\
			&\lesssim \delta^{1-\alpha}
			+q_\varepsilon^2\delta^{-2\beta}.
		\end{align*}
		\end{samepage}
		Letting first $\varepsilon\downarrow0$ and then $\delta\downarrow0$ yields
		\begin{align}\label{ineq:approximation_3rd_term}
			\lim_{\varepsilon\downarrow 0}\sup_{t\leq T}\int_0^t(t-s)^{-\alpha}
			\left[\sup_{x\in\X}h(x)^{-\lambda}
			Q_\varepsilon(s,x)\right]^2\mathrm ds=0.
		\end{align}
		\emph{Step 4.} It remains to control the last two terms in \eqref{ineq:approximation_master}. Fix $p>2$ and $R>\|f\|_{(\lambda/2)}$, and set
		\[
			A_R:=\left\{\sup_{0\leq r\leq T}
			\norm{u(r,\cdot)}_{(\lambda/2)}\leq R\right\}.
		\]
		By \eqref{eq:LiG},\eqref{ineq:g_lip},\eqref{eq:con} and the fact that $u\in\mathcal C_{\R_+}\mathcal C^{\mathrm{tem}}_{\X,h}$, we see  $B_s,\Sigma_s\in\mathcal C^{\mathrm{tem}}_{\X,h}$ for every $s\in[0,T]$ almost surely. Hence, by \eqref{cond:ctem continuous}, for every
		$s\in[0,T]$, it holds almost surely that
		\begin{equation}\label{eq:approximation_good_terms}
			\begin{aligned}
				\lim_{\varepsilon \downarrow 0}\mathbf1_{A_R}\sup_{x\in\X}h(x)^{-\lambda}
				\left(\left|P_\varepsilon^*B_s(x)-B_s(x)\right|
				+\left|P_\varepsilon^*\Sigma_s(x)-\Sigma_s(x)\right|\right)
				=0.
			\end{aligned}
		\end{equation}
		By the dominated convergence theorem, we see it holds for every fixed $R$ that 
		\begin{align*}
			&\lim_{\varepsilon \downarrow 0}\int_0^T\sup_{x\in\X}h(x)^{-\lambda}
			\norm{\mathbf1_{A_R}\big(P_\varepsilon^*B_s(x)-B_s(x)\big)}_{L^2_\Omega}
			\,\mathrm ds=0,\\
			&\lim_{\varepsilon \downarrow 0}\int_0^T\sup_{x\in\X}h(x)^{-2\lambda}
			\norm{\mathbf1_{A_R}\big(P_\varepsilon^*\Sigma_s(x)-\Sigma_s(x)\big)}_{L^2_\Omega}^2
			\,\mathrm ds=0.
		\end{align*}
		By H\"older's inequality, Corollary~\ref{cor:moment},
		\eqref{eq:LiG} and \eqref{ineq:hk_m long time}, it holds uniformly in $R$ that
		\begin{equation}\label{ineq:approximation_bad_term}
			\begin{aligned}
				&\sup_{\varepsilon\in(0,1]}\sup_{s\leq T}\sup_{x\in\X}
				h(x)^{-\lambda}
				\norm{\mathbf1_{A_R^c}\big(P_\varepsilon^*B_s(x)-B_s(x)\big)}_{L^2_\Omega}\lesssim
				\mathbb P(A_R^c)^{1/2-1/p}\\
				&\sup_{\varepsilon\in(0,1]}\sup_{s\leq T}\sup_{x\in\X}
				h(x)^{-\lambda}
				\norm{\mathbf1_{A_R^c}\big(P_\varepsilon^*\Sigma_s(x)-\Sigma_s(x)\big)}_{L^2_\Omega}\lesssim
				\mathbb P(A_R^c)^{1/2-1/p}.
			\end{aligned}
		\end{equation}
		Since $u\in\mathcal C_{\R_+}\mathcal C^{\mathrm{tem}}_{\X,h}$,
		$\mathbb P(A_R^c)\to0$ as $R\to\infty$. Thus, letting first
		$\varepsilon\downarrow0$ and then $R\to\infty$, we obtain that 
		\begin{align}\label{eq:approximation_4th_term}
			\lim_{\varepsilon \downarrow 0 }\int_0^T\sup_{x\in\X}h(x)^{-\lambda}
			\norm{P_\varepsilon^*B_s(x)-B_s(x)}_{L^2_\Omega}\,\mathrm ds
			=0.
		\end{align}
		This controls the fourth term in \eqref{ineq:approximation_master}. Since $\alpha \in (0,1)$, we see by \eqref{eq:approximation_good_terms} and dominated convergence that,
		\begin{align*}
			\lim_{\varepsilon\downarrow 0}&\sup_{t\leq T}\int_0^t(t-s)^{-\alpha}
			\sup_{x\in\X}h(x)^{-2\lambda}
			\norm{\mathbf1_{A_R}\big(P_\varepsilon^*\Sigma_s(x)-\Sigma_s(x)\big)}_{L^2_\Omega}^2\,\mathrm ds=0.
		\end{align*}
		Furthermore, it holds uniformly in $R$ that 
		\begin{align*}
			&\sup_{t\leq T}\int_0^t(t-s)^{-\alpha}
			\sup_{x\in\X}h(x)^{-2\lambda}
			\norm{\mathbf1_{A_R^c}\big(P_\varepsilon^*\Sigma_s(x)-\Sigma_s(x)\big)}_{L^2_\Omega}^2\,\mathrm ds
			\overset{\eqref{ineq:approximation_bad_term}}{\lesssim}\mathbb P(A_R^c)^{1-2/p}.
		\end{align*}
		Letting first $\varepsilon\downarrow0$ and then $R\to\infty$ gives
		\begin{align}\label{eq:apprximation_5th_term}
			\lim_{\varepsilon \downarrow 0}\sup_{t\leq T}\int_0^t(t-s)^{-\alpha}
			\sup_{x\in\X}h(x)^{-2\lambda}
			\norm{P_\varepsilon^*\Sigma_s(x)-\Sigma_s(x)}_{L^2_\Omega}^2\,\mathrm ds
			=0,
		\end{align}
		which controls the last term in \eqref{ineq:approximation_master}. 

		\emph{Step 5.} We now return to \eqref{ineq:approximation_master}. By inserting \eqref{ineq:approximation_2nd_term}, \eqref{ineq:approximation_3rd_term}, \eqref{eq:approximation_4th_term} and \eqref{eq:apprximation_5th_term} into \eqref{ineq:approximation_master}, we see there exists $H_\varepsilon$ with $\lim_{\varepsilon\downarrow0}H_\varepsilon=0$ such that the following inequality holds uniformly in $t\in[0,T]$ and sufficiently small $\varepsilon>0$:
		\[
			V_\varepsilon(t)\lesssim \int_0^t
			\bigl(1+(t-s)^{-\alpha}\bigr)V_\varepsilon(s)\,\mathrm ds+H_\varepsilon.
		\]
		We see by Corollary~\ref{cor:moment}, applied to $u$ and to $u_\varepsilon$ (the latter with $(b,\sigma)$ replaced by $(b_\varepsilon,\sigma_\varepsilon)$, see Step~1), that $\sup_{t\in[0,T]}V_\varepsilon(t)<\infty$. Hence we may use Gronwall's lemma to see that 
		\[
			\lim_{\varepsilon\downarrow0}\sup_{t\in[0,T]}V_\varepsilon(t)=0,
		\]
		which is precisely the weighted
		$L^2_\Omega$ convergence in \eqref{eq:Approximation}.
	\end{proof}

	\subsection{Proof of Lemmas \ref{lem:product metric entropy} and \ref{lem:exp_ineq_elementary}}
	\begin{proof}[Proof of Lemma \ref{lem:product metric entropy}]
		Let $\varepsilon >0$, then there are $N_\mathcal X = N_\mathcal  X(\varepsilon),\, N_\mathcal Y  = N_\mathcal Y (\varepsilon)\in \N$ with $N_\mathcal X \lesssim \varepsilon^{-\frac{\beta_\mathcal X}{\xi_\mathcal X}}$ and $N_\mathcal Y \lesssim \varepsilon^{-\frac{\beta_\mathcal Y}{\xi_\mathcal Y}}$ uniformly in $\varepsilon \in (0,1]$, so that 
		\begin{align}
			\mathcal X \subset \bigcup_{k = 1}^{N_\mathcal X} B_{\pr{\varepsilon/2}^{1/\xi_\mathcal X}}(x_k),\qquad \mathcal Y \subset \bigcup_{\ell = 1}^{N_\mathcal Y}B_{\pr{\varepsilon/2}^{1/\xi_\mathcal Y}}(y_\ell)
		\end{align}
		for some $\{x_k\}_{k = 1}^{N_\mathcal X} \subset \mathcal X$ and $\{y_\ell\}_{\ell = 1}^{N_\mathcal Y}\subset \mathcal Y$. Recall that 
		\begin{align}
				d_\mathcal S\pr{(x,y), (x',y')}:= d_\mathcal X\pr{x,x'}^{\xi_\mathcal X} + d_\mathcal Y\pr{y,y'}^{\xi_\mathcal Y}.
		\end{align}
		Denote the ball of radius $r>0$ in $\mathcal S$ under the metric $d_\mathcal S$ by $B_\mathcal S((x,y),r)$ for $(x,y) \in \mathcal S$. Then for $1 \le k  \le N_\mathcal X$ and $1 \le \ell \le N_\mathcal Y$, we have
		\begin{align}
			B_{\pr{\varepsilon/2}^\frac{1}{\xi_\mathcal X}}(x_k)\times B_{\pr{\varepsilon/2}^\frac{1}{\xi_\mathcal Y}}(y_\ell) \subset B_\mathcal S\pr{(x_k,y_\ell),\varepsilon}.
		\end{align}
		Indeed, suppose $(x,y)\in \mathcal S$ is a point in the set on the left hand side, then 
		\begin{align}
			d_\mathcal S\pr{\pr{x,y},\pr{x_k,y_\ell}} =d_\mathcal X\pr{x,x_k}^{\xi_\mathcal X} +d_\mathcal Y\pr{y,y_\ell}^{\xi_\mathcal Y}<\varepsilon.
		\end{align}
		On the other hand, we see 
		\begin{align}
			\mathcal S = \mathcal X \times \mathcal Y \subset \bigcup_{1 \le k \le N_\mathcal X \atop 1 \le \ell \le N_\mathcal Y} B_{\pr{\varepsilon/2}^\frac{1}{\xi_\mathcal X}}(x_k)\times B_{\pr{\varepsilon/2}^\frac{1}{\xi_\mathcal Y}}(y_\ell) \subset \bigcup_{1 \le k \le N_\mathcal X \atop 1 \le \ell \le N_\mathcal Y} B_\mathcal S\pr{(x_k,y_\ell),\varepsilon}.
		\end{align}
		Hence, uniformly in $\varepsilon\in (0,1]$, there exists $N_\mathcal S(\varepsilon) : =N_\mathcal X N_\mathcal Y \lesssim \varepsilon^{-\pr{\frac{\beta_\mathcal X}{\xi_\mathcal X} + \frac{\beta_\mathcal Y}{\xi_\mathcal Y}}}$ number of balls with radius $\varepsilon >0$ in $\pr{\mathcal S,d_\mathcal S}$  that cover $\mathcal S$, which is the desired result.
	\end{proof}
	
	\begin{proof}[Proof of Lemma \ref{lem:exp_ineq_elementary}]
		We first show $\ln(xy)-y(x-1) <0 $ for all $y >0$ and $x \ge 2$. Observe for each $x \ge 2$, the function $f_x(y) := \ln (xy) - y(x-1)$ is increasing for $0 < y \le \frac{1}{x-1}$ and decreasing when $y > \frac{1}{x-1}$.
		Hence
		\begin{gather*}
			\sup_{x \ge 2}\sup_{y>0} \left(\ln(xy) - y(x-1)\right) = \sup_{x \ge 2}\ln\left(\frac{x}{x-1} \right) - 1\le \ln(2) -1 < 0.
		\end{gather*}
		Therefore,
		$xy\exp(-xy+y)= \exp\left(\ln(xy) - y(x-1) \right)<1, \text{ for all } y>0,\, x \ge 2.$
		Multiply both sides by $\exp(-y)$ to get 
		\begin{gather*}
			xy \exp(-xy) < \exp(-y) ,\quad \text{ for all } y>0,\, x \ge 2.
		\end{gather*}
		This implies
		\begin{gather*}
			xy \exp\left( - \frac{y}{2} 2x\right) \le \exp \left( - \frac{y}{2}\right),\quad \text{ for all } y >0,x \ge 1.
		\end{gather*}
		Multiply both sides by $\frac{1}{xy}$ to see the desired inequality.
	\end{proof}

\bibliographystyle{plain}
\bibliography{CCC}
\end{document}